\theoremstyle{plain}
\newtheorem{theorem}{Theorem}
\newtheorem{lemma}[theorem]{Lemma}
\newtheorem{proposition}[theorem]{Proposition}
\newtheorem{corollary}[theorem]{Corollary}
\newtheorem{conjecture}[theorem]{Conjecture}
\theoremstyle{definition}
\newtheorem{remark}[theorem]{Remark}
\newtheorem{definition}[theorem]{Definition}
 \numberwithin{equation}{section}
 \numberwithin{theorem}{section}
\newcommand*{\refh}[2]{\hyperref[#2]{#1~\ref{#2}}} 
\newcounter{ourcount}
\newcommand{\np}{\mathrm N}  
\DeclareMathOperator{\Gr}{Gr}
\newcommand{\half}{%
  \mathchoice{\ffrac{1}{2}}{\frac{1}{2}}{\frac{1}{2}}{\frac{1}{2}}}
\newcommand{\rmi}{\mathrm{i}}
\newcommand{\id}{\mathrm{id}}
\newcommand{\oN}{\mathbb{N}}
\newcommand{\SLiiZ}{SL(2,\oZ)}
\newcommand{\modS}{\mathscr{S}}
\newcommand{\modT}{\mathscr{T}}
\newcommand{\Q}{\mathsf{Q}}
\newcommand{\QQ}{\hat{\Q}}
\newcommand{\Qhat}{\hat{\mathsf{Q}}}
\newcommand{\Salg}{\mathsf{S}}
\newcommand{\B}{\mathsf{B}}
\newcommand{\Bel}{\mathsf{b}}
\newcommand{\z}{\mathsf{z}}
\newcommand{\copS}{\Delta^{\Salg}}
\newcommand\Zc{Z}
\newcommand{\brS}{\psi}               
\newcommand{\smult}{\,\hat.\,}
\newcommand{\RS}{r}
\newcommand{\cat}{\mathcal{C}}
\newcommand{\catD}{\mathcal{D}}
\newcommand{\catSF}{\mathcal{S\hspace{-1.2pt}F}}
\newcommand{\svect}{\mathrm{{\bf Svect}}}
\newcommand{\vect}{\mathrm{{\bf vect}}}
\newcommand{\rep}{\mathrm{\bf Rep}\,}
\newcommand{\repsv}{\mathrm{\bf Rep}_{\mathrm{s.v.}}}
\newcommand{\repQ}{\rep\Q}
\newcommand{\repQQ}{\rep\Qhat}
\newcommand{\repS}{\rep\Salg}
\newcommand{\tensC}{\tensor_{\cat}}
\newcommand{\assC}{\alpha^{\cat}}
\newcommand{\tensD}{\tensor_{\catD}}
\newcommand{\assD}{\alpha^{\catD}}
\newcommand{\fun}{\mathcal{F}}    
\newcommand{\funD}{\mathcal{D}}   
\newcommand{\funCS}{\funD}        
\newcommand{\funSQ}{\mathcal{G}}  
\newcommand{\funQS}{\mathcal{H}}  
\newcommand{\funQSF}{\mathcal{F}} 
\newcommand{\isoD}{\Delta}
\newcommand{\isoG}{\Gamma}
\newcommand{\tensor}{\otimes}
\newcommand{\ot}{\otimes}
\newcommand{\svtensor}{\tensor_{\svect}}
\newcommand{\Ctensor}{*}
\newcommand{\Stensor}{\tensor_{\repS}}
\newcommand{\as}{\Phi}            
\newcommand{\asQQ}{\hat\Phi}      
\newcommand{\Sas}{\Lambda}        
\newcommand{\assocQ}{\alpha^{\repQ}} 
\newcommand{\assocQQ}{\alpha^{\repQQ}} 
\newcommand{\flip}{\tau}                    
\newcommand{\sflip}{\tau^{\mathrm{s.v.}}}   
\newcommand{\tid}{\one}      
\newcommand{\ev}{\mathrm{ev}}
\newcommand{\coev}{\mathrm{coev}}
\newcommand{\Salpha}{\boldsymbol{\alpha}}
\newcommand{\Sbeta}{\boldsymbol{\beta}}
\newcommand\hLie{\mathfrak{h}}
\newcommand{\tr}{\mathrm{Tr}}
\newcommand{\bal}{\varkappa}
\newcommand{\CM}{\mathsf{C}}
\newcommand{\bchi}{\pmb{\chi}}
\newcommand{\bphi}{\pmb{\phi}}
\newcommand{\xx}{\mathsf{x}}
\newcommand{\xp}{\mathsf{x}^+}
\newcommand{\xm}{\mathsf{x}^-}
\newcommand{\xpm}{\mathsf{x}^{\pm}}
\newcommand{\xmp}{\mathsf{x}^{\mp}}
\newcommand{\LL}{\mathsf{L}}
\newcommand{\oneS}{\one_{\mathsf{S}}}
\newcommand{\Za}{\Zc_\algGr}
\newcommand{\Zb}{\Zc_P}
\newcommand{\Sa}{\modS_{\Za}}
\newcommand{\Sb}{\modS_{\Zb}}
\newcommand{\Ta}{\modT_{\Za}}
\newcommand{\Tb}{\modT_{\Zb}}
\newcommand{\ffrac}[2]{\mbox{\footnotesize$\displaystyle\frac{#1}{#2}$}}
\newcommand{\one}{\boldsymbol{1}}
\newcommand{\leftact}{\triangleright}
\newcommand{\oC}{\mathbb{C}}
\newcommand{\End}{\mathrm{End}}
\newcommand{\Hom}{\mathrm{Hom}}
\newcommand{\K}{\mathsf{K}}
\newcommand{\ff}{\mathsf{f}}
\newcommand{\fpm}{\ff^\pm}
\newcommand{\fp}{\ff^+}
\newcommand{\fm}{\ff^-}
\newcommand{\eps}{\varepsilon}
\newcommand{\algGr}{\mathbb{\Lambda}}
\newcommand{\algCl}{\mathsf{Cl}}
\newcommand{\coend}{\mathcal L}
\newcommand{\IntL}{\Lambda_\coend}
\newcommand{\Hpair}{\omega_\coend}
\newcommand{\muc}{\mu_\coend}
\newcommand\eqpics[4]{\begin{eqnarray*}
                   \begin{picture}(#2,#3){}\end{picture}\nonumber\\
                   \raisebox{-#3pt}{ \begin{picture}(#2,#3) #4 \end{picture} }
                   \label{#1} \\~\nonumber \end{eqnarray*} }
\newcommand{\XX}{\mathsf{X}} 
\newcommand{\PP}{\mathsf{P}}
\newcommand{\intQ}{{\mathbf{c}}}
\newcommand{\coint}{\Lambda^{\rm co}}
\newcommand{\balance}{{\boldsymbol{g}}}
\newcommand{\ribbon}{{\boldsymbol{v}}}
\newcommand{\sqs}{{\boldsymbol{u}}}
\newcommand{\Dt}{{\boldsymbol{f}}}
\newcommand{\idem}{\boldsymbol{e}}
\newcommand{\oZ}{\mathbb{Z}}
\newcommand{\gcg}{\ , \ } 
\newcommand{\gp}{\ .}
\newcommand{\gc}{\ ,}
\newcommand{\qcq}{\quad , \quad}
\newcommand{\qc}{\quad ,}
\newcommand{\qp}{\quad .}
\newcommand\be{\begin{equation}}
\newcommand\ee{\end{equation}}
\newcommand{\al}[1]{\begin{align}#1\end{align}}
\providecommand{\keyword}[1]{\textit{{keywords:\;}} #1}
\begin{document}


\title[Symplectic fermion ribbon q-Hopf algebra and $\SLiiZ$-action on its centre]
{The symplectic fermion ribbon quasi-Hopf algebra
\\
 and the $\SLiiZ$-action on its centre}

\address{VF: Fachbereich Mathematik, Universit\"at Hamburg, Bundesstra\ss e 55,
20146 Hamburg, Germany}
\email{vanda.farsad@uni-hamburg.de}

\author{V.~Farsad, A.M.~Gainutdinov, I.~Runkel}
\address{AMG:
Institut Denis Poisson, CNRS, Universit\'e de Tours, Universit\'e d'Orl\'eans,
Parc de Grandmont, 37200 Tours, 
France}
\email{azat.gainutdinov@lmpt.univ-tours.fr}

\address{IR: Fachbereich Mathematik, Universit\"at Hamburg, Bundesstra\ss e 55,
20146 Hamburg, Germany}
\email{ingo.runkel@uni-hamburg.de}

\begin{abstract}
We introduce a family of factorisable ribbon quasi-Hopf algebras $\Q(\np)$ for $\np$ a positive integer:
as an algebra, $\Q(\np)$ is the semidirect product of $\oC\oZ_2$ with the direct sum of a Gra\ss{}mann and a Clifford algebra in $2\np$ generators.
  We show that $\rep \Q(\np)$ is ribbon equivalent to the symplectic fermion category $\catSF(\np)$ that was computed in~\cite{Runkel:2012cf} from conformal blocks of the corresponding logarithmic conformal field theory. The latter category in turn is conjecturally ribbon equivalent to representations of $\mathcal{V}_\mathrm{ev}$, the even part of the symplectic fermion vertex operator super algebra. 

Using the formalism developed in \cite{FGR} we compute the projective $\SLiiZ$-action on the centre of $\Q(\np)$ as obtained from Lyubashenko's general theory of mapping class group actions for factorisable finite ribbon categories.
This allows us to test a conjectural non-semisimple version of the modular Verlinde formula: we verify that the $\SLiiZ$-action computed from $\Q(\np)$ agrees projectively with that
	on
pseudo trace functions of $\mathcal{V}_\mathrm{ev}$.
\end{abstract}

\maketitle

\keyword{quasi-Hopf algebras, ribbon categories, vertex operator algebras, $\SLiiZ$ representations, Verlinde formula.}

\vspace{0.2em}

\setcounter{tocdepth}{1}
\tableofcontents
    
\thispagestyle{empty}
\newpage

\section{Introduction}
We introduce a new family of quasi-Hopf algebras $\Q(\np)$ whose definition is motivated by representation theory of vertex operator algebras, specifically by a family arising from logarithmic conformal field theories called symplectic fermions. Applying the general formalism developed in \cite{FGR} to $\Q(\np)$ allows us to test a conjecture 
in \cite{Gainutdinov:2016qhz} on a non-semisimple version of the Verlinde formula
for this class of examples. Before we give more details on the results in this paper, let us provide some background for this problem.

\medskip

In mathematical physics, 
	vertex operator algebras (VOAs)
provide an axiomatisation of the properties of holomorphic fields of a two-dimensional conformal quantum field theory. In mathematics, the original motivation for their study was to establish a link between algebra and modular forms and in particular, famously, between representations of the Monster group and the modular invariant $j$-function \cite{Borcherds:1983sq,FLM-book}.

The focus of this work is a different link between representation theory and modular forms provided by VOAs: the Verlinde formula, originally formulated in \cite{Verlinde:1988sn} in the context of rational conformal field theory. From the point of view of VOAs, the key statement is that two projective $SL(2,\mathbb{Z})$-actions agree, one algebraic and one from modular forms. 
As a corollary, one then obtains the Verlinde formula which is a statement about fusion rules expressed via 
	the action of the $S$-generator.	
To be more precise, let $V$ be a rational VOA, so that in particular the category $\rep V$ of $V$-modules is finitely semisimple. The two $SL(2,\mathbb{Z})$-actions are defined as follows (rationality of $V$ is crucial for both actions). 
\begin{itemize}
	\item Denote by $\mathrm{Irr}(V)$ a choice of representatives of the isomorphism classes of simple $V$-modules. Each $M \in \mathrm{Irr}(V)$ is $\mathbb{N}$-graded (with an overall shift). The resulting graded dimension converges and defines a function $\chi_M(\tau)$ on the upper half plane, called character of $M$. Remarkably, under modular transformations of $\tau$, these characters transform into each other \cite{Zhu1996}, giving a matrix-valued representation of $SL(2,\mathbb{Z})$. To achieve linear independence one has to use so-called torus one-point-functions instead of characters.
	The space $C_1(V)$ of torus-one-point functions has dimension $|\mathrm{Irr}(V)|$ \cite{Zhu1996}, resulting in an $SL(2,\mathbb{Z})$-representation in terms of matrices of size $|\mathrm{Irr}(V)|$.

	\item
	$\rep V$ is a semisimple 
	factorisable ribbon category (a modular fusion category)
	\cite{Huang:2004bn}. Here, factorisability is a non-degeneracy condition for the braiding.
The three-di\-mensional topological field theory constructed from a modular fusion category \cite{RT,tur} provides projective actions of surface mapping class groups. For $\rep V$ and in the case of the torus one obtains a projective $SL(2,\mathbb{Z})$-action on the vector space $E = \End_V(G)$ where  $G = \bigoplus_{M \in \mathrm{Irr}(V)} M$. 
\end{itemize}
Both $C_1(V)$ and $E$ have a canonical basis indexed by $\mathrm{Irr}(V)$, and the main step in Huang's proof \cite{Huang:2008} of the Verlinde formula for rational VOAs is to show that under the resulting linear isomorphism the two $SL(2,\mathbb{Z})$-actions agree projectively.

The multiplicities in the decomposition of the tensor product of two simple objects into a direct sum of simple objects are called fusion rules.
It is a relatively straightforward algebraic result that the $SL(2,\mathbb{Z})$-action on $E$ determines the fusion rules of the modular fusion category  \cite[Thm.\,4.5.2]{tur} (one actually only needs the action of the $S$-generator). One can summarise this deep result in the theory of VOAs as follows:
\begin{quote}
	\textsl{For $V$ rational, the modular properties of the torus-one-point functions of $V$ determine the fusion rules of $\rep V$.}
\end{quote}

In Zhu's theory of torus-one-point functions \cite{Zhu1996}, in order for $C_1(V)$ to be finite-dimensional and carry a $SL(2,\mathbb{Z})$-action, it is enough to require that $V$ satisfies a technical condition called $C_2$-cofiniteness, rather than imposing the stronger requirement of rationality. 
In this case $\mathrm{Irr}(V)$ is still finite, but $\rep V$ need not be semisimple.

It is now natural to ask if a relation similar to the Verlinde formula can be established in this more general setting. Indeed, this question has a long history in conformal field theory, see \cite{Flohr:2001zs,Fuchs:2003yu} and e.g.\ the works \cite{Fuchs:2006nx,Flohr:2007jy,Gaberdiel:2007jv,Gainutdinov:2007tc,Pearce:2009pg,Creutzig:2016fms}. 
For the specific version of the Verlinde formula we are concerned with in this paper, in addition to $C_2$-cofiniteness we require that $V$ is simple and self-dual,
	as well as non-negatively graded with one-dimensional component at grade zero. 
Then $\rep V$ is known to be braided monoidal \cite{HLZ}, and it is conjectured in \cite{Gainutdinov:2016qhz} that:
\begin{enumerate}
	\item 
	$\rep V$ is a factorisable finite ribbon category, see e.g.~\cite[Sec.\,\ref*{I-sec:univ-Hopf-finite}]{FGR} for conventions and references. 
	\item
	The projective $SL(2,\oZ)$-action on the endomorphisms of the identity functor of $\rep V$ computed from \cite{Lyubashenko:1995} agrees with that on the torus one-point functions $C_1(V)$ of $V$. 
\end{enumerate}
One can then use the algebraic result in \cite[Thm.\,3.9]{Gainutdinov:2016qhz} to compute the structure constants of the Grothendieck ring of $\rep V$ (which encodes the composition factors in the tensor product of two simple modules) in terms of the action of the $S$-generator. 
If $V$ is in addition rational, these conjectures turn into the theorems we quoted above, and one recovers the semisimple Verlinde formula.

The precise formulation of part~2 is that the two $SL(2,\oZ)$-actions (one projective, one not) on
$\End(Id_{\rep V})$ constructed in (a) and (b) below agree projectively:
\begin{itemize}
	\item[(a)] {\em VOA side:}
	Pick a projective generator $G$ of $\rep V$ and let $E = \End_V(G)$. Write $C(E) = \{ \, \varphi \in E^* \,|\, \varphi(fg) = \varphi(gf) \text{ for all } g,f \in E \,\}$ for the central forms on $E$. An element $\varphi \in C(E)$ defines a pseudo-trace function $\xi_G^\varphi$ \cite{Miyamoto:2002ar,Arike:2011ab}, providing a linear map 
	\be\label{eq:intro-xiG}
	\xi_G \colon\; C(E) \to C_1(V) \gc
	\ee
	see \cite[Sec.\,4]{Gainutdinov:2016qhz} and Section~\ref{sec:modprop-SF} for more details and references. 
	Conjecturally, $\xi_G$ is an isomorphism \cite[Conj.\,5.8]{Gainutdinov:2016qhz}. There is a linear (i.e.\ non-projective)  action of $SL(2,\oZ)$ on $C_1(V)$ \cite{Zhu1996} which by pullback along $\xi_G$ gives an $SL(2,\oZ)$-action on $C(E)$.

	Denote the modular $S$-transformation on torus one-point functions by $S_V : C_1(V) \to C_1(V)$ and
	write $\delta \in C(E)$ for the central form that gets mapped to the modular $S$-trans\-for\-ma\-ti\-on of the vacuum character, $\xi_G^\delta = S_V(\chi_V)$.
	Conjecturally, $\delta$ provides a non-degenerate central form on $E$, i.e.\ it turns $E$ into a symmetric Frobenius algebra \cite[Conj.\,5.10]{Gainutdinov:2016qhz}.
	One obtains an isomorphism between central forms and the centre of $E$ via $C(E) \to Z(E)$, $z \mapsto \delta(z \cdot -)$, see e.g.\ \cite[Lem.\,2.5]{Broue:2009}. Since $Z(E) \cong \End(Id_{\rep V})$, altogether we get an isomorphism $\hat\delta : \End(Id_{\rep V}) \xrightarrow{\sim} C(E)$. Via pullback, we finally obtain a 
	linear 
	$SL(2,\oZ)$-action on $\End(Id_{\rep V})$.
	\item[(b)] {\em Categorical side:}
	Abbreviate $\cat := \rep V$, a factorisable finite ribbon category by part~1 of the 
	above
	conjecture. 
	Let $\coend$ be the universal Hopf algebra in $\cat$, which can be defined as the coend $\coend = \int^{U \in \cat} U^* \otimes U$ \cite{Majid:1993,Lyubashenko:1995}, see also \cite[Sec.\,\ref*{I-sec:univ-hopf-braided}\,\&\,\ref*{I-sec:univ-hopf-via-coends}]{FGR}  for more details. One can equip $\coend$ with a normalised integral $\Lambda_\coend$, which is unique up to a sign.
	In \cite{Lyubashenko:1995} a projective $SL(2,\oZ)$-action 
	is constructed on the 
	Hom-space
	$\cat(\one,\coend)$ 
	using the non-degenerate Hopf pairing on $\coend$ and the ribbon twist to define the action of the 
	$S$- and $T$-generator, respectively. There is an isomorphism $\cat(\one,\coend) \cong \End(Id_{\cat})$, and one thus obtains a projective $SL(2,\oZ)$-action on $\End(Id_{\cat})$, see e.g.\ \cite[Sec.\,\ref*{I-sec:SL2Z}]{FGR}. 
	This projective action only depends on $\cat$ and the choice of sign for the integral $\Lambda_\coend$ \cite[Prop.\,\ref*{I-prop:ST-on-EndId}]{FGR}, and different sign choices result in actions that agree projectively.
\end{itemize}


In this paper we provide the first comparsion of the actions in (a) and (b) in a non-semisimple family of examples. The examples we consider are the so-called symplectic fermions. These are the simplest logarithmic conformal field theories \cite{Kausch:1995py}, consisting of the untwisted and $\oZ_2$-twisted sector of a free theory (namely purely odd free superbosons, see \cite{Runkel:2012cf}). 
They are parametrised by the natural number $\np$ (that counts the number of pairs of the fermionic fields). With regard to the actions in (a) and (b) we have:
\begin{itemize}
	\item[(a)] {\em VOA side for symplectic fermions:}
	Denote the even part of the vertex operator super algebra of $\np$ pairs of symplectic fermions by $\mathcal{V}_\mathrm{ev}$ \cite{Abe:2005}. Its central charge is 
	$c=-2\np$. In \cite{Davydov:2012xg,Runkel:2012cf} a factorisable finite ribbon category $\catSF(\np)$ was obtained via a conformal block calculation, and it is known that there is a faithful $\oC$-linear functor 
	$\catSF(\np) \to \rep \mathcal{V}_\mathrm{ev}$ (Proposition~\ref{prop:induction-evenpart-functor}, due to \cite{Abe:2005}), which, conjecturally, is a ribbon equivalence, see below. We review the category $\catSF(\np)$ in Section~\ref{sec:SFdef}. One can choose a projective generator $G$ in 
	$\catSF(\np)$ and use the above faithful functor to compute pseudo-trace functions, their behaviour under modular transformations, and the pullback to $C(E)$ and
	to $\End(Id_{\catSF})$ (\cite[Cor.\,6.9]{Gainutdinov:2016qhz}
	and Theorem~\ref{thm:SL2Z-from-pseudotrace}).

	We stress that the modular forms on which the $SL(2,\mathbb{Z})$-action is realised are given completely explicitly, 
	see Remark~\ref{rem:explicit-modular}.

	\item[(b)] {\em Categorical side for symplectic fermions:}
	General expressions for the universal Hopf algebra $\coend$ and its structure maps are known in the semisimple case, for representations of Hopf algebras \cite{Lyubashenko:1994tm,Kerler:1996}, and of quasi-Hopf algebras \cite[Sec.\,\ref*{I-sec:coend-repA}]{FGR}.
	In this paper, we first generalise the
	construction of~\cite{Gainutdinov:2015lja} to $\np>1$ 
	and give a ribbon quasi-Hopf algebra $\Q(\np)$ such that 
	$\catSF(\np) \simeq \rep\Q(\np)$
	as ribbon categories.
	Then we  apply the results in \cite[Sec.\,\ref*{I-sec:SL2Z-quasiHopf}]{FGR} to compute the projective 
	$SL(2,\oZ)$-action on $\End(Id_{\rep\Q})$, and hence on $\End(Id_{\catSF})$.
\end{itemize} 
The main result of this paper is (Theorem \ref{thm:compare-Q-SF}):

\begin{theorem}\label{thm:intro-main}
	Under the assumption that $\xi_G$
	from~\eqref{eq:intro-xiG} is an isomorphism,
	the $SL(2,\oZ)$-actions computed in part (a) and (b) for symplectic fermions agree projectively.
\end{theorem}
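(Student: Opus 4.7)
The plan is to exploit the ribbon equivalence $\catSF(\np) \simeq \rep\Q(\np)$ established earlier in the paper to reduce both $\SLiiZ$-actions to explicit linear computations on the common vector space
\[
\End(Id_{\catSF(\np)}) \;\cong\; \End(Id_{\rep\Q(\np)}) \;\cong\; Z(\Q(\np)),
\]
and then verify projective agreement of the $S$- and $T$-generators in a suitably chosen basis (e.g.\ one adapted to the block decomposition of $\Q(\np)$).

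First I would compute the categorical side by following the recipe of \cite[Sec.\,\ref*{I-sec:SL2Z-quasiHopf}]{FGR}. Since $\Q(\np)$ is an explicitly presented quasi-Hopf algebra (semidirect product of $\oC\oZ_2$ with a Gra\ss{}mann--Clifford algebra on $2\np$ generators), the universal Hopf algebra $\coend$ in $\rep\Q(\np)$, its Hopf pairing $\Hpair$, ribbon twist, and normalised integral $\IntL$ all admit concrete formulas. Lyubashenko's $S$- and $T$-operators on $\cat(\one,\coend) \cong Z(\Q(\np))$ then take matrix form in the chosen basis, with $T$ essentially the multiplication by the ribbon element and $S$ built from the pairing $\Hpair$ and the integral $\IntL$.

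Second, I would compute the VOA-side action by pulling the Zhu $\SLiiZ$-action on $C_1(\mathcal{V}_\mathrm{ev})$ back along
\[
\End(Id_{\catSF(\np)}) \xrightarrow{\hat\delta} C(E) \xrightarrow{\xi_G} C_1(\mathcal{V}_\mathrm{ev}).
\]
The pseudo-trace functions of $\mathcal{V}_\mathrm{ev}$ and their modular transformations are already collected in Theorem~\ref{thm:SL2Z-from-pseudotrace}, so this step reduces to pinning down the central form $\delta$ via the defining condition $\xi_G^\delta = S_V(\chi_V)$ and re-expressing the resulting matrices in the same basis used in the categorical computation. The assumption that $\xi_G$ is an isomorphism is exactly what allows the pullback to make sense and to produce a genuine linear action on $\End(Id_{\catSF(\np)})$.

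Third comes the comparison. The $T$-generator should match essentially by design, since the categorical ribbon twist on each block corresponds block-by-block to the exponentiated $L_0$-action (up to the overall central-charge phase $e^{-2\pi i c/24}$) on pseudo-trace functions, and the functor $\catSF(\np) \to \rep\mathcal{V}_\mathrm{ev}$ is compatible with conformal weights. The substantive check is the $S$-generator: one must verify that Lyubashenko's $S$, assembled from $\Hpair$ and $\IntL$ on $\Q(\np)$, reproduces the Zhu modular $S$-transformation on the pseudo-trace functions of $\mathcal{V}_\mathrm{ev}$, up to an overall scalar. The main obstacle I expect is precisely this explicit $S$-matrix comparison: one has to evaluate Lyubashenko's integral-and-pairing expression on a basis of $Z(\Q(\np))$ and recognise the resulting matrix as the $S$-matrix of pseudo-trace functions of $\mathcal{V}_\mathrm{ev}$ known from \cite{Arike:2011ab,Runkel:2012cf}. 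The projective slack in the statement is then consumed exactly by the sign ambiguity of $\IntL$ (as in \cite[Prop.\,\ref*{I-prop:ST-on-EndId}]{FGR}) together with the overall normalisation of $\delta$, so the identification only needs to succeed up to one global scalar.
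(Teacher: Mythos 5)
Your outline coincides with the strategy the paper itself follows: your first step is Section~\ref{sec:sl2Z-Q-SF} (Theorem~\ref{thm:ST-from-Q} computes $\modS_\Zc,\modT_\Zc$ on $Z(\Q)$ from the coend data of Proposition~\ref{prop:coend-Q} and the normalised integral of Proposition~\ref{prop:int}), your second step is Section~\ref{sec:modprop-SF} (Theorem~\ref{thm:SL2Z-from-pseudotrace}), and the comparison is Theorem~\ref{thm:compare-Q-SF}, carried out via an explicit isomorphism $\Upsilon : Z(\Q) \to Z(\mathcal E)$ in the bases adapted to the block decomposition, exactly as you propose. You also correctly identify the explicit $S$-comparison as the substantive check; with the integral sign fixed to $\nu=1$ the $S$-matrices in fact agree on the nose, and the only projective factor left is the central-charge phase $e^{2\pi\rmi\np/12}$ on $T$.

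The one step that would fail as written is your claim that the $T$-generators ``match essentially by design''. Lyubashenko's $T$ acts by composition with the ribbon twist $\theta$, which in the conventions of \cite{Runkel:2012cf} corresponds to $e^{-2\pi\rmi L_0}$, whereas the modular $T$-transformation on torus one-point functions is $e^{2\pi\rmi(L_0-c/24)}$. Since $L_0$ acts non-semisimply on the untwisted sector — equivalently, $\modT_\Zc$ has Jordan blocks of rank up to $\np+1$ by Theorem~\ref{thm:ST-from-Q} — the maps $\theta$ and $\theta^{-1}$ do \emph{not} differ by a scalar, so without correcting for this the two $T$-actions fail to agree even projectively. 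The paper resolves the mismatch by comparing $\rep\mathcal{V}_\mathrm{ev}$ with the \emph{reversed} ribbon category and invoking Lemma~\ref{lem:SFrev-SF}, $\catSF(\np,\beta)^{\mathrm{rev}}\simeq\catSF(\np,\beta^{-1})$; consequently the quasi-Hopf computation must be performed at $\beta=e^{+\rmi\pi\np/4}$ rather than at the value $e^{-\rmi\pi\np/4}$ of Conjecture~\ref{conj:SF-equiv-RepVev-ribbon}, and the reversal also flips the sign of the copairing, which is why $\Upsilon$ sends $\fm_k$ to the action of $-b_k$ rather than $b_k$. Once this convention correction is built into your step three, the rest of the plan goes through as in the paper.
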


This does not yet prove the modular Verlinde formula for symplectic fermions. One still needs to show that $\catSF(\np) \simeq \rep \mathcal{V}_\mathrm{ev}$ as ribbon categories \cite[Conj.\,7.4]{Davydov:2016euo} and that the pseudo-trace functions of \cite{Arike:2011ab} indeed provide a bijection between $C(E)$ and $C_1(\mathcal{V}_\mathrm{ev})$ \cite[Conj.\,5.8]{Gainutdinov:2016qhz}.


After stating the main result, we now describe the contents of this paper in more detail. The present paper is the sequel to \cite{FGR}, where the construction of the projective action of the mapping class group of the torus for factorisable finite ribbon categories from \cite{Lyubashenko:1995} is made explicit for quasi-Hopf algebras. This generalises the treatment for Hopf algebras
in~\cite{Lyubashenko:1994ma}. 
Below, we specialise the general results in \cite{FGR} to the symplectic fermion example. 
To do so, we first need to realise the category $\catSF(\np)$ as 
the  representation category of a quasi-Hopf algebra.

From the construction of $\catSF$ it is natural to make it depend on another discrete parameter~$\beta$ which satisfies $\beta^4 = (-1)^\np$, that is, we have factorisable finite ribbon categories $\catSF(\np,\beta)$ which for 
the choice  $\beta = e^{- \rmi \pi \np / 4}$
are conjecturally ribbon-equivalent to $\rep \mathcal{V}_\mathrm{ev}$, see Sections~\ref{sec:SFdef} and~\ref{sec:modprop-SF}. 
The categories $\catSF(\np,\beta)$ all have integral Perron-Frobenius dimensions. By \cite[Prop.\,6.1.14]{EGNO-book}, such categories can be realised as representations of a quasi-Hopf algebra. 
The main technical contribution of this paper is to introduce a new family of factorisable ribbon quasi-Hopf algebras $\Q(\np,\beta)$ such that the following holds (Theorem~\ref{SF-repQ-rib-eq}):

\begin{theorem}
$\rep \Q(\np,\beta) \simeq \catSF(\np,\beta)$ as $\oC$-linear ribbon categories.
\end{theorem}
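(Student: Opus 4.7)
The plan is to prove the ribbon equivalence by constructing an explicit $\oC$-linear functor $F \colon \rep\Q(\np,\beta) \to \catSF(\np,\beta)$ from a Morita-style comparison of endomorphism algebras, and then transporting the monoidal, braided, and ribbon structures one at a time. My starting point would be to recall from Section~\ref{sec:SFdef} (and from~\cite{Runkel:2012cf}) the explicit list of simple and indecomposable projective objects of $\catSF(\np,\beta)$, which organise into an untwisted sector modelled on the Gra\ss{}mann algebra in $2\np$ generators and a twisted sector modelled on the Clifford algebra in $2\np$ generators, with an overall $\oZ_2$-action interchanging parities. Comparing this list with the decomposition of $\Q(\np,\beta)$ as a left regular module shows that the indecomposable projectives on the two sides match up, so one can fix a projective generator $G$ on each side.

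Step one: the \emph{abelian and algebra-level equivalence}. Compute $\End_{\catSF}(G)$ directly from the conformal block description and check that it is isomorphic to $\Q(\np,\beta)$ as an $\oC$-algebra; here the generators of the Gra\ss{}mann and Clifford pieces, and the semidirect action of $\oC\oZ_2$, are identified with the appropriate endomorphisms coming from the symplectic fermion modes. A standard Eilenberg--Watts argument then produces the desired linear equivalence $F$.

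Step two: the \emph{quasi-bialgebra structure}. Transport the tensor product, unit and associator of $\catSF(\np,\beta)$ along $F^{-1}$ to obtain a quasi-bialgebra structure on $\Q(\np,\beta)$; check sector by sector of the $\oZ_2$-grading that the induced coproduct and coassociator coincide with those declared in the definition of $\Q(\np,\beta)$. I expect this to be the main obstacle: the associator in $\catSF(\np,\beta)$ is a $\beta$-dependent sign with a subtle cocycle dependence on parities, and matching it with the explicit coassociator $\Phi$ of $\Q(\np,\beta)$ requires a careful conventions bookkeeping in which several signs multiply up with the number of fermion pairs. This is precisely where the extension from the $\np=1$ case of~\cite{Gainutdinov:2015lja} to arbitrary $\np$ requires genuine new input, rather than a formal repetition.

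Step three: the \emph{braided and ribbon upgrade}. With the quasi-bialgebra identification in hand, transport the braiding of $\catSF(\np,\beta)$ along $F$ and verify that it is implemented by the R-matrix of $\Q(\np,\beta)$, again treating the untwisted--untwisted, mixed, and twisted--twisted sectors separately so that the $\oZ_2$-parity sign is correctly absorbed into the R-matrix. Finally, compare the ribbon twist on the simple and projective objects listed in step one with the action of the ribbon element of $\Q(\np,\beta)$; by naturality this determines the twist on all of $\rep\Q(\np,\beta)$. Factorisability of $\Q(\np,\beta)$ can then be deduced from the factorisability of $\catSF(\np,\beta)$ by transport, or alternatively verified directly from the monodromy matrix of the constructed R-matrix, whichever gives a shorter argument.
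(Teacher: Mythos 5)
Your overall strategy --- fix a $\oC$-linear equivalence first, then transport the monoidal, braided and ribbon data sector by sector and match them against the declared $\Phi$, $R$ and $\ribbon$ --- has the same shape as the paper's argument (Lemma~\ref{lem-trans-QSF} together with Appendices~\ref{app:SF-S} and~\ref{app:Q-S}), and your step one is workable provided you take the projective generator with multiplicities $\algGr\oplus\Pi\algGr\oplus 2^{\np}T\oplus 2^{\np}\Pi T$, so that $\End_{\catSF}(G)^{\mathrm{op}}\cong\Q$ rather than the much smaller basic algebra of Lemma~\ref{lem:End-of-projgen-Q}. The genuine gap is in step two (and it propagates to step three): you correctly identify the matching of the transported associator with $\Phi$ as the main obstacle, but you supply no mechanism for carrying it out, and your guess that the extension from $\np=1$ ``requires genuine new input, rather than a formal repetition'' points in the wrong direction. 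Two ideas that make the computation tractable are absent from your plan. First, one does not transport directly from $\catSF$ (which lives over $\svect$) to $\rep\Q$ (over $\vect$): the paper interposes a quasi-bialgebra $\Salg$ in $\svect$, so that all parity signs are absorbed by the symmetric braiding of $\svect$, then bosonises to a quasi-bialgebra $\QQ$ in $\vect$ via the functor of \eqref{eq:funSQ}, and finally identifies $\Q$ as a twist of $\QQ$ by the element $\zeta$ of \eqref{zeta-def}. Attempting the transport in one step forces you to track the super-signs and the $\svect$-to-$\vect$ passage simultaneously, which is exactly where an unstructured sector-by-sector bookkeeping goes wrong.

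Second, and more importantly, the verification of the eight associator identities (Table~\ref{tab:Lam-eqn-sectors}) and the four braiding identities (Table~\ref{tab:R-eqn-sectors}) for general $\np$ proceeds precisely by reduction to $\np=1$: one has $\Salg(\np,\beta)\cong\bigl(\Salg(1,\beta_1)\ot\cdots\ot\Salg(1,\beta_\np)\bigr)/I$ with $\beta=\beta_1\cdots\beta_\np$ (Proposition~\ref{prop:S-quasiHopf}), and every structural element involved --- the components $\Sas^{abc}$, the elements $\delta_0$, $\delta_1$, $\Bel$, $\gamma$, $\kappa$, the map $\phi$, and the components of $\RS$ --- factorises as a product of mutually super-commuting elements supported on the individual fermion pairs. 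Each identity therefore follows from its $\np=1$ instance, which was established in~\cite{Gainutdinov:2015lja}, partly by computer algebra. Without this factorisation, or a substitute for it, your step two is not a proof but a restatement of what needs to be proved; with it, the ``new input'' you anticipate is exactly the careful formal repetition you rule out. The same reduction is what pins down the ribbon element in Section~\ref{sec:trans-twist-SQ}, so step three inherits the gap.
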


The proof of this theorem goes via two intermediate quasi-Hopf algebras, one in $\svect$ and one in $\vect$, and takes up the bulk of this paper.

The detailed definition of $\Q(\np,\beta)$ is given in Section~\ref{sec:Q-def}. 
We refer to~\cite[Sec.\,\ref*{I-subsec:conventions-ribbon-qHopf}]{FGR} for our conventions on quasi-Hopf algebras.
As an algebra over $\oC$, $\Q(\np,\beta)$ is generated by $\K$ and $\fpm_i$, $i=1,\ldots,\np$, subject to the relations
\be 
	\{\ff^\pm_i,\K\} = 0  ~~,\quad 
	\{\ff^+_i,\ff^-_j\}=\delta_{i,j}\,\tfrac12(\one-\K^2) ~~,\quad 
	\{\ff^\pm_i,\ff^\pm_j\}=0 ~~,\quad 
	\K^4=\one \ ,
\ee
where $\{x,y\}=xy+yx$ is the anticommutator. 
Define the central idempotents $\idem_0\coloneqq\tfrac12(\one+\K^2)$ and $\idem_1 \coloneqq \tfrac12(\one-\K^2)$. Then the coproduct is given by
\begin{align}
\Delta (\K) &= \K\tensor\K - (1+(-1)^\np)\idem_1\tensor\idem_1\cdot\K\tensor\K \gc 
\\ \nonumber
\Delta (\fpm_i) &= \fpm_i\tensor\one + (\idem_0 \pm \rmi\idem_1)\K\tensor\fpm_i  \ .
\end{align} 
The counit is defined by
$\eps(\K)=1$ and $\eps(\ff^{\pm}_i)=0$.
From the coproduct one sees that $\K$ is group-like only for odd $\np$. 
The coassociator lives in the Hopf-subalgebra generated by powers of $\K$ and takes the simple form
\be
\Phi = \one\tensor\one\tensor\one  
+ \idem_1\tensor\idem_1\tensor
\big\{ (\K^{\np}-\one)\idem_0+(\beta^2 (\rmi\K)^{\np}-\one)\idem_1 \big\}  \gp
\ee
This should be contrasted with the much more complicated coassociator given in \cite{Gainutdinov:2015lja}, where -- for a different quasi-Hopf algebra $\hat\Q(\np,\beta)$ and for $\np=1$ only -- the ribbon equivalence $\rep \hat\Q(1,\beta) \simeq \catSF(1,\beta)$
  was first established. In Appendix
~\ref{sec:twist-Q-Qhat}
 we show 
that $\hat\Q(\np,\beta)$ is twist-equivalent to $\Q(\np,\beta)$.

\begin{remark}
\mbox{}
\begin{enumerate}
\item We have not made explicit what the qualifier `symplectic' refers to. But let us at least point out that in \cite{Davydov:2012xg}, 
$\catSF(\np,\beta)$ is constructed from 
a $2\np$-dimensional symplectic vector space and from the constant $\beta$. It is furthermore shown in \cite[Rem.\,5.4]{Davydov:2012xg} that $\catSF(\np,\beta)$ carries a faithful action of $Sp(2\np,\mathbb{C})$ by ribbon equivalences.

\item
For $\np=1$, $\mathcal{V}_\mathrm{ev}$ agrees with $\mathcal{W}(p)$, the VOA of the $\mathcal{W}_p$-triplet models, in the case $p=2$,
	see~\cite{Kausch:1995py}. 
For $p \ge 2$ a factorisable ribbon quasi-Hopf algebra is constructed in \cite{CGRprep}
whose representation category is conjecturally ribbon-equivalent to that of~$\mathcal{W}(p)$.
The quasi-Hopf algebra in question is a modification of the restricted quantum group for $sl(2)$ which was used in the study of $\mathcal{W}_p$-models in \cite{[FGST]}.
For $p=2$, this quasi-Hopf algebra indeed agrees with our $\Q$ for $\np=1$.

\item
One can construct examples of non-semisimple theories from bosonic free-field theories by taking kernels of screening operators \cite{Fuchs:2003yu,AM,FT}. 
In particular $\mathcal{V}_\mathrm{ev}$ for $\np=1$ appears as an example there.
For general $\np$ it was shown in~\cite{Flandoli:2017} that $\mathcal{V}_\mathrm{ev}$ is the $W$-algebra constructed as the kernel of (short) screenings in a lattice VOA of type~$B_{\np}$, 
and with the lattice rescaled by $\sqrt{p}$ for $p=2$.
In this context, the symplectic symmetry $\mathfrak{sp}(2\np)$ comes from the action of long screenings.
A similar interplay between $B_\np$ and $C_\np$ root
lattices exists  on the quantum group side due to the Frobenius homomorphism, studied for this case in~\cite{Lentner:Frob}.

\item 
The categories $\rep \Q(\np,\beta) \simeq \catSF(\np,\beta)$ provide four of the 16 possible cases in the classification of factorisable finite tensor categories containing a Lagrangian subcategory of a certain form \cite{Gelaki:2017}.
\end{enumerate}
\end{remark}

	In Section~\ref{sec:SFdef} we review the ribbon category $\catSF(\np,\beta)$.
In Sections \ref{sec:RqHQ} and~\ref{coendQ}
we study properties of $\Q(\np,\beta)$ and the coend $\coend$ in $\rep\Q(\np,\beta)$. 
For $\np$ even and $\beta^2=1$, $\Q(\np,\beta)$ is a Hopf algebra, not just a quasi-Hopf algebra. If in addition $\beta=1$, $\Q(\np,\beta)$ is 
	the
Drinfeld double 
of a generalisation of the Sweedler's Hopf algebra,
 see Proposition~\ref{DH:main-prop}.
 The structure maps on the coend $\coend = \Q(\np,\beta)^*$ are given in Proposition~\ref{prop:coend-Q}.

In Section~\ref{sec:sl2Z-Q-SF} we compute the $SL(2,\oZ)$-action on
the centre
 $Z(\Q(\np,\beta))$, with the result stated in Theorem~\ref{thm:ST-from-Q}, and we also describe the decomposition of this representation of $SL(2,\oZ)$ in Remark~\ref{rem:SL2Z-decomp}.
 In Section~\ref{sec:SL2Z-compare} we carry out the comparison to the modular properties of pseudo-trace functions as already described above. 
 The three appendices take up almost half of the paper and contain the more technical calculations and proofs.

\subsection*{Acknowledgements:} 
We thank Christian Blanchet, Nathan Geer, Simon Lentner, Ehud Meir, and Hubert Saleur for helpful discussions. 
We also grateful to the anonymous referee for the important comments that improved parts of the paper.
The work of AMG was  supported by  CNRS. AMG also thanks Mathematics Department of 
Hamburg University for kind hospitality during the period 2016-2017 when the main results were obtained.

\subsection*{Convention:} We will make frequent references to the prequel \cite{FGR} of the present paper, and we will abbreviate such references by ``I:\dots''. E.g.\ we write ``Section I:\ref*{I-sec:univ-hopf-braided}'' instead of \cite[Sec.\,2]{FGR}.

\section{The ribbon category \texorpdfstring{$\catSF$}{SF}}\label{sec:SFdef}

In this section we review the definition of the categories $\catSF(\np,\beta)$ introduced in \cite{Davydov:2012xg,Runkel:2012cf}, see also \cite{Davydov:2013ty} for a summary. These are finite ribbon categories (see Sections I:\ref*{I-subsec:conventions-mono-cats} and I:\ref*{I-subsec:finite-tens-cats} for notation and conventions) which depend on two parameters:
\begin{equation}\label{eq:beta-param}
	\np\in\oN = \{1,2,\dots\} \qquad\text{and}\qquad \beta\in\oC \quad\text{such that}\quad \beta^4=(-1)^\np \gp 
\end{equation} 
We will refer to $\catSF(\np,\beta)$ as the {\em category of $\np$ pairs of symplectic fermions}, and we will abbreviate $\catSF := \catSF(\np,\beta)$.

\subsection{ \texorpdfstring{$\catSF$}{SF} as an abelian category}\label{sec:SF-abelian}

Let $\algGr$ be the $2^{2\np}$-dimensional Gra\ss{}mann algebra over $\oC$ generated by $a_{i}, b_{i}$ with defining relations
\begin{align}\label{eq:aibi-relation}
		\{a_{i} , a_{j}\} = \{b_{i} , b_{j}\} = 
	\{a_{i} , b_{j}\} = 0 \gc \qquad i,j=1,\ldots \gc\np
	\gp
\end{align}
By giving $c \in \{a_1,b_1, \ldots ,a_\np,b_\np\}$ odd parity and defining $\Delta (c)= c\tensor \one + \one\tensor c$, $\eps(c)=0$ and $S(c)=-c$ 
we get a Hopf algebra in $\svect$.
Let $\repsv\algGr$ be the category of finite dimensional super-vector spaces with a $\algGr$-action. 
The category $\catSF = \catSF(\np,\beta)$ is defined as
\begin{equation}\label{eq:catSF-dec}
\catSF := \catSF_0 \oplus  \catSF_1 \quad \text{where} \quad  \catSF_0\coloneqq \repsv\algGr \gc \  \catSF_1 \coloneqq  \svect .
\end{equation}
Note that $\catSF_0$ is not semi-simple. 
A choice of representatives for the isomorphism classes of simple objects in $\catSF$ is
\begin{align}\label{eq:SF-simple}
	\one &:= \oC^{1|0} \in \catSF_0
	\ , 
	&
	T &:= \oC^{1|0} \in \catSF_1
	\ ,
	\\ \nonumber
	\Pi\one &:= \oC^{0|1} \in \catSF_0
	\ , 
	&
	\Pi T &:= \oC^{0|1} \in \catSF_1 \ ,
\end{align}
where $\Pi$ denotes the parity exchange endofunctor on $\svect$, and the $\algGr$-action on $\one$ and $\Pi\one$ is trivial.
The projective covers of the simple objects are
\begin{align}\label{eq:SF-proj}
	P_{\one} &= \algGr
	\ , 
	&
	P_T &=T
	\ ,
	\\ \nonumber
	P_{\Pi\one} &=\Pi\algGr
	\ , 
	&
	P_{\Pi T} &=\Pi T \ .
\end{align}

We will now endow $\catSF$ step by step with the structure of a ribbon category.

\subsection{Tensor product}

Given two objects $X,Y\in\catSF$ we define a tensor product functor $\ast\colon \catSF\times\catSF\to\catSF$ as follows:
\begin{equation}\label{eq:*-tensor}
X \ast Y ~=~
\left\{\rule{0pt}{2.8em}\right.
\hspace{-.5em}\raisebox{.7em}{
\begin{tabular}{ccll}
   $X$ & $Y$ & $X \ast Y$ &
\\
$\catSF_0$ & $\catSF_0$ & $X \otimes_{\repsv\algGr} Y$ & $\in~\catSF_0$
\\
 $\catSF_0$ & $\catSF_1$ & $F(X) \svtensor Y$ & $\in~\catSF_1$
\\
 $\catSF_1$ & $\catSF_0$ & $X \svtensor F(Y)$ & $\in~\catSF_1$
\\
$\catSF_1$ & $\catSF_1$ & $\algGr \svtensor X \svtensor Y$ & $\in~\catSF_0$
\end{tabular}}
\end{equation}
Here, $F : \repsv\algGr \to \svect$ stands for the forgetful functor. 
By $X \otimes_{\repsv\algGr} Y$ we mean the tensor product in $\svect$ with $\algGr$-action via the coproduct. In more detail, denote by 
\begin{equation}\label{sflip}
\sflip_{X,Y} : X \svtensor Y \longrightarrow Y \svtensor X
\quad , \quad
\sflip_{X,Y}(x \tensor y) = (-1)^{|x||y|} y \tensor x \gc
\end{equation}
the symmetric braiding in $\svect$, where $x,y$ are homogeneous elements. 
Then the action of an element $g\in\algGr$ on $x\tensor y\in X\svtensor Y$ is then given by
\begin{align}
	g.(x\otimes y) &= 
	(\rho^X \otimes \rho^Y) \circ (\id \otimes \sflip_{\algGr,X} \otimes \id) \circ (\Delta(g) \otimes x \otimes y)
\\ \nonumber &=
	\sum_{(g)} (-1)^{|g''||x|} (g'.x) \otimes (g''.y)
\quad \text{where} \quad \Delta(g) = \sum_{(g)} g' \otimes g'' \ , 
\end{align}
where $\rho^X$ and $\rho^Y$ give the action of $\algGr$ on $X$ and $Y$.

On morphism, we define the tensor product in all cases beside the last one to be $f\ast g=f\tensor g$. If $f,g\in\catSF_1$ we set $f *g = \id_\algGr \tensor f \tensor g$. 

For the remainder of this section we will drop the subscripts from the tensor products $\svtensor$ and $\otimes_{\repsv\algGr}$ for brevity.

\subsection{Associator}\label{sec:assoc-SF}

Both, the associator and the braiding depend on $\beta$ from \eqref{eq:beta-param}
and on a copairing $C$ on $\algGr$
given by \cite[Eqn.\,(5.5)]{Davydov:2012xg}
\begin{equation} \label{def:copair}
	C := \sum_{i=1}^\np b_i \tensor a_i - a_i \tensor b_i
     \quad \in~ \algGr \tensor \algGr \gp
\end{equation}

The associator is a natural family of isomorphisms
\begin{equation*}
\alpha^{\catSF}_{X,Y,Z} :\quad X*(Y*Z) \to (X*Y)*Z \gc
 \end{equation*}
which is defined sector by sector by the following eight expressions (see \cite[Thm.\,6.2]{Runkel:2012cf} and \cite[Sect.\,5.2\,\&\,Thm.\,2.5]{Davydov:2012xg}):
\begin{align*}
X\,Y\,Z \qquad & X \ast (Y \ast Z) && (X \ast Y) \ast Z && \alpha^{\catSF}_{X,Y,Z} : X \ast (Y \ast Z) \to (X \ast Y) \ast Z
\nonumber\\[.3em]
0\,~0\,~0\, \qquad &
\underline{X} \tensor \underline{Y} \tensor \underline{Z} &&
\underline{X} \tensor \underline{Y} \tensor \underline{Z} &&
\id_{X \otimes Y \otimes Z}
\nonumber\\[.3em]
0\,~0\,~1\, \qquad &
X \tensor Y \tensor Z &&
X \tensor Y \tensor Z &&
\id_{X \otimes Y \otimes Z}
\nonumber\\[.3em]
0\,~1\,~0\, \qquad &
X \tensor Y \tensor Z &&
X \tensor Y \tensor Z &&
\exp\!\big(C^{(13)} \big)
\nonumber\\[.3em]
1\,~0\,~0\, \qquad &
X \tensor Y \tensor Z &&
X \tensor Y \tensor Z &&
 \id_{X \otimes Y \otimes Z}
\nonumber\\[.3em]
0\,~1\,~1\, \qquad &
\underline{X} \tensor \underline{\algGr}\tensor Y \tensor Z &&
\underline{\algGr} \tensor X\tensor Y \tensor Z &&
\Big[\big\{ \id_{\algGr} \otimes (\rho^X \circ (S \otimes \id_X))  \big\}
\nonumber\\[-.1em]
&&&&&\qquad \circ \big\{ 
	\Delta
\otimes \id_X \big\} \circ \sflip_{X,\algGr} \Big] \otimes \id_{Y \otimes Z}
\nonumber\\[.3em]
1\,~0\,~1\, \qquad &
\underline{\algGr} \tensor X\tensor Y \tensor Z &&
\underline{\algGr} \tensor X\tensor Y \tensor Z &&
\exp\!\big( C^{(13)} \big)
\nonumber\\[.3em]
1\,~1\,~0\, \qquad &
\underline{\algGr} \tensor X\tensor Y \tensor Z &&
\underline{\algGr} \tensor X \tensor Y \tensor \underline{Z} &&
\big\{ \id_{\algGr \otimes X \otimes Y} \otimes \rho^Z \big\}
  \circ
  \big\{ \id_{\algGr} \otimes \sflip_{\algGr,X \otimes Y} \otimes \id_Z  \big\}
\nonumber\\[-.1em]
&&&&&\qquad
  \circ
  \big\{ \Delta \otimes \id_{X \otimes Y \otimes Z} \big\}
\nonumber\\[.3em]
1\,~1\,~1\, \qquad &
X \tensor \algGr \tensor Y \tensor Z &&
\algGr \tensor X \tensor Y \tensor Z &&
\big\{ \phi \otimes \id_{X \otimes Y \otimes Z} \big\} \circ \big\{\sflip_{X,\algGr} \otimes \id_{Y \otimes Z}\big\}
\end{align*}
The underlines mark on which tensor factors $\algGr$ acts (hence they appear only when the triple tensor product lies in $\catSF_0$, i.e.\ when an even number of sectors `1' appear).  
With $C^{(13)}$ denote $C_1\tensor\one\tensor C_2$ where $C=\sum_{(C)}C_1\tensor C_2$. Hence, the action of $C^{(13)}$ is given by
\begin{equation}
	C^{(13)}(x \tensor y \tensor z) = (-1)^{(|x|+|y|)}\sum_{i=1}^\np b_i .x  \tensor y \tensor a_i . z - a_i .x  \tensor y \tensor b_i .z \gc
\end{equation}
for homogeneous $x,y,z$. The linear map $\phi : \algGr \to \algGr$ is given by
\begin{equation}\label{eq:SF-phi-def}
	\phi = \left(\id\tensor (\Lambda^\mathrm{co}_\algGr \circ\mu_\algGr)\right) \circ \left( \exp(-C) \tensor \id\right)
\gc
\end{equation}
where $\mu_\algGr$ is the multiplication in $\algGr$ and $\Lambda^\mathrm{co}_\algGr \in \algGr ^\ast$ is a specific cointegral for $\algGr$ \cite[Eqn.\,(5.16)]{Davydov:2012xg}. Namely, $\Lambda^\mathrm{co}_\algGr$ is non-vanishing only in the top degree of $\algGr$, and there it takes the value
\begin{equation}\label{eq:SF-coint-def}
	\Lambda^\mathrm{co}_\algGr(a_1b_1\cdots a_\np b_\np) = \beta^{-2} \ .
\end{equation}

\subsection{Braiding}\label{sec:SF-braiding} 
For $X \in \svect$ denote by
\begin{equation}\label{eq:svect-parityinv-def}
	\omega_X : X \xrightarrow{~\sim~} X
\quad,\quad
	\omega_X(x) = (-1)^{|x|} \, x \gc
\end{equation}
the parity involution on $X$. The family $X \mapsto \omega_X$ is a natural monoidal isomorphism of the identity functor on $\svect$.

The braiding on $\catSF$ is given -- again sector by sector -- by the following family of natural isomorphisms $c_{X,Y}$ (see \cite[Thm.\,6.4]{Runkel:2012cf} and \cite[Sect.\,5.2\,\&\,Thm.\,2.8]{Davydov:2012xg}):
\begin{equation}\label{eq:SF-braiding-sectors}
\raisebox{.7em}{
\begin{tabular}{ccl}
 $X$ & $Y$ & $c_{X,Y} ~:~ X \ast Y \to Y \ast X$
\\[.3em]
 $0$ & $0$ & $ \sflip_{X,Y} \circ \exp(- C) $
\\[.3em]
 $0$ & $1$ & $\sflip_{X,Y} \circ \big\{ \kappa \tensor \id_Y \big\}$
\\[.3em]
 $1$ & $0$ & $\sflip_{X,Y}  \circ \big\{ \id_X \tensor \kappa \big\} \circ \{ \id_X \otimes \omega_Y \}$
\\[.3em]
 $1$ & $1$ & $\beta \cdot
  \big(\id_{\algGr} \otimes \sflip_{X,Y}\big)\circ \big\{ R_{\kappa^{-1}} \otimes \id_X \tensor \omega_Y \big\}$
\end{tabular}}
\end{equation}
Here, $\kappa :=\exp\!\big(\tfrac12\hat C\big)$ where $\hat C = \mu_\algGr(C) =  \sum_{i=1}^\np -2 a_i b_i$ and $R_a$ is the right multiplication with $a\in\algGr$:
\begin{align} 
R_a\colon \algGr\to \algGr , \qquad R_a=\mu^{\algGr} \circ (id_{\algGr}\otimes a) \gp
\end{align}
If $a$ is parity-even, this is indeed a morphism in $\svect$, and, since it commutes with the $\algGr$-action, it is also a morphism in $\catSF$. 

Recall the definition of a factorisable braided tensor category from Definition~I:\ref*{I-def:fact-cat}. 
A key property of $\catSF$ is:

\begin{proposition}\label{prop:SF-fact}
	The finite braided tensor category $\catSF$ is factorisable.  
\end{proposition}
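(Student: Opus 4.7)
The plan is to establish factorisability by showing that the Müger centre of $\catSF$ is trivial, i.e.\ the only objects $X\in\catSF$ satisfying $c_{Y,X}\circ c_{X,Y}=\id_{X\ast Y}$ for every $Y$ are direct sums of $\one$. By the block decomposition $\catSF=\catSF_0\oplus\catSF_1$ the double braiding is compatible with the sector grading, so I would treat $X\in\catSF_0$ and $X\in\catSF_1$ separately, and moreover would test transparency only against the projective generators $Y\in\{P_\one=\algGr,\,T\}$ of each sector.

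For $X\in\catSF_1$, testing against $Y=T$ and unpacking the $(1,1)$-entry of~\eqref{eq:SF-braiding-sectors} shows that, after cancelling the two copies of $\sflip$, the double braiding acts on $X\ast T=\algGr\tensor X\tensor T$ as right multiplication by $\kappa^{-2}$ on the $\algGr$-factor (together with $\omega^2=\id$ and an overall scalar). Since $\kappa^2=\prod_{i=1}^{\np}(\one-2a_ib_i)\ne\one$ in $\algGr$, right multiplication by $\kappa^{-2}$ is not the identity on the regular $\algGr$-module, so transparency forces $X=0$.

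For $X\in\catSF_0$, a direct computation from the $(0,1)$ and $(1,0)$ braidings gives
\[
(c_{T,X}\circ c_{X,T})(x\tensor t)=(-1)^{|x|}\,\kappa^2.x\tensor t.
\]
Since $\kappa^2+\id$ is invertible (its ``constant term'' is $2$ and the remainder nilpotent), transparency kills the odd part of $X$. The remaining content of transparency is tested against $Y=P_\one=\algGr$: the $(0,0)$-entry of~\eqref{eq:SF-braiding-sectors} yields the monodromy $\exp(-C^{21})\exp(-C)$ acting on $X\tensor\algGr$ via the $\algGr$-actions, and the non-degeneracy of the symplectic copairing $C$ on $\mathrm{span}\{a_i,b_i\}$ forces the augmentation ideal of $\algGr$ to act as zero on $X$. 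Hence $X$ is a direct sum of copies of $\one$ and $\Pi\one$, and a final check $c_{T,\Pi\one}\circ c_{\Pi\one,T}=-\id$ -- where the asymmetric placement of $\omega$ in the $(1,0)$ braiding but not the $(0,1)$ braiding is what does the job -- rules out $\Pi\one$.

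The technical heart of the argument is the step asserting that the monodromy $\exp(-C^{21})\exp(-C)$ acts faithfully enough on $\algGr$-modules to exclude nontrivial $\algGr$-actions. This is equivalent to non-degeneracy of the Hopf pairing on the coend $\coend$ of $\catSF$, and arguably the cleanest route is to compute $\coend$ and its pairing directly -- as is done in \cite{Davydov:2012xg,Runkel:2012cf} -- and then translate the resulting non-degeneracy statement into the triviality-of-Müger-centre form needed for the factorisability definition of~\cite{FGR}. An alternative, adopted later in the paper, is to defer the proof and deduce factorisability a posteriori from the ribbon equivalence $\catSF\simeq\rep\Q(\np,\beta)$ together with factorisability of $\Q(\np,\beta)$.
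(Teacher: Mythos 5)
Your strategy coincides with the paper's at the top level: both arguments show that the M\"uger centre of $\catSF$ (the full subcategory of transparent objects) is $\vect$ and then invoke the equivalence of factorisability conditions from \cite{Shimizu:2016}. The difference is that the paper simply cites \cite[Prop.\,5.3]{Davydov:2012xg} for the triviality of the M\"uger centre, whereas you carry out that computation explicitly, and your computation is essentially correct. For $X\in\catSF_1$ the monodromy with $T$ contains the factor $R_{\kappa^{-2}}$ acting on the regular $\algGr$-factor of $X\ast T$, and $R_{\kappa^{-2}}(\one)=\kappa^{-2}\neq\one$ already forces $X=0$; for $X\in\catSF_0$ the monodromy with $T$ is $\kappa^2\,\omega_X\otimes\id_T$, which kills the odd part since $\kappa^2+\id$ is unipotent plus $\id$, and the degree-one component of the monodromy with the regular module $\algGr$ forces $a_i.x=b_i.x=0$ by linear independence of $\{a_i,b_i\}$ in $\algGr$. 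Two minor remarks: in the $\catSF_1$ case the two parity involutions live on different tensor factors (one is $\omega_X$, the other $\omega_T=\id_T$), so they do not compose to ``$\omega^2=\id$'' --- the conclusion is unaffected because the $R_{\kappa^{-2}}$ factor alone obstructs transparency; and the reduction to testing only against $\algGr$ and $T$ requires no justification for the implication you actually use, since transparency against all objects in particular implies transparency against these two. What your route buys is a self-contained argument independent of \cite{Davydov:2012xg}; what the paper's citation buys is brevity, and the fallback you mention (deducing factorisability from $\catSF\simeq\rep\Q$ and Lemma~\ref{lem:M-non-deg}) is indeed realised in the paper as the alternative proof of Corollary~\ref{cor:Q-fact}.
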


\begin{proof}
	In \cite[Proposition 5.3]{Davydov:2012xg} it is shown that the full subcategory of transparent objects in $\catSF$ is $\vect$. 
	Thus $\catSF$ fulfils one of equivalent factorisability conditions in \cite{Shimizu:2016} (see Section~I:\ref*{I-sec:factorisable-cat} for a summary in our notation). 
\end{proof}

\subsection{Left duality}\label{eq:SF-dual}

Our conventions for duality morphisms are given in Section I:\ref*{I-subsec:conventions-mono-cats}. By convention, for $X \in \svect$ we choose the left-dual $X^*$ and the associated duality maps $\ev_X^{\svect} $, $\coev_X^{\svect}$ to be the same as for the underlying vector space.

For $X \in \catSF_i$, $i=0,1$, we define the left dual object $X^* \in \catSF_i$ to be the dual in $\svect$ as super-vector space. For $X \in \catSF_0$, $X^*$ is furthermore equipped with the $\algGr$-action 
\be \rho_{X^\ast}\colon \algGr\tensor X^\ast \to X^\ast \gcg g\tensor \varphi \mapsto 
(x\mapsto (-1)^{|g|(|\varphi|+1)} \varphi(g.x)) 
\gp 
\ee 
The evaluation and coevaluation maps in $\catSF$ are induced from those in $\svect$ as follows \cite[Sec.\,3.6]{Davydov:2012xg} (note that for $X \in \catSF_1$ we have $X^\ast \ast X = \algGr \tensor X^\ast \tensor X$): 
\begin{equation}
\raisebox{.7em}{
\begin{tabular}{ccc}
 \phantom{.}$X$\phantom{.} & \phantom{.}$\ev_X^{\catSF}: X^\ast \ast X \to \tid$\phantom{.} & \phantom{.}$\coev_X^{\catSF}: \tid \to X \ast X^\ast$\phantom{.}
\\[.3em]
 $0$ & $ \ev_X^{\svect} $ & $\coev_X^{\svect} $
\\[.3em]
 $1$ & $\eps_\algGr \tensor \ev_X^{\svect}$ & $\Lambda_\algGr \tensor \coev_X^{\svect}$
\end{tabular}} 
\end{equation}
Here, $\eps_\algGr$ is the counit for $\algGr$, and $\Lambda_\algGr =\beta^2 a_1b_1\cdots a_\np b_\np$ is the integral for $\algGr$ normalised with respect to the cointegral in \eqref{eq:SF-coint-def} such that $\Lambda^\mathrm{co}_\algGr( \Lambda_\algGr ) = 1$.

\subsection{Ribbon twist}\label{eq:SF-twist}
The ribbon twist isomorphisms $\theta_X$ are given in \cite[Prop.\,4.17]{Davydov:2012xg}:
\begin{equation}
\raisebox{.7em}{
\begin{tabular}{ccl}
 $X$ & $\theta_X : X \to X$
\\[.3em]
 $0$ & $ \exp(-\hat C) $
\\[.3em]
 $1$ & $\beta^{-1} \cdot \omega_X$
\end{tabular}}
\end{equation}
The twist isomorphisms satisfy
\begin{equation}
	\theta_{X * Y} = (\theta_X \tensor \theta_Y) \circ c_{Y,X} \circ c_{X,Y} \quad , \quad \theta_{X^\ast} = \theta_{X}^\ast \gp
\end{equation}
For later reference we note that on the four simple objects in \eqref{eq:SF-simple}, the twist is given by
\be\label{eq:twistSFsimple}
	\theta_{\one} = \id_{\one} 
	~~,\quad
	\theta_{\Pi\one} = \id_{\Pi\one}
	~~,\quad
	\theta_{T} = \beta^{-1} \,\id_{T}
	~~,\quad
	\theta_{\Pi T} = -\beta^{-1} \,\id_{\Pi T}
	\ .
\ee

We summarise Proposition~\ref{prop:SF-fact} and the ribbon structure on $\catSF$ reviewed in this section by the following theorem.

\begin{theorem}\label{SF-fact}
$\catSF(\np,\beta)$ is a factorisable finite ribbon category for all choices of $\np$ and $\beta$ as in \eqref{eq:beta-param}. 
\end{theorem}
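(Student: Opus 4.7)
The plan is to assemble this theorem as a summary of the structural pieces laid out in Sections~\ref{sec:SF-abelian}--\ref{eq:SF-twist} together with Proposition~\ref{prop:SF-fact}; no new calculation is required, only a careful bookkeeping of what is needed to satisfy the definition of a factorisable finite ribbon category (as recalled in Section~I:\ref*{I-subsec:finite-tens-cats} and Definition~I:\ref*{I-def:fact-cat}).

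First I would verify \emph{finiteness}. By \eqref{eq:catSF-dec} we have $\catSF = \catSF_0 \oplus \catSF_1$. The summand $\catSF_0 = \repsv\algGr$ is the category of finite-dimensional modules over the finite-dimensional Hopf algebra $\algGr$ in $\svect$, and $\catSF_1 = \svect$ is finite semisimple. Hence $\catSF$ is a finite $\oC$-linear abelian category with finitely many simples \eqref{eq:SF-simple} and projective covers \eqref{eq:SF-proj}. The unit object $\one$ is simple, and the whole category is $\oC$-linear, so this gives the abelian prerequisites for a finite tensor category.

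Next I would assemble the \emph{monoidal, braided, rigid, and ribbon} structure. The tensor product $\ast$ from \eqref{eq:*-tensor} is exact in each variable (since it is built from the tensor product in $\svect$, the tensor product over $\algGr$ — applied to modules over a finite-dimensional Hopf algebra — and forgetful functors, all of which are exact). The associator $\assoc^{\catSF}$ of Section~\ref{sec:assoc-SF} was shown to satisfy the pentagon in \cite[Thm.\,6.2]{Runkel:2012cf} (see also \cite[Thm.\,2.5]{Davydov:2012xg}); the braiding $c$ of Section~\ref{sec:SF-braiding} satisfies the two hexagons by \cite[Thm.\,6.4]{Runkel:2012cf} and \cite[Thm.\,2.8]{Davydov:2012xg}; the left duality of Section~\ref{eq:SF-dual} satisfies the zig-zag identities by \cite[Sec.\,3.6]{Davydov:2012xg} (right duals then exist via the braiding); and the twist isomorphism $\theta$ of Section~\ref{eq:SF-twist} is compatible with tensor product, braiding, and duality by \cite[Prop.\,4.17]{Davydov:2012xg}. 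Together this makes $\catSF(\np,\beta)$ a finite ribbon category.

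Finally, \emph{factorisability} is precisely the content of Proposition~\ref{prop:SF-fact}, which was already established by invoking \cite[Prop.\,5.3]{Davydov:2012xg} (transparent objects in $\catSF$ form $\vect$) and the equivalent characterisation of factorisability in \cite{Shimizu:2016} recalled in Section~I:\ref*{I-sec:factorisable-cat}. There is no real obstacle in the proof of this theorem itself: the only nontrivial input is the pentagon axiom for the somewhat intricate mixed-sector associator of Section~\ref{sec:assoc-SF} (in particular the $(1,1,1)$-component involving the map $\phi$ from \eqref{eq:SF-phi-def} and the cointegral \eqref{eq:SF-coint-def}), but this was already verified in the cited references, so at this point the theorem follows by citation.
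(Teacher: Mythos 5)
Your proposal is correct and matches the paper's own treatment: the theorem is stated there as a summary of Proposition~\ref{prop:SF-fact} together with the ribbon structure reviewed in Section~\ref{sec:SFdef}, with all structural verifications delegated to \cite{Runkel:2012cf} and \cite{Davydov:2012xg}, exactly as you do. Your additional bookkeeping of finiteness and exactness of $\ast$ is a reasonable (and harmless) elaboration of what the paper leaves implicit.
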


\section{The ribbon quasi-Hopf algebra \texorpdfstring{$\Q$}{Q}}
\label{sec:RqHQ}

In this section we define the central object of this paper, the family of ribbon quasi-Hopf algebras $\Q(\np,\beta)$, where the parameters $\np$ and $\beta$ are constrained as in \eqref{eq:beta-param}. The main result of this section is a ribbon equivalence between 
$\repQ(\np,\beta)$ and $\catSF(\np,\beta)$.

Our conventions on quasi-Hopf algebras, universal R-matrices, etc., are given in Section~I:\ref*{I-sec:ribbon-qHopf}. In this section we will abbreviate $\Q := \Q(\np,\beta)$.

\subsection{Definition of \texorpdfstring{$\Q$}{Q}}	\label{sec:Q-def}

In defining the ribbon quasi-Hopf algebra $\Q$, we will first list all of its data -- starting with the product and ending with the ribbon element -- and only afterwards we will prove that it satisfies the necessary properties.

We start by giving $\Q$ as an associative unital algebra over $\oC$ via generators and relations. The generators are
\be
	\K \quad \text{and} \quad \fpm_i,\qquad i=1,\ldots,\np \ .
\ee
Define the elements
\begin{align} \label{def:idem Q}
\idem_0\coloneqq\tfrac12(\one+\K^2) \qcq \idem_1 \coloneqq \tfrac12(\one-\K^2)  \ .
\end{align}
Using these, the defining relations of $\Q$ can be written as, for $i,j = 1,\dots,\np$,
\be \label{def:Q}
	\{\ff^\pm_i,\K\} = 0  ~~,\quad 
	\{\ff^+_i,\ff^-_j\}=\delta_{i,j}\idem_1 ~~,\quad 
	\{\ff^\pm_i,\ff^\pm_j\}=0 ~~,\quad 
	\K^4=\one \ ,
\ee
where $\{x,y\}=xy+yx$ is the anticommutator. With these relations, $\idem_0$ and $\idem_1$ become central idempotents in $\Q$. The corresponding decomposition of $\Q$ into ideals is
\begin{equation}\label{Q-decomp}
\Q =  \Q_0\oplus \Q_1 \qquad \text{where}\qquad \Q_i\coloneqq\idem_i\Q \gp
\end{equation}
It is easy to check that restricted to each ideal, the algebra structure of $\Q$ becomes
\begin{equation}\label{Q0Q1}
\Q_0 =  \algGr_{2\np}\rtimes \oC\oZ_2 \gc \qquad  \Q_1 = \algCl_{2\np}\rtimes \oC\oZ_2 \ .
\end{equation}
Here,  $\algGr_{2\np}$ is the Gra\ss{}mann algebra of the $2\np$ generators $\ff^{\pm}_i\idem_0$ while  $\oC\oZ_2$ stands for the group algebra of $\oZ_2$ generated by $\K\idem_0$. Similarly, $\algCl_{2\np}$ is  the Clifford algebra generated by $\ff^{\pm}_i\idem_1$, while the generator of $\oZ_2$ is $\rmi \K\idem_1$. In particular, we see that
\be 
\dim_\oC\Q=2^{2\np+2} \ 
\ee
	and that a basis of $\Q$ is
\be\label{eq:Qbasis}
\Q = \mathrm{span}_\oC \left\{(\ff^{+}_{1})^{n_1^+}(\ff^{-}_{1})^{n_1^-}\cdots (\ff^{+}_{N})^{n_N^+}(\ff^{-}_{N})^{n_N^-} \K^n\,|\, 
n_i^{\pm} \in \{0,1\},\,
n\in \mathbb{Z}_4 \right\}\ .
\ee

Next we define the quasi-bialgebra structure of $\Q$. 
The coproduct  on generators is
\begin{align}\label{def:delta}
\Delta (\K) &= \K\tensor\K - (1+(-1)^\np)\idem_1\tensor\idem_1\cdot\K\tensor\K \gc 
\\ \nonumber
\Delta (\fpm_i) &= \fpm_i\tensor\one + \omega_\pm\tensor\fpm_i  \ ,
& \omega_\pm\coloneqq (\idem_0 \pm \rmi\idem_1)\K \gp
\end{align} 
We will show in Lemma~\ref{lem:D-alg-map} below that $\Delta$ is 
	well-defined and
an algebra map. The counit is 
\begin{equation}
\eps(\K)=1\gc\qquad
\eps(\ff^{\pm}_i)=0 \gp
\end{equation}
We remark that $\K$ itself is group-like only for odd $\np$. However, one quickly verifies that
\be
	\K^2 ~,~
	\K^\np
	\quad
	\text{are group-like for all $\np$} \ .
\ee

The co-associator and its inverse are
\be \label{Phi+Phi-inv}
\Phi^{\pm 1} = \one\tensor\one\tensor\one  
+ \idem_1\tensor\idem_1\tensor
\big\{ (\K^{\np}-\one)\idem_0+(\beta^2 (\pm \rmi\K)^{\np}-\one)\idem_1 \big\}  \gp
\ee

For the quasi-Hopf algebra structure on $\Q$ we still need to specify the antipode $S$ and the evaluation and coevaluation elements $\Salpha$ and $\Sbeta$. They are: 
\begin{align} \label{eq:Q-antipode-def}
	S(\K) &= \K^{(-1)^\np} = (\idem_0+(-1)^\np\idem_1)\K \ ,
	& \Salpha&=\one \ ,
	\\ \nonumber
	 S(\fpm_k) &= \fpm_k (\idem_0 \pm (-1)^\np \, \rmi \idem_1)\K \ ,
   & \Sbeta &=\idem_0+ \beta^2(\rmi\K)^\np \idem_1 \ .
\end{align} 

\begin{remark} \label{rem:Q-Hopf-alg}
  For even $\np$ and $\beta^2=1$ we have $\Salpha=\Sbeta=\one$ and $\Phi=\one\tensor\one\tensor\one$, so that (pending the verification that the above data verifies the axioms) in these cases $\Q$ is a Hopf algebra.
For odd $\np$ on the other hand, the coproduct fails to be coassociative.   
For example, 
\begin{align}
(\Delta\tensor\id)\circ\Delta(\fm_i) 
=~& \fm_i\tensor\one\tensor\one + \omega_-\tensor\fm_i\tensor\one  \\ \nonumber
&~+ \K\tensor\K\cdot \left( \idem_0\tensor\idem_0 -\rmi \idem_0\tensor\idem_1 -\rmi \idem_1\tensor\idem_0 -(-1)^\np\idem_1\tensor\idem_1\right)\tensor\fm_i \gc \\\nonumber
(\id\tensor\Delta)\circ\Delta(\fm_i) 
=~& \fm_i\tensor\one\tensor\one + \omega_-\tensor\fm_i\tensor\one  \\ \nonumber
&~+ \K\tensor\K\cdot \left( \idem_0\tensor\idem_0 -\rmi \idem_0\tensor\idem_1 -\rmi \idem_1\tensor\idem_0 -\idem_1\tensor\idem_1\right)\tensor\fm_i \gp
\end{align}
\end{remark}

Next we introduce a quasi-triangular structure on $\Q$.
Define the Cartan factor $\rho_{n,m}$ as
\begin{equation}\label{eq:car-fac}
\rho_{n,m} = \half \sum_{i,j=0}^1 (-1)^{ij}\rmi^{-in+jm} \K^i\tensor\K^j \gc
\qquad n,m \in \{0,1\}\gp
\end{equation}
The universal $R$-matrix and its inverse are
defined as
\begin{align} 
 R  &=  \Big( \sum_{n,m \in \{0,1\} } \beta^{nm} \rho_{n,m} \idem_n\tensor\idem_m \Big) \cdot \prod_{k=1}^{\np}(\one\tensor\one - 2 \ff^-_k \omega_-\tensor \ff^+_k) \gc 
 \label{R+Riv} 
 \\ \nonumber
 R^{-1} &= \prod_{k=1}^{\np}(\one\tensor\one+2\ff^-_k \omega_-\tensor \ff^+_k)\cdot
 \Big( \sum_{n,m \in \{0,1\} } \beta^{-nm} \rho_{n,m}
\idem_n\tensor\idem_m \Big) \ .
\end{align}

Finally, the ribbon element of $\Q$ and its inverse are
\begin{align}\label{ribbon+ribinv}
\ribbon &=  (\idem_0-\beta\rmi\K\idem_1)\cdot\prod_{k=1}^\np(\one-2\fp_k\fm_k) \gc \\ \label{eq:ribinv}
\ribbon^{-1} &= (\idem_0-\beta^{-1}\rmi\K\idem_1)\cdot\prod_{k=1}^\np(\one+2\fp_k\fm_k\K^2) \gp
\end{align}  

\begin{proposition} \label{prop-Q-qHopf}
	The data $(\Q,\cdot,\one,\Delta,\eps,
	\Phi,
S,\Salpha,\Sbeta, R, \ribbon)$ defines a ribbon quasi-Hopf algebra.
\end{proposition}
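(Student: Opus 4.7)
The plan is to verify each axiom defining a ribbon quasi-Hopf algebra for the data $(\Q,\cdot,\one,\Delta,\eps,\Phi,S,\Salpha,\Sbeta,R,\ribbon)$ directly on the generators $\K,\fpm_i$. The crucial structural simplification is the ideal decomposition $\Q=\Q_0\oplus\Q_1$ of \eqref{Q-decomp}, together with the observation that $\Phi-\one\tensor\one\tensor\one$ and $\Sbeta-\one$ are supported on the $\idem_1$-components and belong to the commutative subalgebra generated by $\K$ and the $\idem_i$. Consequently many checks reduce either to a computation inside one of the summands in \eqref{Q0Q1}, or to a short calculation in that commutative subalgebra combined with the constraint $\beta^4=(-1)^\np$ from \eqref{eq:beta-param}.

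\textbf{Algebra and quasi-bialgebra structure.} Associativity and the dimension $\dim_\oC\Q=2^{2\np+2}$ follow from \eqref{Q0Q1} and the basis \eqref{eq:Qbasis}. That $\Delta$ is well-defined and multiplicative is the content of the cited Lemma~\ref{lem:D-alg-map}; it amounts to checking that \eqref{def:delta} respects the four relations of \eqref{def:Q}, and the counit axioms are then immediate from \eqref{def:delta}. For quasi-coassociativity $(\id\tensor\Delta)\circ\Delta=\Phi\cdot((\Delta\tensor\id)\circ\Delta)\cdot\Phi^{-1}$, both sides agree on $\K$ because $\Delta(\K^2)=\K^2\tensor\K^2$ (so $\Delta$ is genuinely coassociative on the group-like $\K^2$ and its powers), while on $\fpm_i$ the discrepancy recorded in Remark~\ref{rem:Q-Hopf-alg}, namely $((-1)^\np-1)\cdot\idem_1\tensor\idem_1\cdot\K\tensor\K\tensor\fpm_i$, is exactly what conjugation by $\Phi$ produces: the third tensor factor of $\Phi$ supplies the sign via $\fpm_i\K^\np=(-1)^\np\K^\np\fpm_i$. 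The pentagon identity and the $\eps$-normalisation $(\id\tensor\eps\tensor\id)\Phi=\one\tensor\one$ (which uses $\eps(\idem_1)=0$) then reduce to straightforward identities in the central subalgebra.

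\textbf{Antipode.} The identities $\sum S(a_{(1)})\Salpha a_{(2)}=\eps(a)\Salpha$ and $\sum a_{(1)}\Sbeta S(a_{(2)})=\eps(a)\Sbeta$, together with the two normalisations involving $\Phi^{\pm 1},\Salpha,\Sbeta$, are checked on the generators $\K$ and $\fpm_i$. The delicate interplay is between the twisted grouplike $\omega_\pm$ in $\Delta(\fpm_i)$, the $(-1)^\np$-dependent formula for $S(\fpm_k)$ in \eqref{eq:Q-antipode-def}, and the $\beta^2(\rmi\K)^\np$-term in $\Sbeta$; the constraint $\beta^4=(-1)^\np$ ensures all signs cancel as required, and the $\Phi$-normalisations reduce to elementary identities in $\oC\langle\K,\idem_0,\idem_1\rangle$.

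\textbf{Quasi-triangularity and ribbon.} This is the main obstacle. To prove $\Delta^{\mathrm{op}}=R\Delta R^{-1}$ on the generators, the Cartan factor $\sum\beta^{nm}\rho_{n,m}\idem_n\tensor\idem_m$ conjugates $\K$ trivially but produces, by the form \eqref{eq:car-fac} of $\rho_{n,m}$, the $\rmi$-twist that swaps $\omega_\pm$ with $\omega_\mp$, while the nilpotent factor $\prod_k(\one\tensor\one-2\fm_k\omega_-\tensor\fp_k)$ interchanges the two summands of $\Delta(\fpm_i)$; the product truncates because $(\fpm_k)^2=0$. The two hexagon identities relating $(\Delta\tensor\id)(R)$ and $(\id\tensor\Delta)(R)$ to $R_{13}R_{23}$ and $R_{13}R_{12}$ with $\Phi$-insertions form the technically heaviest step: I split each of them into a Cartan contribution, in which the exponents $\beta^{nm}$ behave as a bicharacter on $\oZ_2\times\oZ_2$ and combine correctly with the $\K^\np$- and $\beta^2(\rmi\K)^\np$-factors of $\Phi$, and a nilpotent contribution, in which repeated use of $(\fpm_k)^2=0$ reduces all products to polynomials of bounded degree and the $\omega_\pm$-twists from one leg of $R$ cancel against those coming from applying $\Delta$ to the other. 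For the ribbon element $\ribbon$ I verify centrality on the generators, the identity $\Delta(\ribbon)=(R_{21}R)^{-1}(\ribbon\tensor\ribbon)$ by an analogous Cartan/nilpotent split, and $S(\ribbon)=\ribbon$, $\eps(\ribbon)=1$ directly from \eqref{ribbon+ribinv}. The principal difficulty throughout is the combined bookkeeping of the signs $(\pm\rmi)^\np$, $(-1)^\np$ and the $\beta$-exponents, which is what makes the two hexagons the technically heaviest verification.
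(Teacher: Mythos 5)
Your route is genuinely different from the paper's. The paper does \emph{not} verify the pentagon, the hexagons, or the ribbon compatibility $\Delta(\ribbon)=(R_{21}R)^{-1}(\ribbon\otimes\ribbon)$ by computation at all: it invokes Lemma~\ref{lem-trans-QSF}, which produces a multiplicative equivalence $\funQSF\colon\catSF\to\repQ$ transporting the associator, braiding and twist of $\catSF$ onto the maps induced by $\Phi$, $R$ and $\ribbon^{-1}$. Since $\catSF$ is already a ribbon category (Theorem~\ref{SF-fact}), all coherence identities for $\Phi$, $R$ and $\ribbon$ come for free, and the only things checked by hand in the paper's proof are that $\Delta$ is an algebra map (Lemma~\ref{lem:D-alg-map}) and the antipode identities \eqref{eq:Salpha-1}--\eqref{eq:Salpha-2}. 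The price is that the proof of Lemma~\ref{lem-trans-QSF} occupies Appendices~\ref{app:SF-S} and~\ref{app:Q-S}, where every identity is factorised into commuting $k=1,\dots,\np$ pieces and reduced to the $\np=1$ case of \cite{Gainutdinov:2015lja}. Your direct approach buys independence from $\catSF$ and from the prior $\np=1$ computation, which is a legitimate advantage.

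However, as written your proposal has a substantive gap exactly where the work is: the pentagon identity, the two hexagons, and $\Delta(\ribbon)=(R_{21}R)^{-1}(\ribbon\otimes\ribbon)$ are described as a bookkeeping strategy (``Cartan/nilpotent split'', ``all signs cancel as required'') but not actually carried out, and these are precisely the identities that are nontrivial here --- note that the analogous verifications in the appendices are only manageable because each factorises over $k$ and the $k=1$ case was checked by computer algebra in \cite{Gainutdinov:2015lja}. Without either performing these computations or importing such a factorisation argument, the proposal does not yet constitute a proof. One smaller point: your justification of quasi-coassociativity on $\K$ (``because $\Delta(\K^2)=\K^2\otimes\K^2$'') is not an argument --- coassociativity on $\K^2$ says nothing about $\K$ itself when $\np$ is even and $\K$ fails to be group-like; the correct statement is that $\Delta(\K)=\sum_{a,b}(-1)^{ab}\idem_a\K\otimes\idem_b\K$ is coassociative by a direct check of the $\oZ_2$-cocycle identity $(-1)^{(a+b)c+ab}=(-1)^{a(b+c)+bc}$, and that both sides then commute with $\Phi$ because they lie in the commutative subalgebra generated by $\K$.
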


The proof of this proposition will be given after some preparation. 
The following lemma is straightforward check on the generators of $\Q$.

\begin{lemma} \label{lem:D-alg-map}
The map $\Delta$ defined in \eqref{def:delta} is an algebra map.
\end{lemma}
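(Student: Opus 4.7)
The plan is to reduce the claim to a check on generators. Since $\Q$ is presented by the generators $\K, \fpm_i$ subject to the four families of relations in~\eqref{def:Q}, the assignment $\Delta$ in \eqref{def:delta} extends (uniquely) to a well-defined algebra homomorphism $\Q \to \Q\tensor\Q$ if and only if the images $\Delta(\K)$ and $\Delta(\fpm_i)$ satisfy the same four relations inside $\Q\tensor\Q$. Thus it suffices to verify
\begin{align*}
\Delta(\K)^4 &= \one\tensor\one \ , & \{\Delta(\fpm_i), \Delta(\K)\} &= 0 \ , \\
\{\Delta(\fpm_i), \Delta(\fpm_j)\} &= 0 \ , & \{\Delta(\fp_i), \Delta(\fm_j)\} &= \delta_{i,j}\,\Delta(\idem_1) \ .
\end{align*}

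Before attacking these, I would record a short list of structural identities from~\eqref{def:Q} that will be used repeatedly. Since $\K \fpm_i = -\fpm_i \K$, the element $\K^2$ is central in $\Q$, hence so are $\idem_0, \idem_1$ and all even polynomials in $\K$. In particular $\fpm_i$ commutes with $\idem_0 \pm \rmi\idem_1$ and so $\{\omega_\pm, \fpm_j\} = 0$ for all $i,j,\pm$. From $(\idem_0 \pm \rmi\idem_1)^2 = \idem_0 - \idem_1 = \K^2$ and $(\idem_0+\rmi\idem_1)(\idem_0-\rmi\idem_1) = \one$ one gets $\omega_\pm^2 = \K^4 = \one$ and $\omega_+\omega_- = \omega_-\omega_+ = \K^2$. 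Finally, since $\one - (1+(-1)^\np)\idem_1\tensor\idem_1$ is central in $\Q\tensor\Q$ and squares to $\one$, a direct calculation in each parity of $\np$ gives $\Delta(\K^2)=\K^2\tensor\K^2$, whence $\Delta(\idem_0)=\idem_0\tensor\idem_0+\idem_1\tensor\idem_1$ and $\Delta(\idem_1)=\idem_0\tensor\idem_1+\idem_1\tensor\idem_0$. This last expression is the target that the mixed anticommutator will have to reproduce.

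The first three checks are then immediate. For $\Delta(\K)^4$ one factors out the central prefactor, uses $(\one - (1+(-1)^\np)\idem_1\tensor\idem_1)^4 = \one$, and invokes $\K^4 = \one$. For $\{\Delta(\fpm_i), \Delta(\K)\}$ one distributes, commutes the central factor past everything, and uses $\{\fpm_i, \K\} = \{\omega_\pm,\fpm_i\}=0$; all four resulting terms cancel pairwise. For $\{\Delta(\fpm_i), \Delta(\fpm_j)\}$, expansion produces four classes of terms, each of which contains a factor $\{\fpm_i,\fpm_j\}=0$ or $\{\omega_\pm,\fpm\}=0$ and so vanishes on its own.

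The main obstacle I anticipate is the mixed anticommutator check, which is the only one whose right-hand side is nonzero. Expanding $\Delta(\fp_i)\Delta(\fm_j) + \Delta(\fm_j)\Delta(\fp_i)$ gives four classes of terms:
\begin{align*}
\{\fp_i,\fm_j\}\tensor\one \ , \qquad \{\fp_i,\omega_-\}\tensor\fm_j \ , \qquad \{\omega_+,\fm_j\}\tensor\fp_i \ , \qquad \omega_+\omega_-\tensor\fp_i\fm_j + \omega_-\omega_+\tensor\fm_j\fp_i \ .
\end{align*}
The middle two vanish by $\{\omega_\pm,\fpm\}=0$. The first contributes $\delta_{i,j}\,\idem_1\tensor\one$, and the last, using $\omega_+\omega_-=\omega_-\omega_+=\K^2$, contributes $\delta_{i,j}\,\K^2\tensor\idem_1$. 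Summing and rewriting via $\one = \idem_0+\idem_1$ and $\K^2 = \idem_0-\idem_1$, together with $\idem_0\idem_1=0$ and $\idem_j^2=\idem_j$, the idempotent cross-terms in $\idem_1\tensor\idem_1$ cancel and one is left with $\delta_{i,j}(\idem_0\tensor\idem_1+\idem_1\tensor\idem_0)$, which by the preliminary computation equals $\delta_{i,j}\Delta(\idem_1)$. Note that the parity- and $\beta$-dependent refinements in $\Delta(\K)$ and $\Phi$ play no role in any of the four checks, since those refinements only enter through even powers of $\K$ acting centrally; the non-trivial information lies entirely in the choice of the shift elements $\omega_\pm$ and in the consistent behaviour of $\Delta$ on the central idempotents.
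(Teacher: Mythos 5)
Your proposal is correct and is exactly the "straightforward check on the generators" that the paper asserts without writing out: you verify that $\Delta(\K)$ and $\Delta(\fpm_i)$ satisfy the defining relations \eqref{def:Q} in $\Q\tensor\Q$, using the centrality of $\K^2$ (hence of $\idem_0,\idem_1$ and of the $\np$-dependent correction factor), the identities $\{\omega_\pm,\fpm_j\}=0$, $\omega_+\omega_-=\K^2$, and $\Delta(\idem_1)=\idem_0\tensor\idem_1+\idem_1\tensor\idem_0$. All four verifications, including the only nontrivial one $\{\Delta(\fp_i),\Delta(\fm_j)\}=\delta_{i,j}(\idem_1\tensor\one+\K^2\tensor\idem_1)=\delta_{i,j}\Delta(\idem_1)$, check out.
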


Since $\Delta$ is an algebra map, we can use it to define a tensor product functor 
\be\label{eq:tensor-Q}
\tensor\colon\; \repQ\times\repQ\to\repQ\ .
\ee
The central idempotents $\idem_0$ and $\idem_1$ behave under the coproduct as
\be
\Delta(\idem_0)=\idem_0\tensor\idem_0+\idem_1\tensor\idem_1
\quad , \quad 
\Delta(\idem_1)=\idem_0\tensor\idem_1+\idem_1\tensor\idem_0 \ ,
\ee
so that the tensor product~\eqref{eq:tensor-Q} respects the $\oZ_2$-grading $\rep\Q = \rep\Q_0 \oplus \rep\Q_1$.

Denote by
\begin{equation}\label{vecflip}
\flip_{X,Y}\colon\; X \tensor Y \longrightarrow Y \tensor X
\quad , \quad
\flip_{X,Y}(x \tensor y) =  y \tensor x \gc
\end{equation}
the symmetric braiding in $\vect$.
By acting with $\Phi$, $R$
 nd $\ribbon^{-1}$ we get 
families of isomorphisms in $\repQ$
\begin{align} 
\assocQ_{M,N,K} &\colon \quad M\tensor(N\tensor K) \to (M\tensor N)\tensor K \gcg  \quad	m\tensor n\tensor k \mapsto \Phi. (m\tensor n\tensor k) \gc\label{assocQ} \\ 
  c^{\repQ}_{M,N} &\colon \quad M\tensor N \to N\tensor M \gcg \qquad m\tensor n \mapsto 
  \flip_{M,N}
  (R.(m\tensor n)) \gc  \label{cQ} \\ 
	\theta^{\repQ}_M &\colon  \quad M \to M  \gcg \qquad m \mapsto \ribbon^{-1}.m  \gp  \label{thetaQ}
\end{align} 
We will show in Lemma~\ref{lem-trans-QSF} below that the linear maps~\eqref{assocQ}--\eqref{thetaQ} are indeed morphisms in $\repQ$, that they are natural and  that $\assocQ$ gives an associator, $c^{\repQ}$ a braiding and $\theta^{\repQ}$ a twist in $\repQ$.

\begin{definition} \label{def:transport}
Let $\cat$ be a monoidal category and $\catD$ a category with a tensor product functor $\tensor_\catD\colon \catD\times\catD\to\catD$ and tensor unit $\one_\catD$.
A functor $\fun\colon \cat\to\catD$ is called \emph{multiplicative}
if there exists a family of natural isomorphisms 
$\Theta_{U,V}\colon \fun(U\tensor_{\cat} V)\to \fun(U)\tensor_{\catD} \fun(V)$ and an isomorphism $\Theta_{\one}\colon \fun(\one)\to\one$. 
\end{definition}

If $\catD$ in the above definition is a monoidal category as well (i.e.\ equipped with associator and unit isomorphisms), a multiplicative functor is also called a {\em quasi-tensor functor}, see e.g.\ \cite[Def.\,4.2.5]{EGNO-book},
i.e.\ we dropped the coherence conditions with the associators from the definition of the tensor functor.

Let $\fun\colon \cat\to\catD$ be a multiplicative equivalence.
 We now describe how $\fun$ can be used to transport structure from $\cat$ to $\catD$.
\begin{itemize}
\item
By \emph{transporting the associator from $\cat$ to $\catD$ along $\fun$} we mean  seeking a natural family $\alpha_{X,Y,Z}^\catD$, for $X,Y,Z \in \catD$, 
such that for all $U,V,W \in \cat$, the diagram
\begin{equation}\label{eq:transport-assoc-diag}
\xymatrix@R=32pt@C=72pt{
\fun \bigl(U\tensC (V\tensC W)\bigr)\ar[d]_{\Theta_{U,V\tensC W}}\ar[r]^{\fun(\assC_{U,V,W})}
&\fun \bigl((U\tensC V)\tensC W\bigr)\ar[d]^{\Theta_{U\tensC V,W}}\\
\fun (U)\tensD \fun(V\tensC W)\ar[d]^{\id\tensor\Theta_{V,W}}
&\fun (U\tensC V)\tensD \fun(W)\ar[d]^{\Theta_{U,V}\tensor\id}\\
\fun (U)\tensD\bigl(\fun(V)\tensD\fun(W)\bigr)\ar[r]^{\assD_{\fun(U),\fun(V),\fun(W)}}
&\bigl(\fun (U)\tensD \fun(V)\bigr)\tensD\fun(W)
}
\end{equation}
commutes. 
Since $\fun$ is an equivalence such an $\alpha^\catD$ exists. Moreover, because $\alpha^\catD$ is natural and $\fun$ is essentially surjective, $\alpha^\catD$ is unique. 
By construction $\alpha^\catD$ satisfies the pentagon condition. 
The tensor unit structure (i.e.\ left and right unit morphisms) can be transported in the same way which makes $\catD$ a monoidal category. 
\item Suppose $\cat$ is in addition braided.
\emph{Transporting the braiding} from $\cat$ to $\catD$ means determining a family $c^{\catD}_{X,Y}$ such that the following diagram commutes for all $U,V\in\cat$:
\begin{equation}\label{eq:transport-braiding-via-functorequiv}
\xymatrix@R=22pt@C=42pt{
&\fun(U\tensor_{\cat} V)\ar[r]^{\fun(c^{\cat}_{U,V})}\ar[d]^{\Theta_{U,V}}&\fun(V\tensor_{\cat} U)\ar[d]^{\Theta_{V,U}}&\\
&\fun(U)\tensor_{\catD}\fun(V)\ar[r]^{c^{\catD}_{\fun(U),\fun(V)}}&\fun(V)\tensor_{\catD}\fun(U)&}  
\end{equation}
For the same reasons as above this diagram determines the $c^{\catD}_{X,Y}$ uniquely and turns $\catD$ into a braided category.
\item Suppose $\cat$ is in addition ribbon. 
By \emph{transporting the ribbon twist} we mean giving a natural family $\theta^\catD_X$ such that for all $U \in \cat$, $\theta^\catD_{\fun(U)} = \fun(\theta^\cat_U)$.
Again, the family $\theta^\catD_X$ is unique and turns $\catD$ into a ribbon category (with braided monoidal structure as above).
\end{itemize}

We will use the above transport of structure in the following way: Suppose we are given natural isomorphisms $\alpha^\catD$, $c^\catD$, $\theta^\catD$, but we do not know if they satisfy the coherence conditions required to make $\catD$ into a ribbon category.
One way to verify these conditions is to show that $\alpha^\catD$, $c^\catD$, $\theta^\catD$ are obtained by transport of structure from a ribbon category $\cat$ as above. By construction the multiplicative equivalence $F$ is then a ribbon functor. However, because the standard notion of a ribbon functor requires source and target to already be ribbon categories, we cannot start the proof that $\alpha^\catD$, $c^\catD$, $\theta^\catD$ satisfy the required coherence conditions with ``let $F$ be a ribbon functor''. Hence the above somewhat roundabout approach.

\begin{lemma}\label{lem-trans-QSF}
The linear maps $\assocQ$, $c^{\repQ}$, $\theta^{\repQ}$ in~\eqref{assocQ}--\eqref{thetaQ} are natural isomorphisms in $\repQ$.
There exists a multiplicative equivalence $\funQSF\colon\catSF\to\repQ$ which transports
\begin{itemize}
	\item the associator \eqref{sec:assoc-SF} of $\catSF$ to $\assocQ$, and the unit isomorphisms of $\catSF$ to those of the underlying vector spaces in $\rep\Q$,
	\item the braiding \eqref{sec:SF-braiding} of $\catSF$  to $c^{\repQ}$,
	\item the ribbon twist \eqref{eq:SF-twist} of $\catSF$ to $\theta^{\repQ}$.
\end{itemize}
\end{lemma}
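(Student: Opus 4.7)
The plan is to first construct a multiplicative equivalence $\funQSF\colon\catSF\to\repQ$ of the underlying linear categories, then \emph{define} $\assocQ$, $c^{\repQ}$, $\theta^{\repQ}$ as the unique transports of the $\catSF$ associator, braiding and twist along $\funQSF$. Since these structures satisfy the pentagon, hexagon and ribbon axioms in $\catSF$, the very shape of diagrams \eqref{eq:transport-assoc-diag} and \eqref{eq:transport-braiding-via-functorequiv} guarantees that the transport yields natural isomorphisms in $\repQ$ enjoying the same axioms. The remaining technical content is then to verify that these transports agree on the nose with the explicit maps \eqref{assocQ}--\eqref{thetaQ} built from $\Phi$, $R$ and $\ribbon^{-1}$.

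For the abelian equivalence I would exploit the block decompositions $\catSF=\catSF_0\oplus\catSF_1$ from \eqref{eq:catSF-dec} and $\repQ\simeq\rep\Q_0\oplus\rep\Q_1$ from \eqref{Q-decomp}. On degree $0$, send $X\in\repsv\algGr=\catSF_0$ to the $\Q_0$-module with the same underlying vector space, identifying $\idem_0\fp_i,\idem_0\fm_i$ with the odd Gra\ss{}mann generators $a_i,b_i$ and letting $\idem_0\K$ act by the parity involution $\omega_X$; the defining relations in \eqref{def:Q} hold precisely because the $a_i, b_i$ are odd. On degree $1$, use that $\Q_1\cong\algCl_{2\np}\rtimes\oC\oZ_2$ is Morita-equivalent to $\oC[\oZ_2]$: fix the unique irreducible $\algCl_{2\np}$-module $S$ and send $X\in\svect$ to $X\otimes S$, with $\oZ_2$-grading implemented by $\rmi\K\idem_1$. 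The multiplicative structure $\Theta_{X,Y}\colon\funQSF(X\ast Y)\to\funQSF(X)\tensor\funQSF(Y)$ is then built sector by sector following the four rows of \eqref{eq:*-tensor}; the non-obvious case is $(1,1)$, where one needs an explicit $\Q_0$-module isomorphism $S\tensor S\cong\algGr_{2\np}\tensor S$ matching the coproduct action of $\Q$ on the left with the $\algGr$-action on the right.

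With $(\funQSF,\Theta)$ in hand, the main obstacle is the sector-by-sector verification that the transported structures reproduce \eqref{assocQ}--\eqref{thetaQ}. The associator has eight sectors indexed by $(X,Y,Z)\in\{0,1\}^3$; the delicate ones are those with $\idem_1\tensor\idem_1$ in the first two slots, where $\Phi$ acts non-trivially. In particular, the $(1,1,1)$ sector demands matching $\phi$ from \eqref{eq:SF-phi-def} — built out of the copairing $C$, the Gra\ss{}mann product and the cointegral $\Lambda^{\mathrm{co}}_\algGr$ — with the action of $\beta^2(\rmi\K)^\np$ read off from \eqref{Phi+Phi-inv}, and this is exactly where the normalisation $\Lambda^{\mathrm{co}}_\algGr(a_1b_1\cdots a_\np b_\np)=\beta^{-2}$ from \eqref{eq:SF-coint-def} cancels the $\beta^2$ appearing in $\Phi$ and in $\Sbeta$. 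The braiding and twist comparisons are parallel but shorter: they reduce to matching the Cartan factor $\rho_{n,m}$ and the product $\prod_k(\one\tensor\one-2\fm_k\omega_-\tensor\fp_k)$ in \eqref{R+Riv} against $\sflip$, $\exp(-C)$ and $\kappa$ of \eqref{eq:SF-braiding-sectors}, and the ribbon element \eqref{ribbon+ribinv} against $\exp(-\hat C)$ and $\beta^{-1}\omega_X$ in the two sectors. The route through the intermediate quasi-Hopf algebras in $\svect$ and $\vect$ alluded to in the introduction organises precisely these checks by first pulling the Gra\ss{}mann/Clifford structure out of the supergrading and then incorporating the $\oZ_2$-action, which is where I expect the most bookkeeping.
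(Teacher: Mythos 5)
Your overall architecture is the same as the paper's: build a $\oC$-linear equivalence blockwise ($\K$ acting as parity involution on the degree-$0$ part, a Clifford spinor module tensored on in degree $1$), equip it with a multiplicative structure sector by sector, and then verify that the transported associator, braiding and twist coincide with the explicit maps \eqref{assocQ}--\eqref{thetaQ}. Two points, one minor and one substantive. The minor one: in sector $(1,1)$ the isomorphism you need is $S\ot S\cong\algGr_{2\np}$ (both of dimension $2^{2\np}$), not $S\ot S\cong\algGr_{2\np}\ot S$, which is dimensionally impossible; the paper's version of this map is $a\mapsto \copS(a)\cdot(\Bel\ot\Bel)$, and proving it is an isomorphism already takes a nontrivial induction (Lemma~\ref{lem:isoD-is-iso}).

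The substantive gap is that your ``remaining technical content'' \emph{is} the proof, and you give no workable method for carrying it out. The paper makes the sector-by-sector verification tractable by two devices that are absent from your plan. First, it factors the equivalence as $\funQSF=\funSQ\circ\funD$ through an intermediate quasi-bialgebra $\Salg$ in $\svect$, followed by a twist $\zeta$ from $\QQ$ to $\Q$; this separates the parity signs of $\svect$ from the $\K$-conjugations and keeps the coherence conditions (Tables~\ref{tab:Lam-eqn-sectors} and~\ref{tab:R-eqn-sectors}) in a form where each ingredient is manifestly a product of commuting one-pair pieces. Second --- and this is the key idea --- every structure constant involved ($\Sas^{abc}$, $\RS^{nm}$, $\hat\Phi^{abc}$, $\hat R^{nm}$, $\zeta$, $\delta_0$, $\delta_1$, $\Bel$, $\xi$, $\phi$, \dots) is written as a product over $k=1,\dots,\np$ of elements in the mutually super-commuting subalgebras generated by the $k$-th pair of fermions, so that each of the eight pentagon-compatibility identities and four hexagon-compatibility identities reduces to the $\np=1$ case, which was verified in \cite{Gainutdinov:2015lja} and by computer algebra. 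Without this reduction, a direct check of, say, the $\mathbf{111}$ sector of \eqref{eq:transport-assoc-diag} --- where $\phi$ from \eqref{eq:SF-phi-def} (built from $\exp(-C)$, the multiplication and the cointegral) must be matched against the coassociator --- is not a routine bookkeeping exercise but an open-ended computation. You correctly identify \emph{where} the hard cases sit and why the normalisation \eqref{eq:SF-coint-def} matters there, but you have not supplied the mechanism that actually closes them.
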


The proof of this lemma is lengthy and tedious (and fills half of this paper). It is spread across the Appendices~\ref{app:SF-S} and \ref{app:Q-S}. In Appendix~\ref{app:SF-S} we transport the structure morphisms of $\catSF$ to an intermediate category of representations of a quasi-bialgebra
in $\svect$. In Appendix~\ref{app:Q-S} we transport the structure morphisms further to a quasi-bialgebra
in $\vect$ which, finally, we exhibit to be a twisting of $\Q$.

\begin{proof}[Proof of Proposition~\ref{prop-Q-qHopf}]

By Lemma~\ref{lem-trans-QSF}, $\assocQ$ fulfils the pentagon identity (since $\alpha^{\catSF}$ does and $\funQSF$ is multiplicative) and $c^{\repQ}$ the hexagon identities. Hence, $\repQ$ is braided monoidal. 
We conclude that $(\Q,\cdot,\one,\Delta,\eps,R)$ is a quasi-triangular quasi-bialgebra.

We will now show that $S,\Salpha,\Sbeta$ define a quasi-Hopf structure on $\Q$, see I:\ref*{I-subsec:conventions-ribbon-qHopf} for definitions. A straightforward calculation shows that $S$, as defined on generators in \eqref{eq:Q-antipode-def}, is compatible with the relations on $\Q$ and hence provides an algebra anti-homomorphism on $\Q$. It remains to show the identities
\begin{equation}\label{eq:Salpha-1}
\sum_{(a)}S(a')\Salpha a'' =
\eps(a)
\Salpha\gc\qquad
\sum_{(a)}a'\Sbeta S(a'') =
\eps(a)
\Sbeta \ ,
\end{equation}
for all $a\in \Salg$ and
\begin{equation}\label{eq:Salpha-2}
\sum_{(\Phi)}S(\Phi_1)\Salpha \Phi_2\Sbeta  S(\Phi_3) = \one\gc\qquad
\sum_{(\Phi^{-1})}(\Phi^{-1})_1\Sbeta S((\Phi^{-1})_2)\Salpha  (\Phi^{-1})_3 = \one \gp
\end{equation}
For example, to see the first equality in \eqref{eq:Salpha-2} one computes
\begin{align}
\sum_{(\Phi)}S(\Phi_1)\Salpha \Phi_2\Sbeta  S(\Phi_3) 
&= \idem_0 + \idem_1 \big(\beta^2(\rmi\K)^\np\big) S\big(\beta^2 (\rmi\K)^{\np}\big)  \\ 
&= \idem_0 + \idem_1 \K^\np   ((-1)^\np\K)^{\np} = \idem_0 + \idem_1 (-1)^{\np^2-\np} = \one \gp \nonumber
\end{align}

To see the first identity in \eqref{eq:Salpha-1} we define the linear map $P\coloneqq\mu\circ(S\tensor\id)\colon\Q\tensor\Q\to\Q$ where $\mu$ denotes the multiplication in $\Q$ and check that
\begin{align}\label{eq:Pf1}
P\big(\Delta(\fpm_i)\big) &= S(\ff^\pm_i) + S\big((\idem_0\pm\rmi \idem_1)\K\big)\ff^\pm_i \\ \nonumber
                  &= \fpm_i\K(\idem_0 \pm (-1)^\np \, \rmi \idem_1) + (\idem_0 \pm (-1)^\np \, \rmi \idem_1)\K\ff^\pm_i = 0 \gc \\ \nonumber
P\big(\Delta(\K)\big)     &=(\idem_0+(-1)^\np \idem_1)\K^2 - (1+(-1)^\np)\idem_1 \K^2 =\one \gp \nonumber
\end{align}
For general basis elements
$\ff_{i_1}^{\eps_1}\cdots \ff_{i_m}^{\eps_m} \K^n$ 
from~\eqref{eq:Qbasis}
and by defining 
$f_1\coloneqq \ff_{i_1}^{\eps_1}$ and $f\coloneqq \ff_{i_2}^{\eps_1}\cdots \ff_{i_m}^{\eps_m} \K^n$ we get 
\begin{align}
P\big(\Delta(\ff_{i_1}^{\eps_1}\cdots \ff_{i_m}^{\eps_m} \K^n)\big) 
&= P\big(\Delta(f_1)\Delta(f)\big)=\sum_{(f_1),(f)} S(f_1'f')f_1''f'' \\  \nonumber
 &=\sum_{(f)} S(f')P\big(\Delta(f_1)\big)f'' 
\stackrel{\eqref{eq:Pf1}}{=}
  0 \gc \\  \nonumber
P\big(\Delta(\K^n)\big)
&= \sum_{(\K^{n-1})} S\big((\K^{n-1})'\big)P\big(\Delta(\K)\big)(\K^{n-1})'' = P\big(\Delta(\K^{n-1})\big) 
= \one \ , \nonumber
\end{align} 
where the last equality follows by induction in $n$.
The second identity in \eqref{eq:Salpha-1} can be shown in a similar way.  

Having a quasi-Hopf structure we can now conclude from Lemma~\ref{lem-trans-QSF} that $\ribbon$ defines a ribbon element in $\Q$. 
In particular, $S(\ribbon)=\ribbon$ follows from the duality property of the twist $\theta_{U^*} = (\theta_U)^*$ 
(one can verify that if this equality holds for one choice of left duality $(-)^*$ on the category in question it holds for all choices of left duality).
\end{proof}

Another important consequence of Lemma~\ref{lem-trans-QSF} is the following theorem.

\begin{theorem}\label{SF-repQ-rib-eq}
$\catSF(\np,\beta)$ is ribbon equivalent to $\rep \Q(\np,\beta)$ for all choices of $\np$ and $\beta$ as in \eqref{eq:beta-param}. 
\end{theorem}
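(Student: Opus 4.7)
The plan is to deduce this theorem directly from Lemma~\ref{lem-trans-QSF}: once a multiplicative equivalence is known to transport the associator, the braiding and the ribbon twist in the precise sense defined above, it automatically upgrades to a ribbon equivalence, and essentially nothing further needs to be proved.

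More concretely, the first step is to observe that Lemma~\ref{lem-trans-QSF} supplies an equivalence of $\oC$-linear categories $\funQSF\colon\catSF\to\repQ$ together with natural multiplicativity isomorphisms $\Theta_{U,V}\colon\funQSF(U*V)\xrightarrow{\sim}\funQSF(U)\tensor\funQSF(V)$ and $\Theta_{\one}$. The transport condition for the associator is exactly the commutativity of the diagram \eqref{eq:transport-assoc-diag} between $\alpha^{\catSF}$ and $\assocQ$, and this is precisely the monoidal coherence axiom for the datum $(\funQSF,\Theta)$; together with the analogous conditions on the unit isomorphisms this makes $\funQSF$ a tensor functor. The commutativity of \eqref{eq:transport-braiding-via-functorequiv} then promotes $\funQSF$ to a braided tensor functor, and the identity $\theta^{\repQ}_{\funQSF(U)}=\funQSF(\theta^{\catSF}_U)$ further promotes it to a ribbon functor. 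Since $\funQSF$ is already an equivalence of underlying categories, this yields a ribbon equivalence in the monoidal sense.

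The remaining point is compatibility with left duality, which is not part of the list transported in Lemma~\ref{lem-trans-QSF}. Here I would invoke the following general fact: in any rigid monoidal category, left duals are unique up to a unique isomorphism compatible with $\ev$ and $\coev$, and consequently any braided monoidal equivalence between rigid braided categories lifts uniquely to an equivalence that intertwines left duals. Since both $\catSF$ (by Theorem~\ref{SF-fact}) and $\repQ$ (by Proposition~\ref{prop-Q-qHopf}) are rigid braided with compatible twist, this lift is automatically ribbon and completes the argument.

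The main obstacle is not in this bookkeeping step but in Lemma~\ref{lem-trans-QSF} itself: one must actually exhibit the functor $\funQSF$ together with the multiplicativity isomorphisms $\Theta$, and then verify the three transport diagrams against the explicit formulas \eqref{def:delta}, \eqref{Phi+Phi-inv}, \eqref{R+Riv}, \eqref{ribbon+ribinv} for $\Delta$, $\Phi$, $R$ and $\ribbon$. Because all relevant structures in $\catSF$ are defined sector by sector on the $\oZ_2$-decomposition \eqref{eq:catSF-dec}, this is a case analysis with eight sub-cases for the associator and four for the braiding, and is the computational heart of the construction that the authors defer to Appendices~\ref{app:SF-S} and \ref{app:Q-S}.
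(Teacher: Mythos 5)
Your proposal is correct and follows essentially the same route as the paper: Theorem~\ref{SF-repQ-rib-eq} is stated there as an immediate consequence of Lemma~\ref{lem-trans-QSF}, since the transport diagrams \eqref{eq:transport-assoc-diag} and \eqref{eq:transport-braiding-via-functorequiv} together with $\theta^{\repQ}_{\funQSF(U)}=\funQSF(\theta^{\catSF}_U)$ are exactly the coherence data making $\funQSF$ a braided monoidal (hence, by uniqueness of duals, ribbon) equivalence, and you correctly locate the entire computational burden in the sector-by-sector verifications of Appendices~\ref{app:SF-S} and~\ref{app:Q-S}. Your remark on left duality being automatic for a monoidal equivalence between rigid categories is standard and consistent with the paper's implicit treatment.
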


The precise definition of the equivalence functor $\funQSF\colon\catSF\to\repQ$ is given in Section~\ref{app:rib-equiv-QSF}. 

\begin{remark}\label{rem:3-ab-cohomology}
For fixed $\np$, the quasi-triangular quasi-Hopf algebras $\Q(\np,\beta)$ for the four possible choices of $\beta$ differ by abelian 3-cocycles for $\mathbb{Z}_2$. The 3rd 
abelian group cohomology of $\mathbb{Z}_2$  is $H^3_{ab}(\mathbb{Z}_2,\oC^\times) = \mathbb{Z}_4$ and it describes possible braided monoidal structures on $\rep \mathbb{Z}_2$, up to braided monoidal equivalence  (see \cite{Joyal:1993} for details). A generator of $H^3_{ab}(\mathbb{Z}_2,\oC^\times)$ is given by the class of $(\omega,\sigma)$, where $\omega$ is a 3-cocycle for group-cohomology with (writing $\mathbb{Z}_2 = \{0,1\}$ additively) $\omega(1,1,1)=-1$ and $1$ else, and $\sigma$ is a 2-cochain with only non-trivial value $\sigma(1,1)=\rmi$.
Multiplying the coassociator $\as$ by $\sum_{a,b,c \in \mathbb{Z}_2} \omega(a,b,c) \cdot \idem_a \otimes \idem_b \otimes \idem_c$ and the $R$-matrix by
$\sum_{a,b \in \mathbb{Z}_2} \sigma(a,b) \cdot \idem_a \otimes \idem_b$ is equivalent to changing $\beta$ to $\rmi \beta$. Repeating this makes $\beta$ run through its four possibilities.
\end{remark}

\subsection{Factorisability of \texorpdfstring{$\Q$}{Q}} \label{sec:fact-Q}

A finite-dimensional quasi-triangular quasi-Hopf algebra is called \emph{factorisable} if its representation category is factorisable in the sense of Definition~I:\ref*{I-def:fact-qHopf}. 
A direct definition in terms of the data of $\Q$ can be found in Remark I:\ref*{I-rem:fact-qHopf-def}. It is shown in Corollary I:\ref*{I-prop:fact-qHopf=fact-coend} that this definition is equivalent to the one given in \cite[Def.\,2.1]{BT}. We obtain the following corollary to Theorem~\ref{SF-repQ-rib-eq}:

\begin{corollary}\label{cor:Q-fact}
$\Q$ is a factorisable ribbon quasi-Hopf algebra.
\end{corollary}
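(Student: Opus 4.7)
The plan is to combine the ribbon equivalence established in Theorem~\ref{SF-repQ-rib-eq} with the factorisability of the symplectic fermion category recorded in Proposition~\ref{prop:SF-fact} (equivalently Theorem~\ref{SF-fact}). Since factorisability of a finite-dimensional quasi-triangular quasi-Hopf algebra is defined in terms of its representation category (per the discussion preceding the corollary, citing Definition~I:\ref*{I-def:fact-qHopf} and Remark~I:\ref*{I-rem:fact-qHopf-def}), it is invariant under ribbon equivalences of the representation category, and the result is essentially immediate.

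More concretely, I would argue as follows. By Theorem~\ref{SF-repQ-rib-eq} there is a ribbon equivalence $\funQSF \colon \catSF(\np,\beta) \xrightarrow{\sim} \rep\Q(\np,\beta)$. By Proposition~\ref{prop:SF-fact}, $\catSF(\np,\beta)$ is a factorisable finite ribbon category. Factorisability in the sense of Definition~I:\ref*{I-def:fact-qHopf} is a property of the finite ribbon category structure: for instance, it can be phrased as the statement that the only transparent objects are (finite direct sums of) the tensor unit, or equivalently that a certain canonical morphism associated to the coend is an isomorphism (cf.\ Corollary~I:\ref*{I-prop:fact-qHopf=fact-coend}). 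Any such characterisation is preserved under ribbon equivalence, so $\rep\Q(\np,\beta)$ is factorisable as well. By the definition of a factorisable quasi-Hopf algebra recalled at the start of this subsection, this means $\Q(\np,\beta)$ is factorisable.

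There is no real obstacle here; the only thing worth being careful about is that the notion of factorisability used in this paper (phrased categorically via $\rep\Q$) is manifestly invariant under ribbon equivalence, so the appeal to Theorem~\ref{SF-repQ-rib-eq} genuinely suffices and one does not need to re-verify any condition intrinsic to the Hopf-algebraic data of $\Q$ (such as invertibility of the monodromy element $R_{21}R$). All the substantive work has already been done in establishing the ribbon equivalence and in Proposition~\ref{prop:SF-fact}.
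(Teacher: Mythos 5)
Your proposal is correct and is exactly the paper's argument: the corollary is stated as an immediate consequence of Theorem~\ref{SF-repQ-rib-eq} combined with the factorisability of $\catSF(\np,\beta)$ (Proposition~\ref{prop:SF-fact}), since factorisability of a quasi-Hopf algebra is defined via its representation category and is invariant under braided/ribbon equivalence. The paper additionally notes that a second, independent proof is given later via the explicit non-degeneracy of the monodromy matrix (Lemma~\ref{lem:M-non-deg}), but that is presented only as an alternative and is not needed for the corollary itself.
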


Below in Lemma~\ref{lem:M-non-deg} we will give an alternative proof by verifying non-degeneracy of the Hopf pairing of the universal Hopf algebra in $\repQ$ by direct calculation.

\subsection{\texorpdfstring{$\Q(2n,1)$}{Q(2n,1)} as a Drinfeld double} \label{sec:DD}

In Remark~\ref{rem:Q-Hopf-alg} we saw that for $\np = 2n$
 and $\beta^2=1$, $\Q(\np,\beta)$ is a Hopf algebra, not only a quasi-Hopf algebra. Combining this with Corollary~\ref{cor:Q-fact} shows that $\Q(2n,\pm 1)$ is a factorisable ribbon Hopf algebra.
	It turns out that 
	as a quasi-triangular Hopf algebra,
$\Q(2n,1)$ is isomorphic to a Drinfeld double,
as we now explain.

Let $H=H(\np)$ be the algebra generated by the elements $k$ and $f_i$, $i=1,\ldots ,\np$, subject to the relations
\begin{align} \label{H-alg1}
\{f_i,f_j\}=0 \gc \qquad \{f_i,k\}=0  \gc \qquad 
	k^2=\one \gp
\end{align}
We define the coproduct, counit and antipode on $H(\np)$ as ($i = 1,\dots,\np$)
\begin{align} \label{H-alg2}
\begin{split}
\Delta(f_i) &= f_i\tensor k + \one\tensor f_i \gc \qquad \Delta(k) = k\tensor k \gc \\
\eps(f_i) &= 0 \gc  \qquad  \eps(k) = 1 \gc  \qquad  S(f_i) = -f_ik \gc \qquad S(k) = k \gp 
\end{split} 
\end{align} 
One can easily verify that we get an injective Hopf-algebra homomorphism 
$H(\np) \to \Q(\np,1)$ 
which is defined on generators by (see Appendix~\ref{app:DD} for details)
\be\label{eq:HN-QN}
	k \longmapsto \omega_-=(\idem_0-i\idem_1)\K
	\  , 
	\qquad
	f_i \longmapsto \fm_i \omega_-
	~, \quad i=1,\ldots ,\np \ .
\ee
This embedding also proves that $H(\np)$ is indeed a Hopf algebra.

\begin{remark}
Above we defined $H(\np)$ for even $\np$, but the definition works just as well for odd $\np$. 
In this case, the map \eqref{eq:HN-QN} defines an embedding $H(\np) \to \Q(\np+1,1)$ 
(or into any $\Q(2n,1)$ with $2n>\np$) 
and therefore $H(\np)$ is a Hopf algebra for all $\np \in \mathbb{N}$.
Note that $H(1)$ is Sweedler's 4-dimensional Hopf algebra, so that 
for $\np>1$ we have a generalisation of Sweedler's Hopf algebra which is different from the Taft Hopf algebra. 
In fact, $H(\np)$ is the Hopf algebra associated to (or bosonisation of)
the super-group algebra $\Lambda \mathbb{C}^\np$, see e.g.\ \cite[Sec.\,3.4]{AEG}.
\end{remark}

\begin{proposition}\label{DH:main-prop}
For  $\np\geq 1$,  the Drinfeld double of $H(\np)$ is isomorphic to $\Q(\np,\beta)$ as a $\oC$-algebra.
For even $\np$ and $\beta=\pm 1$ the Drinfeld double of $H(\np)$ is isomorphic to $\Q(\np,\beta)$ as a Hopf algebra. 
Moreover, if $\beta=1$ this isomorphism is an isomorphism of quasi-triangular Hopf algebras. 
\end{proposition}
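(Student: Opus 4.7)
The plan is to reduce each of the three claims (isomorphism as algebra, as Hopf algebra, as quasi-triangular Hopf algebra) to explicit verifications on generators, leveraging the embedding $H(\np) \hookrightarrow \Q(\np,\beta)$ of~\eqref{eq:HN-QN} and a mirror embedding of $H(\np)^{*\mathrm{cop}}$. I would first pin down $H(\np)^{*\mathrm{cop}}$ explicitly: using the PBW basis $\{f_I k^a\}$ of $H(\np)$, compute its dual basis and distinguish canonical generators $\gamma, \eta_1, \ldots, \eta_\np$, with $\gamma$ group-like and the $\eta_i$ twisted primitives. The resulting algebra relations coincide with those of $H(\np)$ (Grassmann $\rtimes \oC\oZ_2$), reflecting the self-duality of the super-group-algebra bosonisation. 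This suggests the candidate map
\begin{equation*}
\Psi \colon D(H(\np)) \longrightarrow \Q(\np,\beta), \qquad k \mapsto \omega_-,\ f_i \mapsto \fm_i \omega_-,\ \gamma \mapsto \omega_+,\ \eta_i \mapsto \fp_i \omega_+,
\end{equation*}
which is symmetric in the $\pm$-roles of $\omega_\pm$ and $\fpm_i$ in $\Q$.

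For the algebra isomorphism, the only nontrivial verification is that $\Psi$ respects the Drinfeld-double cross-relation
\begin{equation*}
 h \cdot \xi  \;=\; \sum\nolimits_{(h),(\xi)} \langle \xi_{(3)}, h_{(1)} \rangle \; \xi_{(2)} h_{(2)} \; \langle \xi_{(1)}, S^{-1}(h_{(3)}) \rangle
\end{equation*}
on generator pairs $h\in\{k,f_i\}$, $\xi\in\{\gamma,\eta_j\}$. This reduces to a handful of identities in $\Q$: $\omega_+\omega_- = \omega_-\omega_+ = \K^2$, the anticommutation of $\fpm_i$ past $\omega_\mp$ (with the $\idem_0,\idem_1$ projectors absorbed in $\omega_\pm$ producing the correct signs), and crucially $\{\fp_i,\fm_j\} = \delta_{i,j}\idem_1$, which encodes the duality pairing $\langle \eta_i, f_j\rangle$. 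A dimension count $\dim D(H(\np)) = 2^{2\np+2} = \dim \Q(\np,\beta)$, combined with the surjectivity of $\Psi$ (its image contains $\omega_\pm$ and $\fpm_i$, and hence $\K = \idem_0\omega_- + \rmi\idem_1\omega_-$ once $\idem_0,\idem_1$ are extracted from $\omega_+\omega_-$), then yields bijectivity.

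For the Hopf algebra claim, I would first verify that for even $\np$ and $\beta^2=1$ the coassociator~\eqref{Phi+Phi-inv} collapses to $\one\otimes\one\otimes\one$, using $\K^\np\idem_0 = \idem_0$, $\K^\np\idem_1 = (-1)^{\np/2}\idem_1$, and $\rmi^\np = (-1)^{\np/2}$. Checking that $\Psi$ preserves coproducts then reduces to verifying $\Delta_\Q(\omega_\pm) = \omega_\pm\otimes\omega_\pm$ (both are group-like once $\omega_\pm^2=\one$ is noted) and that $\Delta_\Q(\fpm_i\omega_\pm) = \fpm_i\omega_\pm \otimes \omega_\pm + \one \otimes \fpm_i\omega_\pm$, matching the coproducts of $f_i$ and $\eta_i$ in $D(H(\np))$; these are short direct calculations. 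For the quasi-triangular upgrade at $\beta=1$, I would expand the canonical $R_{D(H)} = \sum_\nu e_\nu \otimes e^\nu$ in the PBW basis of $H(\np)$ and its dual: the $k^a \otimes \gamma^b$ contributions assemble into the Cartan factor $\sum_{n,m}\rho_{n,m}\idem_n\otimes\idem_m$ of~\eqref{eq:car-fac}, while the $f_I \otimes \eta_I$ contributions sum to the nilpotent product $\prod_k(\one\otimes\one - 2\fm_k\omega_- \otimes \fp_k)$, reproducing~\eqref{R+Riv} at $\beta=1$.

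The main obstacle is the cross-relation verification in the algebra step, where the presence of the central projectors $\idem_0, \idem_1$ inside the images $\omega_\pm$ and $\fpm_i\omega_\pm$ makes the sign and projector bookkeeping delicate, and one must correctly align the $S$ versus $S^{-1}$ convention chosen for $D(H(\np))$ with the antipode~\eqref{eq:Q-antipode-def}. Once this is in place, the Hopf and quasi-triangular upgrades are routine from the explicit formulas, mirroring the fact that $\beta$ enters $\Q$ only through the coassociator, the $\Sbeta$-part of the antipode, the $R$-matrix, and the ribbon element, while leaving the algebra structure untouched (consistent with part~(a) holding for all $\beta$).
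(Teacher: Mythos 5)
Your proposal follows essentially the same route as the paper's Appendix~\ref{app:DD}: describe the dual Hopf algebra of $H(\np)$ via skew-primitive generators, map the generators of the double to $\omega_\pm$-dressed fermions in $\Q$, verify the cross-relation using $\{\fp_i,\fm_j\}=\delta_{i,j}\idem_1$ and get bijectivity from a dimension count, then check coproducts for even $\np$, $\beta^2=1$ (where the coassociator trivialises) and transport the canonical $R$-matrix for $\beta=1$. One caveat on normalisation: with the dual-basis skew primitive $\varphi_i=(f_ik)^\ast-f_i^\ast$ the correct image is $2\fp_i$ (equivalently $\varphi_i\kappa\mapsto 2\fp_i\omega_+$), so your assignment $\eta_i\mapsto\fp_i\omega_+$ satisfies the cross-relation only after rescaling $\eta_i$ by $\tfrac12$ --- precisely the pairing bookkeeping you flag as the delicate step.
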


The proof of this proposition is given in  Appendix~\ref{app:DD}.

\begin{remark}\label{rem:HN-comments}
~\\[-1.5em]
\begin{enumerate}\setlength{\leftskip}{-1em}
\item
The double of $H(\np)$ has also been constructed in \cite{Bontea:2014}. It  appears in \cite{Gelaki:2017} in the classification of factorisable tensor categories which contain $\rep H(\np)$ as a Lagrangian subcategory (and of which $\catSF(\np,\beta)$ provide four of the 16 possible cases).

\item
Following Remark~\ref{rem:3-ab-cohomology}, the quasi-triangular quasi-Hopf algebras  $\Q(2n,\beta)$ for the other three  choices of $\beta$ are simple modifications of the Drinfeld double of $H(2n)$ by the 3rd abelian cohomology classes of $\mathbb{Z}_2$.

\item
It is shown in \cite[Thm.\,6.11]{Davydov:2016euo} that $\catSF(\np,\beta)$ contains a Lagrangian algebra $L$ iff $\np$ is even and $\beta=1$ (see  \cite{Davydov:2016euo} for definition and references). This implies that precisely in these cases $\catSF(\np,\beta)$ is equivalent as a braided category to the Drinfeld centre $\mathcal{Z}(\catD)$ of some other (non-unique) finite tensor category $\catD$ \cite[Cor.\,4.1]{DMNO}, e.g.\ one may choose $\catD=L$-mod. Proposition~\ref{DH:main-prop} shows that $\catD$ can also be taken to be $\rep H(\np)$. 
\end{enumerate}
\end{remark}

\subsection{Some special elements of \texorpdfstring{$\Q$}{Q}}

The Drinfeld twist $\Dt$ of a quasi-Hopf algebra expresses the 
deviation
of the antipode from being an anti-coalgebra map via  
\begin{equation}\label{eq:anti-coalg}
	\Dt \Delta (S(a))  = (S\tensor S)(\Delta^{\operatorname{op}}(a)) \Dt \gc \qquad a\in A \ .
\end{equation} 
Its expression in terms of quasi-Hopf algebra structure maps is given in 	I:(\ref*{I-def:F})
following~\cite{Dr-quasi}. When evaluated for $\Q$, one quickly checks that the general expression reduces to
\begin{align}\label{def:F-Q}
\Dt = \idem_0\tensor\one + \idem_1\tensor \K^\np\idem_0 + \beta^2(-\rmi\K)^\np \idem_1\tensor \idem_1 \gp  
\end{align} 

As reviewed in Section~I:\ref*{I-sec:Drinfeld-element}, the canonical Drinfeld element $\sqs$ and the 
corresponding
 element $\tilde\sqs$ with inverse braiding defined as
\begin{align}\label{sqs+sqsinvbr}
\sqs &= \sum_{(\as),(R)} S\big(\as_2\Sbeta S(\as_3)\big)S(R_2)\Salpha R_1\as_1 \gc \\ \nonumber
\tilde\sqs &= \sum_{(\as),(R^{-1})} S\big(\as_2\Sbeta S(\as_3)\big)S\big((R^{-1})_2\big)\Salpha (R^{-1})_1\as_1 \gp
\end{align}

\begin{lemma} In $\Q$ the elements $\sqs$, $\tilde\sqs$ and $\sqs^{-1}$ take the form 
\begin{align}\label{sqs+sqsinvbr-Q}  
	\sqs &= \Big(\idem_0 \K + \idem_1 \beta (-\rmi\K)^\np \Big) \cdot \prod_{i=1}^{\np} (\one-2\fp_i\fm_i)  \gc \\ \nonumber
 	\tilde\sqs &= \sqs^{-1} = \big( \idem_0 \K + \idem_1 \beta^{-1} (-\rmi\K)^\np \big) \cdot \prod_{i=1}^{\np} (\one+2\fp_i\fm_i \K^2)  \gp
\end{align}
\end{lemma}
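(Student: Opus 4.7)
The plan is a direct computation: substitute the explicit data $\Phi$, $R$, $S$, $\Salpha$, $\Sbeta$ from Section~\ref{sec:Q-def} into the definitions~\eqref{sqs+sqsinvbr}, simplify, and then verify $\sqs\cdot\tilde\sqs=\one$ to conclude $\tilde\sqs=\sqs^{-1}$. Two structural observations make the calculation tractable. First, by~\eqref{Phi+Phi-inv} the coassociator $\Phi$ has only two Sweedler summands, namely $\one\tensor\one\tensor\one$ and $\idem_{1}\tensor\idem_{1}\tensor\zeta$ with $\zeta=(\K^{\np}-\one)\idem_{0}+(\beta^{2}(\rmi\K)^{\np}-\one)\idem_{1}$, so the outer sum in~\eqref{sqs+sqsinvbr} collapses to two terms. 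Second, from~\eqref{R+Riv} we have $R=R^{\mathrm{Cart}}\cdot R^{\mathrm{ferm}}$ with $R^{\mathrm{Cart}}=\sum_{n,m}\beta^{nm}\rho_{n,m}\idem_{n}\tensor\idem_{m}$ in the Cartan subalgebra and $R^{\mathrm{ferm}}=\prod_{k}(\one\tensor\one-2\ff^{-}_{k}\omega_{-}\tensor\ff^{+}_{k})$. Using the relations~\eqref{def:Q}, and in particular $\K\ff^{\pm}_{k}=-\ff^{\pm}_{k}\K$, one checks that the individual fermionic factors commute among themselves and that $R^{\mathrm{Cart}}$ commutes with $R^{\mathrm{ferm}}$, so the sum $\sum_{(R)}S(R_{2})R_{1}$ factorises as a Cartan contribution times a fermionic contribution.

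For the fermionic part, the commutation of the factors lets one write
\als{
\sum_{(R^{\mathrm{ferm}})}S(R^{\mathrm{ferm}}_{2})\,R^{\mathrm{ferm}}_{1}=\prod_{k=1}^{\np}\bigl(\one-2\,S(\ff^{+}_{k})\,\ff^{-}_{k}\omega_{-}\bigr).
}
Expanding $S(\ff^{+}_{k})=\ff^{+}_{k}(\idem_{0}+(-1)^{\np}\rmi\,\idem_{1})\K$, moving $\K$ past $\ff^{-}_{k}$ with a sign, and collapsing the idempotent product via $\idem_{0}\idem_{1}=0$ and $\K^{2}=\idem_{0}-\idem_{1}$, each factor simplifies to exactly $\one-2\ff^{+}_{k}\ff^{-}_{k}$, reproducing the fermionic product $\prod_{i}(\one-2\fp_{i}\fm_{i})$ claimed in~\eqref{sqs+sqsinvbr-Q}.

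For the Cartan part, the two Sweedler terms of $\Phi$ give two contributions, both of which lie in the four-dimensional Cartan subalgebra $\oC[\K]/(\K^{4}-\one)$, split by $\idem_{0}$ and $\idem_{1}$ and by parity in $\K$. Using the explicit form~\eqref{eq:car-fac} of $\rho_{n,m}$ together with $S(\K)=(\idem_{0}+(-1)^{\np}\idem_{1})\K$, $S(\idem_{i})=\idem_{i}$ (since $\K^{2}$ is group-like) and $\Sbeta=\idem_{0}+\beta^{2}(\rmi\K)^{\np}\idem_{1}$, the $\idem_{0}$-component collapses to $\idem_{0}\K$ (only the trivial part of $\Phi$ contributes, as the other has $\as_{1}=\idem_{1}$), while the $\idem_{1}$-component, after combining both $\Phi$-contributions and using $(\one+\zeta)\idem_{1}=\beta^{2}(\rmi\K)^{\np}\idem_{1}$, collapses to $\idem_{1}\beta(-\rmi\K)^{\np}$. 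Multiplying with the fermionic factor yields the stated formula for $\sqs$.

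The formula for $\tilde\sqs$ is obtained by the same procedure with $\Phi^{-1}$ and $R^{-1}$ in place of $\Phi$ and $R$ in~\eqref{sqs+sqsinvbr}; the replacements $\rmi\to-\rmi$ in the third tensor factor of $\Phi^{-1}$ and $\beta^{nm}\to\beta^{-nm}$ in the Cartan part, together with the sign flip in the fermionic part of $R^{-1}$, produce the substitutions $\beta\to\beta^{-1}$ and $\one-2\fp_{k}\fm_{k}\to\one+2\fp_{k}\fm_{k}\K^{2}$ in the answer. Finally, $\tilde\sqs=\sqs^{-1}$ is verified by direct multiplication: the Cartan product collapses to $\idem_{0}\K^{2}+\idem_{1}=\one$ using $(-\rmi\K)^{2\np}=(-1)^{\np}\K^{2\np}$ and $\idem_{1}\K^{2\np}=\idem_{1}$, while each fermionic factor satisfies $(\one-2\fp_{k}\fm_{k})(\one+2\fp_{k}\fm_{k}\K^{2})=\one$, which follows from $(\fp_{k}\fm_{k})^{2}=\idem_{1}\fp_{k}\fm_{k}$ and $\idem_{1}\K^{2}=-\idem_{1}$. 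The main obstacle is purely bookkeeping: the $\idem_{0}/\idem_{1}$ projections and the $\np$-parity phases entering through $S(\K)$ and through the $\beta^{\pm nm}$ in~\eqref{eq:car-fac} must be tracked carefully at every step.
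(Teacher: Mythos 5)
Your reduction of \eqref{sqs+sqsinvbr} to two $\Phi$-summands, the observation that only diagonal idempotent sectors of $R$ survive, and the final check $\sqs\cdot\tilde\sqs=\one$ (which is the paper's identity \eqref{eq:prod-inverse}) are all fine. The gap is in the core step, the evaluation of $\sum_{(R)}S(R_2)R_1$. First, your commutation claim is false: conjugating $\K^i\tensor\K^j$ past $\ff^-_k\omega_-\tensor\ff^+_k$ produces a sign $(-1)^{i+j}$, so the Cartan element $\sum_{n,m}\beta^{nm}\rho_{n,m}\idem_n\tensor\idem_m$ of \eqref{eq:car-fac} does \emph{not} commute with the fermionic product in \eqref{R+Riv}. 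Second, even if it did, the map $T\colon a\tensor b\mapsto S(b)a$ is not an algebra map on $\Q\tensor\Q$, so ``the sum factorises as a Cartan contribution times a fermionic contribution'' does not follow; $T$ is only quasi-multiplicative with signs, which is exactly the content of \eqref{T-quasi-alg-map} in the paper's proof, and those signs carry the answer.

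Concretely, your per-factor simplification is wrong: using \eqref{eq:Q-antipode-def} one finds $S(\fp_k)\fm_k\omega_-=-\fp_k\fm_k\bigl(\idem_0-(-1)^{\np}\idem_1\bigr)$, hence $\one-2S(\fp_k)\fm_k\omega_-=\one+2\fp_k\fm_k\idem_0-2(-1)^{\np}\fp_k\fm_k\idem_1$, which on the $\idem_0$-sector equals $\one+2\fp_k\fm_k$ --- the \emph{opposite} sign to the one needed. Combined with your claimed Cartan factor $\idem_0\K$, your route gives $\sqs\,\idem_0=\K\prod_i(\one+2\fp_i\fm_i)\idem_0$, contradicting the lemma. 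The missing ingredient is precisely the interleaving of the Cartan $\K$'s with the fermionic generators: in the paper's computation the piece $\tfrac12(\one\tensor\one-\K\tensor\K)$ of $\rho_{0,0}$ drops out on $\idem_0$, and the piece $\tfrac12(\one\tensor\K+\K\tensor\one)$ forces one to commute $\K$'s through the $\fpm$'s, turning $2^m$ into $(-2)^m$ and producing $\K\prod_i(\one-2\fp_i\fm_i)\idem_0$. You need to either redo the calculation with this sign bookkeeping (e.g.\ via \eqref{T-quasi-alg-map}) or avoid it altogether; note also that the paper gets $\sqs^{-1}$ directly from \eqref{eq:prod-inverse} and then $\tilde\sqs=S(\sqs^{-1})$, which is cheaper than repeating the whole computation with $\Phi^{-1},R^{-1}$ as you propose.
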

\begin{proof}
	We start with the expression for $\sqs$ in \eqref{sqs+sqsinvbr-Q}. We give the details for sector {\bf 0}, the computation in sector {\bf 1} is similar. 

In sector {\bf 0}, the first equality in \eqref{sqs+sqsinvbr} reduces to 
	$\sqs \cdot \idem_0 = S(R_2)R_1 \cdot \idem_0$.
Define the linear map $T\colon \Q\tensor\Q\to\Q , \ a\tensor b\mapsto S(b)a$, so that
 $\sqs \cdot \idem_0= T(R) \cdot \idem_0$. 
The last factor of $R$ in \eqref{R+Riv} can be written as
\begin{align}
	X\coloneqq\prod_{i=1}^\np (\one\tensor\one
	 - 2\fm_i\omega_-\tensor\fp_i) 
	= \one\tensor\one + \sum_{\substack{1\leq m\leq\np \\ 1\leq i_1 <\ldots<i_m\leq\np}} (-2)^m \fm_{i_1}\omega_-\cdots\fm_{i_m}\omega_-\tensor\fp_{i_1}\cdots\fp_{i_m} \gp
\end{align}
We need to compute
\be
	T(R) \idem_0 = T\big(\tfrac12(\one\tensor\one+\one\tensor\K+\K\tensor\one-\K\tensor\K)\cdot X\big) \cdot \idem_0 \ .
\ee
To do so we use that for $a,b,x,y\in \Q$ such that $S(y)T(a\tensor b)=\pm T(a\tensor b)S(y)$ we have
\be \label{T-quasi-alg-map}
	T(a\tensor b\cdot x\tensor y) = \pm T(a\tensor b)\cdot T(x\tensor y) \ .
\ee
Then, together with $\omega_- \idem_0 = \K \idem_0$,
\begin{align}
	 T\big(\tfrac12(\one\tensor\one-\K\tensor\K)\cdot X\big) \cdot \idem_0 
	 &=
	 (1-\K^2) T(X) \idem_0  = 0
\\ \nonumber
T\big(\tfrac12(\one\tensor\K+\K\tensor\one)\cdot X\big) \cdot \idem_0 
&=
\K \Big(\one + \sum 2^m  \fp_{i_m}\K\cdots\fp_{i_1}\K \cdot \fm_{i_1}\K\cdots\fm_{i_m}\K \Big) \cdot \idem_0
\\ \nonumber
&= 
\K \Big(\one + \sum (-2)^m \fp_{i_1} \fm_{i_1}\cdots\fp_{i_m}\fm_{i_m} 
	\Big) \cdot \idem_0
\\ \nonumber
&= 
\K \prod_{i=1}^\np(\one-2\fp_i\fm_i) \cdot \idem_0 \ ,
\end{align}
as required.

The expression for $\sqs^{-1}$ is an immediate consequence of the following identity, which is easily verified:
\be\label{eq:prod-inverse}
\Big(\prod_{i=1}^{\np} (\one-2\fp_i\fm_i)\Big)^{-1}
=~
\prod_{i=1}^{\np} (\one+2\fp_i\fm_i \K^2) \ .
\ee
Finally, 
$\tilde \sqs$ can be calculated from the identity $\tilde \sqs = S(\sqs^{-1})$,  see I:(\ref*{I-sqs-iv}).
\end{proof}

In a ribbon quasi-Hopf algebra, we define the balancing element as~\cite{[AC]}
\begin{equation}\label{balance-ribbon}
  \balance=\Sbeta S(\Salpha)\ribbon^{-1}\sqs \gc
\end{equation}
with the canonical Drinfeld element $\sqs$ defined in~\eqref{sqs+sqsinvbr}.
The balancing element $\balance$ is group-like.
 We recall 
	from Section~I:\ref*{I-sec:qHopf-pivot} 
that the pivotal structure is given by the action with $\ribbon^{-1}\sqs$ (as in the Hopf case).
The balancing $\balance$ appears in the expression
for the categorical trace $\mathrm{tr}^{\cat}$ of a morphism $f: M\to M$:
 \be
 \mathrm{tr}^{\cat}(f) = \tilde{\ev}_M \circ (f\otimes\id) \circ \coev_M =  
\tr_M(\balance \circ f)\ ,
 \ee
where we used the expressions for evaluation and coevaluation in I:(\ref*{I-eq:ev-coev}) and I:(\ref*{I-eq:qHopf-tilde-evcoev}).
In particular, the quantum dimension of $M$ is 
$\tr_M(\balance)$.

Combining \eqref{eq:Q-antipode-def}, \eqref{ribbon+ribinv} and \eqref{sqs+sqsinvbr-Q}, in $\Q$ we find the  balancing element to be 
\begin{equation}\label{balance-ribbon-Q}
\balance = (\idem_0-(-1)^{\np}\, \rmi\beta^2\idem_1)\K \gp
\end{equation}

\begin{remark}
For even $\np$ the element $\balance$ equals $\omega_\pm$ for $\beta^2=\mp 1$, while for $\np$ odd $\balance= \K^{\pm1}$ for $\beta^2 = \mp\rmi$.
Given a pivotal structure on a monoidal category with left duals, all other pivotal structures are obtained by composing the given one with natural monoidal automorphisms of the identity functor. In $\rep\Q$, these are given by acting with group-like elements in the centre of $\Q$. It follows from
Proposition~\ref{prop:cen-Q} below that these are precisely $\{ \one , \K^2 \}$. Modifying the pivotal structure by $\K^2$ has the effect of replacing $\beta^2$ by $-\beta^2$ in \eqref{balance-ribbon-Q}.
\end{remark}

\subsection{Integrals}
A \emph{two-sided integral} of a quasi-Hopf 
algebra $A$ is an element $\intQ\in A$ such that~\cite[Def.\,4.1]{Hausser:1999}
\be \label{eq:two-sided-int} \intQ a = \eps(a)\intQ = a \intQ \quad , \quad\text{for all }a\in A\gp \ee
In \cite[Sec.\,6]{BT} it is shown that factorisable quasi-Hopf algebras are unimodular. 
Together with \cite[Thm.\,4.3]{Hausser:1999} this shows that the space of two-sided integrals is one-dimensional. 
It is easy to see that every element in $\Q$  of the form 
\begin{align} \label{int-Q}
	\intQ= \nu \, 2^{\np}  \beta^2  \, \fp_1\fm_1\ldots\fp_\np\fm_\np \,\idem_0(1+\K) \qcq \nu\in\oC \ ,
\end{align}
satisfies \eqref{eq:two-sided-int}. 
	Indeed, $\intQ\K  = \intQ = \K \intQ$ and $\intQ\,\fpm_i = 0 = \fpm_i \intQ$.
The prefactor $\nu 2^{\np}\beta^2$ will be convenient later, when a normalisation condition will require the constant $\nu$ to be a sign (see Proposition~\ref{prop:int} below).

\subsection{The centre of \texorpdfstring{$\Q$}{Q}} \label{sec:Z(Q)}

We define 
\be \label{eq:epm}
\idem_1^\pm \coloneqq \ffrac12 \idem_1 \big(\one\mp \rmi\K \prod_{i=1}^\np (\one- 2\fp_i\fm_i)\big) \gp 
\ee

\begin{lemma}\label{lem:e1+-idem}
The elements $\idem_1^\pm$ are central orthogonal idempotents.
Moreover, the ribbon twist acts on $\idem_1^\pm$ by a scalar: 
\be \label{eq:epm2}
 \ribbon^{-1}\cdot \idem_1^\pm = \pm\,\beta^{-1} \idem_1^\pm \gp 
\ee
\end{lemma}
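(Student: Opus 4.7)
The most economical route is to recognise $\idem_1^{\pm}$ as the spectral projectors of a central involution of the ideal $\idem_1\Q$. Set
$$ P \;:=\; \prod_{i=1}^{\np}(\one - 2\fp_i\fm_i) \qquad\text{and}\qquad j \;:=\; \rmi\K P,$$
so that $\idem_1^{\pm} = \tfrac{1}{2}\idem_1(\one\mp j)$. Since $\idem_1 = \tfrac{1}{2}(\one-\K^2)$ is central (because $\K$ anticommutes with $\fpm_i$, hence $\K^2$ is central), $\idem_1\Q$ is a two-sided ideal, and it is a unital subalgebra with unit $\idem_1$. The whole lemma reduces to showing that (i) $j\cdot\idem_1$ is central in $\idem_1\Q$ and (ii) $j^2\cdot\idem_1=\idem_1$; then $\tfrac12\idem_1(\one\mp j)$ are automatically orthogonal idempotents.

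For (i), $\K$ commutes with $P$ because each factor $\one-2\fp_i\fm_i$ commutes with $\K$ (it anticommutes twice with $\fp_i$ and $\fm_i$). It therefore suffices to check that $\K P$ commutes with $\fpm_k$ on $\idem_1\Q$; equivalently, that $P$ anticommutes with $\fpm_k$ modulo $\idem_1$. A direct calculation using $(\fpm_k)^2=0$ and $\{\fp_k,\fm_k\}=\idem_1$ from~\eqref{def:Q} gives
$$\fpm_k(\one-2\fp_k\fm_k) \;=\; \fpm_k, \qquad (\one-2\fp_k\fm_k)\fpm_k \;=\; \fpm_k - 2\fpm_k\idem_1,$$
while for $i\neq k$ the factor $(\one-2\fp_i\fm_i)$ commutes with $\fpm_k$. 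Multiplying by $\idem_1$ the second identity equals $-\fpm_k\idem_1$, and one concludes $P\fpm_k\idem_1 = -\fpm_k P\idem_1$. Combined with the anticommutation $\K\fpm_k=-\fpm_k\K$, this yields $\K P\cdot \fpm_k\cdot\idem_1 = \fpm_k\cdot\K P\cdot\idem_1$, giving the required centrality. For (ii), $(\one-2\fp_k\fm_k)^2 = \one-4\fp_k\fm_k\idem_0$ vanishes against $\idem_1$, so $P^2\idem_1 = \idem_1$; with $\K^2\idem_1 = -\idem_1$ and $\rmi^2=-1$ this gives $j^2\idem_1 = \idem_1$. Then
$$(\idem_1^{\pm})^2 = \tfrac{1}{4}\idem_1(\one\mp j)^2 = \tfrac{1}{4}\idem_1(\one\mp 2j+j^2) = \tfrac{1}{2}\idem_1(\one\mp j) = \idem_1^{\pm},$$
and $\idem_1^{+}\idem_1^{-} = \tfrac{1}{4}\idem_1(\one-j^2) = 0$.

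For the twist action, note that $\K^2\idem_1=-\idem_1$ turns each factor of $\ribbon^{-1}$ in~\eqref{eq:ribinv} into $(\one+2\fp_k\fm_k\K^2)\idem_1 = (\one-2\fp_k\fm_k)\idem_1$, and the idempotent $\idem_0$ drops out, yielding
$$\ribbon^{-1}\idem_1 \;=\; -\beta^{-1}\rmi\K P\idem_1 \;=\; -\beta^{-1}\,j\cdot\idem_1.$$
Therefore, using $j^2\idem_1=\idem_1$ from step (ii),
$$\ribbon^{-1}\idem_1^{\pm} \;=\; -\tfrac{\beta^{-1}}{2}\,j\cdot\idem_1(\one\mp j) \;=\; \tfrac{\beta^{-1}}{2}\idem_1(-j\pm j^2) \;=\; \pm\tfrac{\beta^{-1}}{2}\idem_1(\one\mp j) \;=\; \pm\beta^{-1}\idem_1^{\pm}.$$
The only obstacle is bookkeeping of the signs produced by $\rmi$, by the anticommutations $\K\fpm_k=-\fpm_k\K$, and by the two cases $\pm$; there is no conceptual subtlety beyond isolating the involutive central element $j$ inside $\idem_1\Q$.
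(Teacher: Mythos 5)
Your proof is correct and follows essentially the same route as the paper: centrality is checked on the generators using $\{\fpm_i,\K\}=0$ and $\{\fp_i,\fm_j\}=\delta_{i,j}\idem_1$, idempotency and orthogonality come from $(\one-2\fp_i\fm_i)^2\idem_1=\idem_1$, and your central involution $j=\rmi\K\prod_i(\one-2\fp_i\fm_i)$ satisfies $j\idem_1=-\beta^{-1}\ribbon\idem_1$, so your final computation is exactly the paper's rewriting $\idem_1^\pm=\tfrac12\idem_1(\one\pm\beta^{-1}\ribbon)$ in different notation. One cosmetic slip: for the lower sign your two displayed identities are swapped (in fact $\fm_k(\one-2\fp_k\fm_k)=\fm_k-2\fm_k\idem_1$ and $(\one-2\fp_k\fm_k)\fm_k=\fm_k$), but the relation you actually use, $P\fpm_k\idem_1=-\fpm_k P\idem_1$, still holds, so nothing breaks.
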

\begin{proof}
It is easy to see the commutativity property of $\idem_1^\pm$. For example, since 
\be  \K(\one-2\fp_i\fm_i)\fp_i \idem_1= -\K\fp_i \idem_1= \fp_i\K(\one-2\fp_i\fm_i) \idem_1  \ee
$\idem_1^\pm$ commutes with $\fp_i$.

The orthogonality and idempotent property follow immediately from $\idem_1 (\one-2\fp_i\fm_i)^2=\idem_1$, see \eqref{eq:prod-inverse}.
In order to prove \eqref{eq:epm2} we express $\idem_1^\pm$ in terms of $\ribbon$ from \eqref{ribbon+ribinv} as
\be  
\idem_1^\pm = \ffrac12\idem_1(\one\pm \beta^{-1}\ribbon) \qp 
\ee
Together with $\beta^{-1} \ribbon \idem_1 = \beta \ribbon^{-1} \idem_1$, this gives
\be
\ribbon^{-1} \idem_1^\pm = \tfrac12\idem_1(\ribbon^{-1} \pm \beta^{-1} \one)=\pm\beta^{-1}\tfrac12\idem_1(\pm \beta \ribbon^{-1} + \one)=\pm\beta^{-1}\idem_1^\pm \qp
\ee
\end{proof}

\begin{proposition}\label{prop:cen-Q}
	The centre of $\Q$ is $\Zc(\Q) = \Zc_0 \oplus \Zc_1$, where
	\begin{align}	\label{eq:cQ}
	\Zc_0 &~\coloneqq~ \mathrm{span}_\oC\Big\{ \idem_0 \prod_{j=1}^{2k} \ff_{i_j}^{\eps_j}  \,\big|\, 
	k\in \{ 0,1,\dots,N \},\,  
	i_j \in \{1, \dots, \np\},\,  \eps_j=\pm \Big\} 
	~\oplus~ 
\oC\,  \K\idem_0 \prod_{i=1}^\np \fp_i\fm_i 
	\gc \\  \nonumber
	\Zc_1 &~\coloneqq~ \mathrm{span}_\oC\big\{ \idem_1^+,\idem_1^- \big\} \ .   
	\end{align}
	It has dimension $3+2^{2\np-1}$.
\end{proposition}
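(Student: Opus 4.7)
The plan is to exploit the decomposition $\Q=\Q_0\oplus\Q_1$ from~\eqref{Q-decomp}, which gives $\Zc(\Q)=\Zc(\Q_0)\oplus\Zc(\Q_1)$, and to compute each summand by direct commutator calculations on the basis of monomials $\ff^S\K^n\,\idem_i$, where $S\subseteq\{+1,\dots,+\np,-1,\dots,-\np\}$ is ordered and $n\in\{0,1\}$ (using $\K^2\idem_0=\idem_0$ and $\K^2\idem_1=-\idem_1$, cf.~\eqref{Q0Q1}).

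For $\Zc(\Q_0)$: on this sector the anticommutator $\{\ff^+_i,\ff^-_j\}\idem_0=\delta_{i,j}\idem_1\idem_0$ vanishes, so all $\ff^{\pm}_i\idem_0$ mutually anticommute and each anticommutes with $\K$. Commutation with $\K$ forces $|S|$ to be even since $\ff^S\K=(-1)^{|S|}\K\ff^S$. For a generator $\ff^{\pm}_j$ not appearing in $S$ (otherwise both sides of the commutator vanish by $(\ff^{\pm}_j)^2\idem_0=0$), a sign-count gives
\[
[\ff^{\pm}_j,\,\ff^S\K^n]\,\idem_0 \;=\; \bigl((-1)^{|S|}-(-1)^n\bigr)\,\ff^S\ff^{\pm}_j\K^n\,\idem_0,
\]
which, with $|S|$ even, vanishes iff $n=0$ or $\ff^{\pm}_j\in S$. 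Imposing this for every generator yields precisely two cases: (i) $n=0$ with $|S|$ even otherwise arbitrary, contributing the even part of the Gra\ss{}mann subalgebra of $\Q_0$, of dimension $2^{2\np-1}$; and (ii) $n=1$ with $S$ containing every generator, forcing $S=\{\pm 1,\dots,\pm\np\}$ and producing the one-dimensional line spanned by $\K\idem_0\prod_i\fp_i\fm_i$. Altogether $\dim\Zc_0=2^{2\np-1}+1$, matching the claimed basis.

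For $\Zc(\Q_1)$: here $\{\fp_i,\fm_j\}\idem_1=\delta_{ij}\idem_1$ makes $\algCl_{2\np}\cong M_{2^{\np}}(\oC)$, and $\Q_1\cong\algCl_{2\np}\rtimes\oC\oZ_2$ as in~\eqref{Q0Q1}, with $\oZ_2$ generated by $\rmi\K\idem_1$ acting as $\ff^{\pm}_j\mapsto-\ff^{\pm}_j$. The crucial point is that this action is \emph{inner}: set $u:=\prod_{i=1}^{\np}(\one-2\fp_i\fm_i)\,\idem_1$. Then~\eqref{eq:prod-inverse} (specialised to the $\idem_1$-sector, where $\K^2=-\idem_1$) gives $u^2=\idem_1$; a direct Clifford-relation computation using $\fm_j\fp_j\idem_1=\idem_1-\fp_j\fm_j$ yields $u\,\ff^{\pm}_j\,u=-\ff^{\pm}_j$; and $u$ is even in the $\ff$'s so $u\K=\K u$. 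Consequently $\rmi\K u\,\idem_1$ is central in $\Q_1$ and squares to $\idem_1$, and its $\mp 1$-spectral projectors are precisely the $\idem_1^{\pm}$ of~\eqref{eq:epm}. Since the $\oZ_2$-action is inner, each block $\idem_1^{\pm}\Q_1$ is isomorphic to $\algCl_{2\np}\cong M_{2^{\np}}(\oC)$, whose centre is one-dimensional, giving $\Zc(\Q_1)=\oC\idem_1^+\oplus\oC\idem_1^-$, of dimension $2$. The total dimension is $(2^{2\np-1}+1)+2=2^{2\np-1}+3$, as required.

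The main obstacle is the careful sign bookkeeping in the commutator computation on the $\Q_0$-sector (separating the cases where $\ff^{\pm}_j$ is absorbed by $\ff^S$ from the cases where it slides past), and verifying the inner-automorphism identity $u\ff^{\pm}_j u=-\ff^{\pm}_j$ on $\Q_1$, which is exactly what identifies the centre of the crossed product $\algCl_{2\np}\rtimes\oC\oZ_2$ with $\oC\oZ_2$ and pins down the explicit form of $\idem_1^{\pm}$.
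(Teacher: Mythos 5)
Your argument is correct and follows essentially the same route as the paper: decompose $\Zc(\Q)=\Zc(\Q_0)\oplus\Zc(\Q_1)$, analyse $\Q_0$ monomial by monomial (commutation with $\K$ forces even Gra\ss{}mann degree, and a $\K$-factor survives only on the top monomial $\prod_i\fp_i\fm_i$), and identify $\Zc(\Q_1)$ as the span of $\idem_1^\pm$. The only difference is presentational: your explicit inner-automorphism computation with $u=\prod_i(\one-2\fp_i\fm_i)\idem_1$ spells out what the paper delegates to \eqref{Q0Q1} and Lemma~\ref{lem:e1+-idem}, namely that $\Q_1$ splits into two matrix blocks cut out by the central idempotents $\idem_1^\pm$.
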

\begin{proof}
{}From \eqref{Q0Q1} we know that
the ideal $\Q_1$ is a direct sum of two matrix algebras, so its centre is two-dimensional and is therefore spanned by the central idempotents $\idem_1^{\pm}$ from Lemma~\ref{lem:e1+-idem}.
It remains to compute the centre $\Zc_0$ of $\Q_0$. For an element of $\Q_0$ to commute with $\K$, it must be a sum of monomials in $\ff^{\pm}_i\idem_0$ of even degree, where each monomial can be multiplied by $\K$, that is,
\be\label{eq:Z0sub}
	\Zc_0 ~\subset~ \mathrm{span}_\oC\Big\{ \K^\delta  \prod_{j=1}^{2k} \ff_{i_j}^{\eps_j} \idem_0 \,\big|\, 
	k\leq \np \,,\, 1\leq i_j \leq\np \,,\, \eps_j=\pm \,,\,
 \delta \in \{0,1\}
\Big\} \ .
\ee
Any monomial from RHS of~\eqref{eq:Z0sub} with $\delta=0$ obviously commutes with $\fpm_i$, for $1\leq i\leq \np$, while
a monomial with $\delta=1$  commutes with all $\fpm_i$ iff it is annihilated by all $\fpm_i$. This gives the expression for $\Zc_0$ in \eqref{eq:cQ}.
Since the $\oC$-linear span of the even degree monomials (multiplied by $\idem_0$) has the dimension $2^{2\np-1}$, the overall dimension of the centre is
\begin{equation}
\dim_\oC \Zc(\Q) = 3 + 2^{2\np-1} \ .
\end{equation}
\end{proof}

\subsection{Simple and projective \texorpdfstring{$\Q$}{Q}-modules}\label{sec:modules}

Recall the decomposition~\eqref{Q-decomp} of $\Q$ onto  the  direct sum of two algebras  $\Q_0 \oplus \Q_1$ where the first  is the Gra\ss{}mann algebra times $\oZ_2$, which is non-semisimple, while the second is 
 the Clifford algebra times $\oZ_2$, which is semisimple. Therefore,
the algebra $\Q$ has up to an isomorphism only four simple modules that we will denote as $\XX^{\pm}_{s}$, with $s=0,1$, and where 
$\XX^{\pm}_{0}\in\rep \Q_0$ while  $\XX^{\pm}_{1}\in\rep \Q_1$. They are of highest-weight type:
 $\XX^{\pm}_{0}$ are one-dimensional of weights $\pm1$ with respect to $\K$ and with zero action of $\fpm_i$, 
 i.e.\ they are spanned by $v^{\pm}_0$ such that
\be\label{eq:v00}
\K.v^{\pm}_0 = \pm v^{\pm}_0\ ,\qquad
\fpm_i .v^{\pm}_0 = 0\ ;
\ee
$\XX^{\pm}_{1}$ are of the highest weights $\pm\rmi$, i.e.\ they are generated by $v^{\pm}_1$ such that
\be\label{eq:v0}
\K.v^{\pm}_1 = \pm\rmi v^{\pm}_1\ ,\qquad
\fm_i .v^{\pm}_1 = 0\ .
\ee
	A basis of $\XX^{\pm}_{1}$ is given by the set
\be\label{eq:X1+-_basis}
	\Big\{ \, v^{\pm}_i := \prod_{k=1}^{\np}
(\fp_k)^{i_k}.v^{\pm}_1 \,\Big|\, i=(i_1,\dots, i_\np) \,,\, 
 i_k\in \{0,1\} \,\Big\} \ .
\ee
In particular,
the dimension of $\XX^{\pm}_{1}$ is $2^\np$. 
In terms of the central idempotents $\idem_1^\pm$ from~\eqref{eq:epm},  the modules $\XX^{\pm}_{1}$ are direct summands in 
$\Q\idem_1^\pm \cong \bigl(\XX^{\pm}_{1}\bigr)^{\oplus 2^\np}$, recall that $\Q_1$ is a semisimple algebra.
The modules $\XX^{\pm}_{1}$ are therefore projective.

We note that 
\be\label{eq:idem0-pm}
\idem^{\pm}_0 = \half(\one\pm \K)\idem_0
\ee
 are primitive (non-central) idempotents and $\K \idem^{\pm}_0 = 
  \idem^{\pm}_0 \K
  = \pm \idem^{\pm}_0$.
The module $\PP^{\pm}_{0}:= \Q \idem^{\pm}_0$ is therefore a projective cover for $\XX^{\pm}_{0}$.
It is indecomposable but reducible and  has the basis 
	\be
\PP^{\pm}_{0}\, :\; \text{span} \Big\{
\Big(\prod_{k=1}^{\np}
(\fp_k)^{i^+_k}(\fm_k)^{i^-_k}\Big) \idem^{\pm}_0 
\,\Big|\, i^+_k, i^-_k\in\{0,1\}  
\  \Big\} \ .
\ee
The dimension of $\PP^{\pm}_{0}$ is thus $2^{2\np}$ and each of them has $2^{2\np-1}$ copies of $\XX^{\pm}_{0}$ 
in its  composition series.
We can finally conclude that the 
 \textit{Cartan matrix}\footnote{Its matrix elements are the multiplicities of the simple $\Q$-module $V$ in the composition series of the projective cover $P_U$ of $U$, i.e.\ $\CM(\Q)_{U,V} = \Hom_\Q(P_V,P_U)$.}  $\CM(\Q)$ 
is
\be\label{eq:CM}
\CM(\Q)
 ~=~ 
	\begin{pmatrix}
	2^{2\np-1} & 2^{2\np-1} & 0 & 0  \\
	2^{2\np-1} & 2^{2\np-1} & 0 & 0  \\
	0 & 0 & 1 & 0 \\
	0 & 0 & 0 & 1
	\end{pmatrix} \ .
\ee
One can check now the dimension of $\Q$ by decomposing it as the left regular representation: $\Q = \PP^{+}_{0} \oplus \PP^{-}_{0} \oplus 2^{\np}\XX^{+}_{1} \oplus  2^{\np}\XX^{-}_{1}$ and this indeed gives $\dim\Q = 2^{2\np+2}$.

\newcommand\Cb            {\mathbb{C}}
\newcommand{\KK}{\kappa}
\subsection{Basic algebra}

The basic algebra of $\Q$ is $E:=\End_{\Q}(G_\Q)$ where $G_\Q$ is the minimal projective generator
\be\label{eq:G_Q}
G_\Q =  \PP^{+}_{0} \oplus  \PP^{-}_{0} \oplus \XX^{+}_{1} \oplus \XX^{-}_{1} \ .
\ee 
In what follows, we will need a description of this algebra. Recall from~\eqref{Q0Q1} that $\algGr:=\algGr_{2\np}$ is the subalgebra in $\Q_0$ generated by $\ff^{\pm}_i\idem_0$.

\begin{lemma}\label{lem:End-of-projgen-Q}
$E^{\mathrm{op}} = (\algGr \oplus \Cb e_{\XX}) \rtimes \Cb\oZ_2$, where
\begin{itemize}
\item $e_\XX$ denotes	the idempotent corresponding to $\XX^{+}_{1} \oplus \XX^{-}_{1}$, i.e.\ it acts as identity on $\XX^{+}_{1} \oplus \XX^{-}_{1}$ and as zero otherwise,

\item 
 $a \in \algGr$ acts 
 on $\PP^{+}_{0} \oplus  \PP^{-}_{0} = \Q_0$ 
 by right multiplication,
\item
the generator  $\KK$
 of the group algebra $\Cb \oZ_2$ acts on $\PP^{+}_{0} \oplus  \PP^{-}_{0}$ by right multiplication with~$\K$ which is  $\pm\id$ on  $\PP^{\pm}_{0}$; 
 on $\XX^{\pm}_{1}$ it equally acts by $\pm\id$,
 
\item the algebra structure 
is that of $\algGr \oplus \Cb e_\XX$ with   $\KK a = - a \KK$ for all 
$a=\ff^{\pm}_i\idem_0$, 
$\KK e_\XX = e_\XX\KK$.
\end{itemize}
\end{lemma}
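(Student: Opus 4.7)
My plan is to realise $G_\Q=\Q e$ for a suitable idempotent $e\in\Q$, then use the standard identification $\End_{\Q}(\Q e)^{\mathrm{op}}\cong e\Q e$ and compute $e\Q e$ directly via the block structure $\Q=\Q_0\oplus\Q_1$. From Section~\ref{sec:modules} I have $\PP^{\pm}_{0}=\Q\idem^{\pm}_{0}$ with $\idem^{+}_{0}+\idem^{-}_{0}=\idem_{0}$. For the semisimple block, $\Q\idem_{1}^{\pm}$ is isomorphic to $2^{\np}$ copies of $\XX^{\pm}_{1}$, so I choose primitive idempotents $e^{\pm}_{\XX}\in\idem_{1}^{\pm}\Q\idem_{1}^{\pm}$ with $\Q e^{\pm}_{\XX}\cong\XX^{\pm}_{1}$ (any rank-one projector in the corresponding matrix block will do). Setting $e:=\idem_{0}+e^{+}_{\XX}+e^{-}_{\XX}$ then gives $\Q e\cong G_\Q$ and hence $E^{\mathrm{op}}\cong e\Q e$.

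Next I decompose $e\Q e$. The idempotents $\idem_0,\idem_1^{+},\idem_1^{-}$ are pairwise orthogonal and central, and $e^{\pm}_{\XX}\in\idem_{1}^{\pm}\Q$. Hence $\idem_{0}\Q e^{\pm}_{\XX}=0$ (the right factor lies in $\Q_1$, which $\idem_0$ annihilates) and $e^{+}_{\XX}\Q e^{-}_{\XX}=0$ (from centrality of $\idem_1^{\pm}$ together with $\idem_1^{+}\idem_1^{-}=0$). These vanishings give
\als{
e\Q e~=~\idem_{0}\Q\idem_{0}~\oplus~ e^{+}_{\XX}\Q e^{+}_{\XX}~\oplus~ e^{-}_{\XX}\Q e^{-}_{\XX}~=~\Q_{0}~\oplus~\Cb e^{+}_{\XX}~\oplus~\Cb e^{-}_{\XX}\gp
}
The final equality uses \eqref{Q0Q1} for the first summand and Schur's lemma for the other two (invoking that $\XX^{\pm}_{1}$ are simple over the algebraically closed field $\Cb$).

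To match the asserted crossed-product form I set $e_{\XX}:=e^{+}_{\XX}+e^{-}_{\XX}$ and
\als{
\KK~:=~\K\idem_{0}~+~e^{+}_{\XX}~-~e^{-}_{\XX}~\in~ e\Q e\gp
}
Using $\K^{2}\idem_{0}=\idem_{0}$, $(e^{\pm}_{\XX})^{2}=e^{\pm}_{\XX}$, the orthogonality $e^{+}_{\XX}e^{-}_{\XX}=0$, and $\idem_{0}\cdot e^{\pm}_{\XX}=0$, a short computation gives $\KK^{2}=e$, so $\KK$ generates a $\Cb\oZ_{2}$ inside $E^{\mathrm{op}}$. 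From $\{\K,\fpm_{i}\}=0$ in $\Q$ together with $a\cdot e^{\pm}_{\XX}=e^{\pm}_{\XX}\cdot a=0$ for $a\in\algGr\subset\Q_{0}$ I obtain $\KK\, a=-a\,\KK$ on the generators $a=\fpm_{i}\idem_{0}$, while $\KK\,e_{\XX}=e^{+}_{\XX}-e^{-}_{\XX}=e_{\XX}\,\KK$. This identifies $E^{\mathrm{op}}$ with $(\algGr\oplus\Cb e_{\XX})\rtimes\Cb\oZ_{2}$ as claimed, and the stated action on the summands of $G_\Q$ follows by computing right multiplication by $\KK$ on the generators $\idem^{\pm}_{0}$ of $\PP^{\pm}_{0}$ and on $e^{\pm}_{\XX}\in\XX^{\pm}_{1}$. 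I do not anticipate any substantive obstacle: the required inputs (centrality of $\idem_{0},\idem_{1}^{\pm}$ and simplicity of $\XX^{\pm}_{1}$) are already established in Section~\ref{sec:modules} and Lemma~\ref{lem:e1+-idem}, and as a sanity check $\dim E^{\mathrm{op}}=\dim\Q_{0}+2=2^{2\np+1}+2$ agrees with $\sum_{U,V}[P_{V}:U]$ computed from the Cartan matrix~\eqref{eq:CM}.
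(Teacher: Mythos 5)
Your proof is correct and follows essentially the same route as the paper's: the paper splits $E=\End_{\Q}(\PP^{+}_{0}\oplus\PP^{-}_{0})\oplus\End_{\Q}(\XX^{+}_{1}\oplus\XX^{-}_{1})$, identifies the first summand with $\Q_0^{\mathrm{op}}$ acting by right multiplication on its own regular representation (which is exactly your corner computation $\idem_0\Q\idem_0=\Q_0$), and gets $\oC\oplus\oC$ for the second summand, just as you do via Schur's lemma. Your uniform packaging through $G_\Q\cong \Q e$ and $E^{\mathrm{op}}\cong e\Q e$, with the explicit element $\KK=\K\idem_0+e^{+}_{\XX}-e^{-}_{\XX}$ realising the $\oZ_2$-generator, is only a slightly more explicit reformulation of the same argument.
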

\begin{proof}
 We have the decomposition $\End(G_\Q) = E_0 \oplus E_1$ where $E_0=\End_\Q( \PP^{+}_{0} \oplus  \PP^{-}_{0})$ and $E_1= \End_\Q(\XX^{+}_{1} \oplus \XX^{-}_{1})$. The latter algebra is  a direct sum of two matrix algebras of dimension~1 each, and is isomorphic to the algebra $\Cb e_\XX\rtimes \oC\oZ_2$ from the statement.
We then note that $\PP^{+}_{0} \oplus  \PP^{-}_{0}$ is 
	equal
to the left regular representation of $\Q_0$, with the action by left multiplication. The algebra centralising this action is given by $\Q_0^{\mathrm{op}}$ which acts by right multiplication. Therefore 
 $E_0^{\mathrm{op}}=\Q_0$.
We then recall from~\eqref{Q0Q1} that 
  $\Q_0 = \algGr\rtimes \oC\oZ_2$, with   the same $\oZ_2$ action as in the statement. The fact that $\K$ acts by $\pm\id$ on the direct summands $\PP^{\pm}_{0}$ follows from the identification $\PP^{\pm}_{0}:= \Q \idem^{\pm}_0$, recall~\eqref{eq:idem0-pm}.  This finally proves the lemma.
\end{proof}

\begin{remark}
In the above lemma we give $E^\mathrm{op}$ instead of $E$ as we describe the endomorphism algebra via a right action. However, 
from the defining relations of $\Q$ it is easy to give an algebra isomorphism $E^\mathrm{op} \to E$, 
e.g.\ via the antipode $S$.
We also note that
$\repQ$ is equivalent to $\rep E$ as abelian categories,
i.e.\ $\Q$ and $E$ are Morita equivalent, and $E$ is the minimal algebra with such a property.
\end{remark}

We recall then  the equivalence stated in Theorem~\ref{SF-repQ-rib-eq} (we need only the equivalence of abelian $\oC$-linear categories) and given by the functor $\funQSF\colon\catSF\to\repQ$. 
Under this functor, the projective covers  are mapped as $\funQSF(P_{\one})=\PP^+_0$, $\funQSF(P_{\Pi\one})=\PP^-_0$, $\funQSF(T)=\XX^+_0$, and  $\funQSF(\Pi T)=\XX^-_0$, recall~\eqref{eq:SF-proj}. In particular, the minimal projective generator in $\catSF$
\be\label{eq:G-def}
G_\catSF := P_{\one} \oplus P_{\Pi \one} \oplus T \oplus \Pi T 
= (\algGr \oplus T) \otimes \oC^{1|1}
\ee
  goes to $\funQSF(G_{\catSF}) \cong G_{\Q}$.

\newcommand{\funSFQ}{\mathcal J}

We denote the functor inverse to $\funQSF$, as a $\oC$-linear functor, by $\mathcal J: \repQ \to \catSF$ and $\mathcal J = \mathcal E \circ \funQS$, where $\mathcal E$ 
is given in the proof of Proposition~\ref{prop:D-is-C-lin-equiv} and $\funQS$ in the proof of Proposition~\ref{prop:G-equiv}.
	To describe $\End_{\catSF}(G_\catSF)$ explicitly, consider
the isomorphism $\psi: \algGr \otimes \oC^{1|1}\xrightarrow{\sim} \funSFQ(\Q_0)$ given by
\be
\psi \colon\; f\tensor v \mapsto f\cdot \bigl(v_0 \idem_0^+ + v_1 \idem_0^-\bigr)\ ,\qquad f\in\algGr, \; v\in\oC^{1|1}\ ,
\ee
where $v_{0/1}\in\oC$ is the even/odd component of $v$. Using this isomorphism, we have that the endomorphism $R_a$ of $\Q_0$ from Lemma~\ref{lem:End-of-projgen-Q} given by the right multiplication with $a\in\algGr$ goes under the functor $\funSFQ$ to
\begin{align}\label{eq:SF-a-acts}
\psi^{-1} \circ \funSFQ(R_a) \circ \psi
\colon \;& \algGr \otimes \oC^{1|1} \to  \algGr \otimes \oC^{1|1}\ ,  \\
& f\otimes v \mapsto f\cdot a\otimes \Pi^{\deg a} v\gp\nonumber
\end{align}
Indeed, the shift of the degree part is due to 
\be
\psi^{-1}(R_a(f\idem^{\pm}_0)) = \psi^{-1}(f\idem^{\pm}_0\cdot a) = \psi^{-1}(f\cdot a\idem^{\mp}_0) = f\cdot a\otimes \Pi (-)\ ,
\ee
for odd $a$. 
Then, as a corollary to Lemma~\ref{lem:End-of-projgen-Q} we	get:

\begin{corollary}\label{lem:End-of-projgen-SF}
For the minimal projective generator $G_\catSF$ the opposite algebra of $\End_{\catSF}(G_\catSF)$ is
$(\algGr \oplus \Cb e_{T}) \rtimes \Cb\oZ_2$ where
\begin{itemize}
\item $e_T$ is	the idempotent corresponding to $T \oplus \Pi T$,

\item the generator $\KK$ acts by $\id$ on
$P_{\one}$ and $T$ and by $-\id$ on $P_{\Pi\one}$ and $\Pi T$,

\item the element $a\in\algGr$ acts  as in~\eqref{eq:SF-a-acts},

\item the algebra structure 
is that of $\algGr \oplus \Cb e_T$ with   $\KK a = - a \KK$ for odd $a$, 
$\KK e_T = e_T\KK$.
\end{itemize}
\end{corollary}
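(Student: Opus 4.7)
The plan is to transport the description of $\End_\Q(G_\Q)^{\mathrm{op}}$ provided by Lemma~\ref{lem:End-of-projgen-Q} through the $\oC$-linear equivalence $\funSFQ\colon\rep\Q\to\catSF$ (the inverse of $\funQSF$ as $\oC$-linear functors), which is available by Theorem~\ref{SF-repQ-rib-eq}. Since $\funQSF(G_\catSF)\cong G_\Q$, applying $\funSFQ$ yields an algebra isomorphism
\als{
\End_\catSF(G_\catSF)^{\mathrm{op}} \;\cong\; \End_\Q(G_\Q)^{\mathrm{op}} \;=\; (\algGr \oplus \Cb e_\XX) \rtimes \Cb\oZ_2,
}
so as an abstract algebra the desired description is immediate; the content of the statement is to spell out how the generators act on the four summands of $G_\catSF$.

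First I would handle the semisimple part $T\oplus \Pi T$. Under $\funQSF$ these summands go to $\XX^\pm_1$, and Lemma~\ref{lem:End-of-projgen-Q} identifies the corresponding idempotent with $e_\XX$ and records that $\KK$ acts as $\pm\id$ on $\XX^\pm_1$. Transporting back, $e_T$ is the projection onto $T\oplus \Pi T$, and since the equivalence $\funQSF$ sends $T\mapsto \XX^+_1$ and $\Pi T\mapsto \XX^-_1$, we conclude $\KK$ acts as $\id$ on $T$ and as $-\id$ on $\Pi T$, as claimed.

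Next I would treat the non-semisimple part $P_\one \oplus P_{\Pi\one}$. Under $\funQSF$ this is $\PP^+_0\oplus\PP^-_0=\Q_0$, and Lemma~\ref{lem:End-of-projgen-Q} realises the endomorphism algebra as $\Q_0^\mathrm{op}=\algGr\rtimes\oC\oZ_2$ via right multiplication. Transporting this through $\funSFQ$ is exactly the content of equation~\eqref{eq:SF-a-acts}: the isomorphism $\psi\colon \algGr\tensor\oC^{1|1}\xrightarrow{\sim}\funSFQ(\Q_0)$ intertwines right multiplication by $a\in\algGr$ with the map $f\tensor v\mapsto fa\tensor \Pi^{\deg a}v$, and intertwines right multiplication by $\K$ (which equals $\pm\id$ on $\idem_0^\pm\Q_0$) with the map acting as $\id$ on $P_\one\subset\algGr\tensor\oC^{1|0}$ and $-\id$ on $P_{\Pi\one}\subset\algGr\tensor\oC^{0|1}$. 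Combined with the previous paragraph this gives the stated action of $\KK$ on all four summands.

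Finally, the algebra relations are preserved by any algebra isomorphism, so $\KK a = -a\KK$ for odd $a\in\algGr$ and $\KK e_T=e_T\KK$ follow directly from the corresponding relations in Lemma~\ref{lem:End-of-projgen-Q}. The only step that requires genuine verification (rather than abstract transport) is the explicit form of the $\algGr$-action on $P_\one\oplus P_{\Pi\one}$, and this is precisely what equation~\eqref{eq:SF-a-acts} already provides; the main technical content is therefore the compatibility of the isomorphism $\psi$ with the parity shift, which is immediate from $a\idem_0^{\pm}=\idem_0^{\mp}a$ for odd $a$.
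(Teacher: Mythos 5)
Your proof is correct and follows essentially the same route as the paper: the corollary is obtained by transporting Lemma~\ref{lem:End-of-projgen-Q} through the $\oC$-linear equivalence, with the only computation being the parity-shift behaviour of right multiplication recorded in \eqref{eq:SF-a-acts}, which is exactly how the paper derives it. The identity $a\,\idem_0^{\pm}=\idem_0^{\mp}a$ for odd $a$ that you cite is precisely the justification given in the text preceding the corollary.
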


\section{Properties of the coend in \texorpdfstring{$\rep\Q$}{Rep(Q)}}
\label{coendQ}

In this section we investigate the universal Hopf algebra $\coend$ of $\rep\Q$, which can be expressed as a coend of a certain functor. We refer to Sections~I:\ref*{I-sec:univ-hopf-braided} and I:\ref*{I-sec:univ-hopf-via-coends} for general background and references on the universal Hopf algebra of a braided monoidal category with duals, and to Section~I:\ref*{I-sec:coend-repA} for the particular case of categories of representations of quasi-triangular quasi-Hopf algebras.

We give explicitly the Hopf algebra structure and Hopf pairing on the universal Hopf algebra $\coend$ of $\rep\Q$, we verify by direct calculation that the Hopf pairing is non-degenerate, and we describe the integrals and cointegrals of $\coend$.
In section~\ref{sec:internal-ch}, we also calculate the central elements corresponding to internal characters of $\coend$.

\subsection{The universal Hopf algebra \texorpdfstring{$\coend$}{L}}

By Proposition~I:\ref*{I-prop:coend-RepH}, universal Hopf algebra -- described by a coend $\coend$ in $\repQ$ -- can be chosen to be the object $\coend = \Q^\ast$ equipped with the coadjoint action 
\be
	\Q \otimes \Q^* \to \Q^*
	\gcg\qquad
	a \otimes \varphi
	\mapsto
	\sum_{(a)} \varphi\big( S(a') (-) a'' \big) \ , 
\ee
and the dinatural transformation
\be
 \iota_M \colon M^\ast\tensor M \to \Q^\ast \ ,\qquad \varphi\tensor m \mapsto \big(a\mapsto \varphi(a.m)\big) \gcg M\in\repQ 
 \ee
(see Figure~I:\ref*{I-fig:coend-qHopf} for a string diagram representation).

\newcommand{\mm}{\hat \mu_\coend}
\newcommand{\co}{\hat \Delta_\coend}
\newcommand{\un}{\hat \eta_\coend}
\newcommand{\coun}{\hat \eps_\coend}
\newcommand{\anip}{\hat S_\coend}
\newcommand{\Hp}{\hat \omega_\coend}
\newcommand{\Co}{D}

We define the contraction maps
\begin{align}
	\langle-,-\rangle &: \Q^* \otimes \Q \to \oC \ ,
	&&\langle \varphi,a \rangle = \varphi(a) \ ,
	\\ \nonumber
	\langle-,-\rangle &: \Q^* \otimes \Q^* \otimes \Q\otimes \Q \to \oC \ ,
	&&\langle \varphi \otimes\psi,a \otimes b \rangle = \varphi(b)\psi(a)   
	\ .
	\hspace*{4em}
\end{align}
As in Section I:\ref*{I-sec:Hopf-structure-repA}, we will express the Hopf algebra structure maps of $\coend$ in terms of their duals as follows ($f,g \in \Q^*$, $a,b \in \Q$):
\begin{align} \label{eq:coend:structuremaps}
\big\langle \muc (f\tensor g) \,,\, a \big\rangle
 &~=~ \big\langle f\tensor g \,,\, \mm(a) \big\rangle
 &&, \quad \mm : \Q \to \Q \otimes \Q \ ,
 \\ \nonumber 
\big\langle \Delta_\coend (f) \,,\, a \otimes b \big\rangle
 &~=~ \big\langle f \,,\, \co(a \otimes b) \big\rangle 
 &&, \quad \co : \Q \otimes \Q \to \Q\ ,
 \\ \nonumber 
\eta_\coend (1) &~=~ \big(\, a\mapsto \un(a) \, \big)  
 &&, \quad \un : \Q \to \oC \ ,
 \\ \nonumber
\eps_\coend (f) &~=~ f(\coun) 
 &&, \quad \coun \in \Q  \ ,
 \\ \nonumber
\big\langle S_\coend(f) \,,\, a \big\rangle &~=~ 
\big\langle f \,,\, \anip(a) \big\rangle  
 &&, \quad \anip : \Q \to \Q \ ,
 \\ \nonumber
	\omega_\coend (f\tensor g) &~=~  \big\langle f\tensor g \,,\, \Hp \big\rangle
 &&, \quad \Hp \in \Q \otimes \Q \ .
\end{align}

\begin{proposition}\label{prop:coend-Q}
	Via the dualisation in \eqref{eq:coend:structuremaps}, the Hopf algebra structure maps and the Hopf pairing on the coend $\coend = \Q^\ast$ are given by
\begin{align} \label{eq:qHopf-coend-dualstructuremaps}
\mm(a) &~=~ \sum_{\substack{(R),(a),\\ n,m\in\oZ_2}}\left(\K^{\np nm}R_2\K^{\np m}\right)\leftact a'\tensor(\K^{\np n(1-m)}\leftact a'') R_1 \cdot \idem_n\tensor\idem_m  \gc
 \\ \nonumber
\co(a\ot b)&~=~  ba \gcg 
 \\ \nonumber
\un(a)&~=~ \eps( a) \gc 
 \\ \nonumber
\coun&~=~\one \gc 
 \\ \nonumber
\anip(a)&~=~ \sum_{(R)} S(aR_1)\tilde\sqs R_2 \ ,
\\ \nonumber
\Hp&~=~ \sum_{(M)} S(M_2)\tensor M_1\idem_0 + S(\K^{-\np}M_2\K^{\np})\tensor M_1 \idem_1
\ ,
\end{align}
where $h\leftact a := \sum_{(h)} S(h')ah''$ defines an action of $\Q^{\operatorname{op}}$ on $\Q$, and $\tilde\sqs$ was defined in \eqref{sqs+sqsinvbr-Q}. 
\end{proposition}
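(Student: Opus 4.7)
My plan is to specialise the general closed-form expressions for the Hopf algebra structure maps and Hopf pairing of $\coend \cong A^\ast$ in $\rep A$ --- derived for an arbitrary finite-dimensional quasi-triangular quasi-Hopf algebra $A$ in Section~I:\ref*{I-sec:coend-repA} of the prequel --- to the case $A = \Q$, using the explicit data~\eqref{def:delta}, \eqref{Phi+Phi-inv}, \eqref{eq:Q-antipode-def}, \eqref{R+Riv} and the formula~\eqref{sqs+sqsinvbr-Q} for $\tilde\sqs$. Since each of the six structure maps in~\eqref{eq:coend:structuremaps} has a closed-form expression in the general quasi-Hopf setting, each case reduces to substitution followed by sector-by-sector simplification via the central idempotents $\idem_0$ and $\idem_1$.

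The three easy identifications come first. The coproduct on $\coend$ dualises to the opposite multiplication in $\Q$ (as a consequence of how the coadjoint $\Q$-action on $\Q^\ast$ interacts with the braiding used to multiply the coend in $\rep\Q$), giving $\co(a\ot b) = ba$. The unit $\eta_\coend$ dualises to the counit of $\Q$, so $\un = \eps$, and the counit $\eps_\coend$ is evaluation at the identity, whence $\coun = \one$. For the remaining three entries, $\Phi$ from~\eqref{Phi+Phi-inv} is the sole source of genuine quasi-Hopf corrections, and its very simple form --- supported on $\idem_1\tensor\idem_1\tensor(\idem_0+\idem_1)$ and acting there by scalars times powers of $\K^\np$ --- makes the sector-by-sector reduction manageable. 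For the multiplication $\mm$, the general formula combines the coadjoint $\leftact$-action with insertions of $R$ and two $\Phi^{\pm 1}$'s needed to reassociate $M^\ast\ot M\ot N^\ast\ot N$ into $(M\ot N)^\ast\ot(M\ot N)$. Decomposing the target $\Q\ot\Q$ along $\idem_n\ot\idem_m$ for $n,m\in\oZ_2$ and substituting~\eqref{Phi+Phi-inv} produces exactly the conjugations by $\K^{\np nm}$ and $\K^{\np n(1-m)}$ appearing in the stated formula, while the $R_1,R_2$ factors emerge from the braiding step. The antipode $\anip$ is read off the general quasi-Hopf expression, with $\tilde\sqs$ absorbing the combination $S(\Salpha)\cdot(\text{$\Phi$-corrections})\cdot\Sbeta$ via the explicit form~\eqref{sqs+sqsinvbr-Q}. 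Finally, the Hopf pairing $\Hp$ is essentially the monodromy $M = R_{21}R$ with one tensorand hit by $S$ and decorated by the appropriate $\Phi$-corrections; on the $\idem_0$-sector these corrections vanish, while on the $\idem_1$-sector the $\K^\np$ in $\Phi$ yields the conjugation $\K^{-\np} M_2 \K^\np$ on the first tensorand.

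The main obstacle will be $\mm$: the bookkeeping is densest there, since two distinct $\Phi^{\pm 1}$-insertions enter the general formula for $\muc$, and dualising them to a map $\Q \to \Q\ot\Q$ requires careful attention to which tensorand each $\K^\np$-factor acts upon, as well as careful tracking of signs arising from the $R$-matrix conjugations. Once the sector-by-sector reduction for $\mm$ is carried out, the remaining entries of~\eqref{eq:qHopf-coend-dualstructuremaps} follow by shorter, structurally identical computations, and the proposition reduces to checking that the simplified expressions on each of the four sectors $\idem_n\ot\idem_m$ assemble into the compact formulas quoted.
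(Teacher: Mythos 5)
Your plan coincides with the paper's proof: both specialise the general closed-form expressions of Theorem~I:\ref*{I-thm:explicit-coend-qHopf} to $\Q$ and then simplify sector by sector using the idempotents $\idem_n\tensor\idem_m$ and the fact that $\Phi^{\pm1}$, $\Sbeta$ and the Drinfeld twist $\Dt$ act only by powers of $\K^{\np}$ (times scalars) in the relevant sectors. Do note that the general formula for $\mm$ also contains the Drinfeld twist $\Dt$ from~\eqref{def:F-Q} alongside the $\Phi^{\pm1}$-insertions, and that $\co(a\ot b)=ba$ still requires a (short) check that the $\Phi$- and $\Sbeta$-corrections cancel in the two contributing sectors, but neither point affects the validity of your approach.
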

\begin{proof} 
Applying Theorem~I:\ref*{I-thm:explicit-coend-qHopf} the only non-trivial equalities in \eqref{eq:qHopf-coend-dualstructuremaps} are the first two and last one. 
Note that for the calculation of $\un$ we used $\eps(\Sbeta)=1$.
By the same theorem we know that 
\newcommand{\p}{{\Psi}}
\newcommand{\pp}{\tilde{\Psi}}
\begin{align}\label{M-gen}
\mm(a) &~=~ \sum_{(\as),(\Psi),(\tilde{\Psi}),(R)} 
\left[S(\as_2\Psi_1 R_2'\tilde{\Psi}_3')\tensor S(\as_1\tilde{\Psi}_1)\right] \cdot \Dt
\\[-.8em] \nonumber
& \hspace{8em}
 \cdot \Delta(a \as_3) \cdot \left[(\Psi_2
R_2''\tilde{\Psi}_3'')
\tensor(\Psi_3R_1\tilde{\Psi}_2)\right] \gc
\end{align} 
where $\Psi=\as^{-1}$, $\tilde{\Psi}$ is another copy of $\as^{-1}$, and $\Dt$ is the Drinfeld twist from \eqref{def:F-Q}. 
We compute the result sector by sector. For this, it is useful to expand  $\Dt$ and $\as^{\pm 1}$ sector by sector first:
\be\label{eq:drin-phi-sectordec}
\begin{tabular}{c|c}
& $\Dt$ \\
\hline
00 & $\one \otimes \one$  \\
01 & $\one \otimes \one$  \\
10 & $\one \otimes \K^\np$  \\
11 & $\beta^2 (-\rmi)^\np \one \otimes \K^\np$ 
\end{tabular}
\hspace{5em} 
\begin{tabular}{c|c}
& $\as^{\pm1}$ \\
\hline
000 & $\one \otimes \one \otimes \one$  \\
001 & $\one \otimes \one \otimes \one$  \\
010 & $\one \otimes \one \otimes \one$  \\
100 & $\one \otimes \one \otimes \one$  \\
011 & $\one \otimes \one \otimes \one$ \\
101 & $\one \otimes \one \otimes \one$  \\
110 & $\one \otimes \one \otimes \K^\np$  \\
111 & $\beta^2 (\pm\rmi)^{\np}  \one \otimes \one \otimes \K^{\np}$ 
\end{tabular}
\ee
The elements $\Dt$ and $\as^{\pm 1}$ are recovered from these tables by summing over the idempotents $\idem_a \otimes \idem_b$ (resp.\ $\idem_a \otimes \idem_b \otimes \idem_c$) multiplied with the corresponding entry of the table.

We now give the contribution of each sector to $\mm(a)$. In doing so, we indicate which sectors of $\Dt,\as,\p,\pp,R$ contribute to the expression
(the equalities in sectors ${\bf ij}$ are true up the multiplication with the corresponding idempotents $\idem_i\tensor \idem_j$, here we omit them for brevity):
\begin{description}\setlength{\leftskip}{-1em}
	\item[{\bf 00}] $\quad \Dt: 00 \ , \ \as: 000\ ,\ \p:  000\ ,\ \pp:  000\ ,\ R:00$
			$$
	\sum_{(R)} 
\left[S(R_2')\tensor \one\right] \cdot \Delta(a) \cdot \left[R_2''\tensor R_1\right] = \sum_{(R),(a)} R_2\leftact a' \tensor a'' R_1$$

	\item[{\bf 01}] $\quad \Dt: 01 \ , \ \as: 101\ ,\ \p:  001\ ,\ \pp:  110\ ,\ R:10$
	$$ \sum_{(R)} \left[S(R_2'\K^\np)\tensor \one\right] \cdot \Delta(a) \cdot \left[R_2''\K^\np\tensor R_1\right] = \sum_{(R),(a)} (R_2\K^\np)\leftact a' \tensor a'' R_1 $$ 

	\item[{\bf 10}] $\quad \Dt: 10 \ , \ \as:011\ ,\ \p:110\ ,\ 
	 \pp:000 
	 \ ,\ R:00 $ 
		\begin{align*} &\sum_{(R)} \left[S(R_2')\tensor \one\right] \cdot \one\tensor \K^\np \cdot \Delta(a) \cdot \left[R_2''\tensor (\K^\np R_1)\right] \\ 
						&= \sum_{(R),(a)} R_2\leftact a' \tensor  \K^\np a''  \K^\np R_1  = \sum_{(R),(a)} R_2\leftact a' \tensor  (\K^\np \leftact a'') R_1 
		 \end{align*} 

	\item[{\bf 11}] $\quad \Dt: 11 \ , \ \as:110\ ,\ \p:111\ ,\ \pp:110\ ,\ R:10 $ 
		\begin{align*}
						& \big(\beta^2(-\rmi)^\np\big)^2\sum_{(R)} \left[S(R_2'\K^\np)\tensor \one\right] \cdot \K^\np\tensor\one  \cdot \Delta(a\K^\np) \cdot \left[R_2''\K^\np\tensor \K^\np R_1\right] \\ 
						&= \sum_{(R),(a)} S(R_2'\K^\np)\K^{\np} a' \K^{\np} R_2''\K^\np \tensor   a''  \K^{2\np} R_1 \\ 
						&= \sum_{(R),(a)} S(\K^{\np} R_2'\K^\np) a' \K^{\np} R_2''\K^\np \tensor   a'' R_1  \\ 
						&= \sum_{(R),(a)} (\K^\np R_2\K^\np)\leftact a' \tensor a'' R_1   \gp
		\end{align*} 
\end{description}
Combining the four sectors above results in the expression for $\mm(a)$ given in \eqref{eq:qHopf-coend-dualstructuremaps}.

In order to determine $\co$ we have to calculate 
\begin{equation}\label{eq:hat-delta-aux}
\co(a\tensor b) = \sum_{(\Co)}  S(D_1)bD_2 S(D_3)aD_4 
\end{equation} 
where $\Co=(\id\tensor\id\tensor\Delta)(\as)\cdot\one\tensor\as^{-1}\cdot\one\tensor\Sbeta\tensor\one\tensor\one$,
see I:(\ref*{I-eq:qHopf-coend-dualstructuremaps}) and I:(\ref*{I-eq:coend-maps-elementsDW}).
Note that we need $\Co$ only in sectors {\bf 0000} and {\bf 1111} which are $\one^{\tensor 4}$ and $\one\tensor\K^\np\tensor\K^\np\tensor \one$, respectively. It is then immediate that \eqref{eq:hat-delta-aux} reduces to $\co(a\tensor b) = ba$ as claimed in \eqref{eq:qHopf-coend-dualstructuremaps}.

The Hopf pairing is given by (see I:(\ref*{I-eq:qHopf-coend-dualstructuremaps}) and I:(\ref*{I-eq:coend-maps-elementsDW}))
\begin{align}
	\Hp~=~ \sum_{(W)} S(W_3) W_4 \ot S(W_1) W_2
\end{align}
with
$W = (\one\tensor\Salpha\tensor\one\tensor\Salpha)\cdot
(\one\tensor\as^{-1})\cdot(\one\tensor
 M
\tensor\one)\cdot(\one\tensor\as)
\cdot(\id\tensor\id\tensor\Delta)(\as^{-1})$. 
Recall from \eqref{eq:Q-antipode-def} and \eqref{eq:drin-phi-sectordec}, that $\Salpha=\one$ and that $\as$ is non-trivial only in the third tensor factor. Thus $W$ simplifies to
\be
 W = (\one\tensor M \tensor\one)
\cdot(\id\tensor\id\tensor\Delta)(\as^{-1})  \ .
\ee
To compute $\Hp$ we only need the following four sectors of $W$:
\be
\begin{tabular}{c|l}
& \hspace{2em} $W$ \\
\hline
0000 & $\one \tensor M \tensor \one$\\
0011 & $\one \tensor M \tensor \one$\\
1100 & $(\one \tensor M \tensor \one) \cdot (\one \tensor \one \tensor \K^\np \otimes \K^\np)$ \\
1111 & $(\one \tensor M \tensor \one) \cdot (\one \tensor \one \tensor \K^\np \otimes \K^\np)$
\end{tabular}
\ee
{}From this it is straightforward to read off the expression for $\Hp$ in \eqref{eq:qHopf-coend-dualstructuremaps}.
\end{proof}

\subsection{Non-degeneracy of the monodromy matrix}
 
We compute the monodromy matrix (or double braiding) $M=R_{21} R\in\Q\tensor\Q$. 
For this, we need the  identities (for any  $n,m\in\oZ_2$)
\begin{equation}
\ff^{\pm}\tensor\ff^{\mp}\omega_-\cdot \rho_{n,m}\cdot \idem_n\tensor\idem_m = (-1)^{m+1}  \rho_{n,m}\cdot \ff^{\pm}\omega_-\tensor\ff^{\mp}\cdot  \idem_n\tensor\idem_m
\end{equation}
(note that $\omega_-=(\idem_0-\rmi\idem_1)\K$ appears in different tensor factors)
and
\begin{equation}
(\rho_{m,n})_{21} \cdot \rho_{n,m} \cdot  \idem_n\tensor\idem_m = (-1)^{nm}  \K^m\tensor\K^n \cdot  \idem_n\tensor\idem_m \gc
\end{equation}
where $(\rho_{m,n})_{21}$ stands for the flip of $\rho_{m,n}$ from \eqref{eq:car-fac}.
Then, using the expression~\eqref{R+Riv} for the $R$-matrix the computation is straightforward:
\begin{multline}\label{M}
M=R_{21} R = \sum_{n,m=0}^{1} \bigl(-\beta^2\bigr)^{nm}\,
\K^m\idem_n\tensor\K^n\idem_m \\
\times \prod_{j=1}^{\np}\Big\{ 
\big(\one\tensor\one + 2 (-1)^m \ff^+_j\omega_-\tensor\ff^-_j\big)
\big(\one\tensor\one - 2\ff^-_j\omega_-\tensor\ff^+_j\big) \Big\}\gp
\end{multline}

\begin{lemma}\label{lem:M-non-deg}
The $M$-matrix is non-degenerate, i.e., it  can be written as $M=\sum_{I \in X} g_I\tensor f_I$, where $\{g_I\}_{I \in X}$ and $\{f_I\}_{I \in X}$ are two bases of $\Q$, for some indexing set $X$.
\end{lemma}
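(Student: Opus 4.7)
The plan is to expand $M$ from~\eqref{M} completely and exhibit it as $M=\sum_I c_I\,g_I\tensor f_I$ with nonzero scalars $c_I$ and with both $\{g_I\}$ and $\{f_I\}$ being $\oC$-bases of $\Q$; non-degeneracy then follows immediately.

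First I would expand the fermionic product in~\eqref{M} using the identities $\omega_-^2=\one$ and $\omega_-\ff^\pm_k=-\ff^\pm_k\omega_-$. A short calculation shows that the single-$j$ factors
$F_j:=(\one\tensor\one+2(-1)^m\ff^+_j\omega_-\tensor\ff^-_j)(\one\tensor\one-2\ff^-_j\omega_-\tensor\ff^+_j)$
for different indices $j$ commute inside $\Q\tensor\Q$, because the anticommutation signs picked up on the two tensor slots cancel. Using this, one obtains for each sector $(n,m)\in\{0,1\}^2$,
\als{
M_{(n,m)}=(-\beta^2)^{nm}(\K^m\idem_n\tensor\K^n\idem_m)\!\!\!\sum_{S,T\subseteq\{1,\dots,\np\}}\!\!\!(2(-1)^m)^{|S|}(-2)^{|T|}\,(A_SB_T\tensor a_Sb_T),
}
where $A_S=\prod_{j\in S}\ff^+_j\omega_-$, $B_T=\prod_{j\in T}\ff^-_j\omega_-$, $a_S=\prod_{j\in S}\ff^-_j$, $b_T=\prod_{j\in T}\ff^+_j$ (each in increasing-$j$ order).

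Next I would simplify using $\omega_-^2=\one$: commuting all $\omega_-$'s to the right yields $A_SB_T=\pm\ff^+_{[S]}\ff^-_{[T]}\omega_-^{(|S|+|T|)\bmod 2}$, while $a_Sb_T=\ff^-_{[S]}\ff^+_{[T]}$. Combining with the prefactor $\K^m\tensor\K^n$ and projecting onto the sector $\idem_n\tensor\idem_m$, the first tensor slot equals a nonzero scalar times $\ff^+_{[S]}\ff^-_{[T]}\K^{(m+|S|+|T|)\bmod 2}\idem_n$; as $n,m\in\{0,1\}$ and $S,T\subseteq\{1,\dots,\np\}$ vary, this runs bijectively through the basis $\{\ff^+_{[U^+]}\ff^-_{[U^-]}\K^K\idem_n\}$ of $\Q$, via $(U^+,U^-,K,n)=(S,T,(m+|S|+|T|)\bmod 2,n)$. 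On the second tensor slot: in sector $m=0$, Gra\ss{}mann reordering $\ff^-_{[S]}\ff^+_{[T]}\idem_0=\pm\ff^+_{[T]}\ff^-_{[S]}\idem_0$ again gives a bijection onto the $\idem_0$-part of the basis of $\Q$; in sector $m=1$, the PBW theorem for the Clifford algebra $\algCl_{2\np}$ guarantees that the reverse-ordered products $\{\ff^-_{[S]}\ff^+_{[T]}\idem_1\}_{S,T}$ form a basis of $\algCl_{2\np}\idem_1$, and multiplication by $\K^n$ extends this to a basis of $\Q_1$.

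Putting things together, $M$ is a sum of exactly $\dim_\oC\Q=2^{2\np+2}$ tensors $c_I\,g_I\tensor f_I$ with $c_I\neq 0$ and with $I\mapsto g_I$, $I\mapsto f_I$ defining (up to sign) bijections onto bases of $\Q$. Equivalently, the matrix of $M$ in those bases is a permutation matrix times an invertible diagonal, so $M$ is non-degenerate. The main obstacle will be the careful bookkeeping of signs arising from commuting $\omega_-$'s and $\K$'s past fermionic generators and from rearranging inside $A_SB_T$; the reverse-ordered Clifford PBW claim is routine by a triangularity argument against the standard-ordered basis.
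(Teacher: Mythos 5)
Your proposal is correct and follows essentially the same route as the paper: expand the product form of $M$ sector by sector into a sum over subsets (equivalently, over tuples $(s_j,t_j)$), and observe that the resulting $2^{2\np+2}$ left and right tensor factors each form a basis of $\Q$ up to nonzero scalars. You supply slightly more detail than the paper on why the reverse-ordered Clifford monomials in the $m=1$ sector are a basis, but the argument is the same.
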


\begin{proof}
Using the expression~\eqref{M}, we can rewrite $M$ as
\begin{align}\label{M-non-deg}
\begin{split}
M=  \sum_{n,m=0}^{1} \sum_{s_1,t_1=0}^1 \!\!\!\dots\!\!\! \sum_{s_{\np},t_{\np}=0}^1 & \bigl(-\beta^2\bigr)^{nm}  2^{\sum_{i}(s_i+t_i)} (-1)^{\sum_i(m t_i + s_i)}
\\ 
&\times \K^m \idem_n \prod_{j=1}^\np (\tilde{\ff}^+_j)^{t_j}(\tilde{\ff}^-_j)^{s_j} \tensor  \K^n \idem_m\prod_{j=1}^\np (\ff^-_j)^{t_j}(\ff^+_j)^{s_j}
\end{split}
\end{align}
where $\sum_i$ stands for $\sum_{i=1}^{\np}$
and $\tilde{\ff}^\pm_j=\ff^\pm_j\omega_-$. This expression suggests to introduce  two bases in $\Q$,
\begin{align}
\begin{split}
f_I &=  \K^n \idem_m\prod_{j=1}^\np (\ff^-_j)^{t_j}(\ff^+_j)^{s_j} \gc\\
g_I &= \bigl(-\beta^2\bigr)^{nm}  2^{\sum_{i}(s_i+t_i)} (-1)^{\sum_i(m t_i + s_i)}
 \K^m \idem_n\prod_{j=1}^\np (\tilde{\ff}^+_j)^{t_j}(\tilde{\ff}^-_j)^{s_j}
 \ ,
\end{split}\label{W:basis}
\end{align}
where the indices $I$ run over the set
\be
X = \big\{ (n,m,s_1,t_1,\dots,s_\np,t_\np) \,\big|\, n,m,s_j,t_j\in\oZ_2,\, j=1,\dots,\np\big\} \ .
\ee
We then have $M=\sum_{I \in X} g_I\tensor f_I$ and thus the statement of the lemma is proven.
\end{proof}

As a consequence of Lemma~\ref{lem:M-non-deg}, we have that $\Hp$ is also non-degenerate (the antipode $S$ and the conjugation with $\K$ map a basis to another basis) and this is a direct proof of Corollary~\ref{cor:Q-fact}.

\subsection{Integrals and cointegrals for the coend}

Since $\rep\Q$ is factorisable (Corollary~\ref{cor:Q-fact}), $\coend = \Q^*$ has a one-dimensional space of two-sided integrals $\IntL : \oC \to \coend$ (see Proposition~I:\ref*{I-prop:integrals-exist}, due to \cite{Lyubashenko:1995}). This space of integrals contains an element (unique up to a sign) normalised such that $\Hpair \circ (\IntL \otimes \IntL) = \id_\oC$. From Lemma~I:\ref*{I-lem:coint-from-int} (due to \cite{Kerler:1996}) we furthermore know that there is a two-sided cointegral 
$\coint_\coend\colon \coend \to \one$ such that
$\coint_\coend=\omega_\coend  \circ (\IntL \ot \id_\coend)$. 

In this section we give $\IntL$ and $\coint_\coend$ for $\coend = \Q^*$ in the above normalisation.
We refer to Section~I:\ref*{I-subsec:conventions-Hopf-bmc} for our conventions for integrals and cointegrals in braided categories.

By Proposition~I:\ref*{I-prop:coint-coend-via-int-qHopf} the integrals on $\Q$ from \eqref{int-Q} define cointegrals on
$\coend = \Q^*$
by 
\be\label{eq:coint-coend-via-intQ}
\coint_\coend=\langle - \,,\, \intQ \rangle\in\Q^{**} \gp 
\ee  
To describe the integrals of $\coend$,
let $\tilde B_m=\{\K^m,\fp_{1}\K^m,\fm_1\K^m,\fp_{1}\fm_1\K^m, \, \ldots \, , \fp_{1}\fm_1\cdots\fp_\np\fm_\np\K^m\}$, so that
$B=\bigcup_{m=0}^3 \tilde B_m $ is a basis in $\Q$. We use the basis dual to $B$ to define the element $\hat\Lambda_{\coend}\in\Q^*$ as
\be \label{eq:intL-hat}
  \hat\Lambda_{\coend} = (-1)^\np \nu \beta^2 2^{1-\np} \Big(\prod_{i=1}^\np \ff^+_i \ff^-_i \Big)^\ast \ ,
\ee
where $\nu \in \oC$ is the same constant as in the definition of the integral for $\Q$ in \eqref{int-Q}.
In what follows, we will use several times the following simple lemma
	(whose proof we omit).

\begin{lemma}\label{lem-M-central}
Let $\{ w_I \}_{I \in X}$, for some indexing set $X$,
be a linearly independent subset of $\Q$ such that each $w_I$ is a monomial in the generators $\fpm_i$ and $\K$.
Assume that a central element $z\in\Zc(\Q)$ can be written as
 \be
 z= \sum_{I\in X} \alpha_I w_I\ , \qquad \alpha_I\in\mathbb{C}\ , \ \alpha_I \neq 0 \ .
 \ee
 Then each $w_I$ commutes with $\K$.
 \end{lemma}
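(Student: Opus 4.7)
The plan is to exploit the single central-ness condition $\mathsf{K} z = z \mathsf{K}$ and the anticommutation relation $\{\mathsf{f}^\pm_i,\mathsf{K}\}=0$ to force each $w_I$ to have even total $\mathsf{f}$-degree.

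First I would introduce, for each monomial $w_I$ in the generators, its total degree $d_I \in \mathbb{Z}_{\ge 0}$ in the letters $\mathsf{f}^\pm_j$. From the defining relation $\mathsf{f}^\pm_j \mathsf{K} = -\mathsf{K}\mathsf{f}^\pm_j$ one reads off by induction on $d_I$ that
\begin{equation*}
\mathsf{K}\, w_I = (-1)^{d_I}\, w_I\, \mathsf{K}\ .
\end{equation*}

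Next, using centrality, I would write out $\mathsf{K} z - z \mathsf{K} = 0$ to get
\begin{equation*}
0 = \sum_{I\in X} \alpha_I\bigl((-1)^{d_I}-1\bigr)\, w_I\, \mathsf{K}\ .
\end{equation*}
Since $\mathsf{K}^4 = \boldsymbol{1}$, the element $\mathsf{K}$ is invertible in $\mathsf{Q}$, so right multiplication by $\mathsf{K}$ is a $\mathbb{C}$-linear bijection $\mathsf{Q}\to\mathsf{Q}$. Consequently $\{w_I\mathsf{K}\}_{I\in X}$ is linearly independent in $\mathsf{Q}$. Combined with $\alpha_I \neq 0$ for every $I$, this forces $(-1)^{d_I}=1$, i.e.\ $d_I$ is even for each $I\in X$. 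The identity displayed above then gives $\mathsf{K} w_I = w_I \mathsf{K}$, which is precisely the claim.

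There is no real obstacle: the argument uses only the simplest relation $\{\mathsf{f}^\pm_i,\mathsf{K}\}=0$ together with invertibility of $\mathsf{K}$, and does not require the remaining relations on the $\mathsf{f}^\pm_i$ (the hypothesis that the $w_I$ are themselves linearly independent monomials is used exactly to conclude that $\{w_I\mathsf{K}\}$ is linearly independent, without needing to appeal to the PBW-type basis \eqref{eq:Qbasis}).
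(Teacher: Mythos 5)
Your argument is correct: conjugation by $\K$ multiplies a monomial by $(-1)^{d_I}$ where $d_I$ is its $\ff$-degree, and since $\K$ is invertible the set $\{w_I\K\}$ inherits linear independence, so centrality forces every $d_I$ to be even. The paper explicitly omits its proof of this lemma, and your argument is exactly the natural one the authors intend; no gaps.
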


\begin{proposition} \label{prop:int}
The linear map $\IntL\colon\oC\to \coend$ given by $\IntL(1)=\hat\Lambda_{\coend}$ is a two-sided integral for $\coend$. For $\nu \in \{ \pm 1\}$ this integral satisfies
\be
	\Hpair \circ (\IntL \otimes \IntL) = \id_\oC
	\quad , \quad
	\coint_\coend \circ \IntL = \id_\oC \ .
\ee
\end{proposition}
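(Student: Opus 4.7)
\emph{Proof plan.} The strategy is to exploit the general theory. Since $\rep\Q$ is factorisable (Corollary~\ref{cor:Q-fact}), Proposition~I:\ref*{I-prop:integrals-exist} guarantees a two-sided integral $\IntL\colon\one\to\coend$ unique up to a non-zero scalar, and Lemma~I:\ref*{I-lem:coint-from-int} asserts the Frobenius relation $\coint_{\coend}=\omega_{\coend}\circ(\IntL\ot\id_{\coend})$ for any such $\IntL$. Combined with the non-degeneracy of $\omega_{\coend}$ (Lemma~\ref{lem:M-non-deg}) and the explicit form $\coint_{\coend}=\langle-,\intQ\rangle$ from~\eqref{eq:coint-coend-via-intQ}, the integral $\IntL$ is uniquely characterised (up to a sign fixed by $\nu\in\{\pm 1\}$) by the pairing identity
\[
  \omega_{\coend}(\IntL\ot\phi)\;=\;\phi(\intQ)\qquad\text{for all }\phi\in\Q^{*}.
\]
Thus, if we can verify this identity for $\hat\Lambda_{\coend}$ as given in~\eqref{eq:intL-hat}, then $\hat\Lambda_{\coend}$ automatically \emph{is} a two-sided integral; the plan then reduces to (A) establishing this identity, and (B) checking the two normalisation statements.

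For (A), I would use Proposition~\ref{prop:coend-Q} to expand
\begin{align*}
  \omega_{\coend}(\hat\Lambda_{\coend}\ot\phi)
  \;=\;& \sum_{(M)}\hat\Lambda_{\coend}(M_{1}\idem_{0})\,\phi(S(M_{2}))\\
       &+\sum_{(M)}\hat\Lambda_{\coend}(M_{1}\idem_{1})\,\phi\bigl(S(\K^{-\np}M_{2}\K^{\np})\bigr),
\end{align*}
and then insert the basis decomposition $M=\sum_{I}g_{I}\ot f_{I}$ from \eqref{M-non-deg}. Because $\hat\Lambda_{\coend}$ is dual to the single basis element $\prod_{i}\fp_{i}\fm_{i}$, only summands with all $s_{j}=t_{j}=1$ can survive in $M_{1}$; moreover, $\fp_{j}\fm_{j}\idem_{0}=0$ kills the $\idem_{0}$-contribution from $M_{1}\idem_{0}$, and the identity $\omega_{-}^{2}=\one$ reduces the monomials $\prod_{j}\tilde\fp_{j}\tilde\fm_{j}$ to a sign times $\prod_{j}\fp_{j}\fm_{j}$. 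The handful of remaining terms is then checked by inspection to match $\phi(\intQ)$; the prefactor $(-1)^{\np}\nu\beta^{2}2^{1-\np}$ in~\eqref{eq:intL-hat} is fixed precisely to produce the coefficient $1$.

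For (B), the first identity $\coint_{\coend}\circ\IntL=\id_{\oC}$ is a direct evaluation. Writing $\idem_{0}(\one+\K)=\tfrac{1}{2}(\one+\K+\K^{2}+\K^{3})$, the coefficient of the basis element $\prod_{i}\fp_{i}\fm_{i}$ in $\intQ$ of~\eqref{int-Q} equals $\nu\,2^{\np-1}\beta^{2}$, so that
\[
  \hat\Lambda_{\coend}(\intQ)\;=\;(-1)^{\np}\nu\beta^{2}\,2^{1-\np}\cdot\nu\,2^{\np-1}\beta^{2}\;=\;(-1)^{\np}\nu^{2}\beta^{4}\;=\;1,
\]
using $\beta^{4}=(-1)^{\np}$ and $\nu^{2}=1$. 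The second identity $\omega_{\coend}\circ(\IntL\ot\IntL)=\id_{\oC}$ then follows automatically from the Frobenius relation by composing with $\IntL$ on the right.

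The main obstacle is step (A): conceptually the argument is clear, but it demands careful sector-by-sector bookkeeping across the four $(n,m)$ sectors of $\Hp$, together with sign tracking from the antipode $S$, from conjugation with $\K^{\np}$ on the second tensor factor, and from the conversion between the bases $\{f_{I}\}$ and $\{g_{I}\}$ of~\eqref{W:basis}. Once the vanishing terms are eliminated, only a single surviving monomial contributes to each $\phi$, and the match with $\intQ$ becomes transparent.
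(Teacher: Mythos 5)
Your overall strategy coincides with the paper's: you reduce the integral property to the Frobenius identity $\omega_\coend\circ(\IntL\ot\id_\coend)=\langle-,\intQ\rangle$ via Lemma~I:\ref*{I-lem:coint-from-int} and non-degeneracy of $\Hpair$, verify it by expanding $M=\sum_I g_I\ot f_I$ from \eqref{M-non-deg}, and obtain both normalisation identities from the single evaluation $\hat\Lambda_\coend(\intQ)=\nu^2=1$. Your part (B) is correct and matches the paper's computation.

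There is, however, a concrete error in your step (A): the identity $\fp_j\fm_j\idem_0=0$ is false. From \eqref{def:Q} one only gets $\{\fp_j,\fm_j\}\idem_0=\idem_1\idem_0=0$, i.e.\ $\fp_j\fm_j\idem_0=-\fm_j\fp_j\idem_0$; the element $\fp_j\fm_j\idem_0$ is a nonzero degree-two monomial in the Gra\ss{}mann algebra $\Q_0$ (indeed $\intQ$ itself is built from such monomials). Consequently the first sum $\sum_{(M)}\hat\Lambda_\coend(M_1\idem_0)\,\phi(S(M_2))$ does \emph{not} vanish. In the surviving terms one has $M_1\propto\K^m\idem_n\prod_j\tilde\fp_j\tilde\fm_j$ with all $s_j=t_j=1$, and $\hat\Lambda_\coend$ forces $m=0$ but is insensitive to $n$; both $n=0$ (the $M_1\idem_0$ piece) and $n=1$ (the $M_1\idem_1$ piece) contribute. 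These two contributions produce, after applying $S$ to $f_I=\K^n\idem_0\fm_1\fp_1\cdots\fm_\np\fp_\np$, precisely the two summands $\one$ and $\K$ in $\intQ=\nu\,2^{\np}\beta^2\,\idem_0\prod_i\fp_i\fm_i\,(\one+\K)$. If you discard the $\idem_0$ piece as you propose, you recover only the $\K$-term and the claimed match with $\phi(\intQ)$ fails by the missing summand. The fix is simply to keep both sectors; the rest of your bookkeeping (top-degree selection, $\omega_-^2=\one$, sign tracking) is sound and then reproduces the paper's calculation of \eqref{eq:c-M}.
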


\begin{proof} 
We know that $\coint_\coend$ in \eqref{eq:coint-coend-via-intQ} is a cointegral for $\coend$. To show that $\IntL$ is a two-sided integral, by Lemma~I:\ref*{I-lem:coint-from-int} it is enough to verify the identity
$\coint_\coend=\omega_\coend  \circ (\IntL \ot \id_\coend)$. 
In the present setting, this is equivalent to 
$\langle - , \intQ \rangle = \omega_\coend (\hat\Lambda_{\coend} \ot -)$ or 
\be\label{eq:c-M}
 \intQ=\sum_{(\omega_\coend)} (\hat\omega_\coend)_1\, \hat\Lambda_{\coend}\big((\hat\omega_\coend)_2\big) 
 =  \sum_{(M)}  \hat\Lambda_{\coend}(M_1)S(M_2)
 \ ,
\ee
where for the last equality we used Lemma~\ref{lem-M-central} (each non-zero term in the sum commutes with $\K$).
\begin{align*}
 \text{RHS of}\; \eqref{eq:c-M} &=\\ \nonumber
  \sum_{m,n=0}^{1}&(-\beta^2\bigr)^{mn}  2^{2\np} (-1)^{\np(m+1)}  
	 \overbrace{\hat\Lambda_{\coend}(\K^m\idem_n \fp_1\omega_-\fm_1\omega_-\ldots\fp_\np\omega_-\fm_\np\omega_-)}^{= 
	 \, \delta_{m,0} \, \nu \beta^2 2^{-\np} } 
 S(\K^n \idem_m \fm_1\fp_1\ldots\fm_\np\fp_\np) \\ \nonumber
&=  (-2)^{\np} \nu \beta^2 \big(	S(\idem_0 \fm_1\fp_1\ldots\fm_\np\fp_\np) + 	S(\idem_0  \fm_1\fp_1\ldots\fm_\np\fp_\np \K) \big) \\ \nonumber
&=  2^{\np} \nu \beta^2 \idem_0 \fp_1\fm_1\ldots\fp_\np\fm_\np (1+\K) = \intQ \gp
\end{align*}
It remains to show that the normalisation condition holds. Indeed, we have
\begin{align}
	\Hpair(\hat\Lambda_{\coend}\tensor \hat\Lambda_{\coend}) &= \langle \hat\Lambda_{\coend}\tensor\hat\Lambda_{\coend} \,,\, \Hp \rangle 
\overset{\eqref{eq:c-M}}= \hat\Lambda_{\coend}(\intQ)  \\ \nonumber
&= 2^{\np} \nu  \beta^2 \hat\Lambda_{\coend}(\idem_0 \fp_1\fm_1\ldots\fp_\np\fm_\np) 
= \nu^2 = 1 \gp 
\end{align}
\end{proof}
  
\subsection{Internal characters and \texorpdfstring{$\phi_M$}{phi-M}}\label{sec:internal-ch}
We first recall that the {\em internal character}	of $V \in \rep \Q$ is the intertwiner from the trivial representation to the coend $\coend$
 given by (see \cite{Fuchs:2013lda,Shimizu:2015}) 
\be\label{eq:chiV-def}
	\chi_V ~=~ 
	\big[\, \one \xrightarrow{\widetilde\coev_V} V^* \ot V   \xrightarrow{\iota_{V}} \coend \,\big] \ ,
\ee
where we follow conventions Section~I:\ref*{I-sec:intchar-natendo}.
Via the isomorphism 
$\mathrm{Hom}_\Q(\one,\coend) \to \mathrm{End}(Id_{\repQ})$ 
given in I:(\ref*{I-eq:tildechi-def}), the internal characters $\chi_V$ correspond to natural endomorphisms $\phi_V$ of the identity functor. 
For $\cat=\repQ$, under the categorical modular $S$-transformation $\modS_{\cat}\colon \mathrm{End}(Id_{\cat}) \to  \mathrm{End}(Id_{\cat})$ in I:(\ref*{I-eq:SCTC}) the
 images $\phi_V$ of the internal characters
are mapped to natural endomorphisms $\modS_\cat(\phi_V)$ which are the 
	``Hopf link operators''  
in I:(\ref*{I-eq:hopf-link-op}). 
We also recall from 
Corollary~I:\ref*{I-cor:phiM-Gr-indepL}
and Theorem~I:\ref*{I-thm:S_C(phi_M)-algebramap}
that the assignments $[V] \mapsto \phi_V$ and  $[V] \mapsto \modS_\cat(\phi_V)$ are injective linear maps $\Gr_\mathbb{C}(\cat) \to \End(Id_\cat)$ (the second map is actually an algebra map).
Both, $\phi_V$ and $\modS_\cat(\phi_V)$ will be useful in the computation of 
the 
$S$-transformation on the centre $Z(\Q)$
in Section~\ref{sec:sl2Z-Q-SF} and when comparing $\SLiiZ$-actions in Section~\ref{sec:SL2Z-compare}.
	 
Let us identify $\chi_V$ with their images $\chi_V(1)\in\Q^{*}$. 
In I:(\ref*{I-eq:chi-Tr}), we expressed these linear forms via the traces
\be\label{eq:chi-tr}
\chi_V(-) = \tr_V(\bal \cdot - )\ ,\qquad  \bal = \sqs^{-1} \ribbon S(\Sbeta)\ ,
\ee
where explicitly
\be\label{eq:kappa}
\bal = (\idem_0  - \rmi\beta^2\idem_1)\K \ .
\ee 

The $\phi_V \in \mathrm{End}(Id_{\repQ})$ correspond to central elements  $\bphi_V \in \Zc(\Q)$, see  Section~I:\ref*{I-sec:intchar-qHopf}. We now compute the $\bphi_V$ for all simple modules $V$.

\begin{lemma}\label{eq:phiV-explicit}
We have 
\be \label{phiV-Q}
\bphi_V = \sum_{(\intQ)} \intQ' \ot \chi_V(S(\intQ'')) 
\ee
and, in particular,
	\be\label{eq:bphi-X}
	\bphi_{\XX^{\pm}_{0}}  = \nu \, 2^{\np}  \beta^2  (\K\pm \one)\idem_0 \prod_{i=1}^\np \fp_i\fm_i \ ,\qquad
	\bphi_{\XX^{\pm}_{1}}  = \pm \, \nu\, 2^{\np+1}  \idem_1^{\pm} \ . 
	\ee
\end{lemma}
\begin{proof}
Recall from Section~I:\ref*{I-sec:intchar-qHopf} that
\be \label{bphi}
\bphi_V ~=~ \sum_{(F)}  F_1 \, \chi_V(F_2) \ ,
\ee
where
\be
F= \eps(\Sbeta) 	\sum_{(\Psi),(\Phi),(\intQ)}
 \Psi_1 \intQ' \Phi_1 \tensor S(\Psi_2  \intQ''\Phi_2)\Salpha\Psi_3\Phi_3
\quad \text{and}\quad \Psi=\Phi^{-1}\ .
\ee
It is straightforward to see that 
for $\Q$ we have
\be
	F= \sum_{(\intQ)} \intQ'\ot S(\intQ'') \gc
\ee
which together with \eqref{bphi} proves \eqref{phiV-Q}.

Next we compute	from \eqref{int-Q} that
\begin{align}
\Delta(\intQ) &= a \big(\one\ot\one + \K^2\ot \K^2\big)\big(\one\ot\one + \Delta(\K)\big) 
\\ \nonumber
& \hspace{5em}
\times \prod_{i=1}^\np \big(\fp_i\fm_i \ot \one +  \fp_i \omega_- \ot \fm_i -  \fm_i \omega_+\ot \fp_i + \K^2\ot \fp_i\fm_i\big)\ ,
\end{align}
	where $a =  \nu \, 2^{\np-1}  \beta^2$.

We note that for any $x\in\Q$ we get 
\be
\tr_{\XX^{\pm}_{i}}(\idem_j x)= \delta_{i,j}\tr_{\XX^{\pm}_{i}}(x)\ , \quad i,j\in\{0,1\}\ .
\ee
Recall in Section~\ref{sec:modules} that  $\XX^{\pm}_{0}$ have the trivial action of $\fpm_i$. 
Therefore, in the traces over $\XX^{\pm}_{0}$ only the basis elements 
$\K^n$, $0\leq n\leq 3$, 
have non-zero contribution.
Then for $V=\XX^{\pm}_{0}$ and using~\eqref{eq:chi-tr} together with~\eqref{eq:kappa} we get
\be
\bphi_{\XX^{\pm}_{0}} = \sum_{(\intQ)} \tr_{\XX^{\pm}_{0}}(\K \intQ'') \intQ'   = 2a (\K\pm \one)\idem_0 \prod_{i=1}^\np \fp_i\fm_i  \ ,
\ee
where we used that only the components with $\intQ''=\K^n$ contribute to the expression, and that $S(\K^n\idem_0) = \K^n\idem_0$. This gives the first expression in~\eqref{eq:bphi-X}.

For the computation of $\bphi_{\XX^{\pm}_{1}}$ we first recall that $\XX^{\pm}_{1}$ are simple projective, as discussed in Section~\ref{sec:modules}. 
	It then follows from~\cite[Eq.\,(7.5)]{Gainutdinov:2017kae} that $\bphi_{\XX^{\pm}_{1}} = b_{\pm} \idem_1^{\pm}$ for some non-zero $b_{\pm}\in \mathbb{C}$. 
To determine $b_{\pm}$ it is enough to act on the highest-weight vector $v_0^{\pm}\in \XX^{\pm}_{1}$ defined by~\eqref{eq:v0}.
We then note that in the expression 
\be
\bphi_{\XX^{\pm}_{1}} . v_0^{\pm} = -\rmi \beta^2 \sum_{(\intQ)} \tr_{\XX^{\pm}_{1}}\big(\K S(\intQ'')\big) \intQ' . v_0^{\pm}
\ee
only the term $a (\one\ot\one + \K^2\ot \K^2)(\one\ot\one + \Delta(\K)) \big(\K^{2\np}\ot \prod_{i=1}^\np \fp_i\fm_i\big)$ in  $\Delta(\intQ)$ gives a non-zero contribution: 
the other terms  either have $\fm_i$ in $\intQ'$, and therefore are zero on $v_0^{\pm}$, or have $\fp_k$ without $\fm_k$ in  $\intQ''$ and therefore zero in the trace. 
Within the trace $\tr_{\XX^{\pm}_{1}}(-)$,
	we replace 
	$\idem_1 S( \prod_{i=1}^\np \fp_i\fm_i)= \idem_1 \prod_{i=1}^\np \fm_i\fp_i$. 
Therefore, we need to calculate the trace of the operator $\K^n \prod_{i=1}^\np \fm_i\fp_i$. For any $n$, this operator in the basis $v^{\pm}_i$
	from \eqref{eq:X1+-_basis} 
is given by a diagonal matrix with one-dimensional eigenspace of non-zero eigenvalue -- spanned by $v_0^{\pm}$. We thus have $\tr_{\XX^{\pm}_{1}}\big(\K^n \prod_{i=1}^\np \fm_i\fp_i\big) = (\pm \rmi)^n$.
Using this, a simple calculation finally gives
\be
\bphi_{\XX^{\pm}_{1}}    = \pm   \nu \, 2^{\np+1}   \idem_1^{\pm}  \ ,
\ee
which agrees with the second expression in~\eqref{eq:bphi-X}.
\end{proof}

The Hopf link operators $\modS_\cat(\phi_V) \in \mathrm{End}(Id_{\repQ})$ correspond to central elements  $\bchi_V \in \Zc(\Q)$ given by
\be\label{eq:bchi-general}
\bchi_V
~=
\sum_{(\Phi),(\Psi),(M)} \tr_V\Big( \bal\, S\big(\Psi_2 M_2  \Phi_2  \big)  \Salpha \, \Psi_3  \Phi_3  \Big) \, \Psi_1 \, M_1\,  \Phi_1 \ ,
\ee
	see I:(\ref*{I-eq:chiV-central-def}).
We now compute the $\bchi_V$ for all simple $\Q$-modules $V$.

\begin{lemma}\label{lem:bchi-Q}
We have 
\be\label{eq:bchi-V}
\bchi_V
~=
\sum_{(M)} \tr_V\big(\bal \, S(M_2)\big) M_1\ 
\ee
and on simple $V$:
\be\label{eq:bchi-X}
\bchi_{\XX^{\pm}_{0}}  = \idem_1 \pm \idem_0 \ ,\qquad
\bchi_{\XX^{\pm}_{1}}  = \pm  \beta^2 4^N \K \idem_0 \prod_{j=1}^\np \fp_j\fm_j  + 2^N (\idem_1^+ - \idem^-_1) \ . 
\ee
\end{lemma}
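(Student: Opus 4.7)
The plan is to first simplify the general expression~\eqref{eq:bchi-general} to the claimed formula~\eqref{eq:bchi-V} using the specific structure of $\Q$, and then compute the resulting sum explicitly on each of the four simple modules.

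For the reduction I would use that in $\Q$ one has $\Salpha = \one$ and that both the coassociator and its inverse have the block decomposition $\as^{\pm 1} = \one^{\otimes 3} + \idem_1 \otimes \idem_1 \otimes Z^\pm$ with $Z^\pm = (\K^\np - \one)\idem_0 + (\beta^2(\pm\rmi\K)^\np - \one)\idem_1$, cf.~\eqref{Phi+Phi-inv}. Expanding~\eqref{eq:bchi-general} into the four resulting summands, the three ``correction terms'' all project $M$ onto its $(1,1)$-sector in both tensor slots (since $\idem_1$ acts there) and contribute $\tr_V(\bal\,\idem_1 S(M_2)\cdot E)\,\idem_1 M_1$ with $E \in \{Z^-, Z^+, Z^- Z^+\}$, respectively. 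The identity $\as^{-1}\as = \one^{\otimes 3}$ is equivalent to $Z^+ + Z^- + Z^- Z^+ = 0$ after projection onto $\idem_1$, so the three corrections sum to zero and~\eqref{eq:bchi-V} follows.

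For the computation on simple modules I would expand $\bchi_V = \sum_I \tr_V(\bal S(f_I))\,g_I$ in the basis~\eqref{W:basis} of $M$, and treat $\XX^\pm_0$ and $\XX^\pm_1$ separately. On the one-dimensional modules $\XX^\pm_0$ the generators $\fpm_i$ act as zero and $\K$ as $\pm\one$, so only the ``Cartan part'' of $M$ in the sector $m=0$ contributes, yielding $\bchi_{\XX^\pm_0} = \idem_1 \pm \idem_0$ after a short computation. For $V = \XX^\pm_1$ only the sectors $(n,m)\in\{(0,1),(1,1)\}$ of $M$ contribute, since $V$ lies in the $\idem_1$-block. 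In the basis~\eqref{eq:X1+-_basis} the operator $\prod_{j \in J}\fp_j\fm_j$ acts as a projector onto vectors with $i_j=1$ for $j \in J$ and $\K$ acts diagonally, so all required traces $\tr_V(\K^k\prod_{j\in J}\fp_j\fm_j)$ are easily evaluated; they vanish for odd $k$ unless $J$ is the full index set. The $(0,1)$-sector contribution survives only for $J = \{1,\ldots,\np\}$ and directly produces the $\pm\beta^2 4^\np \K\idem_0\prod_j\fp_j\fm_j$ piece. The $(1,1)$-sector contribution, summed over all $J$, collapses via the binomial-type identity $\sum_J(-2)^{|J|}\prod_{j\in J}\fp_j\fm_j = \prod_j(1 - 2\fp_j\fm_j)$; rewriting the latter via~\eqref{eq:epm} as $\K\idem_1\prod_j(1 - 2\fp_j\fm_j) = \rmi(\idem_1^+ - \idem_1^-)$ then gives the compact form $2^\np(\idem_1^+ - \idem_1^-)$.

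The main obstacle is the combinatorial bookkeeping in the $(1,1)$-sector calculation, where one must carefully track the signs arising from the antipode, the anticommutation $\K\fpm_j = -\fpm_j\K$, and the parity of $\np$. What makes the answer come out compact is that $\K^2$ acts as $-\one$ on the sector-$1$ modules and that $\bal|_{\idem_1} = -\rmi\beta^2\K$; combined with $\beta^4 = (-1)^\np$, these simplifications ensure that the final result is independent of the parity of $\np$, matching the statement.
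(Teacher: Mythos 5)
Your proposal is correct, and two of its three parts coincide with what the paper does: the reduction of \eqref{eq:bchi-general} to \eqref{eq:bchi-V} (which the paper only asserts is ``straightforward''; your cancellation of the three correction terms via $Z^+ + Z^- + Z^-Z^+ = 0$, i.e.\ $\Phi^{-1}\Phi = \one^{\otimes 3}$ together with $\Salpha=\one$ and centrality of $\idem_1$, is exactly the missing justification), and the $\XX^{\pm}_{0}$ computation, where both arguments note that only the Cartan part of $M$ in the $m=0$ sector survives the trace.

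For $\XX^{\pm}_{1}$ you take a genuinely different route. You evaluate the sum $\sum_I \tr_V(\bal S(f_I))\,g_I$ term by term in the basis \eqref{W:basis}: the odd power of $\K$ in the $(0,1)$ sector kills everything except the top monomial $J=\{1,\dots,\np\}$, while in the $(1,1)$ sector the power of $\K$ is even, all subsets $J$ contribute, and the resummation $\sum_J(-2)^{|J|}\prod_{j\in J}\fp_j\fm_j = \prod_j(\one-2\fp_j\fm_j)$ together with \eqref{eq:epm} produces $2^\np(\idem_1^+-\idem_1^-)$. The paper instead exploits centrality: it computes the \emph{action} of $\bchi_{\XX^{\pm}_1}$ on the cyclic generators $v_1^{\alpha}$ of $\XX^{\alpha}_1$ and $\idem_0^{\alpha}$ of $\PP^{\alpha}_0$ (which determines a central element via the left regular representation) and reads off the answer from the very few surviving terms of $\Delta$-type expansions. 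The paper's method avoids the binomial collapse and the bookkeeping over all subsets $J$; yours avoids having to argue that a central element is reconstructible from its action on those generators, and yields the closed form directly as an element of $\Q$. Both are sound; I verified that your sign bookkeeping (antipode contributions $S(\ff^+_j)S(\ff^-_j)\idem_1 = \fp_j\fm_j\idem_1$, the factors from $\tilde\ff^{\pm}_j = \ff^{\pm}_j\omega_-$, and $\bal\idem_1 = -\rmi\beta^2\K\idem_1$ with $\beta^4=(-1)^\np$) indeed reproduces both terms of \eqref{eq:bchi-X}.
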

\begin{proof}
It is straightforward to see that
	for
$\Q$ the expression in~\eqref{eq:bchi-general} reduces to
\be
\bchi_V
~=
\sum_{(M)} \tr_V\big(\bal \, S(M_2)) M_1\ .
\ee
We first compute
\be
\bchi_{\XX^{\pm}_0}
~=
\sum_{(M)} \tr_{\XX^{\pm}_0}\big(\K \, S(M_2)) M_1 = \idem_1 \pm \idem_0\ ,
\ee
where we used that only the Cartan part of $M$ 
	in \eqref{M}
contributes into the trace over $\XX^{\pm}_0$.

Next, to compute $\bchi_{\XX^{\pm}_1}$ we study the action of these central elements on all the four projective covers, actually on generating vectors of the covers. Recall that they are $v^{\pm}_1$ for $\XX^{\pm}_1$ and $\idem^{\pm}_0$ for $\PP^{\pm}_0$,
recall~\eqref{eq:idem0-pm}.
 The point is that only very few terms in an expansion of $\bchi_{\XX^{\pm}_1}$ will contribute 
to the action. 
To start we act on $v^{\alpha}_1$, $\alpha=\pm$,
\be\label{eq:bchi1-1}
\bchi_{\XX^{\pm}_1}.v^{\alpha}_1 = -\rmi \beta^2 \sum_{(M)} \tr_{\XX^{\pm}_1}\big(\K \, S(M_2)) M_1 .v^{\alpha}_1 = \alpha 2^N v^{\alpha}_1 \ ,
\ee
where again only the Cartan part of $M$ (actually just the $n=m=1$ term) contributed into the non-zero action. Next, we have
\be\label{eq:bchi1-2}
\bchi_{\XX^{\pm}_1}.\idem^{\alpha}_0 = -\rmi \beta^2 \sum_{(M)} \tr_{\XX^{\pm}_1}\big(\K \, S(M_2)) M_1 .\idem^{\alpha}_0 =\pm \alpha \beta^2 4^N \prod_{j=1}^\np \fp_j\fm_j \idem^{\alpha}_0 \ ,
\ee
where we also noticed that only one term in the expansion of $M$:
\be
M = (-1)^N 4^N (\K\idem_0 \ot \idem_1) \prod_{j=1}^\np \fp_j\fm_j \ot \prod_{j=1}^\np \fm_j\fp_j + \ldots
\ee
contributes, otherwise the trace over $\XX^{\pm}_1$ is zero. We used here
\be
\tr_{\XX^{\pm}_{1}}\big(\K \prod_{k\ \text{terms}} \fp_i\fm_i\big) = \pm \rmi (-1)^k \delta_{\np,k}\ .
\ee
Combining~\eqref{eq:bchi1-1} and~\eqref{eq:bchi1-2}, we finally get
\be
\bchi_{\XX^{\pm}_1} = \pm  \beta^2 4^N \K \idem_0 \prod_{j=1}^\np \fp_j\fm_j  + 2^N (\idem_1^+ - \idem^-_1)\ .
\ee
\end{proof}

\section{\texorpdfstring{$SL(2,\oZ)$}{SL2Z}-action on the centre of \texorpdfstring{$\Q$}{Q}}\label{sec:sl2Z-Q-SF}

In this section, we specialise the projective $SL(2,\oZ)$-action on the centre $\Zc$ of a general factorisable ribbon quasi-Hopf algebra obtained in Section~I:\ref*{I-sec:SL2Z-quasiHopf} to the symplectic fermion quasi-Hopf algebra $\Q$,
	and we study the decomposition of this projective representation.
 The result is summarised in Theorem~\ref{thm:ST-from-Q}.

By a \textit{projective representation} of a group $G$ we mean a group homomorphism $\bar\rho : G \to PGL(V)$ for $V$ a vector space. When specifying projective representations, we will give the map $\rho : G \to GL(V)$ instead, with the projective representation then being the composition with the projection $GL(V) \to PGL(V)$. 
Two projective representations $\bar\rho : G \to PGL(V)$ and $\bar\sigma : G \to PGL(W)$ are \textit{equivalent} if there is a linear isomorphism $f : V \to W$ such that $\bar\sigma = \overline{f \circ \rho \circ f^{-1}}$, where $\rho : G \to GL(V)$ is any lift of $\bar\rho$.
In particular, changing $\rho$ by $g$-dependent constants, $\rho(g) \leadsto \lambda_g \rho(g)$, $\lambda_g \in \mathbb{C}^\times$,
$g\in G$,
 leads to an equivalent (in fact, the same) projective representation.
Thus, every one-dimensional projective representation is equivalent to the trivial representation. 
We call a projective representation of $SL(2,\oZ)$ fundamental, if it is projectively equivalent to the fundamental representation of $SL(2,\oZ)$ on~$\mathbb{C}^2$.

By the $S$- and $T$-generators of $SL(2,\oZ)$ we mean the $2{\times}2$ matrices $\mathbf{S} = \big( \begin{smallmatrix} 0 & -1 \\ 1 & 0 \end{smallmatrix} \big)$ 
and
$\mathbf{T} = \big( \begin{smallmatrix} 1 & 1 \\ 0 & 1 \end{smallmatrix} \big)$,
respectively. One can describe $SL(2,\oZ)$ as the group freely generated by $\mathbf{S}$ and $\mathbf{T}$ subject to the relations
\be\label{eq:SL2Z-gen-and-rel}
	(\mathbf{S}\mathbf{T})^3 = \mathbf{S}^2
	\quad , \quad
	\mathbf{S}^4 = \id \ .
\ee

By Theorem~I:\ref*{I-thm:SL2Z-on-centre},
the (projective) action of $\mathbf{S}$ and $\mathbf{T}$ on the center $\Zc$ of a factorisable ribbon quasi-Hopf algebra is given by the following invertible linear endomorphisms of $\Zc$, for $z \in \Zc$,
\begin{align} \label{eq:STact-ZA}
	\modS_\Zc(z)&~= \sum_{\substack{(\Psi),(\Hp)}} 
	\Psi_1 \, \Sbeta \, S(\Psi_2)
	\, ({\Hp})_1 \, \Psi_3 \, 
		\hat\Lambda_\coend
	\Big(\co\big((\Hp)_2\ot \Salpha z\big)\Big) \ , \\
	\modT_\Zc(z)&~= \ribbon^{-1} z \gc 
\label{eq:Tact-ZA}
\end{align}
where $\Psi=\Phi^{-1}$, the dual structure maps $\co$, $\Hp$ are defined  by~\eqref{eq:coend:structuremaps}.
	We evaluate~\eqref{eq:STact-ZA} for the quasi-Hopf algebra $\Q$ using  the map $\co$ and  the explicit form of $\Hp$ computed in Proposition~\ref{prop:coend-Q}:
\be\label{eq:modS-Q}
	\modS_\Zc(z)~= \sum_{\substack{(\Hp)}} 
	({\Hp})_1 \,
		\hat\Lambda_\coend
	\big(z (\Hp)_2\big) = \sum_{\substack{(M)}} \, \hat\Lambda_\coend(M_1 z) S(M_2) \ ,
\ee
where we also used the fact that in the sum all  non-zero summands commute with $\K$,  recall Lemma~\ref{lem-M-central}  and that $\modS_\Zc(z)$ is central.
We also recall that $\hat\Lambda_\coend$ is given in \eqref{eq:intL-hat}.

Our aim is to give a decomposition of the 
	$SL(2,\oZ)$-action~\eqref{eq:Tact-ZA}, \eqref{eq:modS-Q} 
on $\Zc=\Zc(\Q)$.
In Proposition~\ref{prop:cen-Q}, we described a decomposition of the centre $\Zc= Z_0\oplus Z_1$ and a basis in it.
In what follows, we will need a slightly different decomposition:
\be\label{ZQ-2}
\Zc(\Q) = \Zb \oplus \Za  
\ee
with
\begin{gather}\label{eq:Zproj-Zf-span}
\Zb\coloneqq\rm{span}_\oC\big\{\,\bphi_{\PP^{+}_{0}}\,,\, \bphi_{\XX^{+}_{1}}\,,\,\bphi_{\XX^{-}_{1}}\big\} \gc 
\\ \nonumber
 \Za\coloneqq {\rm span}_\oC\Big\{
  \idem_0 \prod_{j=1}^{2k} \ff_{i_j}^{\eps_j}  \mid
   k\in \{0,1, \dots, \np\}\,,\,
    i_j \in \{1,\dots,\np\}
     \,,\, \eps_j=\pm \Big\} \ , 
\end{gather}
where  $\Zb$ is spanned by the internal characters $\bphi_V$ for $V$ projective,
while $\Za$ is the centre of $\algGr_{2\np}$ from~\eqref{Q0Q1}.
	The central elements $\bphi_{\XX^{\pm}_{0}}$ are given in~\eqref{eq:bphi-X}.
For $\bphi_{\PP^{+}_{0}}$, recall that the map  $V \mapsto \bphi_V$ factors through the Grothendieck ring  (as reviewed in Section~\ref{sec:internal-ch} and see also Section~I:\ref*{I-sec:intchar-natendo}) and thus using the Cartan matrix in~\eqref{eq:CM} we can write 
\be\label{eq:bphiP}
\bphi_{\PP^{+}_{0}} = 2^{2\np-1} \bphi_{\XX^{+}_{0}} +  2^{2\np-1} \bphi_{\XX^{-}_{0}} = \nu \, 2^{3\np}  \beta^2 \K\idem_0 \prod_{i=1}^\np \fp_i\fm_i \ .
\ee
	This shows that \eqref{ZQ-2} is indeed a decomposition of the centre $Z(\Q)$ as computed in Proposition~\ref{prop:cen-Q}.

{}From \cite[Prop.\,7.1\,\&\,Cor.\,8.5]{Gainutdinov:2017kae} we know that $\Zb$ is an invariant subspace of $Z(\Q)$ for the $SL(2,\oZ)$-action. The next lemma gives the action of $\modS_Z$ 
and $\modT_Z$
on $\Zb$.

\begin{lemma}\label{lem:ST-on-Zproj}
The restriction of the linear maps $\modS_\Zc$, $\modT_\Zc$ from~\eqref{eq:STact-ZA} and \eqref{eq:Tact-ZA} to $\Zb$ in the basis \eqref{eq:Zproj-Zf-span}
is given by the matrices
\be\label{eq:Sb}
\Sb =
\nu 
	\begin{pmatrix}
		 0          & 2^{-\np}  &  -2^{-\np} \\ 
	  2^{\np-1}   & 1/2       & 1/2 \\
	  - 2^{\np-1}   & 1/2       & 1/2
\end{pmatrix} 
\quad , \quad
\modT_{\Zb} =
	\begin{pmatrix}
		 1          & 0  &  0 \\ 
	  0   & \beta^{-1}   & 0 \\
	  0  & 0      & -\beta^{-1}
\end{pmatrix} \gp
\ee
They provide an irreducible (projective) representation of $SL(2,\mathbb{Z})$ for all $\np$ and $\beta$ from~\eqref{eq:beta-param} if and only if $\beta \ne \pm1$.
Otherwise, the representation is the direct sum of a 1- and 
a 2-dimensional irreducible subrepresentation, and its isomorphism class is the same for all even $\np\geq2$ and $\beta=\pm1$.
\end{lemma}

We remark that the 2-dimensional projective representation in the above statement is \textsl{not} projectively isomorphic to the fundamental one, because, as we will see in the proof below, the action of $\mathbf{T}$-generator is diagonal here, while in the fundamental representation it cannot be diagonalised.

\begin{proof}[Proof of Lemma~\ref{lem:ST-on-Zproj}]
To compute the $S$-transformation on $\Zb$
 we use the relation stated in 
Corollary I:\ref*{I-cor:S(PHI)=CHI},
\be\label{eq:Sbphi}
 \modS_Z(\bphi_V) = \bchi_V \ ,
 \ee
where the central elements  $\bchi_V$ are given by~\eqref{eq:bchi-V} and were computed for simple $V$ in Lemma~\ref{lem:bchi-Q}.
Applying this formula for $V=\XX^{\pm}_1$ and using~\eqref{eq:bchi-X} together with~\eqref{eq:bphiP}, we can write
\be
\modS_Z(\bphi_{\XX^{\pm}_1} ) = \bchi_{\XX^{\pm}_1} =  \nu \big(\pm 2^{-\np} \bphi_{\PP^{+}_0} + \ffrac12 (\bphi_{\XX^{+}_1} + \bphi_{\XX^{-}_1})\big) \ .
\ee
This gives the second and third column
in~\eqref{eq:Sb}.

As discussed in Section~\ref{sec:internal-ch} (see also Section~I:\ref*{I-sec:intchar-natendo}), the map $V\mapsto \bchi_V$ factors through $\Gr_\mathbb{C}(\repQ)$ and therefore we can write $\bchi_{\PP^{+}_{0}} = 2^{2\np-1} \bchi_{\XX^{+}_{0}} +  2^{2\np-1} \bchi_{\XX^{-}_{0}}$.
 Using~\eqref{eq:bchi-X} together with the expression for $\bphi_{\XX^{\pm}_1}$ in~\eqref{eq:bphi-X}, recall that $\idem_1= \idem_1^+ + \idem_1^-$,  the relation~\eqref{eq:Sbphi}  for $V=\PP^{+}_0$ is
\be
\modS_Z(\bphi_{\PP^{+}_0} ) = \bchi_{\PP^{+}_0} =  \nu \, 2^{\np-1} (\bphi_{\XX^{+}_1} - \bphi_{\XX^{-}_1}) \ .
\ee
This finally gives the first column in~\eqref{eq:Sb} and it finishes our calculation of $\Sb$.

For the $T$-transformation~\eqref{eq:Tact-ZA} we use the expression~\eqref{eq:ribinv} for $\ribbon^{-1}$. This immediately gives the diagonal matrix for $\modT_Z$  with the entries $\{1, \beta^{-1}, - \beta^{-1}\}$ in the basis $\big\{\,\bphi_{\PP^{+}_{0}}\,,\, \bphi_{\XX^{+}_{1}}\,,\,\bphi_{\XX^{-}_{1}}\big\}$.

We now  prove  irreducibility of the $SL(2,\mathbb{Z})$ representation when $\beta\ne \pm1$. First, we notice that the decomposition under the subgroup $\langle\mathbf{T}\rangle$ generated by $\mathbf{T}$ is the direct sum of three one-dimensional representations characterised by eigenvalues of the $\mathbf{T}$-action $(1, \beta^{-1}, -\beta^{-1})$, and they are pair-wise inequivalent.
Assume now that there exists a proper  invariant subspace with respect to the action of $SL(2,\mathbb{Z})$. Clearly,  this subspace is a direct sum of irreducible representations of $\langle\mathbf{T}\rangle$, or equivalently, $\mathbf{T}$-eigenspaces. 
Because $\mathbf{S}$ generator does not leave invariant any of the $\mathbf{T}$-eigenspaces, we  have no 1-dimensional invariant subspace. It is then two-dimensional and it is a direct sum of $\mathbf{T}$-eigenspaces of non-equal eigenvalues. For any pair of such eigenvalues, we see from~\eqref{eq:Sb} that the $\mathbf{S}$-action does not leave the corresponding subspace invariant. We thus get a contradiction and the 3-dimensional representation of $SL(2,\mathbb{Z})$ is irreducible. 

Consider now the case $\beta = \pm1$. We then have a two-dimensional $\mathbf{T}$-eigenspace (of eigenvalue $\pm1$) spanned by $\bphi_{\PP^{+}_0}$ and $\bphi_{\XX^{\pm}_1}$. It is straightforward to check that both vectors 
\be\label{eq:inv-comb-char}
v_\pm = \bphi_{\PP^{+}_0} \pm 2^N \bphi_{\XX^{\pm}_1}
\ee
 are stabilised by the  $\mathbf{S}$-action. Therefore, if $\beta=1$ the $SL(2,\mathbb{Z})$-invariant subspace is spanned by $v_+$, and for $\beta=-1$ it is spanned by $v_-$. 
There is a direct sum complement to the above 1-dimensional subrepresentation: for $\beta=\pm1$ the subspace spanned by 
\be
w_1=\bphi_{\XX^{\mp}_1}\quad \text{and} \quad w_2=\frac{1}{2}\bphi_{\XX^{\pm}_1} \mp 2^{-\np}\bphi_{\PP^{+}_0}
\ee
 is invariant under the $SL(2,\mathbb{Z})$-action. 
Indeed, $w_1$ and $w_2$ are eigenvectors of $\mathbf{T}$-generator with eigenvalues $-1$ and $+1$,
while the $\mathbf{S}$-generator acts as $\mathbf{S} .w_1 = w_2 + \frac{1}{2} w_1$ and $\mathbf{S} .w_2 = \frac{3}{4}w_1 - \frac{1}{2} w_2$.
We finally note from the explicit action that the isomorphism class of these 3-dimensional 	projective 
 $SL(2,\mathbb{Z})$ representations is the same for all even $\np\geq2$ and $\beta\in\{\pm1\}$.
\end{proof}

Let $U_i \subset \Q_0$ be the subalgebra
 $U_i := {\rm span}_\oC \{\idem_0,\fm_i\idem_0,\fp_i\idem_0,\fp_i\fm_i\idem_0\}$,
  with $1\leq i\leq \np$. Consider the injective linear map $\vartheta : U_1\tensor \cdots \tensor U_\np \to \Q_0$
 given by
\be\label{eq:U1...Un-to-Q0-map}
	\vartheta(a_1 \otimes \cdots \otimes a_\np) := a_1 \cdots a_\np \ .
\ee
Note that this is not an algebra map.
Write $U_+ \subset U_1\tensor \cdots \tensor U_\np$ for the subspace spanned by all homogeneous vectors with an even overall number of $\fpm_i$'s. In other words, $U_+$ is the eigenspace of $(\K(-)\K^{-1})^{\otimes \np}$ of eigenvalue $+1$.
The direct summand $\Za$ of the centre in \eqref{eq:Zproj-Zf-span} is the image of $U_+$,
\be
	\Za = \vartheta(U_+) \ .
\ee
	In particular, $\vartheta|_{U_+} : U_+ \to \Za$ is a bijection and below we use $\vartheta^{-1}$ on elements from $\Za$.
With the help of $\vartheta$ we
will now describe the $S$-
and $T$-transformations
on $\Za$ as a tensor product of linear maps 
$\modS_{\Za}^i, \modT_{\Za}^i\colon U_i\to U_i$, for $i=1,\dots,\np$.

\begin{lemma}\label{lem:Sa}
$\modS_\Zc$ and $\modT_\Zc$ from~\eqref{eq:STact-ZA}-\eqref{eq:Tact-ZA} map $\Za$ to itself. The restrictions of $\modS_\Zc, \modT_\Zc$ to $\Za$ are given by 
\begin{align}
\label{eq:S-action-on-Zf-from-Q}
	\modS_\Zc\big|_{\Za}
	~&=~
	\vartheta \circ \Big(
	\nu\beta^{2}\cdot\big(\Sa^1\tensor \cdots \tensor\Sa^\np\big)\big|_{U_+}
	\Big)
	\circ \vartheta^{-1}\ ,
	\\ 
\label{eq:T-action-on-Zf-from-Q}
	\modT_\Zc\big|_{\Za}
	~&=~
	\vartheta \circ \big(\Ta^1\tensor \cdots \tensor\Ta^\np\big)\big|_{U_+}
	\circ \vartheta^{-1} \ ,
\end{align}
where the individual maps 
$\modS_{\Za}^i$, $\modT_{\Za}^i$ are given in the basis
 $\{ \fp_i\fm_i\idem_0,  \idem_0, \fm_i\idem_0,\fp_i\idem_0 \}$
   of $U_i$ by the matrices
\be \label{eq:STact-mat} 
\Sa^i =
\begin{pmatrix}
0  &2 & 0 & 0 \\
 -2^{-1} & 0 & 0 & 0\\
0 & 0 & 1 & 0  \\
0 & 0 & 0 & 1  
\end{pmatrix}
\qquad , \qquad
\Ta^i =
\begin{pmatrix}
1 & 2 & 0 & 0\\
0 & 1 & 0 & 0 \\
0 & 0 & 1 & 0 \\
0 & 0 & 0 & 1 
\end{pmatrix}
\quad .
\ee
In particular, the restriction of  $\modS_{\Zc}^2$ to $\Za$ is the identity,
and $\modT_\Zc$ has Jordan blocks of maximum rank $\np+1$. 
\end{lemma}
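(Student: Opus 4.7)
The plan is to directly evaluate the general formulas~\eqref{eq:Tact-ZA} and~\eqref{eq:modS-Q} on elements of $\Za$, exploiting the matching product structures of the $M$-matrix~\eqref{M-non-deg}, the integral $\hat\Lambda_\coend$ from~\eqref{eq:intL-hat}, and the tensor decomposition $\vartheta\colon U_1\otimes\cdots\otimes U_\np\to\Q_0$ of~\eqref{eq:U1...Un-to-Q0-map}.

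For the $T$-action: on the $\idem_0$-sector one has $\K^2\idem_0=\idem_0$ and $\idem_1\idem_0=0$, so using~\eqref{eq:ribinv} we get $\ribbon^{-1}\idem_0=\prod_{i=1}^\np(\idem_0+2\fp_i\fm_i\idem_0)$. Each factor $\idem_0+2\fp_i\fm_i\idem_0$ is an even-degree element of $\idem_0\Q_0$, hence lies in $\Zc_0\supset\Za$ by Proposition~\ref{prop:cen-Q} and is therefore central in $\Q$. Since these central factors commute pairwise and with every $a_j\in U_j$, the map $z\mapsto\ribbon^{-1}z$ on $\Za$ is the $\vartheta$-transport of the tensor product of the local multiplications on $U_i$. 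A slot-by-slot check using $\fpm_i^2=0$ and $(\fp_i\fm_i)^2\idem_0=\fp_i\fm_i\idem_1\idem_0=0$ then yields the matrix $\Ta^i$.

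For the $S$-action, only the $n=0$ terms in~\eqref{M-non-deg} contribute to $\sum_{(M)}\hat\Lambda_\coend(M_1z)S(M_2)$ because $z\in\idem_0\Q_0$. On the $\idem_0$-sector, the identities $\omega_-z=\K z$ and $\omega_-^2=\one$ rewrite the fermionic factor of $M_1$ as $\epsilon\,\K^{d\bmod 2}\prod_j(\fp_j)^{t_j}(\fm_j)^{s_j}$ with a tractable sign $\epsilon=\pm1$ and $d=\sum_j(t_j+s_j)$. For a basis vector $z=\vartheta(a_1\otimes\cdots\otimes a_\np)$ the product $\prod_j(\fp_j)^{t_j}(\fm_j)^{s_j}\cdot z$ contains the basis element $\prod_i\fp_i\fm_i$ only when each $(t_i,s_i)$ is the unique pair with $t_i+s_i=2-\deg(a_i)$ (the four such pairs are in bijection with the four basis vectors of $U_i$); in particular $d=2\np-d_z$ is then even, ruling out the $m=1$ branch whose $\K$-power would be odd in the expansion in the basis $B$. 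The antipode $S(f_I)$ for $n=m=0$ also factorises slot-wise into $\fpm_j$ and $\K^{\pm 2}$, and on $\idem_0$ the $\K^{\pm 2}$-factors collapse to $\pm\one$. Collecting the tractable signs together with the prefactors $2^d$, $(-1)^{\sum s_j}$ from~\eqref{M-non-deg} and $(-1)^\np\nu\beta^2\,2^{1-\np}$ from $\hat\Lambda_\coend$ yields the claimed tensor product $\nu\beta^2(\Sa^1\otimes\cdots\otimes\Sa^\np)|_{U_+}$.

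Once the tensor-product form is in hand, the additional assertions follow by linear algebra. Computing $(\Sa^i)^2=\mathrm{diag}(-1,1,1,-1)$ directly, the tensor product $\bigotimes_i(\Sa^i)^2$ acts on $a_1\otimes\cdots\otimes a_\np$ as $(-1)^{\#\{i:\deg(a_i)\text{ even}\}}$; on $U_+$ the number of odd-degree slots is even, so this equals $(-1)^\np$, which together with $(\nu\beta^2)^2=\beta^4=(-1)^\np$ yields $\modS_\Zc^2|_{\Za}=\id$. For the Jordan structure of $\modT_\Zc|_{\Za}$, each $N_i:=\id^{\otimes(i-1)}\otimes(\Ta^i-\id)\otimes\id^{\otimes(\np-i)}$ is rank-one nilpotent with $N_i^2=0$, and the $N_i$ commute; hence every $(\np+1)$-fold product $N_{i_1}\cdots N_{i_{\np+1}}$ must repeat a slot and vanish, while $(\bigotimes\Ta^i-\id)^{\np}\,(\idem_0^{\otimes\np})=\np!\,2^{\np}\bigotimes_i\fp_i\fm_i\idem_0\ne 0$, giving maximum Jordan block size exactly $\np+1$. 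The principal difficulty is the bookkeeping in step two: although the factorisation over slots is forced by the matching product structures of $M$, $\hat\Lambda_\coend$ and $\vartheta$, verifying the precise local entries of $\Sa^i$ requires careful balancing of the signs from $\omega_-$-conversions, the antipode on composite monomials, and the scalar prefactor in~\eqref{M-non-deg}.
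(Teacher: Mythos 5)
Your strategy coincides with the paper's: evaluate $\modS_\Zc(z)=\sum_{(M)}\hat\Lambda_\coend(M_1z)S(M_2)$ using the factorised form \eqref{M-non-deg} of $M$, kill the $n=1$ terms via $\idem_1\idem_0=0$ and the $m=1$ terms via the support of $\hat\Lambda_\coend$ at $\K^0$, observe that for each basis vector of $\Za$ exactly one tuple $(t_j,s_j)$ survives, and read off a slot-wise tensor product through $\vartheta$; for $\modT_\Zc$ use $\ribbon^{-1}\idem_0=\prod_k(\one+2\fp_k\fm_k)\idem_0$ and the fact that $\vartheta$ is multiplicative on the subalgebra generated by the $\fp_i\fm_i$. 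Two of your sub-arguments are actually cleaner than the paper's: the verification $\modS_\Zc^2|_{\Za}=\id$ via $(\Sa^i)^2=\mathrm{diag}(-1,1,1,-1)$ combined with $(\nu\beta^2)^2=(-1)^\np$ (the paper only says ``by a direct calculation''), and the Jordan-block bound via the commuting square-zero nilpotents $N_i$ with the explicit witness $(\bigotimes_i\Ta^i-\id)^\np(\idem_0^{\otimes\np})=\np!\,2^\np\bigotimes_i\fp_i\fm_i\idem_0\neq 0$, where the paper instead invokes the tensor-product behaviour of Jordan blocks as $sl(2)$-modules.

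The one substantive shortfall is that the decisive computation is asserted rather than performed. The content of the lemma is precisely the matrix $\Sa^i$ — that the odd-degree basis vectors $\fpm_i\idem_0$ are fixed, that $\idem_0\mapsto 2\fp_i\fm_i\idem_0$, and that $\fp_i\fm_i\idem_0\mapsto -\tfrac12\idem_0$ — and these entries emerge only after the sign bookkeeping you defer: the signs $(-1)^{\sum_i(mt_i+s_i)}$ and the power $2^{\sum_i(s_i+t_i)}$ from \eqref{M-non-deg}, the signs from cancelling the $\K$'s hidden in $\tilde\ff^\pm_j=\ff^\pm_j\omega_-$, the reordering sign $(-1)^{\frac12\sum\eps_j}$ when bringing the argument of $\hat\Lambda_\coend$ to the form $\prod_j\fp_j\fm_j$, and the reordering after applying $S$ to $f_I$. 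The paper carries this out to the closed formula $\modS_\Zc(z)=(-1)^M\nu\beta^2 2^{\np-2(M+k)}\idem_0\bigl(\prod_j\ff_{i_j}^{\eps_j}\bigr)\prod_n\fp_{l_n}\fm_{l_n}$ in \eqref{eq:Sf-fin} and only then matches it against the slot-wise action; without that step (or an equivalent computation on the four generators of a single $U_i$ together with a multiplicativity argument), ``collecting the tractable signs yields the claimed tensor product'' is a statement of the goal, not a proof of it.
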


\begin{proof}
We first recall that the $S$-transformation $\modS_\Zc$ for $\Q$ was given in~\eqref{eq:modS-Q}.
Using the explicit expression of $M$ from~\eqref{M-non-deg},  $\modS_\Zc$  on a central element $z$ becomes
\al{\label{Sz-Q}
\modS_\Zc(z) &=
	\sum\limits_{\substack{n,m,t_1,s_1,\\ \ldots,t_\np,s_\np=0}}^{1}  (-\beta^2)^{nm} \, 2^{\sum_{i}(s_i+t_i)} (-1)^{\sum_i(m t_i + s_i)}
\hat\Lambda_\coend\big(z\idem_n\K^m  \prod_{j=1}^\np(\tilde{\ff}^+_j)^{t_j}(\tilde{\ff}^-_j)^{s_j}\big) \\ \nonumber
&\qquad \times \idem_m S\big(\K^n\prod_{j=1}^\np(\ff^-_j)^{t_j}(\ff^+_j)^{s_j}\big) \gp
}

Our aim is to calculate $\modS_\Zc (z) $ on $\Za$. Recall the basis elements of $\Za$ in~\eqref{eq:Zproj-Zf-span}, they are all in $\Zc \idem_0$. Therefore the terms with $n=1$ are zero in the sum~\eqref{Sz-Q}. Then, we recall $\hat\Lambda_\coend$ from~\eqref{eq:intL-hat} and as a basis element of $\Za$ has even number of $\fpm_i$ the sum $\sum_j(t_j + s_j)$ should be an even number too, otherwise the value of $\hat\Lambda_\coend$ on the corresponding element is zero. 
	As $\sum_i(t_i + s_i)$  is even, 
	the factors of $\K$ in $\idem_0 \tilde{\ff}^\pm_j= \idem_0 \ff^\pm_j \K$ cancel (resulting in signs), and so the term $\hat\Lambda_\coend(\cdots)$ in \eqref{Sz-Q} is zero for $m=1$.
Combining all these observations, for $z\in\Za$ we have
\be\label{eq:Sf-Q}
	\modS_\Zc (z) = \sum\limits_{\substack{t_1,s_1,\\ \ldots,t_\np,s_\np=0}}^{1} \!\!\!\!\! 2^{\sum_{i}(s_i+t_i)} 
	(-1)^{\sum_i( s_i+(t_i + s_i)/2)}
\hat\Lambda_\coend\Big(z\idem_0\prod_{j=1}^\np(\ff^+_j)^{t_j}(\ff^-_j)^{s_j}\Big)\idem_0 S\Big(\prod_{j=1}^\np(\ff^-_j)^{t_j}(\ff^+_j)^{s_j}\Big)\ ,
\ee
	where the sign $(-1)^{\sum_i (t_i + s_i)/2}$ arises when cancelling the factors of $\K$ contained in $\tilde{\ff}^\pm_j$ (it helps to rewrite the product as in \eqref{eq:z-Zf} below to see this).
It is clear that we can write any basis element of $\Za$
from~\eqref{eq:Zproj-Zf-span} in the form
 \be\label{eq:z-Zf}
 z = \Big(\prod_{j=1}^{2k} \ff_{i_j}^{\eps_j}\Big)\Big(\prod_{j=1}^M \fp_{k_j}\fm_{k_j}\Big)\idem_0 \ ,
 \ee
	for some	$0\leq k,M \leq \np$ 
	and $i_j,k_j \in \{1,\dots,\np\}$
 such that $ i_1<\cdots<i_{2k}$, and $\eps_j \in \{ \pm \}$.
The essential part of the calculation is to analyse the coefficients $\hat\Lambda_\coend\big(z\idem_0\prod_{j}(\ff^+_j)^{t_j}(\ff^-_j)^{s_j}\big)$ in the sum~\eqref{eq:Sf-Q}.
For a given $z$ as in~\eqref{eq:z-Zf}, 
 the $2\np$-tuple $\{t_1,s_1,\ldots, t_\np,s_\np\}$ here is unique because $z\prod_{j=1}^\np(\ff^+_j)^{t_j}(\ff^-_j)^{s_j}$ should be proportional to the support of $\hat\Lambda_\coend$, recall~\eqref{eq:intL-hat}. We thus get 
 	that $\hat\Lambda_\coend(\cdots)$ is non-zero iff, for the same index choice as in \eqref{eq:z-Zf},
\be\label{eq:z-tjsj}
\prod_{j=1}^\np(\ff^+_j)^{t_j}(\ff^-_j)^{s_j} =
\Big(
	\prod_{j=1}^{2k}
\ff_{i_j}^{-\eps_j}\Big)\cdot \Big(\prod_{n=1}^{\np-M-2k} \fp_{l_n}\fm_{l_n}\Big) \ ,
\ee
where the $l_n$ take the $N-M-2k$ values in $\{1,\dots,\np\}$ which are different from all $i_j,k_j$ in \eqref{eq:z-Zf}.
Note that this set of $l_n$'s is unique.
After reordering the generators $\fpm_j$, the element within $\hat\Lambda_\coend(\dots)$ is $(-1)^{\half\sum_{j=1}^{2k}\eps_j}\idem_0\prod_{j=1}^{\np}\fp_j\fm_j$
\footnote{To get the sign, first we write $\prod_{j=1}^{2k} \ff_{i_j}^{\eps_j}\cdot \prod_{j=1}^{2k} \ff_{i_j}^{-\eps_j} = (-1)^k\prod_{j=1}^{2k} \ff_{i_j}^{\eps_j} \ff_{i_j}^{-\eps_j}$, 
where $\ff_{i_1}^{-\eps_1}$ is commuted past an odd number of fermions, so we get $-1$, then $\ff_{i_2}^{-\eps_2}$ is commuted past an even number of fermions, so we get $+1$, etc. -- this explains $(-1)^k$. Then to reorder $\ff_{i_j}^{\eps_j} \ff_{i_j}^{-\eps_j}$ we do nothing if $\eps_j=+1$ and get $-1$ otherwise. Thus we have to multiply by $(-1)^{\sum_{j=1}^{2k}\frac{1-\eps_j}{2}}$. This together with $(-1)^k$ gives the sign $(-1)^{\half\sum_{j=1}^{2k}\eps_j}$.}.
We also note that the $2\np$ tuple corresponding to~\eqref{eq:z-tjsj} gives
\be
 \sum_i(t_i+s_i) =  2(\np-M-k)\ , \qquad \sum_i s_i = \np -M -2k + \half\sum_{j=1}^{2k}(\eps_j+1)\ .
\ee
We thus get for~\eqref{eq:Sf-Q} the following expression, for $z$ as in~\eqref{eq:z-Zf},
\begin{align}\label{eq:Sf-fin}
\modS_\Zc (z) &= 2^{2(N-M-k)}\hat\Lambda_\coend\big(\idem_0\prod_{j=1}^{\np}\fp_j\fm_j\big) \idem_0 
S\Bigg(\Big(\prod\limits_{j=1}
^{2k}\ff_{i_j}^{\eps_j}\Big)\cdot
\Big(\prod_{n=1}^{\np-M-2k} \fm_{l_n}\fp_{l_n}\Big)\Bigg)\\
 & =   (-1)^{M} \nu  \beta^2 2^{\np-2(M+k)}\idem_0 \Big(\prod\limits_{j=1}
^{2k}\ff_{i_j}^{\eps_j}\Big)\cdot
\Big(\prod_{n=1}^{\np-M-2k} \fp_{l_n}\fm_{l_n}\Big)\ , \nonumber
\end{align}
	where the $l_n$ are as in \eqref{eq:z-tjsj}, and
where we used our assumption that $i_1<\cdots<i_{2k}$ and thus had to reorder $\ff_{i_j}^{\eps_j}$'s after the application of the antipode $S$. It is now clear that the image of $\Za$ under $\modS_\Zc$ is $\Za$ itself. 
We also note by a direct calculation that $\modS_\Zc^2 (z) = z$.

Our aim is now to check the expression in~\eqref{eq:S-action-on-Zf-from-Q} together with~\eqref{eq:STact-mat}. Recall that the bijection $\vartheta$ was defined in~\eqref{eq:U1...Un-to-Q0-map}. Then having $z$ as in~\eqref{eq:z-Zf}, its image under $\vartheta^{-1}$ has the form (of course here for a particular choice of  $i_j,k_j$)
\be
\vartheta^{-1}(z) = \idem_0\tensor\ldots \tensor \idem_0\tensor\idem_0 \ff^{\eps_1}_{i_1}\tensor \ldots \tensor \idem_0 \fp_{k_1}\fm_{k_1}\tensor \ldots  \tensor \idem_0 \gp
\ee
The action of each tensor component $\modS_{\Za}^i$, as defined in~\eqref{eq:STact-mat}, replaces $\idem_0$ in the $i$'th factor by $2\fp_i\fm_i\idem_0$ and $\fp_i\fm_i\idem_0$ by $-\half \idem_0$, while does nothing to the factors $\idem_0 \ff^{\eps_1}_{i_1}$. With this, it is now straightforward to  see the equality in~\eqref{eq:S-action-on-Zf-from-Q} using the final expression in~\eqref{eq:Sf-fin}.

For the $T$-transformation on $\Za$, we use the expression~\eqref{eq:ribinv} that can be written as
\be
\ribbon^{-1} \idem_0= \prod_{k=1}^\np(\one+2\fp_k\fm_k) \idem_0 \gp
\ee
It is therefore clear that $\modT_\Zc$ acts on $\Za$.
Recall  that the  injective linear map~\eqref{eq:U1...Un-to-Q0-map} restricts to 
   an isomorphism $\vartheta\bigl|_{U_+}: U_+ \to \Za$. 
Note that
 \be\label{eq:TZf-in-U+}
 \vartheta^{-1}(\ribbon^{-1}\idem_0) = (\idem_0 +2 \fp_1\fm_1\idem_0)\tensor  (\idem_0 + 2\fp_2\fm_2\idem_0)\tensor \ldots \tensor (\idem_0 +2 \fp_\np\fm_\np\idem_0)\gp
 \ee 
It is easy to check that, even though $\vartheta
\bigl|_{U_+}: U_+ \to \Za$ is itself not an algebra map, when restricted to the subalgebra of $\Q_0$ generated by $\fp_i\fm_i$, $1\leq i\leq \np$, it does become an algebra map.
In particular for $\ribbon^{-1}\idem_0$ we have $\vartheta^{-1}(\ribbon^{-1}z) = \vartheta^{-1}(\ribbon^{-1}\idem_0) \cdot \vartheta^{-1}(z)$ for all $z \in \Za$.
 Thus $\modT_{\Zc}|_{\Za}$ acts on $U_+$ by multiplication with \eqref{eq:TZf-in-U+}. From this, we immediately get~\eqref{eq:T-action-on-Zf-from-Q} with~\eqref{eq:STact-mat}. 

Finally, from the $\modT_\Zc(\idem_0)$ we see that $\modT_\Zc$ has Jordan blocks of maximum rank $\np+1$.
Indeed, recall that Jordan blocks of a given dimension behave under tensoring like irreducible $sl(2)$-modules of that same dimension. 
The matrix 
$\Ta^i$ in 
\eqref{eq:STact-mat}
 has one rank-2 Jordan block and two rank-1 blocks (use the basis $\{2\fp_i\fm_i\idem_0, \idem_0,\fm_i\idem_0,\fp_i\idem_0 \}$). Thus the maximal rank Jordan cell in the tensor product arises from from tensoring the $\np$ rank-2 
blocks.
 This
 corresponds to the tensor product of $\np$ fundamental $sl(2)$-modules, which decomposes into a direct sum with the largest irreducible summand being of dimension $\np+1$.  
\end{proof}

As a corollary of the two last lemmas we have the following theorem.

\begin{theorem} \label{thm:ST-from-Q}
The 	projective
$SL(2,\oZ)$-action  on the centre $\Zc$ of $\Q$ given by the linear maps $\modS_\Zc$ and $\modT_\Zc$
from~\eqref{eq:STact-ZA}-\eqref{eq:Tact-ZA} is
\be
\modS_\Zc = \Sb \oplus \Sa \gc 
\qquad
\modT_\Zc = \Tb \oplus \Ta \gc
\ee
	with the constituent maps as defined in Lemmas~\ref{lem:ST-on-Zproj} and~\ref{lem:Sa}.
We have $\modS_\Zc^2 = \id_{\Zc}$ and $\modT_\Zc$ has Jordan blocks of 
	ranks up to and including $\np+1$.
\end{theorem}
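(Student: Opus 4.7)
The plan is to assemble the theorem directly from Lemmas~\ref{lem:ST-on-Zproj} and~\ref{lem:Sa}, together with two small verifications. First I would confirm that \eqref{ZQ-2} really is a direct sum decomposition of $\Zc(\Q)$: combining \eqref{eq:bphi-X} with \eqref{eq:bphiP}, the basis vectors $\bphi_{\PP^{+}_{0}}$, $\bphi_{\XX^{+}_{1}}$, $\bphi_{\XX^{-}_{1}}$ of $\Zb$ are nonzero multiples of the three one-dimensional summands $\Cb\,\K\idem_0\prod_{i} \fp_i\fm_i$, $\Cb\,\idem_1^+$, $\Cb\,\idem_1^-$ identified in Proposition~\ref{prop:cen-Q}, and these are complementary to $\Za$ inside $\Zc(\Q)$. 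Lemmas~\ref{lem:ST-on-Zproj} and~\ref{lem:Sa} then state that both $\modS_\Zc$ and $\modT_\Zc$ preserve each summand and give their restrictions in the stated bases, which at once yields $\modS_\Zc = \Sb \oplus \Sa$ and $\modT_\Zc = \Tb \oplus \Ta$.

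For the identity $\modS_\Zc^2 = \id_\Zc$ I would argue on each summand separately. On $\Zb$, a direct multiplication of the matrix in \eqref{eq:Sb} gives $\Sb^2 = \nu^2 \cdot \id = \id$, using that $\nu \in \{\pm 1\}$ by Proposition~\ref{prop:int}. On $\Za$, squaring the $4 \times 4$ matrix in \eqref{eq:STact-mat} yields
\begin{equation*}
(\Sa^i)^2 = \mathrm{diag}(-1,1,1,-1)
\end{equation*}
in the basis $\{\idem_0, \fm_i\idem_0, \fp_i\idem_0, \fp_i\fm_i\idem_0\}$ of $U_i$. For an element $z$ of the form \eqref{eq:z-Zf}, the tensor factor $\vartheta^{-1}(z) \in U_+$ carries a vector from $\{\fm_{i_j}\idem_0,\fp_{i_j}\idem_0\}$ in each of the $2k$ slots indexed by the $i_j$, a $\fp_{k_j}\fm_{k_j}\idem_0$ in each of the $M$ slots indexed by the $k_j$, and an $\idem_0$ in the remaining $\np - 2k - M$ slots, so the eigenvalue of $(\Sa^1 \otimes \cdots \otimes \Sa^\np)^2$ on $\vartheta^{-1}(z)$ is $(-1)^{M + (\np - 2k - M)} = (-1)^\np$. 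Combined with the prefactor $(\nu \beta^2)^2 = \beta^4 = (-1)^\np$ coming from \eqref{eq:S-action-on-Zf-from-Q}, this gives $\modS_\Zc^2|_{\Za} = (-1)^{2\np}\,\id = \id$.

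Finally, the Jordan block structure of $\modT_\Zc$ is immediate from the block decomposition: by \eqref{eq:Sb} the matrix $\Tb$ is already diagonal, so all its Jordan blocks have rank $1$, while Lemma~\ref{lem:Sa} records that $\modT_\Zc|_{\Za}$ has Jordan blocks of maximum rank $\np + 1$. Hence $\np + 1$ is the overall maximum Jordan rank, as claimed. I do not anticipate any genuine obstacle here: the two lemmas do essentially all of the work, and the only independent computation left is the tensor-product eigenvalue count carried out above, which I expect to be a short sign-tracking exercise.
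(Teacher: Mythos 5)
Your proposal is correct and follows essentially the same route as the paper, which states the theorem as a direct corollary of Lemmas~\ref{lem:ST-on-Zproj} and~\ref{lem:Sa} (with $\modS_\Zc^2=\id$ on $\Za$ noted as a ``direct calculation'' inside the proof of Lemma~\ref{lem:Sa}). Your explicit checks of $\Sb^2=\nu^2\id$, of $(\Sa^i)^2=\mathrm{diag}(-1,1,1,-1)$ combined with the prefactor $(\nu\beta^2)^2=(-1)^\np$, and of the Jordan ranks merely make explicit what the paper leaves implicit, and they are all accurate.
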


\begin{remark}\label{rem:SL2Z-decomp}
\mbox{}
\begin{enumerate}
	\item
The two $SL(2,\oZ)$ representations $\Za$ and $\Zb$ from Theorem~\ref{thm:ST-from-Q} are decomposed as follows.
We notice first that the 4-dimensional representation of the algebra generated by the matrices $\Sa^i$ and $\Ta^i$ from~\eqref{eq:STact-mat}  is decomposed onto irreducible representations as  $\oC^4 = \oC^2 \oplus \oC \oplus \oC$ where the first summand is even in terms of fermions (the adjoint action of $\K$ is $\mathrm{Id}$) while the last two summands are odd (the adjoint action of $\K$ is $-\mathrm{Id}$). With this and using~\eqref{eq:S-action-on-Zf-from-Q} and~\eqref{eq:T-action-on-Zf-from-Q} it is easy to determine
	a direct sum decomposition of $\Za$ in terms of the trivial and powers of the fundamental representations. 
For example in the case $\np=1$, $\Za= \oC^2$ is just 
the fundamental (projective) representation of $SL(2,\oZ)$. For $\np=2$, we have $\Za= \oC^2\otimes \oC^2 \oplus \oC^{\oplus 4}$. In general, we have
$$
\Za = \bigoplus_{k=0}^{\left\lfloor{\frac{\np}{2}}\right\rfloor} 4^k \binom{\np}{2k}  \bigl(\oC^2\bigr)^{\otimes (\np-2k)}\ ,
$$
where $\oC^2$ is the fundamental representation
	and where we set $(\mathbb{C}^2)^{\otimes 0} := \mathbb{C}$ to be the trivial representation.
	In particular, if $\np$ is even we have (at least) $2^\np$ invariants of the $SL(2,\oZ)$ action.
Finally, the decomposition of $\Zb$ was already discussed in Lemma~\ref{lem:ST-on-Zproj}.
We only note that every two pairs of matrices in~\eqref{eq:Sb}, for different choices of $\np$ and~$\beta$, say $(\np_1,\beta_1)$ and  $(\np_2,\beta_2)$, can be related 
	(projectively)
by a transition matrix if and only if $\np_1$ and $\np_2$ are of the same parity and $\beta_1^2=\beta_2^2$.
The transition matrix is diagonal with non-zero entries $(2^{\np_2-\np_1},1,1)$ if the two values of $\beta$ are equal, and it is the matrix
$$\footnotesize{\begin{pmatrix}
-2^{\np_2-\np_1} & 0 & 0 \\
0 & 0 & 1 \\
0 & 1 & 0
\end{pmatrix}}$$ if the two values of $\beta$ have different signs.
Therefore, $\Zb$ provides in total 4 isomorphism classes of 3-dimensional representations of $SL(2,\oZ)$.

\item
From the point of conformal field theory, modular invariant vectors as above appear as torus partition functions of holomorphic CFTs. 
Since partition functions are always traces, rather than pseudo-traces (as reviewed in Section~\ref{sec:SL2Z-compare} below), they correspond to invariant combinations of the elements $\bphi_{\XX^{\pm}_0}$, $\bphi_{\XX^{\pm}_1}$ of the centre $Z$. 
The above analysis implies that there is a projectively invariant combination of these iff $\beta = \pm 1$ (and so $\np$ is even), and it is spanned by the combination in \eqref{eq:inv-comb-char}.
Indeed, while $\Za$ does contain invariants, none of them is a combination of $\bphi_{\XX^{\pm}_0}$ (see \eqref{eq:Sbphi} and Lemmas~\ref{eq:phiV-explicit} and~\ref{lem:bchi-Q}), and so we only need to consider $\Zb$.
Combining with~\eqref{eq:bphiP} and rescaling, we get the spanning vector 
$2^{\np-1} \bphi_{\XX^{+}_{0}} +  2^{\np-1} \bphi_{\XX^{-}_{0}} + \beta  \bphi_{\XX^{\pm}_1}$
for the space of invariants in the span of $\bphi_{\XX^{\pm}_0}$, $\bphi_{\XX^{\pm}_1}$.
For a CFT partition function, all coefficients have to be non-negative integers, and so only $\beta=+1$ remains. The resulting invariant has been investigated in detail in \cite{Davydov:2016euo}, and it has been shown that there is a Lagrangian algebra in 
$\catSF(\np,\beta) \cong \rep \Q(\np,\beta)$ with this underlying character (cf.\ Remark~\ref{rem:HN-comments}\,(3)). Consequently, there exist holomorphic symplectic fermion CFTs at $\np = 0\; \mathrm{mod}\; 8$ (using the ribbon equivalence in Conjecture~\ref{conj:SF-equiv-RepVev-ribbon} below).
\end{enumerate}
\end{remark}

\section{Equivalence of the two projective \texorpdfstring{$SL(2,\oZ)$}{SL2Z}-actions}\label{sec:SL2Z-compare}

In this final section, we review the modular-group action on the symplectic fermion pseudo-trace functions and compare it with the $SL(2,\mathbb{Z})$ action computed in the previous section on the centre of $\Q$.
The main result of this paper is that these two actions are projectively equivalent (Theorem~\ref{thm:compare-Q-SF}).

\subsection{Modular properties of symplectic fermion pseudo-trace functions}\label{sec:modprop-SF}

Here,
we review the computation of the symplectic fermion pseudo-trace functions and of their modular properties carried out in \cite{Gainutdinov:2016qhz}.

We define two affine Lie super-algebras, $\widehat\hLie$ and $\widehat\hLie_\mathrm{tw}$, in terms of a symplectic $\oC$-vector space~$\hLie$ of dimension $2\np$ with symplectic form $(-,-)$. The underlying super-vector spaces are
\be
	\widehat\hLie = \hLie \ot \oC[t^{\pm1}]  \, \oplus \, \oC \hat k
	\quad , \quad
	\widehat\hLie_\mathrm{tw} = \hLie \ot t^{\frac12}\oC[t^{\pm1}] \, \oplus \, \oC \hat k \ ,
\ee
where $t^{\pm1}$ and $\hat k$ are parity-even, and $\hLie$ is parity-odd. For $u \in \hLie$ and $m \in \oZ$ (resp.\, $m \in \oZ + \frac12$), abbreviate $u_m := u \ot t^m$. The Lie super-bracket is given by taking $\hat k$ central and setting, for $u,v \in \hLie$ and $m,n \in \oZ$ (resp.\ $m,n \in \oZ + \frac12$),
\be
	[u_m,v_n] \,=\, (u,v) \,m\, \delta_{m+n,0} \,\hat k \ .
\ee
By convention for the bracket of a Lie super-algebra, 
this is an anti-commutator as $u_m$, $v_n$ are parity odd.
We refer to \cite{Runkel:2012cf} for more on $\widehat\hLie_{(\mathrm{tw})}$ and its representations.

To make the connection to Section~\ref{sec:SFdef}, choose a basis $\{ a_i, b_i \,|\, i = 1,\dots,\np \}$ of $\hLie$ which satisfies
\be
	(a_j,b_k) ~=~ \rmi \pi \delta_{j,k} \ . 
\ee
With this basis as generators, we have $\algGr = \Lambda(\hLie)$, that is, the $2^{2 \np}$-dimensional Gra\ss mann algebra defined in Section~\ref{sec:SF-abelian} is the exterior algebra of $\hLie$.
The reason to put the factor of $\rmi \pi$ in the above normalisation is that we will also work with a rescaled basis of $\hLie$, namely
\be\label{eq:alpha-basis-of-h}
	\alpha^1 = a_1 
	~,~~
	\alpha^2 = \tfrac{1}{\rmi \pi} b_1
	~,~ \dots 
	~,~
	\alpha^{2\np-1} = a_\np
	~,~~
	\alpha^{2\np} = \tfrac{1}{\rmi \pi} b_\np \ .
\ee
The $\alpha^i$ are a symplectic basis in the sense that $(\alpha^1,\alpha^2)=1$, etc. The basis $\{ \alpha^i \}$ is a natural choice when working with the affine Lie algebra $\widehat\hLie_{(\mathrm{tw})}$, and it is used e.g.\ in \cite{Runkel:2012cf} and \cite[Sec.\,6]{Gainutdinov:2016qhz}. 

For later use we recall the action of the Virasoro zero-mode $L_0$ on highest weight modules of $\widehat\hLie_{(\mathrm{tw})}$ from \cite[Rem.\,2.5\,\&\,2.7]{Runkel:2012cf}:
\begin{align}\label{eq:L0-action-on-h(tw)}
\text{$\widehat\hLie$-module :}\quad
&L_0 = \sum_{k=1}^{\np} (
\alpha^{2k}_0 \alpha_{0}^{2k-1}
- \alpha^{2k-1}_0 \alpha_{0}^{2k}) 
+ H \ ,
\\\nonumber
\text{$\widehat\hLie_{\mathrm{tw}}$-module :}\quad
&L_0 =  - \tfrac{\np}{8} + H_{\mathrm{tw}} \ . 
\end{align}
where
\begin{align}\label{eq:H(tw)-def}
H = \sum_{m \in \mathbb{Z}_{> 0}} \sum_{k = 1}^\np 
\big( \alpha_{-m}^{2k} \alpha^{2k-1}_{m} - \alpha_{-m}^{2k-1} \alpha^{2k}_{m}  \big) \ ,
\\ \nonumber
H_{\mathrm{tw}} =  \hspace{-.8em} \sum_{m \in \mathbb{Z}_{\ge 0} + \frac12}\sum_{k =1}^\np 
\big( \alpha_{-m}^{2k} \alpha^{2k-1}_{m} - \alpha_{-m}^{2k-1} \alpha^{2k}_{m} \big) \ .
\end{align}
One verifies that $H_{(\mathrm{tw})}$ acts as grading operator on a highest-weight module of $\widehat\hLie_{(\mathrm{tw})}$, assigning grade zero to the space of ground states.

The braiding on $\catSF$ was originally computed in \cite{Runkel:2012cf} with respect to
the basis in \eqref{eq:alpha-basis-of-h}
and, for example, for $X,Y \in \catSF_0$ the result was
\be
	c_{X,Y} = \sflip_{X,Y} \circ \exp\Big(- \rmi \pi 
	\sum_{k=1}^\np (\alpha^{2k} \otimes \alpha^{2k-1} - \alpha^{2k-1} \otimes \alpha^{2k}) \Big) \ ,
\ee
see \cite[Eqn.\,(6.1)]{Runkel:2012cf}. This formula, and many others, have factors of $\rmi \pi$, which can be absorbed into a suitably rescaled copairing. Indeed, with respect to the basis $\{a_k,b_k\}$, the above braiding produces the one presented in \eqref{eq:SF-braiding-sectors}.

Given a module $M \in \catSF_0 = \repsv\algGr$, we can use induction to construct an $\widehat\hLie$-module $\widehat M$ as follows.
We take $u_m$ for $u \in \hLie$, $m>0$, to act as zero on $M$, $\hat k$ to act by $1$, and the zero modes $(a_i)_0$ and $(b_i)_0$ to act as $a_i$ and $b_i$, respectively. Then
\be\label{eq:induced-M-def}
\widehat M := U(\widehat\hLie) \otimes_{U(\widehat\hLie_{\ge 0} \oplus \oC \hat k)} M \ ,
\ee 
where $U(-)$ denotes the universal enveloping algebra (in super-vector spaces) of a Lie super-algebra, and $\widehat\hLie_{\ge 0}$ is the subalgebra spanned by non-negative modes.
Similarly, for $V \in \catSF_1 = \svect$, we consider the induced $\widehat\hLie_{\mathrm{tw}}$-module 
\be
\widehat V := U(\widehat\hLie_{\mathrm{tw}}) \otimes_{U((\widehat\hLie_{\mathrm{tw}})_{> 0} \oplus \oC \hat k)} V \ ,
\ee
which is defined as above, except that in $\widehat\hLie_{\mathrm{tw}}$ there are no zero modes to take care of.

The symplectic fermion vertex operator super-algebra $\mathcal{V}$ is defined in \cite{Abe:2005}, see also \cite[Sec.\,3.1]{Davydov:2016euo} which uses the present notation (except that $\mathcal{V}$ is denoted by $\mathbb{V}(\hLie)$ there). The underlying super-vector space is $\mathcal{V} = \widehat{\one}$, for $\one \in \catSF$ the tensor unit as given in \eqref{eq:SF-simple}. Write $\mathcal{V}_\mathrm{ev}$ for the parity-even subspace of $\mathcal{V}$ -- this is a vertex operator algebra. 

It is shown in \cite{Abe:2005} (and stated this way in 
	\cite[Cor.\,6.4]{Gainutdinov:2016qhz}) 
that the functor of first inducing a $\widehat\hLie_{(\mathrm{tw})}$-module and then restricting to its even subspace defines a $\mathcal{V}_\mathrm{ev}$-module:

\begin{proposition}\label{prop:induction-evenpart-functor}
$\big( \widehat- \big)_{\mathrm{ev}}$ is a faithful $\oC$-linear functor $\catSF \to \rep \mathcal{V}_\mathrm{ev}$.
\end{proposition}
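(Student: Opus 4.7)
The plan is to verify, in turn, that the construction $(\widehat{-})_{\mathrm{ev}}$ assigns a $\mathcal V_{\mathrm{ev}}$-module to each object of $\catSF$, that it is functorial and $\oC$-linear on morphisms, and finally that it is faithful. The first three points are essentially bookkeeping once the $\mathcal V$-module structure on $\widehat M$ is in hand, so the concrete work lies in setting up that structure and in the faithfulness step.

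First I would recall that for $M \in \catSF_0 = \repsv\algGr$, the positive modes of $\widehat\hLie$ act as zero on $M$, the element $\hat k$ as the identity, and the zero modes $(a_i)_0, (b_j)_0$ via the $\algGr$-generators supplied by the $\repsv\algGr$-structure; hence the induction in \eqref{eq:induced-M-def} is well-defined. For $M \in \catSF_1 = \svect$ there are no zero modes in $\widehat\hLie_{\mathrm{tw}}$ and the induction is immediate. In both cases the construction of \cite{Abe:2005} promotes the $\widehat\hLie_{(\mathrm{tw})}$-module $\widehat M$ to an untwisted (resp.\ $\oZ_2$-twisted) module for the vertex operator super-algebra $\mathcal V = \widehat{\one}$. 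Because the $\mathcal V$-action preserves parity, the parity-even subspace $(\widehat M)_{\mathrm{ev}}$ is a $\mathcal V_{\mathrm{ev}}$-submodule, which gives the object part of the functor. Functoriality and $\oC$-linearity then follow from the universal property of induction: a morphism $f\colon M \to N$ in $\catSF$ is $\oC$-linear, parity-preserving, and $\algGr$-linear in $\catSF_0$, so it extends uniquely to a $\widehat\hLie_{(\mathrm{tw})}$-intertwiner $\widehat f$, which is a $\mathcal V$-module map and parity-preserving, and the assignment $f \mapsto (\widehat f)_{\mathrm{ev}}$ is $\oC$-linear and preserves identities and composition.

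For faithfulness I would identify $M$ with the grade-zero subspace of $\widehat M$ (the kernel of $H$ or $H_{\mathrm{tw}}$ from \eqref{eq:H(tw)-def}), so that $\widehat f|_M = f$, and then handle the fact that $(\widehat M)_{\mathrm{ev}}$ does not contain the parity-odd ground states of $M$. If the parity-even component of $f$ is non-zero on some $m \in M_0$, then $(\widehat f)_{\mathrm{ev}}(m) = f(m) \ne 0$. If only the parity-odd component of $f$ is non-zero, I would pick $m \in M_1$ with $f(m) \ne 0$ and any non-zero $u \in \hLie$, consider $u_{-1} m$ (or $u_{-1/2} m$ in the twisted case), which is a parity-even vector in $(\widehat M)_{\mathrm{ev}}$, and invoke the PBW-type basis of $\widehat N$ to conclude $(\widehat f)_{\mathrm{ev}}(u_{-1} m) = u_{-1} f(m) \ne 0$.

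The main obstacle I anticipate is in the first step: ensuring that the induced $\widehat\hLie_{(\mathrm{tw})}$-module really carries a (twisted) $\mathcal V$-module structure compatible with the $\mathcal V_{\mathrm{ev}}$-restriction. This is standard in vertex operator super-algebra theory and is what is established in \cite{Abe:2005}, but it requires tracking normal ordering, sign conventions in the twisted sector, and the interplay between parity and the vacuum structure; once this is in place, functoriality and faithfulness are essentially formal.
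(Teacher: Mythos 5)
Your proposal is correct and takes essentially the route the paper itself intends: the paper gives no proof of this proposition but attributes the $\mathcal{V}_\mathrm{ev}$-module structure on $(\widehat{M})_{\mathrm{ev}}$ to \cite{Abe:2005}, which is exactly the external input you invoke. Your completion of the remaining routine steps is sound --- in particular the faithfulness argument, which detects $f$ on parity-odd ground states by applying a negative mode $u_{-1}$ (resp.\ $u_{-1/2}$) and appealing to the PBW decomposition $\widehat{N}\cong U(\widehat{\hLie}_{<0})\otimes N$, is correct since morphisms in $\catSF$ are parity-even.
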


\begin{remark}
Write $\chi_1,\chi_2,\chi_3,\chi_4$ for the characters
 $\tr\,e^{2 \pi \rmi\tau (L_0 - c/24)}$
  of the irreducible representations of $\widehat{\one}$, $\widehat{\Pi\one}$, $\widehat{T}$, $\widehat{\Pi T}$, respectively. Their lowest conformal weights are, in the same order,
\be
h_1 = 0 \quad , \qquad
h_2 = 1 \quad , \qquad
h_3 = -\tfrac{\np}{8} \quad , \qquad
h_4 = -\tfrac{\np}{8} + \tfrac12 \ .
\ee
The explicit expressions for the characters $\chi_i$ are most easily given in terms of the $\np=1$ symplectic fermions characters (as usual, $q=e^{2 \pi i \tau}$)
\be\label{eq:symp-ferm-super-char}
\chi^{\np=1}_{ns,\pm}(\tau) = \Big( q^{\frac1{24}} \prod_{n=1}^\infty (1\pm q^n) \Big)^2 \ ,
\qquad
\chi^{\np=1}_{r,\pm}(\tau) =  \Big( q^{-\frac1{48}} \prod_{n=1}^\infty (1\pm q^{n-\frac12}) \Big)^2 \ .
\ee
Writing $\chi_{ns,\pm}(\tau) = (\chi^{\np=1}_{ns,\pm}(\tau))^{\np}$
and $\chi_{r,\pm}(\tau) = (\chi^{\np=1}_{r,\pm}(\tau))^{\np}$, 
we have \cite{Kausch:1995py,Gaberdiel:1996np,Abe:2005}
\begin{align}
\chi_1(\tau) &= \tfrac12\big( \chi_{ns,+}(\tau) + \chi_{ns,-}(\tau) \big) \ ,
\qquad &
\chi_2(\tau) &= \tfrac12\big( \chi_{ns,+}(\tau) - \chi_{ns,-}(\tau) \big)\ ,
\\ \nonumber 
\chi_3(\tau) &= \tfrac12\big( \chi_{r,+}(\tau) + \chi_{r,-}(\tau) \big)\ ,
\qquad &
\chi_4(\tau) &= \tfrac12\big( \chi_{r,+}(\tau) - \chi_{r,-}(\tau) \big) \ .
\end{align}
\end{remark}

Next we compute the image under $\big( \widehat- \big)_{\mathrm{ev}}$
 of the endomorphisms of the minimal projective generator $G_\catSF$ of $\catSF$ given in \eqref{eq:G-def}. 
This requires a bit of preparation. Define
\be\label{eq:SF-calc-specific-projgen}
	\mathcal{G} := \widehat{\algGr} \oplus \widehat{T} \ .
\ee
Note that we do not take the even part. Instead we consider the super-vector space on the RHS simply as a vector space. In terms of the functor $(\widehat-)_\mathrm{ev}$ this means
\be
	\mathcal{G} \cong 
	\big(\widehat 
G_{\catSF} \big)_\mathrm{ev} \ .
\ee
We will need this isomorphism explicitly. 
We first describe the isomorphism 
\be
\jmath
\colon\; \widehat\algGr \xrightarrow{\, \sim \,} \widehat{(\algGr\otimes \oC^{1|1})}_{\mathrm{ev}}
\ee
	of $\mathcal{V}_\mathrm{ev}$-modules.
Recall by~\eqref{eq:induced-M-def} that $\widehat \algGr = U(\widehat\hLie) \otimes_{U(\widehat\hLie_{\ge 0} \oplus \oC \hat k)} 
 \algGr$.
 We then define for homogeneous  $u\in  U(\widehat\hLie)$ and $f\in\algGr$:
 \be
 \jmath
 \colon\; u\tensor f \mapsto u\tensor f \tensor 
 \begin{cases}
 (1,0)\gc \quad \text{if}\; u\tensor f \; \text{even}\gc\\
 (0,1)\gc \quad \text{if}\; u\tensor f \; \text{odd}\gc
 \end{cases}
 \ee
 with the inverse map given by (note that $v$ has to be homogeneous too)
 \be\label{eq:iota-inv}
 \jmath^{-1}\colon\; u\tensor f \tensor v \mapsto 
 v_{|u| + |f|} \cdot
 u\tensor f\gc \qquad  v\in \oC^{1|1} \gp
 \ee
The isomorphism $\jmath_\mathrm{tw} \colon\; \widehat T \xrightarrow{\, \sim \,} \widehat{( T \otimes \oC^{1|1})}_{\mathrm{ev}}$ is defined analogously. 

\begin{lemma}\label{lem:image-of-E-in-EndVev}
The image 
	$\mathcal E$
under $\big( \widehat- \big)_{\mathrm{ev}}$ of 
	$\End_{\catSF}(G_\catSF)$
in $\End_{\mathcal{V}_\mathrm{ev}}(\mathcal{G})$ is generated by
\begin{itemize}
\item
the action of the zero modes $\alpha^i_0$, 
$i=1,\dots,2\np$, 
\item
$\widehat{\id_T}$, the idempotent corresponding to $\widehat T$, i.e.\  the identity map on $\widehat T$ and zero on $\widehat{\algGr}$, 
\item
the parity involution $\omega_{\mathcal{G}}$ on $\mathcal{G}$.
\end{itemize}
\end{lemma}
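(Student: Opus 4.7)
My plan is to use Corollary~\ref{lem:End-of-projgen-SF} to describe $\End_{\catSF}(G_\catSF)$ by explicit generators and then trace each generator through the functor $(\widehat{-})_\mathrm{ev}$, using the isomorphisms $\jmath$ and $\jmath_\mathrm{tw}$ to read off the result as an endomorphism of $\mathcal{G} = \widehat\algGr \oplus \widehat T$. Recall that Corollary~\ref{lem:End-of-projgen-SF} presents $E^\mathrm{op}$ as $(\algGr \oplus \oC e_T) \rtimes \oC\oZ_2$, generated by three kinds of elements: (i) the right multiplications $R_{\alpha^i}$ on $\algGr \otimes \oC^{1|1}$ for $\alpha^i$ a generator of $\algGr$ (and zero on the $T$-summand), (ii) the idempotent $e_T$ onto $T \oplus \Pi T$, and (iii) the $\oZ_2$-generator $\KK$. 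It therefore suffices to identify the image of each.

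Since induction is additive and respects the decomposition of $G_\catSF$, the idempotent $e_T$ immediately maps to $\widehat{\id_T}$. For $\KK$: it acts on $M \otimes \oC^{1|1}$ (with $M \in \{\algGr, T\}$) by $f \otimes v \mapsto (-1)^{|v|} f \otimes v$. Passing through induction, restricting to the even part, and applying $\jmath^{-1}$ from \eqref{eq:iota-inv} -- which on even elements identifies $|v|$ with $|u|+|f|$ -- one obtains the parity involution $\omega_{\mathcal{G}}$ on $\widehat\algGr \oplus \widehat T$. Hence $\KK \mapsto \omega_{\mathcal{G}}$.

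For $R_{\alpha^i}$: the computation I would carry out gives, on $\widehat\algGr$, the endomorphism $u \otimes f \mapsto (-1)^{|u||\alpha^i|}\, u \otimes (f \cdot \alpha^i)$, and zero on $\widehat T$ (since $\algGr$ acts trivially on $T$). Using anti-commutativity $f \alpha^i = (-1)^{|f|} \alpha^i f$ in the exterior algebra $\algGr$, this is equivalent to left multiplication by $\alpha^i$ on the ground states composed with the parity involution on $\widehat\algGr$. But left multiplication by $\alpha^i$ on the ground state is precisely how the zero mode $\alpha^i_0$ acts on $\widehat\algGr$, by construction of the induced module \eqref{eq:induced-M-def}. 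Thus the image of $R_{\alpha^i}$ equals $\alpha^i_0 \circ \omega_{\mathcal{G}}$ on $\widehat\algGr$ and vanishes on $\widehat T$, which matches the natural extension-by-zero of $\alpha^i_0$ there (the twisted sector has no zero modes).

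Combining the three identifications shows that $\mathcal{E}$ is contained in the subalgebra generated by $\{\alpha^i_0\}_{i=1}^{2\np}$, $\widehat{\id_T}$, and $\omega_{\mathcal{G}}$, and each of these in turn lies in $\mathcal{E}$ (the zero modes arising as $R_{\alpha^i} \circ \omega_{\mathcal{G}}$). The hard part will be the sign bookkeeping required when chasing $R_{\alpha^i}$ -- an odd endomorphism of the super-vector space $G_\catSF$ -- through the induction functor and through $\jmath^{-1}$, and in confirming that the induced operator on $\widehat\algGr$ matches $\alpha^i_0 \circ \omega_{\mathcal{G}}$ when one rewrites right multiplication on $\algGr$ as left multiplication composed with the parity involution.
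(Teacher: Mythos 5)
Your overall strategy is exactly the paper's: read off the generators of $\End_{\catSF}(G_\catSF)$ from Corollary~\ref{lem:End-of-projgen-SF}, push each through $(\widehat{-\,})_{\mathrm{ev}}$, and identify $e_T\mapsto\widehat{\id_T}$, $\KK\mapsto\omega_{\mathcal G}$ and $R_a\mapsto a_0\circ\omega_{\mathcal G}$. The identifications of $e_T$ and $\KK$ are fine, and your final conclusion for $R_{\alpha^i}$ is the correct one.

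However, your intermediate formula for the image of $R_{\alpha^i}$ carries a spurious Koszul sign, and as written the step would fail. The morphism being transported is not the odd map $R_{\alpha^i}$ on $\algGr$ but the \emph{even} map $f\otimes v\mapsto f\cdot a\otimes\Pi^{\deg a}v$ on $\algGr\otimes\oC^{1|1}$ from \eqref{eq:SF-a-acts} (the parity shift $\Pi$ compensates the parity of $a$); the functor sends it to $\id_{U(\widehat\hLie)}\otimes(-)$ with no sign, and the $\Pi(v)$ is then absorbed by conjugation with $\jmath$ from \eqref{eq:iota-inv}. The outcome is $u\otimes f\mapsto u\otimes(f\cdot a)$, which via $u\otimes(f\cdot a)=(-1)^{|u|+|f|}a_0\cdot(u\otimes f)$ (anticommutativity in $\algGr$, the balancing of the tensor product in \eqref{eq:induced-M-def}, and the fact that zero modes super-commute with all modes) equals $a_0\circ\omega_{\mathcal G}$. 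Your formula $(-1)^{|u||\alpha^i|}\,u\otimes(f\cdot\alpha^i)$ instead simplifies to $(-1)^{|f|}\,a_0\cdot(u\otimes f)$, which is \emph{not} $a_0\circ\omega_{\mathcal G}$ and is not contained in the algebra generated by the three families listed in the lemma (since $(-1)^{|f|}=\omega_{\mathcal G}\circ(-1)^{|u|}$ and the parity operator of the $U(\widehat\hLie)$-factor alone is not generated by zero modes and $\omega_{\mathcal G}$). So the sign is not a harmless bookkeeping detail that can be deferred: with your sign the stated generation claim would be false, and the derivation you sketch from your intermediate formula to ``$\alpha^i_0\circ\omega_{\mathcal G}$'' does not go through. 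Correcting the starting point as above repairs the argument and reproduces the paper's proof.
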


\begin{proof}
We first refer to
	Corollary~\ref{lem:End-of-projgen-SF} 
 where
	$\End_{\catSF}(G_\catSF)$
was described.  The functor $\big( \widehat- \big)_{\mathrm{ev}}$ maps  a morphism $g$ to $\id_{U(\widehat\hLie)}\tensor g$. 
We first calculate the image (under the functor) of action of $a\in\hLie$ from~\eqref{eq:SF-a-acts}:
\be
u\tensor f \tensor v \mapsto u\tensor f\cdot a \tensor \Pi(v) = (-1)^{|u|+|f|} a_0 \cdot u\tensor f \tensor \Pi(v)\gc
\ee
were the latter equality is by the definition of the induced representation~\eqref{eq:induced-M-def}.
Then composing with the isomorphism $\jmath^{-1}$ from~\eqref{eq:iota-inv} we get the corresponding endomorphism on~$\widehat\algGr$:
\be
 v_{|u| + |f|}
 u\tensor f \mapsto  v_{|a_0| + |u| + |f| +1} (-1)^{|u|+|f|} a_0 \cdot u\tensor f  = v_{|u| + |f|}  a_0 \cdot \omega_{\mathcal{G}}(u\tensor f)\gp
\ee
We finally note that the image of the $\KK$ generator is given by $\omega_{\mathcal{G}}$.
	Therefore the image of $\End_{\catSF}(G_\catSF)$ is generated by $\omega_{\mathcal{G}}$, by $a_0 \circ \omega_{\mathcal{G}}$ for $a\in\hLie$ (or, equivalently, by $a_0$), and by the idempotent for $\widehat {T}$. 
\end{proof}

We
note that the algebra from  Lemma~\ref{lem:image-of-E-in-EndVev} was also introduced  in~\cite{Arike:2011ab}
in order to describe pseudo-trace functions for $\mathcal{V}_\mathrm{ev}$. Let us briefly review this construction.
For a $k$-algebra $A$, the space of  {\em central forms on $A$} is defined as
\be
	C(A) ~=~ \big\{ \,\varphi : A \to k \,\big|\, \varphi(ab) = \varphi(ba) \text{ for all }a,b \in A \,\big\}  \ .
\ee
By definition, an element $\varphi \in C(A)$ induces a symmetric pairing $(a,b) \to \varphi(ab)$ on $A$. 

Recall the algebra $\mathcal E \subset\End_{\mathcal{V}_\mathrm{ev}}(\mathcal{G})$
 from Lemma~\ref{lem:image-of-E-in-EndVev}
and note that $\mathcal{G}$ is  an $\mathcal E$-module.
To each $\varphi \in C(\mathcal{E})$ one can assign a {\em pseudo-trace function} \cite{Arike:2011ab}
\be
\xi^\varphi_\mathcal{G} ~:~ \mathcal{V}_\mathrm{ev} \times \mathbb{H} \longrightarrow \oC \ .
\ee
Explicitly, $\xi^\varphi_\mathcal{G}$ is written as, 
for $v \in \mathcal{V}_\mathrm{ev}$, $\tau \in \mathbb{H}$,
\be\label{eq:ptf-def}
	\xi^\varphi_\mathcal{G}(v,\tau)	\,=\, t^\varphi_\mathcal{G}\big(\,o(v) \,e^{2 \pi \rmi \tau (L_0 - c/24)}\, \big) \ ,
\ee
where $t^\varphi_\mathcal{G} : \mathcal{G} \to \oC$ is a Hattori-Stallings trace, $o\colon \mathcal{V}_\mathrm{ev} \to \End_\oC(\mathcal{G})$ is 
the \textit{zero mode}
 linear map
 and $c = -2 \np$ is the central charge of $\mathcal{V}_\mathrm{ev}$. We refer to \cite{Arike:2011ab} and \cite[Sec.\,4]{Gainutdinov:2016qhz} for details (see also
  \cite[Prop.\,5.2]{Gainutdinov:2016qhz} on how the above definition relates to \cite{Arike:2011ab}).
Pseudo-trace functions provide a linear map 
$\xi_{\mathcal{G}}
: C(\mathcal{E}) \to C_1(\mathcal{V}_\mathrm{ev})$. 
For the choice of $\mathcal{G}$ given in \eqref{eq:SF-calc-specific-projgen},
this map is injective \cite[Thm.\,6.3.2]{Arike:2011ab}. 
 
The pseudo-trace functions are generalisations of the characters of VOA modules. 
Indeed, for each simple $\mathcal{V}_\mathrm{ev}$-module $\mathcal{M}$ there is a unique $\varphi_{\mathcal{M}} \in C(\mathcal{E})$ such that
\be
	\xi^{\varphi_\mathcal M}_\mathcal{G}(v,\tau)
	\,=\, \tr_\mathcal{M}\big(\,o(v) \,e^{2 \pi \rmi\tau (L_0 - c/24)}\, \big) \ ,
\ee
i.e.\ the pseudo-trace function for $\varphi_{\mathcal{M}}$ equals 
the usual trace over $\mathcal{M}$. 
	Existence of $\varphi_{\mathcal M}$ is proven in \cite[Prop.\,5.5]{Gainutdinov:2016qhz} which also provides 
	the simple expression  $\varphi_{\mathcal M} = \tr_{\tilde{\mathcal{M}}}(-)$ where $\tilde{\mathcal{M}} = \Hom_{\mathcal{V}_\mathrm{ev}}(\mathcal{G}, \mathcal{M})$ is the corresponding $\mathcal{E}$-module. 
	Uniqueness follows from injectivity of~$\xi_{\mathcal{G}}$. In particular, there is a unique $\varphi_{\mathcal{V}_\mathrm{ev}}$ corresponding to the vacuum character.

\newcommand{\VOA}{\mathcal{V}}
\newcommand{\Gc}{\mathcal{G}}

\begin{remark}\mbox{}
The general background for the above definition
 is the theory -- due to \cite{Zhu1996} -- of torus one-point functions 
	$C_1(\mathcal{V})$ for a vertex operator algebra $\mathcal{V}$. Torus one-point functions close under modular transformations, and 
 for a certain class of vertex operator algebras with semisimple representation theory, the space of torus one-point functions is spanned by characters \cite{Zhu1996}. These results were extended in \cite{Miyamoto:2002ar} to 
 certain vertex operator algebras with non-semisimple representation theory. It is proved in \cite{Miyamoto:2002ar}  that
in this case the torus one-point functions are spanned by a variant of pseudo-trace functions. 
Pseudo-trace functions in the sense of \cite{Miyamoto:2002ar} are hard to work with, and we use here an easier notion introduced in \cite{Arike:2011ab}. However, it is not known if those simpler functions span the torus one-point functions (even in the case of~$\mathcal{V}_\mathrm{ev}$).
\end{remark}

We need the following conjecture about $\xi_{\mathcal{G}}$:

\begin{conjecture}[{\cite[Conj.\,6.3.5]{Arike:2011ab}, \cite[Conj.\,5.8]{Gainutdinov:2016qhz}}]\label{conj:SF-CE=C1}
$\xi_{\mathcal{G}} : C(\mathcal{E}) \to C_1(\mathcal{V}_\mathrm{ev})$ is an isomorphism.\footnote{
	Note added in revision: Since the arXiv submission of this paper in 2017 and after the submission of the manuscript to the journal, the work \cite{ACe} has appeared, which implies Conjecture~\ref{conj:SF-CE=C1}. Namely, \cite[Prop.\,5.2]{ACe} states that $\dim C_1(\mathcal{V}_\mathrm{ev}) = 2^{2 \np-1}+3$, which agrees with the dimension of $Z(\mathcal{E})$ in \eqref{eq:ZE-sum-dec}. The third part of Proposition~\ref{prop:ST-Vev} below (which does not rely on the conjecture) gives $\dim C(\mathcal{E}) = \dim Z(\mathcal{E})$. Since $\xi_{\mathcal{G}}$ is injective, it is a bijection.}
\end{conjecture}

Under the above conjecture, the following statement
is verified for $\mathcal{V}_\mathrm{ev}$ in \cite[Sec.\,6]{Gainutdinov:2016qhz} and Lemma~\ref{lem:T-calc-SF} below
 by explicit calculation.
\begin{proposition}\label{prop:ST-Vev}
Assuming Conjecture~\ref{conj:SF-CE=C1},
we have
\begin{itemize}
\item
the action of the modular $S$- and $T$-transformation defines 
linear  isomorphisms
\be
	S_{\mathcal{V}_\mathrm{ev}} , T_{\mathcal{V}_\mathrm{ev}} \colon \; C(\mathcal{E}) \to C(\mathcal{E}) \ ;
\ee
\item
the element $\delta := S_{\mathcal{V}_\mathrm{ev}}(\varphi_{\mathcal{V}_\mathrm{ev}}) \in C(\mathcal{E})$ 
is given by
\be\label{eq:delta-form}
\delta(\alpha^1_0 \cdots \alpha^{2\np}_0 \omega_{\Gc}) = (2\pi)^{-\np} \ , \qquad \delta(\widehat{\id_T} \, \omega_{\Gc}) = 2^{-\np}\ ,
\ee
and it is zero on all other monomials in the generators of the algebra $\mathcal E$ (cf.\ Lemma~\ref{lem:image-of-E-in-EndVev}).

\item 
$\delta$
defines a non-degenerate pairing on $\mathcal{E}$ via $(f,g) \mapsto \delta(f \circ g)$;
the assignment $\hat\delta : Z(\mathcal{E}) \to C(\mathcal{E})$, $z \mapsto \delta(z \cdot (-))$ therefore is a linear isomorphism.
\end{itemize}
\end{proposition}

As a consequence of Proposition~\ref{prop:ST-Vev}, we obtain unique linear isomorphisms
\be\label{eq:ST-transport-CE-ZE}
 S_Z = \hat\delta^{-1} \circ S_{\mathcal{V}_\mathrm{ev}}\circ \hat\delta\gc \qquad
  T_Z= \hat\delta^{-1} \circ T_{\mathcal{V}_\mathrm{ev}}\circ \hat\delta \gp
  \ee

Next we give the explicit results for $\hat\delta(\varphi_{\mathcal{M}})$ and 
$S_Z,T_Z$.
Setting $\varphi_X := \varphi_{(\widehat X)_{\mathrm{ev}}}$, for $X \in \catSF$,
the calculation in \cite[Sec.\,6]{Gainutdinov:2016qhz} can be expressed as
\begin{align}\label{eq:hatdelta-of-varphi-simple}
	\hat\delta^{-1}( \varphi_{\one} ) &= (2 \pi)^\np \alpha^1_0 \cdots \alpha^{2\np}_0 (\omega_\mathcal{G} + 1) \ ,
	&
	\hat\delta^{-1}( \varphi_{T} ) &= 2^\np \,\widehat{\id_T} \,(\omega_\mathcal{G} + 1) \ ,
	&
	\\ \nonumber
	\hat\delta^{-1}( \varphi_{\Pi\one} ) &= (2 \pi)^\np \alpha^1_0 \cdots \alpha^{2\np}_0 (\omega_\mathcal{G} - 1) \ ,
	&
	\hat\delta^{-1}( \varphi_{\Pi T} ) &= 2^\np \, \widehat{\id_T} \, (\omega_\mathcal{G} - 1) \ .
\end{align}

The centre of $\mathcal{E}$ is computed in \cite{Arike:2011ab} to be
\be\label{eq:ZE-sum-dec}
	 Z(\mathcal{E}) = \Zb(\mathcal{E}) \oplus \Za(\mathcal{E}) \ ,
\ee
where $\Za(\mathcal{E})$ is spanned by even monomials in the zero modes $\alpha^i_0$, and $\Zb(\mathcal{E})$ has the basis
\be\label{eq:ZprojE-basis}
	\Zb(\mathcal{E}) = \mathrm{span}_\oC\big\{~
	\tfrac12 \hat\delta^{-1}(  \varphi_{\one} + \varphi_{\Pi\one} ) \,,\,
	\hat\delta^{-1}( \varphi_{T} )  \,,\, 
	\hat\delta^{-1}( \varphi_{\Pi T} ) ~\big\}
	\ .
\ee
The form of the centre also follows from Proposition~\ref{prop:cen-Q} via the equivalence in Theorem~\ref{SF-repQ-rib-eq}.

\begin{remark}\label{rem:explicit-modular}\mbox{}
		In our example of the symplectic fermions VOA $\VOA_\mathrm{ev}$ the pseudo-trace functions $\xi^\varphi_\mathcal{G}(v,\tau)$ were explicitly constructed in~\cite[App.\,A]{Gainutdinov:2016qhz}. 
		The first important observation~\cite[Lem.\,A.1]{Gainutdinov:2016qhz} is that  to separate all $\xi^\varphi_\Gc$ it is sufficient to evaluate them on elements $e^{-\tfrac1{12}L_2} \, w$ for $w \in \VOA_\mathrm{ev}$ of the form
		\be\label{eq:w-insertion-pt}
		w := w_{r,s} = \tilde\gamma_{l_1} \cdots \tilde\gamma_{l_r} \alpha^{j_1}_{-1} \cdots \alpha^{j_s}_{-1} \one		\ ,
		\ee
		where 
		$\tilde\gamma_{l} =  \alpha^{2l}_{-1} \alpha^{2l-1}_{-1} \in U(\widehat \hLie)$, \
		for  $l = 1,\dots,\np$ 
		and the set
		$\{j_1,\dots,j_s\}$ contains no pair $\{2l-1,2l\}$.
		That is, if $\xi^\varphi_\Gc(e^{-\tfrac1{12}L_2} \,w,\tau)=0$ for all $w$ of the form \eqref{eq:w-insertion-pt} and all $\tau \in \mathbb{H}$, then $\varphi=0$.
		Using
the characters in \eqref{eq:symp-ferm-super-char}, we can can express  
the pseudo-trace functions associated to the (only) three characters of indecomposable projective $\mathcal E$-modules:  
\begin{align*}
		\xi^{\varphi_{P_{\one}}}_\Gc(e^{-\tfrac1{12}L_2} \,w_{r,s},\tau)
		~&=~ 
2^{2\np-1}
 \, \delta_{s,0} \,(2 \pi i)^{-r}
		\tfrac{\partial}{\partial\tau_{l_1}} \cdots \tfrac{\partial}{\partial\tau_{l_r}}
		\prod_{j=1}^\np \chi^{\np=1}_{ns,+}(\tau_j)
		\Big|_{\tau_j = \tau} \ ,
		\nonumber\\
		\xi^{\varphi_T}_\Gc(e^{-\tfrac1{12}L_2} \,w_{r,s},\tau)
		~&=~ 
		\tfrac12 \, \delta_{s,0} \,(2 \pi i)^{-r}
		\tfrac{\partial}{\partial\tau_{l_1}} \cdots \tfrac{\partial}{\partial\tau_{l_r}}
		\Big\{ 
		\prod_{j=1}^\np \chi^{\np=1}_{r,+}(\tau_j)
		~+~ 
		\prod_{j=1}^\np \chi^{\np=1}_{r,-}(\tau_j)
		\Big\}\Big|_{\tau_j = \tau} \ ,
		\nonumber\\
		\xi^{\varphi_{\Pi T}}_\Gc(e^{-\tfrac1{12}L_2} \,w_{r,s},\tau)
		~&=~ 
		\tfrac12 \, \delta_{s,0} \,(2 \pi i)^{-r}
		\tfrac{\partial}{\partial\tau_{l_1}} \cdots \tfrac{\partial}{\partial\tau_{l_r}}
		\Big\{ 
		\prod_{j=1}^\np \chi^{\np=1}_{r,+}(\tau_j)
		~-~ 
		\prod_{j=1}^\np \chi^{\np=1}_{r,-}(\tau_j)
		\Big\}\Big|_{\tau_j = \tau} \ ,
		\end{align*}
		where $\varphi_{P_{\one}} =  2^{2\np-1}(\varphi_{\one} + \varphi_{\Pi\one})$.
		The rest of  pseudo-trace functions are parametrised  by even monomials $z$ in the zero modes $\alpha^i_0$ and  given by	
		\begin{equation*}
		\xi^{\varphi_z}_\Gc(e^{-\tfrac1{12}L_2} \,w_{r,s},\tau) ~=~ 
		\tfrac12 \,(2 \pi i)^{-r}
		\tfrac{\partial}{\partial\tau_{l_1}} \cdots \tfrac{\partial}{\partial\tau_{l_r}}
		\Big\{\delta
\big(z \alpha^{j_1}_0 \cdots \alpha^{j_s}_0\omega_{\Gc} \, {\textstyle \prod_{j=1}^\np} (1 + 2 \pi i \gamma_j \tau_j)\big)
		{\textstyle \prod_{j=1}^\np} \, \chi^{\np=1}_{ns,-}(\tau_j)
		\Big\}\Big|_{\tau_j = \tau}
		\end{equation*}
for  the central form $\varphi_z(-) := \delta(z \cdot -)$ introduced above in~\eqref{eq:delta-form}, and we used the notation $\gamma_j = \alpha^{2j}_0 \alpha^{2j-1}_0$.
\end{remark}

The linear map $S_Z$ from \eqref{eq:ST-transport-CE-ZE} is computed explicitly in \cite[Cor.\,6.9]{Gainutdinov:2016qhz}. For $T_Z$ we give the explicit expression in the next lemma.

\begin{lemma}\label{lem:T-calc-SF}
For $z \in \Za(\mathcal E)$ we have
\be
	T_Z(z) 
	= 
	e^{2 \pi \rmi \np/12} \cdot 
	e^{2\pi \rmi \sum_{k=1}^\np 
	\alpha^{2k}_0  \alpha^{2k-1}_0  }z \ ,
\ee
while on $\Zb(\mathcal E)$ in the basis \eqref{eq:ZprojE-basis} we get the matrix representation
\be
	T_Z\big|_{\Zb(\mathcal E)}
	= e^{2 \pi \rmi \np/12} \cdot
	\begin{pmatrix}
	1 & 0 & 0 \\
	0 & e^{- \pi \rmi \np/4} & 0 \\
	0 & 0 & -e^{- \pi \rmi \np/4}
	\end{pmatrix}  \quad .
\ee
\end{lemma}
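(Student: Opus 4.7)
The plan is to transport the $T$-trans\-for\-ma\-tion $\tau \mapsto \tau + 1$ on pseudo-trace functions through the linear isomorphism $\hat\delta\colon Z(\mathcal{E}) \to C(\mathcal{E})$ of Proposition~\ref{prop:ST-Vev} and then to compute the resulting operator on $Z(\mathcal{E})$ summand by summand. Substituting $\tau \mapsto \tau + 1$ in \eqref{eq:ptf-def} factors out $e^{-2\pi\rmi c/24} = e^{2\pi\rmi \np/12}$ and inserts an extra factor $\rho := e^{2\pi\rmi L_0}$. Because $L_0$ commutes with the zero modes of vertex operators, with the parity involution $\omega_\mathcal{G}$, and with the projector $\widehat{\id_T}$, it commutes with all generators of $\mathcal{E}$ listed in Lemma~\ref{lem:image-of-E-in-EndVev}, so $\rho$ descends to a central element of $\mathcal{E}$. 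Hence $T_{\mathcal{V}_\mathrm{ev}}(\varphi)(a) = e^{2\pi\rmi \np/12}\,\varphi(\rho a)$, and pulling back through $\hat\delta$ using non-degeneracy of $\delta$ yields the compact form $T_Z(z) = e^{2\pi\rmi \np/12}\,\rho\, z$ for all $z \in Z(\mathcal{E})$.

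The next step is to evaluate $\rho$ on each summand of $\mathcal{G} = \widehat\algGr \oplus \widehat T$ using \eqref{eq:L0-action-on-h(tw)}. On $\widehat \algGr$ the operator $H$ has integer eigenvalues, so $e^{2\pi\rmi H} = \id$, and $\rho|_{\widehat\algGr}$ reduces to the exponential of the zero-mode part of $L_0$; this part is nilpotent of index $\np + 1$ on $\algGr$, yielding $\rho|_{\widehat\algGr} = e^{2\pi\rmi \sum_{k=1}^{\np}\alpha^{2k}_0 \alpha^{2k-1}_0}$. On $\widehat T$ one has $L_0 = -\np/8 + H_\mathrm{tw}$; because the conformal weights of states in $\widehat T$ lie in $\tfrac12\oZ_{\ge 0}$, with $\tfrac12$-integer versus integer status correlated with the super-parity relative to the chosen even ground state, $e^{2\pi\rmi H_\mathrm{tw}}$ coincides with the parity involution $\omega_{\widehat T}$, giving $\rho|_{\widehat T} = e^{-\pi\rmi \np/4}\, \omega_{\widehat T}$.

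With these two expressions in hand, the displayed formulas follow by finite-dimensional linear algebra. Central elements of $\Za(\mathcal{E})$ are monomials in zero modes and are supported on $\widehat\algGr$, since the zero modes act as zero on $\widehat T$ by Corollary~\ref{lem:End-of-projgen-SF}; hence $T_Z|_{\Za(\mathcal{E})}$ multiplies by $e^{2\pi\rmi \np/12}\rho|_{\widehat\algGr}$, producing the first claimed formula. For $\Zb(\mathcal{E})$ in the basis \eqref{eq:ZprojE-basis}, I use the explicit expressions from \eqref{eq:hatdelta-of-varphi-simple}: the vector $\tfrac12\hat\delta^{-1}(\varphi_{\one} + \varphi_{\Pi\one}) = (2\pi)^{\np}\alpha^1_0\cdots \alpha^{2\np}_0$ is the top product of all zero modes, on which the nilpotent part of $\rho|_{\widehat\algGr}$ acts as zero (any additional $\alpha^i_0$ kills the product), so $T_Z$ acts with eigenvalue $e^{2\pi\rmi \np/12}$; on the twisted basis vectors $\hat\delta^{-1}(\varphi_T) = 2^{\np}\widehat{\id_T}(\omega_\mathcal{G} + 1)$ and $\hat\delta^{-1}(\varphi_{\Pi T}) = 2^{\np}\widehat{\id_T}(\omega_\mathcal{G} - 1)$, the identity $\omega_{\widehat T}(\omega_{\widehat T} \pm 1) = \pm(\omega_{\widehat T} \pm 1)$ gives the eigenvalues $\pm e^{-\pi\rmi\np/4}\cdot e^{2\pi\rmi\np/12}$, matching the stated matrix.

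The main obstacle is making precise, within the pseudo-trace formalism of \cite{Arike:2011ab}, the claim that $\rho$ is well-defined as a central element of $\mathcal{E}$ and that $T_{\mathcal{V}_\mathrm{ev}}$ acts on $C(\mathcal{E})$ by $\varphi \mapsto e^{2\pi\rmi \np/12}\,\varphi(\rho\cdot -)$; this requires careful bookkeeping of how the Hattori--Stallings trace interacts with the conformal-weight grading on $\mathcal{G}$. Fortunately the argument is parallel to, and structurally simpler than, the $S$-transformation computation carried out in \cite[Sec.\,6]{Gainutdinov:2016qhz}: the operator $e^{2\pi\rmi (L_0 - c/24)}$ is already central, whereas the $S$-kernel is not, and so once Step~1 is in place the real work is concentrated in the explicit evaluation of $\rho$ in Step~2.
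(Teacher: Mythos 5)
Your proposal is correct and follows essentially the same route as the paper's proof: substitute $\tau\mapsto\tau+1$, use $[L_0,o(v)]=0$ to absorb $e^{2\pi\rmi(L_0-c/24)}$ into the central form, and evaluate this operator sector by sector via \eqref{eq:L0-action-on-h(tw)}, using that $H$ has integer eigenvalues while $H_{\mathrm{tw}}$ has eigenvalues in $\oZ_{\ge0}$ resp.\ $\oZ_{\ge0}+\tfrac12$ on the even resp.\ odd part, so that $e^{2\pi\rmi H_{\mathrm{tw}}}=\omega_{\widehat T}$. Your packaging of the computation as multiplication by the central element $\rho=e^{2\pi\rmi L_0}\in\mathcal E$, and the explicit check on the bases of $\Za(\mathcal E)$ and $\Zb(\mathcal E)$, is only a cosmetic reorganisation of the same argument.
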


\begin{proof}
We start by computing $T_{\mathcal{V}_\mathrm{ev}}$. By definition, for all $\varphi \in C(\mathcal E)$,
\be
	\xi^\varphi_\mathcal{G}(v,\tau+1)
	\,=\, \xi^{T_{\mathcal{V}_\mathrm{ev}}(\varphi)}_\mathcal{G}(v,\tau)
	\ .
\ee
Recall the direct sum decomposition \eqref{eq:SF-calc-specific-projgen} of  $\mathcal{G}$. There are no $\mathcal{V}_\mathrm{ev}$-intertwiners between $\widehat\algGr$ and $\widehat T$, nor between the different parity subspaces of $\widehat T$, and so (as we have already seen in the explicit description above)
\be
	\mathcal E = \mathcal E_0 \oplus \mathcal E_1^+ \oplus \mathcal E_1^-
	\quad , \quad
	\mathcal E_0 = \End_{\mathcal{V}_\mathrm{ev}}(\widehat\algGr)
	~~,~~
	\mathcal E_1^+ = \oC \, \id_{(\widehat T)_\mathrm{ev}} 
	~~,~~
	\mathcal E_1^- = \oC \, \id_{(\widehat T)_\mathrm{odd}} \ .
\ee
{}From the definition of the pseudo-trace functions in \eqref{eq:ptf-def} and from the explicit form in \cite[Eqn.\,(6.16)\,\&\,App.\,A]{Gainutdinov:2016qhz},
see also their expressions in Remark~\ref{rem:explicit-modular},
 it is a straightforward exercise to show that
\be
	\xi^\varphi_\mathcal{G}(v,\tau+1)
	\,=\, t^\varphi_\mathcal{G}\big(\,e^{2 \pi \rmi (L_0 - c/24)}\,o(v) \,e^{2 \pi \rmi \tau (L_0 - c/24)}\, \big)
	\,=\, t^{\varphi'}_\mathcal{G}\big(\, o(v) \,e^{2 \pi \rmi \tau (L_0 - c/24)}\, \big) \ ,
\ee
where 
we used that the zero mode $o(v)$ commutes with $L_0$ and set
\be
	\varphi'(f) = \begin{cases}
	 e^{2 \pi \rmi (L_0 - c/24)} f & ; ~ f \in \mathcal E_0 
	 \\
	  e^{2 \pi \rmi (-\np/8 - c/24)} f & ; ~ f \in \mathcal E_1^+
	 \\
	 - e^{2 \pi \rmi (-\np/8 - c/24)} f & ; ~ f \in
	  \mathcal E_1^-
	\end{cases}
\ee
and, for $f \in \mathcal E_0$,
\be
	e^{2 \pi \rmi (L_0 - c/24)} f ~=~ \exp\Big\{ 2 \pi \rmi \Big( \ffrac12 \sum_{k=1}^\np \big(\alpha^{2k}_0  \alpha^{2k-1}_0 - \alpha^{2k-1}_0  \alpha^{2k}_0 \big) + \ffrac{\np}{12} \Big) \Big\} f \ .
\ee
The above computation makes use of the explicit form of $L_0$ as given in \eqref{eq:L0-action-on-h(tw)}, and of the fact that $H$ has  eigenvalues in $\mathbb{Z}_{\ge 0}$, and so vanish in $\exp(2 \pi \rmi( \cdots))$. On the other hand, $H_{\mathrm{tw}}$ has eigenvalues in $\mathbb{Z}_{\ge 0}$ on the even part of an $\widehat\hLie_{\mathrm{tw}}$-module, and in $\mathbb{Z}_{\ge 0} + \frac12$ on the odd part.
\end{proof}

Similar to the construction  in \eqref{eq:U1...Un-to-Q0-map} we let 
	$W_i \subset \mathcal{E}_0$ 
be the subalgebra with basis
\be\label{eq:Wi-basis}
	W_i = {\rm span}_\oC\big\{
	 \alpha^{2i}_0\alpha^{2i-1}_0 , 
	 \id_{\mathcal{E}_0}, 
	 \alpha^{2i}_0, 
	 \alpha^{2i-1}_0 
	 \big\} \ ,
\ee
and consider the linear map $\varpi : W_1\tensor \cdots \tensor W_\np \to 
	\mathcal{E}_0$ 
given by
\be
	\varpi(f_1 \otimes \cdots \otimes f_\np) := f_1 \cdots f_\np \ .
\ee
Let $W_+ \subset W_1\tensor \cdots \tensor W_\np$ be the subspace spanned by products with an even total number of $\alpha^k_0$'s. 
The map $\varpi$ restricts to an isomorphism $\varpi : W_+ \to \Za(\mathcal{E})$. Let $\sigma_k, \tau_k : W_k\to W_k$ be the linear maps whose matrix representations in the basis \eqref{eq:Wi-basis} are
\be\label{eq:sigma-tau-def}
\sigma_k = 
	\begin{pmatrix}
	0 & -2 \pi & 0 & 0 \\
	(-2 \pi)^{-1} & 0 & 0 & 0 \\
	0 & 0 & -\rmi & 0 \\
	0 & 0 & 0 & -\rmi \\
\end{pmatrix}
	\quad,\quad
\tau_k = 
	\begin{pmatrix}
	1 & 2 \pi \rmi & 0 & 0 \\
	0 & 1 & 0 & 0 \\
	0 & 0 & 1 & 0 \\
	0 & 0 & 0 & 1 \\
\end{pmatrix}
	\quad .
\ee

\begin{theorem}\label{thm:SL2Z-from-pseudotrace}
Assume Conjecture~\ref{conj:SF-CE=C1} holds.
The linear maps $S_Z,T_Z : Z(\mathcal E) \to Z(\mathcal E)$ preserve the direct sum decomposition~\eqref{eq:ZE-sum-dec}:
\be
	S_Z = S_{\Zb} \oplus S_{\Za}
	\quad , \quad
	T_Z = T_{\Zb} \oplus T_{\Za} \ .
\ee
The linear endomorphisms 
$S_{\Zb}, T_{\Zb}$ of $\Zb(\mathcal E)$ are given by the matrices, with respect to the basis \eqref{eq:ZprojE-basis},
\be
	S_{\Zb} = 
	\begin{pmatrix}
	0 & 2^\np & -2^\np \\
	2^{-\np-1} & 2^{-1} & 2^{-1} \\
	-2^{-\np-1} & 2^{-1} & 2^{-1}
	\end{pmatrix} 
	\quad , \quad
	T_{\Zb} = e^{2 \pi \rmi \np/12} \cdot
	\begin{pmatrix}
	1 & 0 & 0 \\
	0 & e^{- \pi \rmi \np/4} & 0 \\
	0 & 0 & -e^{- \pi \rmi \np/4}
	\end{pmatrix}  \quad .
\ee
The linear endomorphisms $S_{\Za},T_{\Za}$ of $\Za(\mathcal E)$ are given by
\begin{align}\label{eq:Zf(E)-ST-via-sig-tau}
	S_{\Za}
	~&=~
	\varpi \circ \Big(  \big(
	\sigma_1 \tensor \cdots \tensor\sigma_\np \big)\big|_{W_+}
	\Big)
	\circ \varpi^{-1} \ ,
	\\ \nonumber
	T_{\Za}
	~&=~ e^{2 \pi \rmi \np/12} \cdot 
	\varpi \circ \Big(  \big(
	\tau_1 \tensor \cdots \tensor\tau_\np \big)\big|_{W_+}
	\Big)
	\circ \varpi^{-1} \ .
\end{align}
\end{theorem}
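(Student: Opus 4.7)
The plan is to treat the two generators and the two summands separately, feeding in Lemma~\ref{lem:T-calc-SF} for the $T$-part and the explicit formula for the $S$-action on pseudo-trace functions of $\mathcal{V}_\mathrm{ev}$ provided by \cite[Cor.\,6.9]{Gainutdinov:2016qhz}. The proof then amounts to rephrasing these closed-form expressions in the basis \eqref{eq:ZprojE-basis} on $\Zb(\mathcal{E})$ and in the tensor-product form \eqref{eq:Zf(E)-ST-via-sig-tau} on $\Za(\mathcal{E})$, while verifying preservation of the decomposition along the way.

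Two parts are immediate. Preservation of $\Zb(\mathcal{E})$ by $T_Z$ and the matrix of $T_{\Zb}$ are read off directly from Lemma~\ref{lem:T-calc-SF}, since the formula given there in the basis \eqref{eq:ZprojE-basis} coincides with the claim. Preservation of $\Za(\mathcal{E})$ by $T_Z$ follows because $e^{2\pi \rmi \sum_k \alpha^{2k}_0 \alpha^{2k-1}_0}$ lies in the subalgebra of $\mathcal{E}_0$ generated by the zero modes, which preserves its even-degree part $\Za(\mathcal{E})$. For $S_{\Zb}$, I would use \eqref{eq:hatdelta-of-varphi-simple} to rewrite the basis \eqref{eq:ZprojE-basis} in terms of $\varphi_X$ for simple $X$ and apply the explicit $S_{\mathcal{V}_\mathrm{ev}}$-action on the simple characters from \cite[Cor.\,6.9]{Gainutdinov:2016qhz}; the resulting matrix is the stated $S_{\Zb}$, and stability of $\Zb(\mathcal{E})$ under $S_Z$ is simultaneously confirmed.

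The tensor factorisation of $T_{\Za}$ is elementary. Each $\alpha^{2k}_0 \alpha^{2k-1}_0$ has even Grassmann degree and is therefore central in $\mathcal{E}_0$; distinct pairs commute, and each square is zero because $(\alpha^i_0)^2 = 0$. Hence
\[
e^{2\pi \rmi \sum_{k=1}^N \alpha^{2k}_0 \alpha^{2k-1}_0}
\;=\;
\prod_{k=1}^N \bigl(1 + 2\pi \rmi\, \alpha^{2k}_0 \alpha^{2k-1}_0\bigr),
\]
a product of pairwise commuting central factors. For $z = \varpi(f_1\otimes\cdots\otimes f_N) \in \Za(\mathcal{E})$, centrality allows each $k$-th factor to be moved past $f_1,\dots,f_{k-1}$ and multiplied into $f_k$ in place; inspecting \eqref{eq:Wi-basis} shows that the resulting map on $W_k$ is precisely $\tau_k$ of \eqref{eq:sigma-tau-def}. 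Combined with the overall scalar $e^{2\pi\rmi N/12}$ from Lemma~\ref{lem:T-calc-SF}, this yields the second line of \eqref{eq:Zf(E)-ST-via-sig-tau}.

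The main obstacle is the tensor factorisation of $S_{\Za}$. Here the strategy is to feed an even monomial in the $\alpha^i_0$ into the pseudo-trace function $\xi^\varphi_\mathcal{G}$ and evaluate the $S$-transform using \cite[Cor.\,6.9]{Gainutdinov:2016qhz}. Factorisation is forced by the fact that the Berezin integral on $\mathcal{E}_0$ entering $\hat\delta$ splits as a product of two-fermion integrals indexed by the symplectic pairs $(\alpha^{2k-1}, \alpha^{2k})$, and that the modular $S$-action applied pair by pair is exactly $\sigma_k$ in the basis \eqref{eq:Wi-basis}. Assembling these contributions with the proper bookkeeping of signs and factors of $2\pi\rmi$ produces the first line of \eqref{eq:Zf(E)-ST-via-sig-tau}. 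A useful consistency check is that $\sigma_k^2 = \mathrm{diag}(1,-1,-1,1)$ in the basis \eqref{eq:Wi-basis}, so that $(\sigma_1 \otimes \cdots \otimes \sigma_N)^2$ restricted to $W_+$ is the identity, in agreement with the relation $S_Z^2 = \mathrm{id}_Z$ imposed by the ribbon structure. This step parallels, but is carried out independently of, the $Q$-side computation in Lemma~\ref{lem:Sa}.
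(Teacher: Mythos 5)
Your proposal is correct and follows essentially the same route as the paper: the paper's own proof consists precisely of quoting \cite[Cor.\,6.9]{Gainutdinov:2016qhz} for the whole of $S_Z$ (including the matrix $S_{\Zb}$ and the tensor-factorised form of $S_{\Za}$) and Lemma~\ref{lem:T-calc-SF} for $T_Z$. Your expansion of the nilpotent exponential $e^{2\pi\rmi\sum_{k}\alpha^{2k}_0\alpha^{2k-1}_0}$ into the commuting product $\prod_{k=1}^{\np}\bigl(1+2\pi\rmi\,\alpha^{2k}_0\alpha^{2k-1}_0\bigr)$ and its identification with $\tau_1\tensor\cdots\tensor\tau_\np$ on $W_+$ is exactly the step the paper declares ``immediate'', and it is correct, as is your $\sigma_k^2$ consistency check.
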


\begin{proof}
The expression for $S_Z$ is computed in \cite[Cor.\,6.9]{Gainutdinov:2016qhz}. 
The expression for $T_Z$ is immediate from Lemma~\ref{lem:T-calc-SF}.
\end{proof}

We conclude this section by a conjectural refinement 
of the $\Cb$-linear	inclusion
in Proposition~\ref{prop:induction-evenpart-functor} to a ribbon equivalence which we need in the next section.
For details on how to define the necessary 
 coherence isomorphisms we refer to  \cite[Conj.\,7.4]{Davydov:2016euo}, and for the specific value of $\beta$ to \cite[Thm.\,6.4]{Runkel:2012cf}.

\begin{conjecture}[{\cite[Conj.\,7.4]{Davydov:2016euo}}]\label{conj:SF-equiv-RepVev-ribbon}
For the choice
\be\label{eq:beta-SF-value}
	\beta = e^{- \rmi \pi \np / 4 } \ ,
\ee
$\big( \widehat- \big)_{\mathrm{ev}}$ is an equivalence of $\oC$-linear ribbon categories.
\end{conjecture}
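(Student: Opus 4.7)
The plan is to build the equivalence in three separate layers: first as an equivalence of $\oC$-linear abelian categories, then as a quasi-tensor (hence monoidal) equivalence, and finally as compatible with the braiding and ribbon twist. Each layer draws on vertex operator algebra input that is external to the present paper.

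For the abelian layer, I would start from the faithful functor of Proposition~\ref{prop:induction-evenpart-functor} and invoke $C_2$-cofiniteness of $\mathcal{V}_\mathrm{ev}$ from \cite{Abe:2005} to ensure that $\rep \mathcal{V}_\mathrm{ev}$ is a finite abelian category. The four simple objects of $\catSF$ in~\eqref{eq:SF-simple} map to pairwise non-isomorphic simple $\mathcal{V}_\mathrm{ev}$-modules under $\bigl(\widehat{-}\bigr)_\mathrm{ev}$; together with the classification of simple $\mathcal{V}_\mathrm{ev}$-modules (also in~\cite{Abe:2005}) this gives bijectivity on isomorphism classes of simples. Essential surjectivity on projective covers is then a PBW-type argument on $\widehat{\mathfrak h}_{(\mathrm{tw})}$-induced modules, and fullness reduces to matching the opposite endomorphism algebra of $G_\catSF$ from Corollary~\ref{lem:End-of-projgen-SF} with the algebra $\mathcal E$ described in Lemma~\ref{lem:image-of-E-in-EndVev}. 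This last match is precisely (the algebraic content of) Conjecture~\ref{conj:SF-CE=C1}, and in particular the abelian-equivalence statement is not stronger than what is already conjectured elsewhere in the paper.

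For the monoidal layer, I would use the Huang-Lepowsky-Zhang $P(z)$-tensor product $\boxtimes$ on $\rep \mathcal{V}_\mathrm{ev}$ and construct natural isomorphisms
\[
\Theta_{X,Y}\colon\; (\widehat{X\ast Y})_\mathrm{ev} \xrightarrow{\,\sim\,} (\widehat X)_\mathrm{ev} \boxtimes (\widehat Y)_\mathrm{ev}
\]
sector by sector, matching the four cases of~\eqref{eq:*-tensor}. The untwisted-untwisted sector comes from the evident intertwining operator induced by the product on $\algGr$; the mixed sectors use that the OPE of a free symplectic fermion with a twist field yields twisted output; and the twisted-twisted sector uses that the OPE of two twist fields produces an untwisted descendant of the Gra\ss{}mann algebra of fermion zero modes, giving exactly the prescription $\algGr \svtensor X \svtensor Y$ in~\eqref{eq:*-tensor}. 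Compatibility with the associator of Section~\ref{sec:assoc-SF} then reduces to comparing the associativity isomorphism of the HLZ construction, governed by four-point spherical conformal blocks, with the explicit conformal-block computation of~\cite{Davydov:2012xg,Runkel:2012cf}; the cointegral normalisation~\eqref{eq:SF-coint-def} arises from the sphere three-point function in the free field realisation of $\mathcal V_\mathrm{ev}$.

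For the braided and ribbon layer, the HLZ braiding is the monodromy of $P(z)$-intertwining operators and, by the very construction of $\catSF$, reproduces~\eqref{eq:SF-braiding-sectors}. The ribbon twist on $(\widehat X)_\mathrm{ev}$ is $e^{2\pi \rmi L_0}$. Using~\eqref{eq:L0-action-on-h(tw)}, on $(\widehat \algGr)_\mathrm{ev}$ the operator $L_0$ has integer eigenvalues (the zero-mode part acts with integer spectrum on $\algGr$, and $H \in \mathbb{Z}_{\ge 0}$), so $e^{2\pi \rmi L_0}=\id$; on $\widehat T$ one gets $e^{-\rmi\pi\np/4}$ on the parity-even part (where $H_\mathrm{tw}\in\mathbb{Z}_{\ge 0}$) and $-e^{-\rmi\pi\np/4}$ on the parity-odd part (where $H_\mathrm{tw}\in\mathbb{Z}_{\ge 0}+\tfrac12$). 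Comparison with~\eqref{eq:twistSFsimple}, taking into account the convention $\theta^{\repQ}_M(m)=\ribbon^{-1}.m$ in~\eqref{thetaQ}, forces the value~\eqref{eq:beta-SF-value} of $\beta$.

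The main obstacle, and what genuinely blocks the conjecture from being a theorem, is the monoidal layer: making the HLZ tensor product explicit for the non-semisimple category $\rep \mathcal V_\mathrm{ev}$ and controlling intertwining operators between $\widehat{\mathfrak h}$- and $\widehat{\mathfrak h}_\mathrm{tw}$-modules requires a detailed analysis of logarithmic twist-field correlators. Essentially all of the work lies in establishing associativity and locality of these intertwining operators, with the explicit associator~$\as$ and braiding of Section~\ref{sec:SFdef} playing the role of the target one must hit.
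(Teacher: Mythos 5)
The statement you are trying to prove is a \emph{conjecture}, both in this paper and in its source \cite[Conj.\,7.4]{Davydov:2016euo}; the paper offers no proof and only ever uses it as a hypothesis (e.g.\ in Section~\ref{sec:compare-to-SF}). Your text is accordingly a programme rather than a proof, and you concede as much: the monoidal layer --- existence of a (ribbon) tensor structure on $\rep\mathcal{V}_\mathrm{ev}$ via $P(z)$-tensor products and the identification of its associativity isomorphisms with the explicit associator of Section~\ref{sec:assoc-SF} --- is precisely what is open. Note that even before comparing associators you would need $\rep\mathcal{V}_\mathrm{ev}$ to \emph{be} a finite ribbon category, which is itself part~1 of the conjecture stated in the introduction, so your plan assumes a large part of what is to be shown.

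Beyond the admitted gap, two steps as written would fail. First, your reduction of fullness of the abelian equivalence to Conjecture~\ref{conj:SF-CE=C1} is incorrect: that conjecture asserts surjectivity of the pseudo-trace map $\xi_{\mathcal{G}}\colon C(\mathcal{E})\to C_1(\mathcal{V}_\mathrm{ev})$, whereas fullness requires the a priori independent statement that the subalgebra $\mathcal{E}\subset\End_{\mathcal{V}_\mathrm{ev}}(\mathcal{G})$ of Lemma~\ref{lem:image-of-E-in-EndVev} exhausts \emph{all} $\mathcal{V}_\mathrm{ev}$-endomorphisms of $\mathcal{G}$; neither implies the other. Second, your twist computation is wrong on the untwisted sector and inconsistent in its conventions. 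On $(\widehat\algGr)_\mathrm{ev}$ the operator $L_0$ from \eqref{eq:L0-action-on-h(tw)} is \emph{not} semisimple: its zero-mode part is nilpotent, so $e^{\pm 2\pi\rmi L_0}$ is a nontrivial unipotent operator matching $\theta_X=\exp(-\hat C)$ from Section~\ref{eq:SF-twist}, not the identity (it is the identity only on the simple objects $\one,\Pi\one$, cf.\ \eqref{eq:twistSFsimple}). Moreover, with your choice $\theta=e^{2\pi\rmi L_0}$ the eigenvalue $e^{-\rmi\pi\np/4}$ on $(\widehat T)_\mathrm{ev}$ must equal $\theta_T=\beta^{-1}$, forcing $\beta=e^{+\rmi\pi\np/4}$; this is exactly the paper's equation \eqref{eq:relation-SF-RepV-with-exp2piiL0}. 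The conjecture's value $\beta=e^{-\rmi\pi\np/4}$ corresponds to the convention $\theta=e^{-2\pi\rmi L_0}$ of \cite{Runkel:2012cf,Davydov:2016euo}, so your argument as stated derives the wrong sign in \eqref{eq:beta-SF-value}.
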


\subsection{Comparison of \texorpdfstring{$SL(2,\oZ)$}{SL2Z}-actions}\label{sec:compare-to-SF}

To compare the results in Theorems \ref{thm:ST-from-Q} and \ref{thm:SL2Z-from-pseudotrace} we first need to correct a mismatch in conventions for the ribbon twist and then relate $Z(\Q)$ from \eqref{eq:cQ} to $Z(\mathcal E)$ from \eqref{eq:ZE-sum-dec}.

We start by remarking that in \cite{Lyubashenko:1995,Lyubashenko:1994tm} -- which we use to compute the 
$SL(2,\oZ)$-action on $Z(\Q)$ -- the convention is that the $T$ generator acts by composing with the ribbon twist~$\theta$. 
However, the modular $T$-transformation acts by composition with $\theta^{-1}$.\footnote{The reason for this mismatch is that by the convention chosen in \cite{Runkel:2012cf}, following e.g.\ \cite{tft1}, the ribbon twist acts by $e^{-2 \pi \rmi L_0}$ while 
for $\rep \mathcal{V}_\mathrm{ev}$
the modular $T$-action is  (up to a constant) given by $e^{2 \pi \rmi L_0}$. One of course could redefine the Lyubashenko's $SL(2,\oZ)$-action, but we find it more convenient to adapt the conventions from~\cite{Runkel:2012cf} to the present context.}
Below we will show that the agreement with the categorical $T$-transformation can be achieved by changing the value of $\beta$ in Conjecture~\ref{conj:SF-equiv-RepVev-ribbon} to its inverse.

According to Conjecture~\ref{conj:SF-equiv-RepVev-ribbon}, $\big(\,\rep \mathcal{V}_\mathrm{ev} \,, \theta = e^{-2 \pi \rmi L_0}\,\big) \cong 
\catSF\big(\np\,,\,\beta = e^{- \rmi \pi \np / 4 }\,\big)$ as ribbon categories. This is equivalent to
\be\label{eq:RepV-theta-SFrev}
	\big(\,\rep \mathcal{V}_\mathrm{ev} \,, \theta = e^{2 \pi \rmi L_0}\,\big) ~\simeq~ \catSF\big(\np\,,\,\beta = e^{- \rmi \pi \np / 4 }\,\big)^{\mathrm{rev}} \ .
\ee
Here, given a ribbon category $\mathcal{C}$,  $\mathcal{C}^\mathrm{rev}$ denotes the 
{\em reversed ribbon category}, where braiding and twist are replaced by their inverses. 
On the 
LHS of \eqref{eq:RepV-theta-SFrev} we now have the convention for~$\theta$ that matches our quasi-Hopf computation
of the $T$-transformation.
 To reformulate the RHS, 
we need the following lemma.

\begin{lemma}\label{lem:SFrev-SF}
For all $\np,\beta$ we have $\catSF(\np,\beta)^\mathrm{rev} \simeq \catSF(\np,\beta^{-1})$ as ribbon categories.
\end{lemma}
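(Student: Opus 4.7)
The plan is to construct an explicit ribbon equivalence $\mathcal{F} \colon \catSF(\np,\beta)^\mathrm{rev} \to \catSF(\np,\beta^{-1})$ by twisting the $\algGr$-action along a Hopf automorphism of $\algGr$ that negates the copairing $C$ and the element $\hat C$. Concretely, I would let $\sigma \colon \algGr \to \algGr$ be the $\svect$-Hopf algebra automorphism determined by $\sigma(a_i) = a_i$ and $\sigma(b_i) = -b_i$; this is well-defined because the relations~\eqref{eq:aibi-relation} are preserved and $a_i,b_i$ are primitive. Define $\mathcal F$ to be the identity on the $\catSF_1$-component and on underlying super-vector spaces and morphisms, and on $\catSF_0$ to send $(M,\rho)$ to $(M,\rho \circ (\sigma \otimes \id_M))$. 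Since $\sigma^2 = \id$ and $\sigma$ is invertible, $\mathcal F$ is immediately a $\oC$-linear equivalence of abelian categories.

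Next I would equip $\mathcal F$ with a multiplicative structure. The compatibility with the $*$-product in sectors $00$ and $11$ (where the Gra\ss{}mann action enters via $\Delta$) reduces to $(\sigma \otimes \sigma)\circ\Delta = \Delta \circ \sigma$, which holds because every generator is primitive; the coherence isomorphisms can therefore be taken to be identities on underlying vector spaces. Compatibility with the associator is then checked sector by sector using the table in Section~\ref{sec:assoc-SF}. The mixed sectors involving $C^{(13)}$ are immediate from $(\sigma\otimes\sigma)(C) = -C$ combined with the super-symmetry $C^{21} = C$. The only sensitive case is sector $111$, whose associator involves the map $\phi$ of~\eqref{eq:SF-phi-def}. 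Under $\sigma$, one computes $\sigma(a_1 b_1 \cdots a_\np b_\np) = (-1)^\np\, a_1 b_1 \cdots a_\np b_\np$, so the pulled-back cointegral takes the value $(-1)^\np \beta^{-2}$ on the top element; using $\beta^4 = (-1)^\np$ this equals $\beta^2 = (\beta^{-1})^{-2}$, which is precisely the normalisation~\eqref{eq:SF-coint-def} for $\catSF(\np,\beta^{-1})$. Combined with $\sigma$ negating $C$ inside $\phi$, this gives equality of associators.

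For the braiding and twist I would verify each of the four sectors of~\eqref{eq:SF-braiding-sectors}. In sector $00$ the reversed braiding is $(\sflip\circ\exp(-C))^{-1} = \sflip\circ\exp(C)$ (using that $C$ is super-symmetric), and pulling back by $\sigma$ turns this into $\sflip\circ\exp(-C)$, the braiding of $\catSF(\np,\beta^{-1})$. The twist in sector $0$ is treated identically since $\sigma(\hat C) = -\hat C$. In sector $1$ the reversed twist is $\beta\,\omega_X = \beta^{-1\cdot(-1)}\omega_X$, which agrees with the sector $1$ twist of $\catSF(\np,\beta^{-1})$ (noting $\omega_X^{-1} = \omega_X$). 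The mixed sectors $01$ and $10$ depend only on $\kappa = \exp(\tfrac12\hat C)$ and $\omega$, and inversion together with $\sigma(\kappa) = \kappa^{-1}$ gives the desired matching. Finally, the duality morphisms are preserved because $\sigma$ sends the integral $\Lambda_\algGr = \beta^2 a_1 b_1 \cdots a_\np b_\np$ to $(-1)^\np \beta^2 a_1 b_1 \cdots a_\np b_\np = \beta^{-2} a_1 b_1 \cdots a_\np b_\np$, matching the integral used in $\catSF(\np,\beta^{-1})$.

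The main technical obstacle is the book-keeping in sector $11$ of the braiding, where the explicit factor of $\beta$ sits in front of a factor $R_{\kappa^{-1}}\otimes\id\otimes\omega$ acting with a nontrivial Gra\ss{}mann element, and one has to carefully invert the whole expression and match it, after pulling back by $\sigma$, with the $\beta^{-1}$-version of the same formula; likewise the associator check in sector $111$ is the delicate one and is the place where the identity $\beta^4 = (-1)^\np$ is essential. All other verifications are routine once the above two are in hand.
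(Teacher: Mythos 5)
Your choice of automorphism ($a_i\mapsto a_i$, $b_i\mapsto -b_i$) and your computation of its effect on the cointegral are exactly the ingredients the paper uses, but there is a genuine gap in the claim that the coherence isomorphisms of your functor can be taken to be identities. Passing to $\catSF(\np,\beta)^{\mathrm{rev}}$ inverts only the braiding and the twist; it does \emph{not} change the associator, which in the mixed sectors is still $\exp(+C^{(13)})$. On the other hand, pulling back the $\algGr$-action along $\sigma$ turns the target associator $\exp(C^{(13)})$ of $\catSF(\np,\beta^{-1})$, evaluated on the twisted modules, into $\exp(-C^{(13)})$ with respect to the original actions. A quasi-tensor functor with identity coherence data must match these two linear maps on the nose, and $\exp(C^{(13)})\neq\exp(-C^{(13)})$ whenever $C$ acts non-trivially (e.g.\ $X=Z=\algGr$, $Y=T$). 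The identity $(\sigma\otimes\sigma)(C)=-C$ together with the super-symmetry of $C$ is precisely what makes the \emph{braiding} and \emph{twist} checks go through (there the sign from $\sigma$ cancels the sign from inversion), but for the associator there is no inversion to cancel against, so the sign survives and the strict check fails. (A smaller instance of the same problem: for $Y,Z\in\catSF_1$ one has $\mathcal F(Y*Z)=\algGr$ with the $\sigma$-twisted left regular action on the first factor, while $\mathcal F(Y)*\mathcal F(Z)$ carries the untwisted one, so already the multiplicative structure cannot literally be the identity there.)

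The paper avoids this by factoring the equivalence into two steps: first, \cite[Prop.\,4.12]{Davydov:2012xg} gives a ribbon equivalence $\catSF(\np,\beta;\Lambda^{\mathrm{co}}_\algGr(\beta),C)^{\mathrm{rev}}\simeq\catSF(\np,\beta^{-1};\Lambda^{\mathrm{co}}_\algGr(\beta),-C)$ whose underlying functor is the identity but whose coherence isomorphisms are explicitly \emph{non-trivial} (they are what reconcile the unchanged associator with the sign-flipped copairing); second, the pullback along your $\sigma$ (called $\varphi$ there) is applied via \cite[Prop.\,4.11]{Davydov:2012xg} to replace $(\Lambda^{\mathrm{co}}_\algGr(\beta),-C)$ by $(\Lambda^{\mathrm{co}}_\algGr(\beta^{-1}),C)$, using exactly the identities $\Lambda^{\mathrm{co}}_\algGr(\beta)\circ\varphi=(-1)^\np\Lambda^{\mathrm{co}}_\algGr(\beta)=\Lambda^{\mathrm{co}}_\algGr(\beta^{-1})$ and $(\varphi\otimes\varphi)(C)=-C$ that you derived. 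To repair your direct argument you would have to construct and verify the non-trivial multiplicative structure $\Theta_{X,Y}$ yourself (sector by sector, including the pentagon/hexagon compatibilities in sectors $010$, $101$, $011$, $110$ and $111$); as written, the assertion that these can be identities is false.
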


\begin{proof}
For the sake of the proof we will write $\catSF(\np,\beta;\Lambda^\mathrm{co}_\algGr(\beta),C)$, where $\Lambda^\mathrm{co}_\algGr(\beta)$ is the cointegral in \eqref{eq:SF-coint-def} and $C$ refers to the copairing in \eqref{def:copair}.\footnote{
More generally, the ribbon category $\catSF$ is constructed from a 
symplectic vector space $\mathfrak{h}$, a cointegral and a choice of $\beta$, see \cite[Sec.\,5.2]{Davydov:2012xg} for details. $C$ is the copairing for the symplectic form. Including the cointegral and the copairing in the notation makes these choices explicit.}
We stress the $\beta$-dependence in the notation $\Lambda^\mathrm{co}_\algGr(\beta)$ as our normalisation convention for the cointegral is $\beta$-dependent.
By \cite[Prop.\,4.12]{Davydov:2012xg} there is a ribbon equivalence
\be\label{eq:SFrev-SF_aux1}
	\catSF(\np,\beta;\Lambda^\mathrm{co}_\algGr(\beta),C)^\mathrm{rev}
	~\simeq~ 
	\catSF(\np,\beta^{-1};\Lambda^\mathrm{co}_\algGr(\beta),-C) 
 \ ,
\ee
whose underlying functor is the identity (with non-trivial coherence isomorphisms). In \cite[Prop.\,4.12]{Davydov:2012xg} the cointegral stays the same, hence $\Lambda^\mathrm{co}_\algGr(\beta)$ appears also on the RHS instead of $\Lambda^\mathrm{co}_\algGr(\beta^{-1})$.

It remains to show that on the RHS of \eqref{eq:SFrev-SF_aux1} one can replace
$\Lambda^\mathrm{co}_\algGr(\beta)$ by $\Lambda^\mathrm{co}_\algGr(\beta^{-1})$ 
and $-C$ by $C$. 
Let $\varphi : \algGr \to \algGr$ be the Hopf algebra isomorphism given on the generators in \eqref{eq:aibi-relation} as $\varphi(a_i)=a_i$, $\varphi(b_i)=-b_i$, $i=1,\dots,\np$. The isomorphism $\varphi$ satisfies
\be
	\Lambda^\mathrm{co}_\algGr(\beta) \circ \varphi = (-1)^\np \Lambda^\mathrm{co}_\algGr(\beta) = 
	\Lambda^\mathrm{co}_\algGr(\beta^{-1})
	\quad , \qquad
	(\varphi \otimes \varphi)(C) = -C \ .
\ee
We can now apply \cite[Prop.\,4.11]{Davydov:2012xg} to conclude that
\be\label{eq:SFrev-SF_aux2}
	\catSF(\np,\beta^{-1};\Lambda^\mathrm{co}_\algGr(\beta),-C) 
	~\simeq~ 
	\catSF(\np,\beta^{-1};\Lambda^\mathrm{co}_\algGr(\beta^{-1}),C) 
\ee
as ribbon categories. The underlying functor of this equivalence is the identity functor in $\catSF_1$, and it is given by $\varphi^*$, the pullback of representations along $\varphi$, on $\catSF_0$.
Combining \eqref{eq:SFrev-SF_aux1} and \eqref{eq:SFrev-SF_aux2} proves the statement of the lemma.
\end{proof}

\begin{remark}
Thanks to Lemma \ref{lem:SFrev-SF} and Theorem~\ref{SF-repQ-rib-eq}, the equivalence in Conjecture~\ref{conj:SF-equiv-RepVev-ribbon} can be restated as
\be\label{eq:relation-SF-RepV-with-exp2piiL0}
	\big(\,\rep \mathcal{V}_\mathrm{ev} \,, \theta = e^{2 \pi \rmi L_0}\,\big) ~\simeq~ \catSF\big(
\np
\,,\,\beta = e^{\rmi \pi \np / 4 }\,\big)
~\simeq~\rep  \Q\big(\np,\beta = e^{\rmi \pi \np / 4}\big)\ . 
\ee
\end{remark}

Denote by $\mathcal{J} : \rep \Q(\np,\beta) \to \catSF(\np,\beta)$
 the ribbon equivalence from Proposition~\ref{SF-repQ-rib-eq} and constructed
in Appendices~\ref{app:SF-S} and~\ref{app:Q-S},
see also the inverse equivalence in Section~\ref{app:rib-equiv-QSF}. 
We need to compute the following chain of algebra isomorphisms for $\beta = e^{\rmi \pi \np/4}$:
\be
\Upsilon ~:~ 
	Z(\Q)
	\overset{(1)}{\longrightarrow}
	\End(Id_{\rep \Q})
	\overset{(2)}{\longrightarrow}
	\End(Id_{\catSF(\beta)})
	\overset{(3)}{\longrightarrow}
	\End(Id_{\catSF(\beta^{-1})^{\mathrm{rev}}})
	\overset{(4)}{\longrightarrow}
	Z(\mathcal E) \ ,
\ee
and use this to compare the results of Theorems~\ref{thm:ST-from-Q} and~\ref{thm:SL2Z-from-pseudotrace}.
Let $z \in Z(\Q)$.
Arrow (1) is simply given by acting with $z$ on a module. 
Arrow (2) maps $\eta$ to the unique $\eta'$ such that
\be
	\eta'_{\mathcal J(X)} = \mathcal J(\eta_X)
	\qquad \text{for all}\quad X \in \rep \Q \ .
\ee
Arrow (3) is described in Lemma \ref{lem:SFrev-SF} and arrow (4) maps $\psi$ to 
$(\widehat{\psi_{G}})_\mathrm{ev}$.

We compute the composition of these maps separately for $Z(\Q_0)$ and $Z(\Q_1)$. Let us start with $z \in Z(\Q_0)$.
The functor $\funQSF:  \catSF(\np,\beta) \to \rep \Q(\np,\beta)$
which is inverse to $\mathcal J$ is given explicitly in Appendix~\ref{app:rib-equiv-QSF}. 
Conversely, the functor $\mathcal J$, as $\oC$-linear functor, is $\mathcal J = \mathcal E \circ \funQS$, where $\mathcal E$ 
is given in the proof of Proposition~\ref{prop:D-is-C-lin-equiv} and $\funQS$ in the proof of Proposition~\ref{prop:G-equiv}.
Then for $X \in \catSF_0$, $\eta'_X$ is given by the action of $z$,
 where now $\fp_k$ acts as $a_k$, $\fm_k$ acts as $b_k$
  and $\K$ acts as parity involution $\omega_X$.
This gives arrow (2). Arrow (3) is pullback of representations (see the proof of Lemma \ref{lem:SFrev-SF}) with the result that $\fp_k$ still acts as $a_k$ but now $\fm_k$ acts as $-b_k$.
Finally, for arrow (4) the relation between $a_k,b_k$ and the zero modes $(\alpha^j)_0$ is given in \eqref{eq:alpha-basis-of-h}.
Altogether, for $\nu_j,\eps_j \in \{ 0,1\}$, $\sum_{j=1}^\np (\nu_j + \eps_j)$ even,
\begin{align}
\Upsilon\big(  \idem_0 (\fp_1)^{\nu_1}(\fm_1)^{\eps_1} \cdots (\fp_\np)^{\nu_\np}(\fm_\np)^{\eps_\np} \big)
&=
(a_1)_0^{\nu_1} (-b_1)_0^{\eps_1} \cdots (a_\np)_0^{\nu_\np} (-b_\np)_0^{\eps_\np} \ ,
\\ \nonumber
&= (-\pi \rmi)^{\eps_1+\cdots+\eps_\np} \cdot 
(\alpha^1_0)^{\nu_1} (\alpha^2_0)^{\eps_1} \cdots (\alpha^{2\np-1}_0)^{\nu_\np} (\alpha^{2\np}_0)^{\eps_\np} \ ,
\end{align}
where the factors on the RHS are zero modes of $\widehat\hLie$ acting on $\mathcal{G}$. Furthermore,
\begin{align}
\Upsilon\big( \K \idem_0 \fp_1 \fm_1 \cdots \fp_\np\fm_\np \big)
&=
(a_1)_0 (-b_1)_0 \cdots (a_\np)_0 (-b_\np)_0 \, \omega_\mathcal{G} \ ,
\\ \nonumber
&= (-\pi \rmi)^{\np} \cdot 
\alpha^1_0 \alpha^2_0 \cdots \alpha^{2\np}_0 \, \omega_\mathcal{G} \ .
\end{align}
For $Z(\Q_1)$ one quickly finds that
\begin{align}
\Upsilon(\idem_1^+) &= \id_{(\widehat T)_\mathrm{ev}} = \tfrac12 \cdot \widehat{\id_T} \circ (\id+\omega_\mathcal{G}) \ ,
\\ \nonumber
\Upsilon(\idem_1^-) &= \id_{(\widehat T)_\mathrm{odd}} = \tfrac12 \cdot \widehat{\id_T} \circ (\id-\omega_\mathcal{G}) \ .
\end{align}

After these preparations, we can finally compare the 
elements $\bphi_V$ from \eqref{eq:bphi-X} to \eqref{eq:hatdelta-of-varphi-simple} and the
action of the generators of $SL(2,\oZ)$ as given in Theorems~\ref{thm:ST-from-Q} and~\ref{thm:SL2Z-from-pseudotrace}.

\begin{theorem}\label{thm:compare-Q-SF}
If one chooses the normalisation constant of the integral for the coend $\coend$ in \eqref{eq:intL-hat} to be $\nu=1$, then
\begin{align} \label{eq:bphi-varphi-compare}
	\Upsilon(\bphi_{\XX^{+}_{0}}) &= \hat\delta^{-1}( \varphi_{\one} ) \ ,
	&
	\Upsilon(\bphi_{\XX^{+}_{1}}) &= \hat\delta^{-1}( \varphi_{T} )\ ,
	\\ \nonumber
	\Upsilon(\bphi_{\XX^{-}_{0}}) &= \hat\delta^{-1}( \varphi_{\Pi\one} )\ ,
	&
	\Upsilon(\bphi_{\XX^{-}_{1}}) &= \hat\delta^{-1}( \varphi_{\Pi T} )\ .
\end{align}
Furthermore,
$S_Z \circ \Upsilon = \Upsilon \circ \modS_\Zc$ and
$T_Z \circ \Upsilon = e^{2 \pi \rmi \np/12} \cdot  \Upsilon \circ \modT_\Zc$.
\end{theorem}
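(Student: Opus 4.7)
The plan is to verify the theorem in three stages, corresponding to the identifications in~\eqref{eq:bphi-varphi-compare}, the intertwining of the $S$-action, and the intertwining of the $T$-action (up to the scalar $e^{2\pi\rmi \np/12}$).

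First I would verify \eqref{eq:bphi-varphi-compare} directly. The map $\Upsilon$ is fully described in the paragraphs preceding the theorem, and the central elements $\bphi_{\XX^\pm_{0/1}}$ are given in \eqref{eq:bphi-X}. Applying $\Upsilon$ using the substitutions $\fp_k\mapsto (\alpha^{2k-1})_0$, $\fm_k\mapsto -\rmi\pi(\alpha^{2k})_0$, $\K\mapsto \omega_{\mathcal G}$ on the $\Q_0$-sector and $\idem_1^\pm\mapsto \tfrac12 \widehat{\id_T}(\id\pm\omega_{\mathcal G})$ on the $\Q_1$-sector, one can match with the expressions \eqref{eq:hatdelta-of-varphi-simple}. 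This check fixes the normalisation $\nu=1$.

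Since $\Upsilon$ sends $Z(\Q_0)$ to $\mathcal E_0$ and $Z(\Q_1)$ to $\mathcal E_1^+\oplus\mathcal E_1^-$, it respects both direct-sum decompositions $Z(\Q)=\Zb\oplus\Za$ and $Z(\mathcal E)=\Zb(\mathcal E)\oplus\Za(\mathcal E)$. On the projective-character summand $\Zb$, the relation $\bphi_{\PP^{+}_{0}}=2^{2\np-1}(\bphi_{\XX^+_0}+\bphi_{\XX^-_0})$ from \eqref{eq:bphiP} combined with stage~(i) shows that $\Upsilon|_\Zb$ has matrix $\mathrm{diag}(2^{2\np},1,1)$ in the bases \eqref{eq:Zproj-Zf-span} and \eqref{eq:ZprojE-basis}. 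A short matrix computation then gives $S_{\Zb}\cdot\mathrm{diag}(2^{2\np},1,1)=\mathrm{diag}(2^{2\np},1,1)\cdot \Sb$ at $\nu=1$, while inserting $\beta=e^{\rmi\pi\np/4}$ into $\Tb$ identifies $\Tb=e^{-2\pi\rmi\np/12}T_{\Zb}$; this settles the $\Zb$-part of the theorem.

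The main work and the main obstacle is on $\Za$. The explicit formula for $\Upsilon$ gives a tensor factorisation $\Upsilon\circ\vartheta=\varpi\circ(C_1\otimes\cdots\otimes C_\np)$, where $C_i:U_i\to W_i$ sends $\idem_0,\fp_i\idem_0,\fm_i\idem_0,\fp_i\fm_i\idem_0$ to $\id,\alpha^{2i-1}_0,-\rmi\pi\alpha^{2i}_0,\rmi\pi\alpha^{2i}_0\alpha^{2i-1}_0$ respectively. A per-factor check shows $\tau_i\circ C_i=C_i\circ \Ta^i$, which — combined with the global scalar $e^{2\pi\rmi\np/12}$ appearing in Theorem~\ref{thm:SL2Z-from-pseudotrace} — gives the $T$-intertwiner on $\Za$. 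For $S$ the analogous check yields $\sigma_i\circ C_i=\lambda^{\eps}_i\, C_i\circ\Sa^i$ with a \emph{parity-dependent} scalar: $\lambda^+_i=\rmi$ on the parity-even part of $U_i$ (spanned by $\idem_0$ and $\fp_i\fm_i\idem_0$), and $\lambda^-_i=-\rmi$ on the parity-odd part (spanned by $\fp_i\idem_0$ and $\fm_i\idem_0$). The crucial observation is that the restriction to $U_+$ forces the number of odd tensor factors to be even, say $2k$, so that $\prod_i\lambda^{\eps_i}_i=\rmi^{\np-2k}(-\rmi)^{2k}=\rmi^{\np-2k}(-1)^k=\rmi^{\np}$, independently of $k$. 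Since $\nu\beta^2=\rmi^\np$ for $\nu=1$ and $\beta=e^{\rmi\pi\np/4}$, this exactly matches the global coefficient in Lemma~\ref{lem:Sa}, and the $S$-intertwiner on $\Za$ follows. Combining the three stages yields the theorem.
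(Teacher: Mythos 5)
Your proposal is correct and follows essentially the same route as the paper's proof: the identities \eqref{eq:bphi-varphi-compare} are checked by direct substitution into the explicit form of $\Upsilon$, the $\Zb$-part is handled by the change of basis $\mathrm{diag}(2^{2\np},1,1)$ coming from $\bphi_{\PP^+_0}=2^{2\np-1}(\bphi_{\XX^+_0}+\bphi_{\XX^-_0})$, and the $\Za$-part by the tensor-factorised change of basis $C_1\otimes\cdots\otimes C_\np$ comparing $\Sa^i,\Ta^i$ with $\sigma_i,\tau_i$. Your explicit bookkeeping of the parity-dependent scalars $\pm\rmi$ per factor, whose product equals $\rmi^\np=\nu\beta^2$ on $U_+$, is exactly the verification hidden in the paper's brief ``one then verifies'' step, and you carry it out correctly.
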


\begin{proof}
That the identities in \eqref{eq:bphi-varphi-compare} hold is immediate from comparing \eqref{eq:bphi-X} and \eqref{eq:hatdelta-of-varphi-simple} via the explicit form of $\Upsilon$ given above.
For example,
\be
	\Upsilon(\bphi_{\XX^{+}_{0}})
	=
	\nu \, 2^{\np}  \beta^2  
	(-\pi \rmi)^{\np} 
	(\omega_\mathcal{G}+\id) \alpha^1_0 \cdots \alpha^{2\np}_0 \ ,
\ee
which is equal to $\hat\delta( \varphi_{\one} )$ in \eqref{eq:hatdelta-of-varphi-simple} since $\nu=1$ and $\beta$ is fixed as in \eqref{eq:beta-SF-value}.

The basis \eqref{eq:Zproj-Zf-span} gets mapped to
\be
\big\{~  \bphi_{\PP^{+}_0} \, , \,
	 \bphi_{\XX^{+}_{1}}  \,,\, 
	\bphi_{\XX^{-}_{1}} ~\big\} 
\overset{\Upsilon}\longmapsto
\big\{~ 2^{2\np} \cdot 
	\tfrac12 \hat\delta^{-1}(  \varphi_{\one} + \varphi_{\Pi\one} ) \,,\,
	\hat\delta^{-1}( \varphi_{T} )  \,,\, 
	\hat\delta^{-1}( \varphi_{\Pi T} ) ~\big\}
	\ ,
\ee
which differs from \eqref{eq:ZprojE-basis} by a coefficient
 in the first basis vector. Taking this into account one arrives at 
$S_{\Zb} \circ \Upsilon = \Upsilon \circ \modS_{\Zb}$ and
$T_{\Zb} \circ \Upsilon = e^{2 \pi \rmi \np/12} \cdot  \Upsilon \circ \modT_{\Zb}$.

The basis for $U_k$ used in Lemma~\ref{lem:Sa} is transported to a basis of $W_k$ (via $\varpi^{-1} \circ \Upsilon \circ \vartheta$) as
\be
	\big\{\, \idem_0 \,,\, 
	\fm_k\idem_0 \,,\, 
	\fp_k\idem_0 \,,\, 
	\fp_k\fm_k\idem_0 \,\big\}
	\longmapsto
	\big\{\, \id \,,\, 
	-\pi\rmi \, \alpha_0^{2k} \,,\, 
	\alpha_0^{2k-1} \,,\, 
	-\pi\rmi \, \alpha_0^{2k-1} \alpha_0^{2k} \,\big\} \ .
\ee
This differs from the basis used in \eqref{eq:Wi-basis}
in the order of basis factors and by coefficients. 
Note that for the present choice of $\beta$, the factor $\beta^2$ in \eqref{eq:S-action-on-Zf-from-Q} is equal to 
	$\rmi^\np$, 
which can be distributed over the individual factors of $\modS_{\Za}^k$. 
One then verifies that the above change of basis transports 
	$\rmi \, \modS_{\Za}^k$ 
to $\sigma^k$ and $\modT_{\Za}^k$ to $\tau^k$ as given in \eqref{eq:sigma-tau-def}. 
Comparing \eqref{eq:S-action-on-Zf-from-Q} and \eqref{eq:Zf(E)-ST-via-sig-tau}, we see that this
proves
$S_{\Za} \circ \Upsilon = \Upsilon \circ \modS_{\Za}$ and
$T_{\Za} \circ \Upsilon = e^{2 \pi \rmi \np/12} \cdot  \Upsilon \circ \modT_{\Za}$.
\end{proof}

This also proves Theorem~\ref{thm:intro-main} from the introduction,
which states that the linear $SL(2,\mathbb{Z})$-action 
$\pi_{\mathcal{E}}$
on $Z(\mathcal{E})$ and the projective $SL(2,\mathbb{Z})$-action $\pi_{\Q}$ on $Z(\Q)$ agree projectively. 
Namely, there is a function $\gamma: SL(2,\mathbb{Z}) \to \mathbb{C}^\times$ 
such that the linear isomorphism $\Upsilon : Z(\Q) \to Z(\mathcal E)$ satisfies
\be
	\pi_{\mathcal{E}}(g) \circ \Upsilon = \gamma(g) \cdot \Upsilon \circ \pi_{\Q}(g) 
	\qquad  \text{ for all } g \in SL(2,\mathbb{Z}) \ .
\ee

\appendix

\section{Equivalence between \texorpdfstring{$\catSF$}{SF} and \texorpdfstring{$\repS$}{Rep(S)}} \label{app:SF-S}

We give here
 the first part of the proof of Lemma~\ref{lem-trans-QSF}.
Fix $\np \in \mathbb{N}$ and $\beta \in \oC$ with $\beta^4 = (-1)^\np$, see \eqref{eq:beta-param}. In this section we introduce a quasi-bialgebra $\Salg = \Salg(\np,\beta)$ in $\svect$, define a braiding on its category $\repS$ of finite-dimensional representations in $\svect$, and show that $\repS$ is equivalent to $\catSF(\np,\beta)$ as a braided monoidal category.


\subsection{A quasi-bialgebra in \texorpdfstring{$\svect$}{Svect}} \label{dfn:algS}

The unital associative algebra $\Salg=\Salg(\np,\beta)$ over $\oC$ has generators $\xpm_i$, $i=1,\ldots,\np$ and $\LL$, subject to the relations
\begin{align} \label{def:SN}
	\xpm_i \LL=\LL \xpm_i \gcg \{\xp_i,\xm_j\}=\delta_{i,j}\tfrac12 (1-\LL)\gcg \{\xpm_i,\xpm_j\}=0\gcg \LL^2=\one\gp
\end{align}
We have $\dim\Salg = 2^{2\np+1}$. Next we turn $\Salg$ into an algebra in $\svect$ by giving it a $\oZ_2$-grading
such that $\xpm_i$ have odd degree and $\LL$ has even degree. 

Define the central idempotents
\begin{align} \label{def:e}
	\idem_0\coloneqq \tfrac12 (1+\LL)\gc \qquad \idem_1\coloneqq \tfrac12 (1-\LL)\gp
\end{align}
Using these, $\Salg$ decomposes as
\begin{align} \label{eq:Salg-dec}
	\Salg =\Salg_0\oplus \Salg_1 
	\qquad , \qquad \Salg_0 \coloneqq \idem_0 \Salg
	\quad , \quad \Salg_1 \coloneqq \idem_1 \Salg \ .
\end{align}
From the defining relations it is immediate that $\Salg_0$ is a $2^{2\np}$-dimensional Gra\ss{}mann algebra and $\Salg_1$ is a $2^{2\np}$-dimensional Clifford algebra.

In the following we will often use the tensor product $A \ot B$ of algebras $A,B$ in $\svect$. As a super-vector space, $A \ot B$ is the tensor product $A \ot_{\svect} B$ of the underlying super-vector spaces. The unit is $1 \ot 1$ and the product $\mu_{A \ot B}$ includes parity signs:
\be\label{eq:super-alg-tensor}
	\mu_{A \ot B}
	:=
	(\mu_A \ot \mu_B) \circ (\id_A \ot \sflip_{B,A} \ot \id_B)
	~~:~~ A \ot B \ot A \ot B \longrightarrow A \ot B \ ,
\ee
where $\sflip$ is the symmetric braiding in $\svect$, see \eqref{sflip}.
In terms of homogeneous elements, this reads $(a \ot b) \cdot (a' \ot b') = (-1)^{|b||a'|} (aa') \ot (bb')$.

The notion of a quasi-bialgebra in $\vect$ is given e.g.\ in Definition~I:\ref*{I-def:quasi-Hopf} (that definition is for quasi-Hopf algebras -- just omit the antipode and the corresponding conditions). In $\svect$, the definition is basically the same (and works in general symmetric monoidal categories):

\begin{definition}\label{def:quasi-bialg-symmoncat}
A quasi-bialgebra in $\svect$ is a tuple $(A,\eps,\Delta,\Phi)$, where
\begin{itemize}
\item $A \in \svect$ is a unital associative algebra, such that the unit $\one \in A$ is even and the product respects the $\mathbb{Z}_2$-grading.
\item $\eps : A \to \mathbb{C}^{1|0}$ and $\Delta : A \to A \ot A$ are even linear maps and algebra homomorphisms. (The algebra structure on $A \ot A$ involves the symmetric braiding in $\svect$ as described in \eqref{eq:super-alg-tensor}.)
\item $\Phi \in A \ot A \ot A$ is an even element.
\end{itemize}
These data is subject to the conditions I:(\ref*{I-eq:eps-Delta})--I:(\ref*{I-eq:3-cocycle}) in Definition~I:\ref*{I-def:quasi-Hopf}, with products in $A \ot A \ot A$ involving parity signs as in \eqref{eq:super-alg-tensor}.
\end{definition}

In presenting the quasi-bialgebra structure for $\Salg$, we first list the data and will then prove in Proposition~\ref{prop:S-quasiHopf} below that these indeed define a quasi-bialgebra in $\svect$.

The (non-coassociative) coproduct $\copS\colon \Salg\to \Salg\otimes \Salg$ and counit  $\epsilon\colon \Salg \to \oC$ are given by
\begin{align} \label{S:delta}
\Delta^{\Salg}(\xpm_i) &= \xpm_i \otimes \one + (\idem_0-\rmi\idem_1)\otimes \xpm_i \pm \rmi\idem_1\otimes \idem_1(\xp_i-\xm_i) \gc 
& \eps^\Salg(\xpm_i) &= 0 \gc \\ \nonumber
	\Delta^{\Salg}(\LL)&=\LL\otimes \LL \gc & \eps^\Salg(\LL) &= 1 \ .
\end{align}
It is straightforward to check that $\Delta^\Salg$ is well-defined on $\Salg$, i.e.\ that the above definition in terms of generators is compatible with the relations in \eqref{def:SN}.

The coassociator $\Sas \in \Salg\tensor\Salg\tensor\Salg$ is given by
\begin{align}\label{eq:Sas}
\Sas = & ~ \idem_0\tensor\idem_0\tensor\idem_0 \\
&
+ \idem_1\tensor\idem_0\tensor\idem_0
+ \Sas^{010}\idem_0\tensor\idem_1\tensor\idem_0
+ \idem_0\tensor\idem_0\tensor\idem_1
\nonumber\\
& + \Sas^{110}\idem_1\tensor\idem_1\tensor\idem_0
  + \Sas^{101}\idem_1\tensor\idem_0\tensor\idem_1
  +\idem_0\tensor\idem_1\tensor\idem_1
\nonumber\\
& + \Sas^{111}\idem_1\tensor\idem_1\tensor\idem_1 \gc
\nonumber
\end{align}
where 
\be\label{eq:Lambda-abc}
\Sas^{abc}= \beta^{2abc} \prod_
{k=1}
^\np \Sas^{abc}_{(k)}
\ee
(i.e.\ there is a factor of $\beta^2$ in sector 
\textbf{111})
with
\begin{align}\label{eq:lambda-sector-def}
\Sas^{010}_{(k)} =~& \one\tensor\one\tensor\one
    + (1 + \rmi) \xm_k\tensor\one\tensor\xp_k - (1 - \rmi) \xp_k\tensor\one\tensor\xm_k -
  2 \xp_k\xm_k\tensor\one\tensor\xp_k\xm_k\gc
\\ \nonumber
\Sas^{110}_{(k)} =~& \one\otimes \one\otimes \one - \rmi \one\otimes (\xp_k + \xm_k)\otimes (\xp_k + \xm_k)\gc\\ \nonumber
\Sas^{101}_{(k)} =~& \one\otimes \one\otimes \one + \rmi \one\otimes (\xp_k + \rmi\xm_k)\otimes (\xp_k + \rmi\xm_k) + (1 - \rmi)(\xp_k- \xm_k)\otimes \xp_k  \xm_k\otimes (\rmi\xp_k-\xm_k)\\ \nonumber
    &+ (1 + \rmi) \xm_k\otimes \xp_k\otimes \one
    - (1 - \rmi) \xp_k\otimes \xm_k\otimes \one
 + (1 + \rmi) \one\otimes \xp_k  \xm_k\otimes \one
     - 2 \xp_k  \xm_k\otimes \xp_k  \xm_k\otimes \one \gc\\ \nonumber
\Sas^{111}_{(k)} =~& (\rmi-1) \one\otimes \one\otimes \one
        + (1 - \rmi) \one\otimes \one\otimes \xp_k\xm_k + (1 + 2 \rmi) \one\otimes \xm_k\otimes \xm_k 
        + \one\otimes \xm_k\otimes \xp_k 
\\ \nonumber
 &
        + \one\otimes \xp_k\otimes \xm_k
+   \one\otimes \xp_k\otimes \xp_k + (1 - \rmi) \one\otimes \xp_k\xm_k\otimes \one - (2 - 2 \rmi) \one\otimes \xp_k\xm_k\otimes \xp_k\xm_k 
\\ \nonumber
 &
- (1 - \rmi) \xm_k\otimes \one\otimes \xm_k 
+ (1 + \rmi) \xm_k\otimes\one\otimes \xp_k - (1 - \rmi) \xm_k\otimes \xm_k\otimes \xp_k  \xm_k 
\\ \nonumber
 &
+ (1 + \rmi) \xm_k\otimes \xp_k\otimes \xp_k  \xm_k 
+ (1 - \rmi) \xm_k\otimes \xp_k  \xm_k\otimes \xm_k
- (1 + \rmi) \xm_k\otimes \xp_k  \xm_k\otimes \xp_k 
\\ \nonumber
 &
+ (1 - \rmi) \xp_k\otimes \xm_k\otimes \one - (1 - \rmi) \xp_k\otimes \xm_k\otimes \xp_k  \xm_k 
- (1 + \rmi) \xp_k\otimes \xp_k\otimes \one 
\\ \nonumber
 &
+ (1 + \rmi) \xp_k\otimes \xp_k\otimes \xp_k  \xm_k - (1 - \rmi) \xp_k\otimes \xp_k\xm_k\otimes \xm_k
+ (1 + \rmi) \xp_k\otimes \xp_k  \xm_k\otimes \xp_k 
\\ \nonumber
 &
+ 2 \xp_k  \xm_k\otimes \one\otimes \one - 2 \xp_k  \xm_k\otimes \one\otimes \xp_k  \xm_k - 2 \rmi \xp_k  \xm_k\otimes \xm_k\otimes \xm_k 
- 2 \rmi \xp_k\xm_k\otimes \xp_k\otimes \xp_k 
\\ \nonumber
 &
- 2 \xp_k\xm_k\otimes \xp_k  \xm_k\otimes \one + 4 \xp_k  \xm_k\otimes \xp_k  \xm_k\otimes \xp_k  \xm_k
 \gp
\end{align}
Note the product of the $\Sas^{abc}_{(k)}$ is taken in the tensor product of super-algebras $\Salg \ot \Salg \ot \Salg$, which includes parity signs, as defined in \eqref{eq:super-alg-tensor}. The
$\Sas^{abc}_{(k)}$ is parity-even, and one quickly checks that for $k \neq l$ the elements $\Sas^{abc}_{(k)}$ and $\Sas^{abc}_{(l)}$ commute in $S \ot S \ot S$, so that the order in which one takes the product does not matter.

Finally, we define an element $\RS\in\Salg\tensor\Salg$ which is given by 
\be \label{eq:R-element-for-S}
	\RS = \RS^{00}\cdot\idem_0\tensor\idem_0+\RS^{01}\cdot\idem_0\tensor\idem_1+\RS^{10}\cdot\idem_1\tensor\idem_0+\RS^{11}\cdot\idem_1\tensor\idem_1 
\ee
where 
$\RS^{nm} = \beta^{nm} \prod_{k=1}^\np \RS^{nm}_{(k)}$ 
and
\begin{align} \label{eq:R-element-for-S-sectors}
	\RS^{00}_{(k)} &= \one\tensor\one-2\xm_k\tensor\xp_k \gc \\ \nonumber
	\RS^{01}_{(k)} &= \one\tensor\one-(1+\rmi)\xm_k\tensor\xp_k-(1+\rmi)\xp_k\tensor\xm_k-(1+\rmi)\xp_k\xm_k\tensor\one+2\rmi\xp_k\xm_k\tensor\xp_k\xm_k \gc\\ \nonumber
	\RS^{10}_{(k)} &= \one\tensor\one-(1-\rmi)\xm_k\tensor\xp_k-(1-\rmi)\xp_k\tensor\xm_k-(1-\rmi)\one\tensor\xp_k\xm_k+2\rmi\xp_k\xm_k\tensor\xp_k\xm_k \gc\\ \nonumber
	\RS^{11}_{(k)} &= 
	-\rmi\xm_k\tensor\xm_k 
	-\xm_k\tensor\xp_k
	-\xp_k\tensor\xm_k
	+\rmi\xp_k\tensor\xp_k
	+ 2\xp_k\xm_k\tensor\xp_k\xm_k\gp 
\end{align}
Again, for $k \neq l$ the elements $\RS^{mn}_{(k)}$ and $\RS^{mn}_{(l)}$ commute in $\Salg \ot \Salg$.
We will use $\RS$ later to define a braiding in $\repS$ (but $\RS$ is not an R-matrix for $\Salg$ as the braiding will involve an extra parity map).

For $\np=1$, the quasi-bialgebra algebra $\Salg$ was defined in \cite[Sec.\,4]{Gainutdinov:2015lja}. The proof that $\Salg(\np,\beta)$ is a quasi-bialgebra for general $\np$ relies on reducing the problem to the $\np=1$ case.
To do so, choose $\beta_1,\dots,\beta_\np$ such that $\beta_i^4 = -1$ and $\beta = \beta_1 \cdots \beta_\np$. Define $\mathsf{A}$ to be the $\np$-fold tensor product
\be\label{eq:A-via-SN=1-def}
	\mathsf{A} := \Salg(1,\beta_1) \ot \cdots \ot \Salg(1,\beta_\np)
\ee
of quasi-bialgebras in $\svect$. 
Thus $\mathsf{A}$ is itself a quasi-bialgebra in $\svect$. The product and coproduct of $\mathsf{A}$ are defined with parity signs (see \eqref{eq:super-alg-tensor} for the product) and the coassociator 
$\Lambda^\mathsf{A}$
of $\mathsf{A}$ is the product in $\mathsf{A} \ot \mathsf{A} \ot \mathsf{A}$ of those in the individual factors $\Salg(1,\beta_i)$.
Consider the two-sided ideal $I$ in $\mathsf{A}$ generated by
\be
	I := \big\langle \LL_i-\LL_{j} \,\big|\, 1\leq i,j\leq\np \big\rangle \ .
\ee
where $\LL_i$ stands for the element with $\LL$ on the $i$-th tensor factor and $\one$ else. One verifies that this ideal satisfies $\Delta^{\mathsf{A}}(I) \subset I\tensor\mathsf{A}+\mathsf{A}\tensor I$ and $\eps^\mathsf{A}(I)=0$, i.e.\ it is a (quasi-)bialgebra ideal.

\begin{proposition}\label{prop:S-quasiHopf}
For $\np,\beta$ as in \eqref{eq:beta-param}, the
data $(\Salg,\cdot,\one,\copS,\eps^\Salg,\Sas)$ defines a quasi-bialgebra in $\svect$ and $\Salg \cong \mathsf{A}/I$ as quasi-bialgebras.
\end{proposition}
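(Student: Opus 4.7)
The strategy is to bootstrap from the $\np=1$ case, which is established as a quasi-bialgebra in $\svect$ in \cite[Sec.\,4]{Gainutdinov:2015lja}, and use the tensor product construction already introduced in the excerpt. Since the tensor product of quasi-bialgebras in a symmetric monoidal category is again a quasi-bialgebra (with the structure maps appropriately twisted by the symmetric braiding $\sflip$ of $\svect$, as recorded in \eqref{eq:super-alg-tensor}), $\mathsf{A} = \Salg(1,\beta_1) \ot \cdots \ot \Salg(1,\beta_\np)$ is a quasi-bialgebra in $\svect$ with coassociator $\Lambda^{\mathsf{A}}$ equal to the product in $\mathsf{A}^{\ot 3}$ of the $\Lambda^{\Salg(1,\beta_k)}$ placed in the respective slots. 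The biideal conditions $\Delta^{\mathsf{A}}(I) \subset I \ot \mathsf{A} + \mathsf{A} \ot I$ and $\eps^{\mathsf{A}}(I)=0$ are routine to check on the generators $\LL_i - \LL_j$ of $I$ using $\Delta^{\mathsf{A}}(\LL_i) = \LL_i \ot \LL_i$; I would also observe that $\Lambda^{\mathsf{A}}$ remains invertible modulo $I$ since each $\Lambda^{\Salg(1,\beta_k)}$ is invertible and $I$ is central. Consequently $\mathsf{A}/I$ inherits a quasi-bialgebra structure in $\svect$; the whole proposition then reduces to producing an isomorphism $\varphi : \Salg \xrightarrow{\sim} \mathsf{A}/I$ of quasi-bialgebras, which transports the structure of $\mathsf{A}/I$ onto $\Salg$.

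To build $\varphi$, I would set $\LL \mapsto [\LL_i]$ (independent of $i$ mod $I$) and $\xpm_i \mapsto [\xpm \text{ in factor } i]$, and verify the defining relations~\eqref{def:SN}. The only nontrivial one is the anticommutation $\{\xp_i,\xm_j\}=\delta_{ij}\tfrac12(\one-\LL)$: for $i=j$ it is inherited from the $\np=1$ case, while for $i\neq j$ the generators lie in distinct tensor factors and the graded product \eqref{eq:super-alg-tensor} makes two parity-odd elements anticommute, yielding zero as required. To invert $\varphi$, define $\psi : \mathsf{A} \to \Salg$ on generators by $\LL_i \mapsto \LL$ and $\xpm_i \mapsto \xpm_i$; the same graded-sign computation shows $\psi$ respects the tensor-product algebra structure of $\mathsf{A}$, and clearly $\psi(I)=0$, so $\psi$ descends to $\mathsf{A}/I$. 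Because $\psi\circ\varphi$ and $\varphi\circ\psi$ are the identity on generators of the respective algebras, they are the identity throughout, giving the algebra isomorphism. (A dimension check $\dim\Salg = 2^{2\np+1}$ versus $\dim(\mathsf{A}/I)$ provides an independent sanity check once one establishes that $\mathsf{A}/I$ admits the obvious PBW-type basis.)

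It remains to match the coalgebra structure under $\varphi$. For the counit this is trivial; for the coproduct, applying $\Delta^{\mathsf{A}}$ to the image of $\xpm_i$ reduces to $\Delta^{\Salg(1,\beta_i)}$ on the $i$-th factor, and after identifying all the $\LL_k$ (hence all $\idem_0,\idem_1$) in $\mathsf{A}/I$ this produces precisely the formula~\eqref{S:delta}. The coassociator is the key place where the sector formulas~\eqref{eq:Sas}--\eqref{eq:lambda-sector-def} must be read off: the factorisation $\Lambda^{\mathsf{A}} = \prod_k \Lambda^{\Salg(1,\beta_k)}$ in $\mathsf{A}^{\ot 3}$, combined with the centrality of the $\idem_a$'s and their identification in $\mathsf{A}/I$, gives a sector decomposition $\Lambda^{\mathsf{A}}|_{(\idem_a\ot\idem_b\ot\idem_c)} = \prod_k \Lambda^{\Salg(1,\beta_k)}|_{(\idem_a\ot\idem_b\ot\idem_c)}$; the scalar $\beta^{2abc} = \prod_k \beta_k^{2abc}$ in \eqref{eq:Lambda-abc} is then just the product of the corresponding $\beta_k$-factors of the single-pair coassociator. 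I expect the main obstacle to be precisely this matching step, since it requires verifying that each sector piece $\Sas^{abc}_{(k)}$ listed in \eqref{eq:lambda-sector-def} coincides (up to the extracted scalar) with the sector component of $\Lambda^{\Salg(1,\beta)}$ from \cite[Sec.\,4]{Gainutdinov:2015lja}; this is mechanical bookkeeping of the super-tensor signs but genuinely lengthy, and is naturally the part that fills most of Appendix~\ref{app:SF-S}.
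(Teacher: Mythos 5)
Your proposal is correct and follows essentially the same route as the paper: both reduce to the $\np=1$ case of \cite[Sec.\,4]{Gainutdinov:2015lja} via the super-tensor product $\mathsf{A}=\Salg(1,\beta_1)\ot\cdots\ot\Salg(1,\beta_\np)$ with $\beta=\beta_1\cdots\beta_\np$, use the biideal $I$ to identify $\Salg\cong\mathsf{A}/I$, and transport the quasi-bialgebra axioms, with the coassociator matching coming from the product form \eqref{eq:Lambda-abc} sector by sector. The only cosmetic difference is that you construct mutually inverse maps $\Salg\leftrightarrow\mathsf{A}/I$ on generators, whereas the paper defines one surjective map $\varphi:\mathsf{A}\to\Salg$ on a basis, checks it intertwines product, counit, coproduct and coassociator, and identifies $\ker\varphi=I$.
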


\begin{proof}
	We write $\xp$, $\xm$ and $\LL$ for the generators of $\Salg(1,\beta_j)$.
Consider the surjective even linear map $\varphi : \mathsf{A} \to \Salg(\np,\beta)$ given by
\begin{align}
&\varphi\Big(\,(\xp)^{\eps_1}(\xm)^{\delta_1} 
\LL^{m_1}
\ot\cdots \ot(\xp)^{\eps_\np}(\xm)^{\delta_\np} \LL^{m_\np} \, \Big)
\\ \nonumber
&\hspace{4em}
	=~
(\xp_1)^{\eps_1}(\xm_1)^{\delta_1}\cdots (\xp_\np)^{\eps_\np}(\xm_\np)^{\delta_\np} \LL^{m_1+\cdots+m_\np} 
\ .
\end{align}
The map $\varphi$ satisfies
\be
	\varphi(\one^{\otimes\np})=\one
	~~,\quad
	\eps^\Salg \circ \varphi = \eps^{\mathsf{A}}
	~~,\quad
	\varphi(ab) = \varphi(a)\varphi(b)
	~~,\quad
	(\varphi \ot \varphi) \circ \Delta^{\mathsf{A}}
	= \Delta^{\Salg} \circ \varphi
\ee
Since $\varphi$ is surjective, this proves that $\Delta^\Salg$ is an algebra homomorphism. Furthermore, $\varphi^{\ot 3}$ maps the coassociator of $\mathsf{A}$ to that of $\Salg$:
\be
	\varphi^{\ot 3}(\Lambda^\mathsf{A}) = \Lambda \ .
\ee
This follows from the product form of $\Lambda \in \Salg^{\ot 3}$ and the fact that $\beta = \beta_1 \cdots \beta_\np$. Using once more that $\Salg$ is surjective, it follows that $\Lambda$ satisfies the defining relations of a coassociator 
(see Definition~\ref{def:quasi-bialg-symmoncat}).

Thus $\Salg$ is a quasi-bialgebra. Finally, it is clear from the definition of $\varphi$ that $\mathrm{ker}(\varphi) = I$.
\end{proof}

\subsection{An equivalence from \texorpdfstring{$\catSF$}{SF} to \texorpdfstring{$\repS$}{Rep(S)}}\label{sec:eqSFRepS}

Due to \eqref{eq:Salg-dec}, $\repS$ can be decomposed into two parts:
\be\label{eq:repS-dec}
	\repS = \repS_0 \oplus \repS_1 \gp
\ee
In this section we construct an equivalence of $\oC$-linear categories $\mathcal D\colon \catSF \to \repS$.
The functor has two components due to the decompositions in \eqref{eq:catSF-dec} and \eqref{eq:repS-dec}. 
On $\catSF_0$ we have
\be\label{eq:D-functor-sec0-def}
\mathcal D_0\colon \catSF_0\to \repS_0 \gcg \mathcal D_0(U)=U \quad\text{where}\quad \xp_i u=a_i u \gcg \xm_i u=b_i u \quad (u\in U) \gc 
\ee
and $D_0(f)=f$ on morphisms. 

For the second component we need the non-central idempotent 
\begin{align} \label{eq:Bel}
\Bel\coloneqq \prod_{i=1}^\np \xm_i \xp_i \idem_1 \in \Salg_1 \gp
\end{align}
To see that indeed $\Bel \Bel = \Bel$ note that for each $i$ separately we have $\xm_i \xp_i \xm_i \xp_i \idem_1 = \xm_i \xp_i \idem_1$. The idempotent $\Bel$ generates a
$\Salg_1$-submodule $\B\subset \Salg_1$:
\begin{align} 
	\B\coloneqq \Salg \Bel= \Salg_1\Bel = \operatorname{span} \left\{(\xp_\np)^{i_\np}\cdots (\xp_1)^{i_1} \Bel \mid i_1,\ldots ,i_\np\in\{0,1\}\right\}.
\end{align}
Since $\Salg_1$ is a Clifford algebra, $\B$ is a simple (and projective) $\Salg_1$-module in $\svect$. There are up to isomorphism two distinct simple $\Salg_1$-modules, namely $\B$ and $\B \otimes_{\svect} \oC^{0|1}$, the parity-shifted copy of $\B$.

The second component of $\mathcal D$ is defined as
\begin{align} \label{eq:D-functor-sec1-def}
\mathcal D_1\colon \catSF_1\to \repS_1 \gcg \mathcal D_1(U)=\B\otimes_{\svect}U \gc 
\end{align}
where the $\Salg_1$-action on $\mathcal D_1(U)$ is the left multiplication on $\B$ and where $U\in \catSF_1\simeq\svect$. 
On morphisms we set $\mathcal D_1(f)=\id_\B\otimes f$. 

\begin{proposition}\label{prop:D-is-C-lin-equiv}
The functor $\mathcal D$ is an equivalence of $\oC$-linear categories. 
\end{proposition}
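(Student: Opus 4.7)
The plan is to leverage the block decompositions $\catSF = \catSF_0 \oplus \catSF_1$ from \eqref{eq:catSF-dec} and $\repS = \repS_0 \oplus \repS_1$ from \eqref{eq:repS-dec}, which the functor $\mathcal D = \mathcal D_0 \oplus \mathcal D_1$ respects by construction. It will then suffice to verify that each component $\mathcal D_i \colon \catSF_i \to \repS_i$ is a $\oC$-linear equivalence separately.

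For $\mathcal D_0$ the argument is quick: the assignment $a_i \mapsto \xp_i \idem_0$, $b_i \mapsto \xm_i \idem_0$ extends to an isomorphism of algebras $\algGr \xrightarrow{\sim} \Salg_0$ in $\svect$. Indeed, both sides are $2^{2\np}$-dimensional and generated by $2\np$ odd elements, and after multiplication by the idempotent $\idem_0$ the relations \eqref{def:SN} collapse to the Gra{\ss}mann relations \eqref{eq:aibi-relation}, since $\{\xp_i,\xm_j\}\idem_0 = \delta_{ij}\idem_1\idem_0 = 0$. The functor $\mathcal D_0$ is then restriction of scalars along this isomorphism, and hence an equivalence.

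The substantive step is to prove that $\mathcal D_1 \colon \svect \to \repS_1$, $U \mapsto \B \otimes_{\svect} U$, is an equivalence. Ignoring the $\oZ_2$-grading, the algebra $\Salg_1$ is the Clifford algebra of a non-degenerate symmetric form on $\oC^{2\np}$ and is therefore isomorphic to $M_{2^\np}(\oC)$, with $\B$ (of dimension $2^\np$) its unique simple module; in particular, $\Salg_1$ is semisimple. The main obstacle is to keep track of the super-structure, and for this I plan to establish two things: first, that $\End_{\Salg_1}(\B) = \oC$ is concentrated in even degree; second, that every object of $\repS_1$ decomposes as a direct sum of copies of $\B$ and its parity-shift $\Pi\B = \B \otimes \oC^{0|1}$.

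For the first claim, observe that $\Bel$ is even, that $\xm_i \Bel = 0$ for every $i$ by a direct check using $(\xm_i)^2 = 0$, and that $\Bel$ spans the full $\xm$-annihilator in $\B$, as one reads off from the basis displayed after \eqref{eq:Bel}. Any $\phi \in \End_{\Salg_1}(\B)$ must then send $\Bel$ to a vector annihilated by every $\xm_i$, so $\phi(\Bel) = \lambda \Bel$ with $\lambda \in \oC$ of the same parity as $\phi$; since $\Bel$ is even, odd $\phi$ must vanish on $\Bel$ and hence on all of $\B$ by $\Salg_1$-linearity, while even $\phi$ is determined by the even scalar $\lambda$. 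The second claim then follows from the first together with ungraded semisimplicity: an arbitrary $M \in \repS_1$ decomposes ungraded as a sum of copies of $\B$, and refining by the $\oZ_2$-grading gives an isomorphism $M \cong \B^{\oplus p} \oplus \Pi\B^{\oplus q} \cong \B \otimes \oC^{p|q}$ for suitable $p,q$. With these in hand, full faithfulness of $\mathcal D_1$ reduces to the Schur-type identity $\Hom_{\Salg_1}(\B \otimes U, \B \otimes V) \cong \Hom_{\svect}(U,V)$, and essential surjectivity is immediate from the second claim.
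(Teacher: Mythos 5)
Your proof is correct and follows essentially the same route as the paper: split along the block decompositions, observe that $\mathcal D_0$ is restriction along the algebra isomorphism $\algGr\cong\Salg_0$, and reduce $\mathcal D_1$ to the facts that $\End^{\vect}_{\Salg_1}(\B)=\oC$ sits in even degree and that every object of $\repS_1$ is of the form $\B\otimes_{\svect}\oC^{p|q}$. The paper packages these same facts into an explicit quasi-inverse $\mathcal E(M)=\Hom^{\vect}_{\Salg_1}(\B,M)$ rather than verifying full faithfulness and essential surjectivity separately, but the content is identical (and you supply the Schur-type computation that the paper simply cites as simplicity of $\B$).
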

\begin{proof}
We will give an inverse functor $\mathcal E \colon \repS \to \catSF$ to $\mathcal D$. The functor $\mathcal E$ will again be defined separately in the two sectors. 

Given $V\in\repS_0$, the $\algGr$-module $\mathcal E(V)$ has $V$ as the underlying super-vector space with the action of $a_i$ and $b_i$ given
by the action of $\xp_i$ and $\xm_i$, respectively. On morphisms $f$ in $\repS_0$ we set $\mathcal E(f)=f$. Clearly, $\mathcal D$ and $\mathcal E$ are inverse to each other as functors between $\catSF_0$ and $\rep \Salg_0$.

For a given object $M \in \repS_1$, we set $\mathcal E (M)=\Hom^{\vect}_{\Salg_1}(\B,M)$. This means we consider all $\Salg_1$-linear maps from $\B$ to $M$, not just the parity-even maps. The decomposition into parity-even and parity-odd maps turns $\Hom^{\vect}_{\Salg_1}(\B,M)$ into a super-vector space. Since $\B$ is simple, we get a canonical isomorphism $\Hom^{\vect}_{\Salg_1}(\B,\B \ot_\svect V) \cong V$ of super-vector spaces which is natural in $V$, that is, $\mathcal{E}(\mathcal{D}(V)) \cong V$.
Conversely, since every $\Salg_1$-module is isomorphic to a direct sum of copies of $\B$ and $\B \otimes_{\svect} \oC^{0|1}$, that is $M\cong \B\otimes V$ for some $V\in\svect$, we get 
$M \cong \B \ot \Hom^{\vect}_{\Salg_1}(\B,M)$.
\end{proof}

\subsection{\texorpdfstring{$\funD$}{D} as a multiplicative functor}\label{sec:funD}
Below we will make an ansatz for isomorphisms
\begin{equation}\label{eq:isoD-source-target}
\isoD_{U,V}\colon \funD(U\Ctensor V) \to 
\funD(U)\Stensor \funD(V) \gp
\end{equation}
which will be given sector by sector. Both, the expression for $\isoD_{U,V}$ and the proof that it is an isomorphism mimics the corresponding construction for $\np=1$ in \cite[Sec.\,6.3]{Gainutdinov:2015lja}.

The multiplication map $\Salg\ot\Salg\to\Salg$ is denoted by $\mu^{\Salg}$, and for an $\Salg$-module $U$ the expression $\rho^U : \Salg \tensor U \to U$ stands for the $\Salg$-action  on $U$. 
We will also need the constants
\begin{align} \label{eq:delta0-delta1}
\delta_0 = \prod_{i=1}^\np \left(\one\otimes\one + \xm_i\otimes \xp_i\right) \in \Salg\otimes \Salg,  \qquad
\delta_1 = \prod_{i=1}^\np \left(\one + \xp_i \xm_i\right) \in \Salg .
\end{align}
Note that they are multiplicative invertible with inverses 
\begin{align} \label{eq:delta0-delta1-inv}
\delta_0^{-1} = \prod_{i=1}^\np \left(\one\otimes\one - \xm_i\otimes \xp_i\right) \in \Salg\otimes \Salg  , \quad
\delta_1^{-1} = \prod_{i=1}^\np \left(\one - \xp_i \xm_i \left(\idem_0+\tfrac12 \idem_1\right)\right) \in \Salg.
\end{align}
We denote the right multiplication with $a\in \Salg$ by $R_a$.

\begin{figure}[bt]
\eqpics{Delta-mult-struct} {300} {125} {\setlength\unitlength{.85pt}
\put(10,180){\scalebox{.2}{\includegraphics[scale=1]{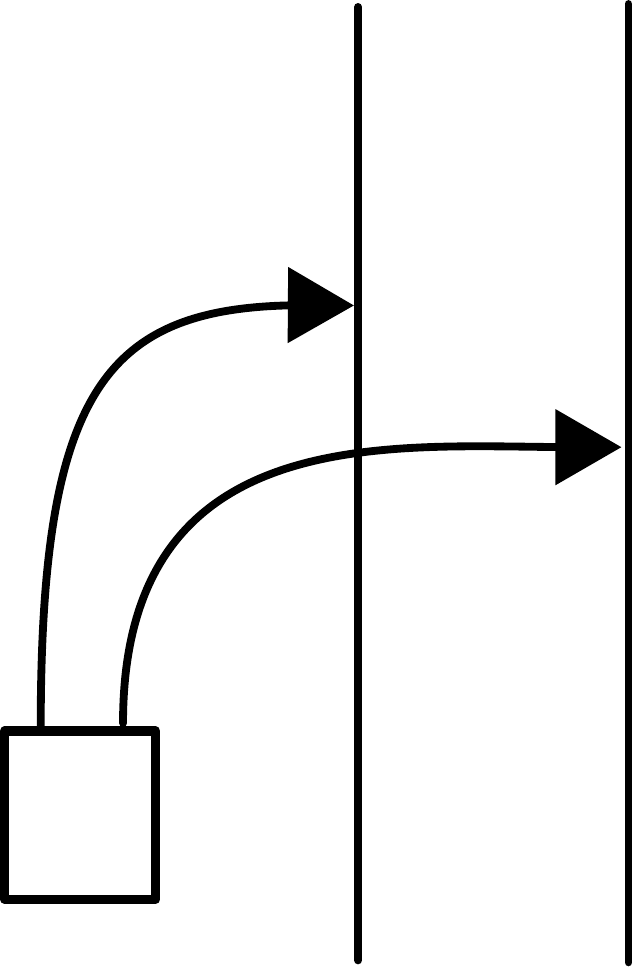}}}
\put(-30,300){\bf 00} \put(14,193){$\delta_0$} \put(46,168){$U$}  \put(76,168){$V$} \put(46,292){$U$}  \put(76,292){$V$}
\put(230,180){\scalebox{.2}{\includegraphics[scale=1]{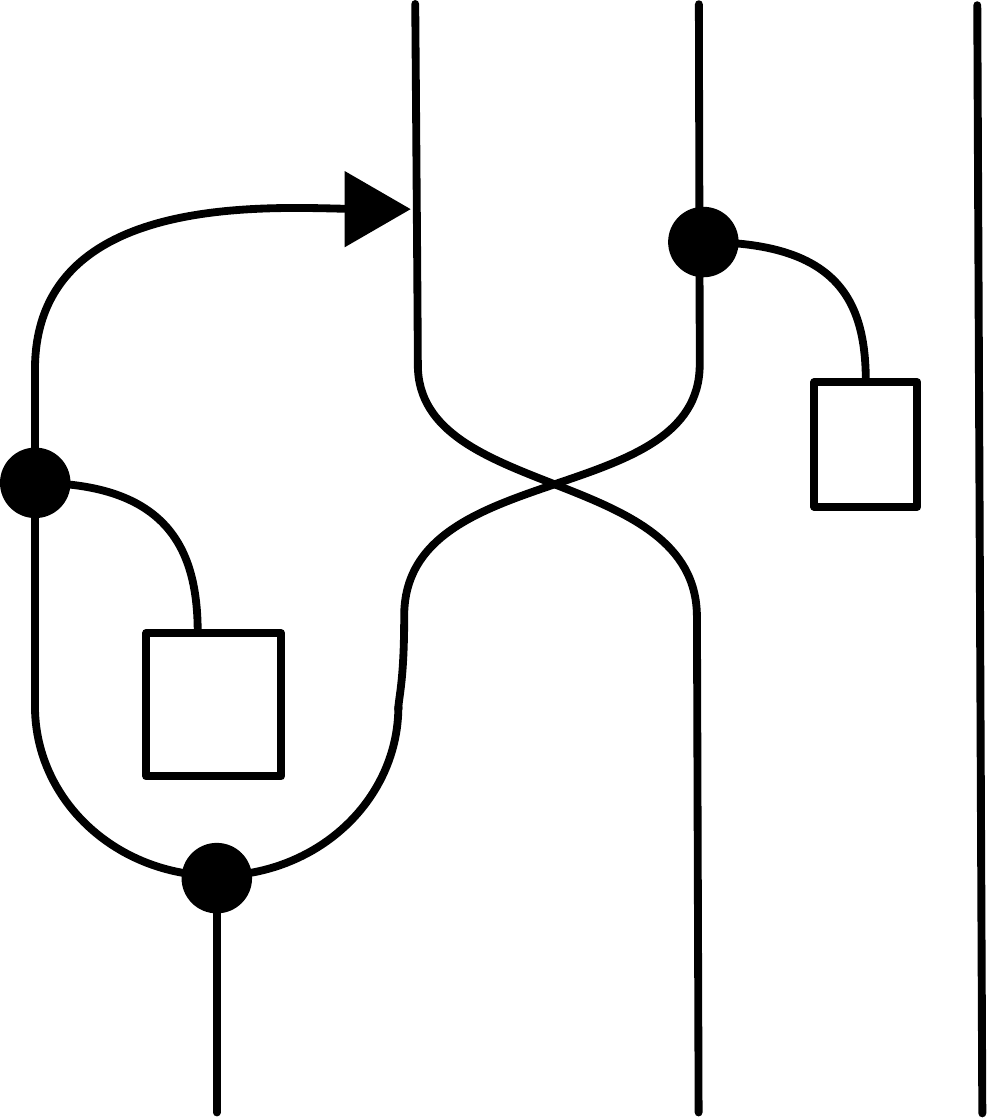}}}
\put(190,300){\bf 01} \put(250,168){$\B$} \put(305,168){$U$} \put(336,168){$V$} \put(273,310){$U$} \put(305,310){$\B$} \put(336,310){$V$} \put(249,223)
{$\delta_1$} 
\put(325,251){$\Bel$}
\put(10,0){\scalebox{.2}{\includegraphics[scale=1]{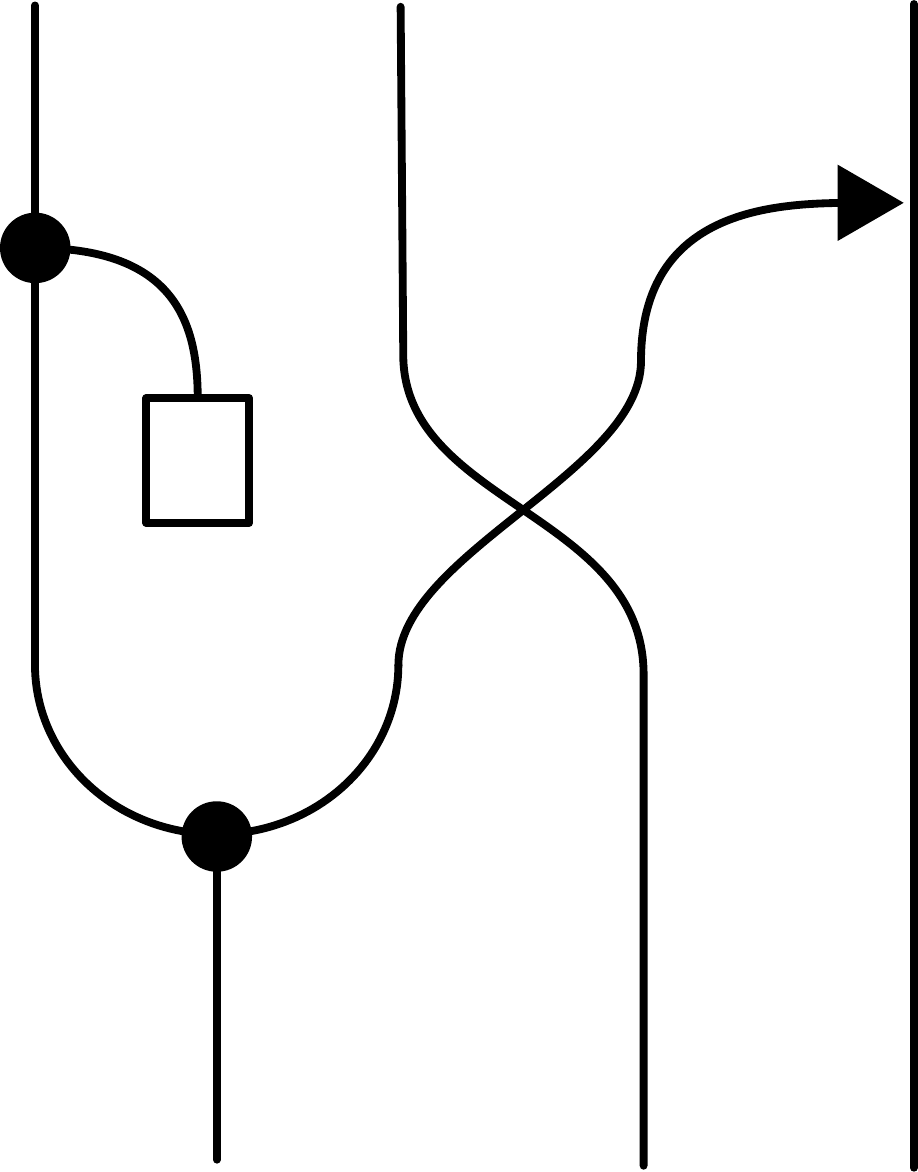}}}
\put(-30,125){\bf 10} \put(30,-12){$\B$} \put(10,136){$\B$} \put(77,-12){$U$} \put(108,-12){$V$}  \put(29,76){$\Bel$}
 \put(51,136){$U$} \put(108,136){$V$}
\put(230,0){\scalebox{.2}{\includegraphics[scale=1]{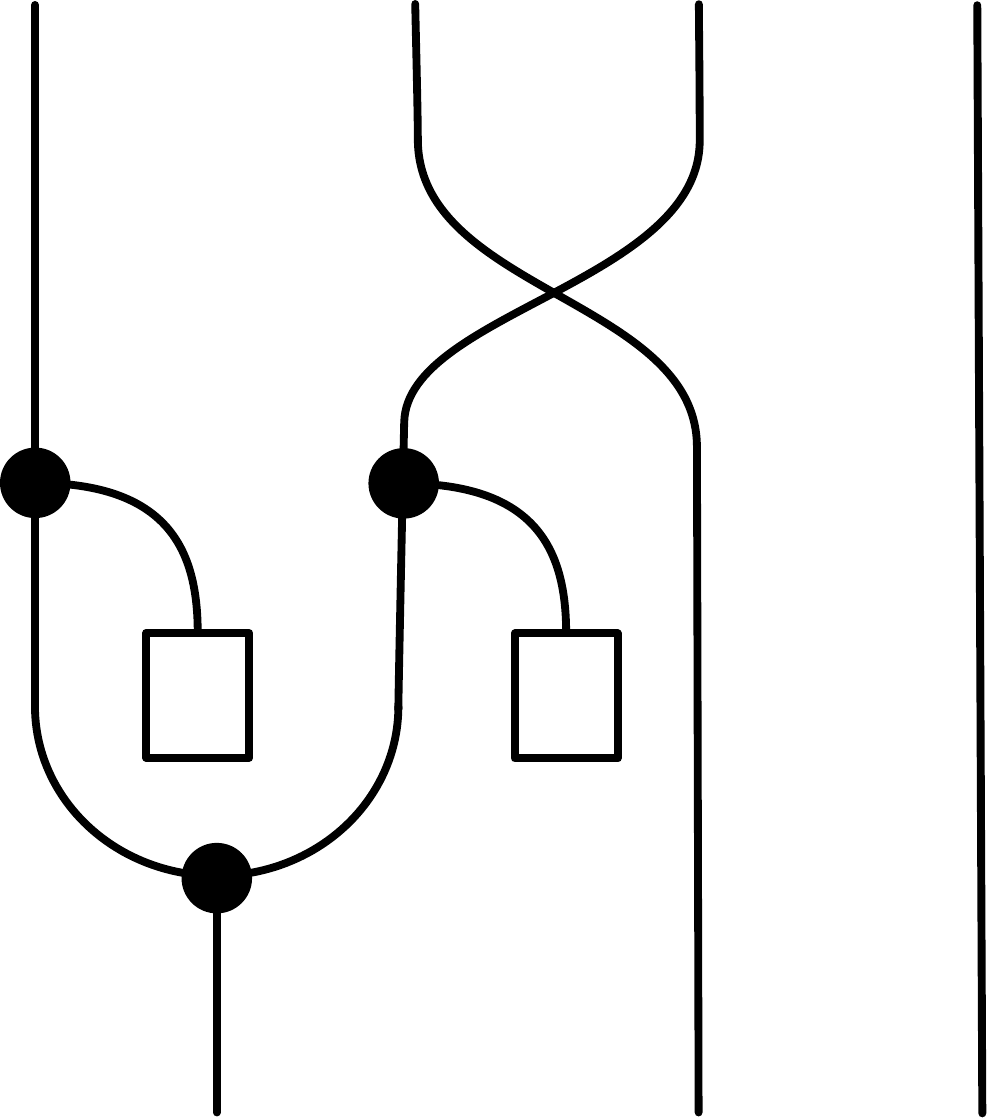}}}
\put(190,125){\bf 11} \put(30,-12){$\B$} 
\put(250,-12){$\Salg_0$} 
\put(272,130){$U$}  \put(249,43)
{$\Bel$}\put(291,43){$\Bel$}
 \put(230,130){$\B$} 
\put(304,-12){$U$} \put(336,-12){$V$} \put(304,130){$\B$} \put(336,130){$V$}
}
\caption{String diagram notation for $\isoD_{U,V}$ in the four sectors.}
\label{fig:Delta_UV}
\end{figure}

The following list defines $\isoD_{U,V}$ for each of the four possibilities to choose $U \in \catSF_i$ and $V \in \catSF_j$, $i,j \in \{0,1\}$, which we refer to as ``sector $ij$''. The underlines indicate on which factors $\Salg$ acts. In Figure~\ref{fig:Delta_UV} we give string diagram expressions for $\isoD_{U,V}$.
\begin{itemize}\setlength{\itemsep}{10pt}
\item \textbf{00} sector: $\isoD_{U,V}: \underline U\tensor \underline V\to \underline U \tensor \underline V$ is given by
\begin{equation}\label{eq:isoD00}
\isoD_{U,V} = \big(\rho^U \tensor \rho^V\big) \circ \big(\id_{\Salg} \tensor \sflip_{\Salg,U} \tensor \id_V\big) \circ \big(\delta_0 \tensor \id_U \tensor \id_V\big) \gc
\end{equation}
where $\sflip$ is defined in \eqref{sflip}.

\item \textbf{01} sector: $\isoD_{U,V}: \underline\B\tensor U\tensor V\to \underline U \tensor \underline\B\tensor V$ is given by
\begin{equation}\label{eq:isoD01}
\isoD_{U,V} =
\big(\rho^U \tensor R_\Bel \tensor \id_V \big) \circ
 \big(R_{\delta_1} \tensor \sflip_{\Salg,U} \tensor \id_V\big)
\circ \big( \copS \tensor \id_U \tensor \id_V \big) \gp
\end{equation}
The image of $R_\Bel$ on
$\Salg$
 is $\B$, that is, the target is indeed $U \tensor \B \tensor V$.

\item \textbf{10} sector: $\isoD_{U,V}: \underline\B\tensor U\tensor V\to \underline\B \tensor U\tensor \underline V$  is given by
\begin{equation}\label{eq:isoD10}
\isoD_{U,V} =
\big(R_\Bel \tensor \id_U \tensor \rho^V \big) \circ
 \big(\id_\Salg \tensor \sflip_{\Salg,U} \tensor \id_V\big)
\circ \big( \copS \tensor \id_U \tensor \id_V \big) \gp
\end{equation}

\item \textbf{11} sector: $\isoD_{U,V}: \underline{\Salg_0}\tensor U\tensor V\to \underline\B\tensor U \tensor \underline\B\tensor V$ is given by
\begin{equation}\label{eq:isoD11}
\isoD_{U,V} =
 \big(\id_\B \tensor \sflip_{\B,U} \tensor \id_V\big)
\circ \big( [(R_\Bel \tensor R_\Bel) \circ \copS] \tensor \id_U \tensor \id_V \big) \gp
\end{equation}
Here, in the source $\Salg$-module we have identified $\Salg_0$ and $\algGr$.
\end{itemize}

\begin{lemma}\label{lem:isoD-is-S-map}
The linear maps $\Delta_{U,V}$ are intertwiners of $\Salg$-modules. 
\end{lemma}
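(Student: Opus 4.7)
The plan is to verify the intertwiner property sector by sector, following the four cases in the definition of $\isoD_{U,V}$ (equations \eqref{eq:isoD00}--\eqref{eq:isoD11}). Since a linear map between $\Salg$-modules is an intertwiner iff it commutes with the action of a generating set, it suffices to check equivariance with respect to $\LL$ and $\xpm_i$, $i=1,\ldots,\np$.

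First I would record the effective module structures on source and target in each sector. In sector $\mathbf{00}$ the source is $U\otimes V$ with the diagonal action through $\copS$, but multiplication by $\idem_0\otimes\idem_0$ turns this into the primitive action (since $\copS(\xpm_i)(\idem_0\otimes\idem_0)=(\xpm_i\otimes\one+\one\otimes\xpm_i)(\idem_0\otimes\idem_0)$, using $\xpm_i\LL=\LL\xpm_i$); so the map $\isoD_{U,V}$ reduces to left-multiplication by $\delta_0$ in $\Salg\otimes\Salg$ acting on $U\otimes V$, and intertwining reduces to the identity $[\delta_0,\copS(a)](\idem_0\otimes\idem_0)=0$ for $a\in\{\LL,\xpm_i\}$. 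For $\LL$ this is immediate from $[\delta_0,\LL\otimes\LL]=0$, and for $\xpm_i$ one uses $\xpm_i{}^2=0$ together with $(\xm_i\xp_i-\xp_i\xm_i)\idem_0=\idem_1\idem_0=0$ to kill the obstruction term.

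For sectors $\mathbf{01}$ and $\mathbf{10}$, the source acts on $\B\otimes U\otimes V$ via left multiplication on $\B$ in the first factor, while the target acts through the corresponding sector of $\copS$. The map $\isoD_{U,V}$ is built from $\copS$ followed by right multiplication by $\Bel$ (and, in sector $\mathbf{01}$, by $\delta_1$). Equivariance therefore reduces to two facts: that $\copS$ is an algebra map, so $\copS(xa)=\copS(x)\copS(a)$, and that right multiplication by $\Bel$ (resp.\ $\Bel\otimes\one$ combined with $\delta_1$) commutes with left multiplication. The check on $\LL$ is immediate from centrality of $\LL$; the check on $\xpm_i$ uses the sector-$\mathbf{01}$/$\mathbf{10}$ form of $\copS(\xpm_i)$ together with the idempotent identity $\Bel\xpm_i\Bel=0$ (which follows from $\{\xpm_i,\xmp_j\}=\delta_{ij}\idem_1$ and the definition $\Bel=\prod_i\xm_i\xp_i\idem_1$).

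Sector $\mathbf{11}$ is the main obstacle and the most computational. Here the source is $\Salg_0\otimes U\otimes V$ with $\Salg_0$ acting by left multiplication (and $\Salg_1$ by zero), while the target $\B\otimes U\otimes\B\otimes V$ carries the sector-$\mathbf{11}$ action of $\copS$. Equivariance amounts to the identity
\begin{equation*}
\copS(x)\cdot\bigl[(R_\Bel\otimes R_\Bel)\copS(a)\bigr]=(R_\Bel\otimes R_\Bel)\copS(xa)
\end{equation*}
in $\Salg\otimes\Salg$, after projection by $\idem_1\otimes\idem_1$ (which is built into $\Bel\otimes\Bel$). To control this I would exploit the product structure: both $\Bel=\prod_i\xm_i\xp_i\idem_1$ and the generator-by-generator nature of $\copS$ in \eqref{S:delta} mean that the computation decouples into commuting $\np=1$ blocks. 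This allows the reduction via the quasi-bialgebra surjection $\varphi\colon\mathsf{A}\twoheadrightarrow\Salg$ of Proposition~\ref{prop:S-quasiHopf}: the analogous intertwiner property for $\np=1$ was established in \cite[Sec.\,6.3]{Gainutdinov:2015lja}, and the tensor product construction of $\mathsf{A}$ (with signs incorporated as in \eqref{eq:super-alg-tensor}) together with $\varphi$ being compatible with $\copS$ and $\Lambda$ transports the result to general $\np$. The only remaining point is to verify that the formulas \eqref{eq:isoD00}--\eqref{eq:isoD11} are indeed the images under $\varphi^{\otimes 3}$ of the analogous $\np=1$ maps tensored together; this is a matter of unpacking $\delta_0,\delta_1,\Bel$ as products and checking that the ideal $I$ acts trivially on the source, which is automatic since $\LL_i-\LL_j$ acts as $\LL-\LL$ after the surjection.
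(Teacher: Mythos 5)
Your overall strategy is sound and your sector-$\mathbf{00}$ analysis is exactly the paper's: the only content of the lemma in that sector is the identity $(\idem_0\otimes\idem_0)\cdot\copS(a)\cdot\delta_0=(\idem_0\otimes\idem_0)\cdot\delta_0\cdot\copS(a)$ for the generators $a\in\{\LL,\xpm_i\}$. Where you differ is in the remaining sectors, and there you work much harder than necessary. In sectors $\mathbf{01}$, $\mathbf{10}$ and $\mathbf{11}$ the maps $\isoD_{U,V}$ are built entirely from $\copS$ applied to the factor carrying the left-regular action, followed by right multiplications ($R_{\delta_1}$, $R_{\Bel}$) and symmetric braidings. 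Since $\copS$ is an algebra map (Proposition~\ref{prop:S-quasiHopf}) and right multiplication commutes with left multiplication, the intertwiner property is immediate; no generator-by-generator check and no identity $\Bel\xpm_i\Bel=0$ is needed (that identity enters only in the proof of invertibility, Lemma~\ref{lem:isoD-is-iso}). In particular, the displayed identity you single out in sector $\mathbf{11}$, $\copS(x)\cdot[(R_\Bel\otimes R_\Bel)\copS(a)]=(R_\Bel\otimes R_\Bel)\copS(xa)$, is literally $\copS(x)\copS(a)\cdot(\Bel\otimes\Bel)=\copS(xa)\cdot(\Bel\otimes\Bel)$, i.e.\ multiplicativity of $\copS$ ($\Bel$ is even, so no Koszul signs interfere). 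Your proposed reduction to $\np=1$ via the surjection $\mathsf{A}\twoheadrightarrow\Salg$ would also work, and that reduction \emph{is} the right tool elsewhere in this appendix (for the coassociator compatibility and the braiding transport), but for this lemma it is unnecessary machinery.

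One genuine slip to fix in sector $\mathbf{00}$: you claim $(\xm_i\xp_i-\xp_i\xm_i)\idem_0=\idem_1\idem_0=0$, but the defining relations give $\{\xp_i,\xm_i\}=\idem_1$, so the \emph{commutator} is $\xm_i\xp_i-\xp_i\xm_i=\idem_1-2\xp_i\xm_i$, and $(\idem_1-2\xp_i\xm_i)\idem_0=-2\xp_i\xm_i\idem_0\neq0$. What actually appears as the obstruction when you expand $[\delta_0,\copS(\xpm_i)]$ in the super-algebra $\Salg\otimes\Salg$ is the \emph{anticommutator} $\xm_i\xp_i+\xp_i\xm_i=\idem_1$ (the Koszul sign from moving one odd generator past another converts the commutator of the tensor factors into an anticommutator of the $\xpm_i$), and this is indeed annihilated by $\idem_0$. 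So the conclusion stands, but the intermediate identity as written is false and should be replaced by the anticommutator relation.
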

\begin{proof}
In all sectors beside \textbf{00} this clear since $\copS$
is an algebra map, and since the right-multiplications $R_{\delta_1}$ and $R_b$
are left-module intertwiners.
In sector \textbf{00} we have to check that 
$(\idem_0\otimes \idem_0)\cdot \copS (a)\cdot\delta_0=  (\idem_0\otimes \idem_0) \cdot\delta_0 \cdot \copS (a)$ holds for all $a\in \Salg$.
This is clear for 
$a=\LL$,
 and for $a=\xpm_i$ it is an easy check.
\end{proof}

\begin{lemma}\label{lem:isoD-is-iso}
The $\Delta_{U,V}$ are isomorphisms. 
\end{lemma}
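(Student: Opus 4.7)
The plan is to verify the four sectors one at a time, since the formulas for $\isoD_{U,V}$ depend strongly on the sector. First I observe that in every sector the map is manifestly natural in $U$ and $V$ and, with the exception of the $\rho^{U},\rho^{V}$ factors in the formulas, only involves data on $\Salg$ itself. Consequently, in sectors \textbf{00}, \textbf{01}, \textbf{10} it suffices to produce an explicit two-sided inverse, while in sector \textbf{11} bijectivity reduces to a statement purely inside $\Salg$.

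In sector \textbf{00} the map \eqref{eq:isoD00} is just the action of the invertible element $\delta_0\in\Salg\otimes\Salg$ on $U\otimes V$ (composed with the symmetric braiding), so the formula obtained by replacing $\delta_0$ by $\delta_0^{-1}$ from \eqref{eq:delta0-delta1-inv} gives a two-sided inverse. In sectors \textbf{01} and \textbf{10}, source and target both have dimension $2^\np \dim U \dim V$; an inverse can be written down by reading \eqref{eq:isoD01}--\eqref{eq:isoD10} backwards, using that $\delta_1$ is invertible in $\Salg$, that $\Bel$ acts as the identity on the cyclic submodule $\B=\Salg\Bel$ (since $\Bel^2=\Bel$), and that $(\id\tensor S^\Salg)\circ\Delta^\Salg$-type contractions provide left inverses to $\Delta^\Salg$ on the sector-\textbf{1} component. (In practice, because $\funD$ is already known to be an equivalence on underlying vector spaces by Proposition~\ref{prop:D-is-C-lin-equiv}, it is enough to check that the above inverse is right-inverse, since injectivity then follows by dimension count.)

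The nontrivial case is sector \textbf{11}, where the map \eqref{eq:isoD11} -- after stripping off the trivial $\id_{U\tensor V}$ factor and the symmetric braiding -- is the linear map
\begin{equation}\label{eq:sec11-key}
	\psi\colon\; \Salg_0 \longrightarrow \B\tensor\B
	\qcq
	a\longmapsto \Delta^\Salg(a)\cdot(\Bel\tensor\Bel)\ ,
\end{equation}
and both sides have dimension $2^{2\np}$. Because $\Delta^\Salg(\idem_0)=\idem_0\tensor\idem_0+\idem_1\tensor\idem_1$ and $\idem_0\Bel=0$, only the $\Salg_1\tensor\Salg_1$ part of $\Delta^\Salg(a)$ contributes, so $\psi$ is the composition of $\Delta^\Salg|_{\Salg_0}$ with projection onto the sector-\textbf{11} piece followed by right multiplication with $\Bel\tensor\Bel$. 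The cleanest way to prove $\psi$ is bijective is to reduce to the case $\np=1$, where it is verified in \cite[Sec.\,6.3]{Gainutdinov:2015lja}. Namely, take the surjective quasi-bialgebra homomorphism $\varphi\colon \mathsf A\twoheadrightarrow \Salg$ from Proposition~\ref{prop:S-quasiHopf} with $\mathsf A=\bigotimes_{k=1}^\np \Salg(1,\beta_k)$. Since the coproduct of $\mathsf A$ is the tensor product (with parity signs) of the $\np=1$ coproducts, and since $\Bel=\prod_k \varphi(\one\tensor\cdots\tensor\Bel_{(k)}\tensor\cdots\tensor\one)$, the map $\psi$ for $\Salg(\np,\beta)$ is the image under $\varphi\tensor\varphi$ of a tensor product (with parity signs) of the analogous maps $\psi_k$ for $\Salg(1,\beta_k)$. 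Each $\psi_k$ is bijective by the $\np=1$ result, hence so is their super-tensor product; surjectivity of $\varphi$ together with a dimension count $\dim\Salg_0 = \prod_k \dim \Salg(1,\beta_k)_0$ then implies that $\psi$ itself is bijective.

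The main obstacle is sector \textbf{11}: the explicit formula for $\Sas^{111}$ is unwieldy and writing an inverse to $\psi$ by bare hands is awkward for general $\np$. The reduction to tensor products via $\varphi$ is what avoids this; the alternative, a direct triangularity argument on the fermion-number filtration of $\Salg_0$, is possible but uses the explicit expansion \eqref{eq:lambda-sector-def} of $\Sas^{111}_{(k)}$ and is substantially longer.
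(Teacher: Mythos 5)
Your proposal is correct, and for the one genuinely hard sector it takes a different route from the paper. For sector \textbf{11} the paper proves bijectivity of $\Theta\colon\Salg_0\to\B\otimes\B$, $a\mapsto\copS(a)\cdot(\Bel\otimes\Bel)$, by evaluating it on the monomial basis and exhibiting triangularity with respect to the fermion-number filtration, whence surjectivity by induction and bijectivity by equality of dimensions. You instead pull the map back along the surjection $\varphi\colon\mathsf{A}\twoheadrightarrow\Salg$ of Proposition~\ref{prop:S-quasiHopf}: since $\varphi$ restricts to bijections $\bigotimes_k\Salg(1,\beta_k)_0\to\Salg_0$ and $\B^{(1)}\otimes\cdots\otimes\B^{(\np)}\to\B$ (a dimension count), intertwines the coproducts, and sends $\Bel^{(1)}\otimes\cdots\otimes\Bel^{(\np)}$ to $\Bel$, your $\psi$ is indeed the image of a Koszul-reshuffled super-tensor product of the $\np=1$ maps, and bijectivity follows from the $\np=1$ case of \cite{Gainutdinov:2015lja}. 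This is a valid and arguably cleaner argument; it also has the virtue of using the same reduction-to-$\np=1$ device that the paper employs everywhere else in this appendix (and that you would need anyway, see below), at the cost of relying on the external $\np=1$ verification where the paper's filtration argument is self-contained.

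Two points should be tightened. In sectors \textbf{01} and \textbf{10} the candidate inverse obtained by ``reading the formula backwards'' is only a two-sided inverse because of the nontrivial identities \eqref{eq:Delta-iso-identity}, such as $(\Bel\otimes\one)\cdot\bigl((\id\otimes\omega_\Salg)\circ\copS(\Bel)\bigr)\cdot(\Bel\otimes\one)=\Bel\otimes(\delta_1\idem_0)$; these must be established for general $\np$, which is done by exactly the factorisation over the subalgebras $\Salg^{(k)}$ that you use in sector \textbf{11}, so you should invoke it here as well rather than leave the verification implicit. Relatedly, $\Salg$ is only a quasi-bialgebra, so there is no antipode $S^\Salg$; the map playing that role in these contractions is the parity involution $\omega_\Salg$.
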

\begin{proof}
In sectors \textbf{00, 01} and \textbf{10} the proof is similar as that in \cite[Lemma~6.5]{Gainutdinov:2015lja}. Indeed, for sector {\bf 00}, invertibility of $\isoD_{U,V}$ follows from that of $\delta_0$. For sector {\bf 10} and {\bf 01} the inverse of $\isoD_{U,V}$ is given by the same string diagram as that in \cite[Lemma~6.5]{Gainutdinov:2015lja}.
The proof that the expressions in \cite[Lemma~6.5]{Gainutdinov:2015lja} are indeed inverses rests on the two identities
\begin{align}\label{eq:Delta-iso-identity}
(\Bel\otimes \one)\cdot((\id\otimes \omega_{\Salg})\circ\copS (\Bel))\cdot (\Bel\otimes \one) &= \Bel\otimes (\delta_1 \idem_0) \ ,
\\ \nonumber
(\Bel\tensor \one)\cdot \big((\id\tensor \omega_ {\Salg}) \circ \sflip\circ \copS(\Bel)\big) \cdot (\Bel\tensor \one) &= \Bel\tensor (\delta_1^{-1}\idem_0) \ ,
\end{align}
where $\omega_\Salg$ denotes the parity involution on the super-vector space $\Salg$ as defined in \eqref{eq:svect-parityinv-def}\footnote{Note that the antipode in \cite{Gainutdinov:2015lja} coincides with $\omega_\Salg$ in sector 0.}.
We need to show that these identities remain true for general $\np$, and we will do so for the first identity.

Namely, for $i\neq j$ and by defining $\Bel^{(i)}=\xm_i \xp_i \idem_1 $ and $\delta_1^{(i)}=\one + \xp_i \xm_i$ we get
\begin{align}
&\Bel^{(i)}\Bel^{(j)} \otimes \delta_1^{(i)}\delta_1^{(j)} \idem_0 
= (\Bel^{(i)}\otimes (\delta_1^{(i)} \idem_0)) \cdot (\Bel^{(j)}\otimes (\delta_1^{(j)} \idem_0)) \\ \nonumber
&\overset{(*)}=\left((\Bel^{(i)}\otimes \one) ((\id\otimes \omega_ {\Salg})\circ\copS (\Bel^{(i)})) (\Bel^{(i)}\otimes \one)\right) \\  \nonumber
&\qquad\cdot \left((\Bel^{(j)}\otimes \one)  ((\id\otimes \omega_ {\Salg})\circ\copS (\Bel^{(j)}))  (\Bel^{(j)}\otimes \one)\right) \\ \nonumber
&\overset{(**)}=(\Bel^{(i)}\otimes \one)\cdot(\Bel^{(j)}\otimes \one) \cdot ((\id\otimes \omega_ {\Salg})\circ\copS(\Bel^{(i)})\cdot(\id\otimes \omega_ {\Salg})\circ\copS (\Bel^{(j)})) \\  \nonumber
&\qquad\cdot(\Bel^{(i)}\otimes \one)\cdot(\Bel^{(j)}\otimes \one) \\ \nonumber
&=(\Bel^{(i)}\Bel^{(j)}\otimes \one)\cdot ((\id\otimes \omega_ {\Salg}) \circ\copS (\Bel^{(i)}\Bel^{(j)})) \cdot (\Bel^{(i)}\Bel^{(j)}\otimes \one) \gc 
\end{align} 
where we used that $\Bel^{(i)}$ is an even element
and in step (*) that the corresponding identity holds for $\np=1$,
then  in step (**) that 
$\omega_\Salg$,
 $\copS$ are morphisms in $\svect$
and so $(\id\otimes \omega_ {\Salg})\circ\copS (\Bel^{(i)})$ is even and moreover does not contain $\xpm_j$ and therefore it commutes with $\Bel^{(j)}\otimes\one$.
  Using then the fact that $\Bel = \prod_{i=1}^{\np}\Bel^{(i)}$ and $\delta_1 = \prod_{i=1}^{\np}\delta_1^{(i)}$, this analysis shows the first identity in~\eqref{eq:Delta-iso-identity}. The second identity there is established in a similar way.

In sector \textbf{11}, we have to show that 
\begin{align}
\Theta \colon \Salg_0 &\to \B\otimes_{\svect} \B \\ \nonumber
a &\mapsto \copS(a) \cdot \Bel\otimes \Bel 
\end{align} 
is an isomorphism.
Note that
 $\copS (\xpm_i) \cdot \idem_1\otimes \idem_1 = (\xpm_i \otimes\one - \rmi\,\one\otimes \xmp_i) \cdot \idem_1\otimes \idem_1$
and $\xm_i \Bel=0$. We get
\begin{align}
\Theta (\idem_0) &=  \copS(\idem_0) \cdot \Bel\otimes \Bel = \Bel\otimes \Bel\\ \nonumber
\Theta (\xp_i \xp_j \idem_0) 
&= \copS(\xp_i)\cdot (\xp_j\Bel)\otimes \Bel = (\xp_i\xp_j\Bel)\otimes \Bel\\ \nonumber
\Theta (\xm_i \xm_j \idem_0) 
&= -\copS(\xm_i) \cdot \rmi\Bel\otimes (\xp_j \Bel) = (-\rmi)^2\Bel\otimes (\xp_i\xp_j\Bel) \\ \nonumber
\Theta (\xp_i \xm_j \idem_0) 
&= -\copS(\xp_i) \cdot \rmi\Bel\otimes (\xp_j \Bel) = -(\rmi\xp_i\Bel)\otimes (\xp_j \Bel) + \delta_{i,j} \rmi\Bel\otimes (\xm_i\xp_j \Bel) \\ \nonumber
&= -(\rmi\xp_i\Bel)\otimes (\xp_j \Bel) + \delta_{i,j} \Bel\otimes \Bel \gp
\end{align} 
Now it is straightforward to see that for $i_1<\ldots <i_k$ and $j_1<\ldots <j_m$ we have
\begin{align}
\Theta (\xp_{i_1}\cdots \xp_{i_k} \xm_{j_1}\cdots \xm_{j_m} \idem_0) 
&= \copS(\xp_{i_1}\cdots \xp_{i_k}) \cdot (-\rmi)^m \Bel\otimes (\xp_{j_1} \cdots \xp_{j_m} \Bel) \\ \nonumber
&= (-\rmi)^m (\xp_{i_1} \cdots \xp_{i_k} \Bel)\otimes (\xp_{j_1} \cdots \xp_{j_m} \Bel) + \tilde w ,
\end{align} 
where $\tilde w \in \bigoplus_{i=0}^{k+m-2}\tilde W_{i}$ and $\tilde W_i$ is the span of elements $(\xp_{i_1} \cdots \xp_{i_u} \Bel)\otimes (\xp_{j_1} \cdots \xp_{j_v} \Bel)$
with $i=u+v$.
So we can show by induction over $m+k$ that $\Theta$ is surjective and since $\Salg_0$ and $\B\otimes \B$ have dimension $2^{2\np}$, $\Theta$ is even bijective. 
\end{proof}

From the definition of $\isoD_{U,V}$ it is immediate that these maps are natural in $U$ and $V$. Together with Lemmas \ref{lem:isoD-is-S-map} and \ref{lem:isoD-is-iso} this proves the following proposition:

\begin{proposition}\label{prop:functor-D-mult}
With the isomorphisms $\Delta_{U,V}$, the functor $\mathcal D\colon \catSF \to \repS$ is multiplicative. 
\end{proposition}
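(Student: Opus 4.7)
The plan is to verify the three ingredients required by Definition~\ref{def:transport}: naturality of the family $\isoD_{U,V}$, the fact that each $\isoD_{U,V}$ is an isomorphism in $\repS$ (not merely in $\vect$ or $\svect$), and the existence of the unit isomorphism $\Theta_{\one}\colon\funD(\one)\to\one$. Since almost all the work has already been packaged into Lemmas~\ref{lem:isoD-is-S-map} and~\ref{lem:isoD-is-iso}, the proof is essentially an assembly step.

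First I would observe that naturality is immediate by inspection of the sector-by-sector formulas \eqref{eq:isoD00}--\eqref{eq:isoD11}: in every sector $\isoD_{U,V}$ is built from the fixed elements $\delta_0$, $\delta_1$, $\Bel$, $\copS$, the symmetric braiding $\sflip$ of $\svect$, and the structural morphisms $\rho^U$, $\rho^V$, $R_{(-)}$ on the modules. All of these commute with arbitrary intertwiners $f\colon U\to U'$ and $g\colon V\to V'$ in the relevant sectors, so naturality squares commute on the nose. Next, Lemma~\ref{lem:isoD-is-S-map} gives that each $\isoD_{U,V}$ is $\Salg$-linear, and Lemma~\ref{lem:isoD-is-iso} gives invertibility of the underlying (super-)vector space maps; together these say that $\isoD_{U,V}$ is an isomorphism in $\repS$.

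Finally, for the unit I would check that $\funD(\one)=\one$ as an $\Salg$-module, so $\Theta_{\one}$ can be taken to be the identity. Explicitly, $\one\in\catSF_0$ is the one-dimensional super-vector space $\oC^{1|0}$ with trivial $\algGr$-action \eqref{eq:SF-simple}, and by the definition \eqref{eq:D-functor-sec0-def} of $\funD_0$ the generators $\xpm_i$ act by the actions of $a_i,b_i$, i.e.\ trivially, while $\LL$ acts as $\one$ (this matches $\idem_0$ acting as identity). This is precisely the tensor unit of $\repS$. No step requires serious calculation here, so I do not expect obstacles; the substance of the proposition sits in Lemmas~\ref{lem:isoD-is-S-map} and~\ref{lem:isoD-is-iso}, which have already been established.
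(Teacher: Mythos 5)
Your proposal is correct and follows essentially the same route as the paper: the paper likewise notes that naturality of $\isoD_{U,V}$ is immediate from the definitions and then invokes Lemmas~\ref{lem:isoD-is-S-map} and~\ref{lem:isoD-is-iso} to conclude. Your explicit check that $\funD(\one)=\one$ with $\Theta_{\one}=\id$ is a small, correct addition that the paper only addresses later (at the start of Section~\ref{sec:trans-assoc-SF-repS}).
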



\subsection{Compatibility with associator and unit isomorphisms} \label{sec:trans-assoc-SF-repS}

In this section we verify that the functor $\mathcal{D}$ from Proposition~\ref{prop:functor-D-mult} is monoidal, i.e.\ that it is compatible with associators and unit isomorphisms.

In fact, since the unit isomorphisms in $\catSF$ and $\repS$ are just those of the underlying super-vector spaces, after choosing the isomorphism $\one \to \mathcal{D}(\one)$ to be the identity on $\oC^{1|0}$, compatibility of $\mathcal{D}$ with the unit isomorphisms is immediate.

\begin{table}[bt]
\begin{align*}
&{\bf000}~:~&& \underline\Sas^{000} \cdot (\one \tensor \delta_0) \cdot (\id \tensor \copS)(\delta_0)\\
&&&= (\idem_0 \tensor \idem_0 \tensor \idem_0) \cdot	(\delta_0 \tensor \one) \cdot (\copS \tensor \id)(\delta_0) \\[.6em]
&{\bf001}~:~&&  \underline\Sas^{001} \cdot \bigl( (\id\tensor\copS)\circ(R_{\delta_1}\tensor R_{\Bel})\circ\copS(v)\bigr)\cdot(\one\tensor\delta_1\tensor\Bel)\\
&&&= (\idem_0\tensor\idem_0\tensor\one)\cdot(\delta_0\tensor\one)\cdot\bigl((\copS\tensor\id)\circ(R_{\delta_1}\tensor R_{\Bel})\circ\copS(v)\bigr)\\[.6em]
&{\bf010}~:~&&  \underline\Sas^{010} \cdot \bigl( (\id\tensor\copS)\circ(R_{\delta_1}\tensor R_{\Bel})\circ\copS(v)\bigr)\cdot(\one\tensor\Bel\tensor\one)\\
&&&= 
 \bigl((\copS\tensor
	\id
 )\circ(R_{\Bel}\tensor
 	\id
 )\circ\copS(v)\bigr)\cdot(\delta_1\tensor\Bel\tensor\one)\cdot\gamma^{(13)}\\[.6em]
&{\bf100}~:~&&  \underline\Sas^{100}\cdot (\one\tensor\delta_0)\cdot\bigl( (\id\tensor\copS)\circ(R_{\Bel}\tensor
	\id
)\circ\copS(v)\bigr)\\
&&&=\bigl((\copS\tensor
	\id
)\circ(R_{\Bel}\tensor
	\id
) \circ\copS(v)\bigr)\cdot(\Bel\tensor\one\tensor\one)\\[.6em]
&{\bf110}~:~&&  \underline\Sas^{110} \cdot \bigl( (\id\tensor\copS)\circ(R_{\Bel}\tensor R_{\Bel})\circ\copS(h)\bigr)\cdot (\one\tensor\Bel\tensor\one)  \\
&&&= \bigl\{(\copS\tensor\id)\bigl(\delta_0\cdot\copS(h)\bigr)\bigr\}\cdot 
(\Bel\tensor\Bel\tensor\one)\\[.6em]
&{\bf101}~:~&&  \underline\Sas^{101} \cdot \bigl\{ (\id\tensor\copS)\bigl( \copS(h)\cdot \Bel\tensor\Bel\bigr)\bigr\}\cdot (\one\tensor\delta_1\tensor \Bel)\\
&&&= (R_{\Bel}\tensor\mu^{\Salg}\tensor
	\id
)\circ(\copS\tensor
   \sflip_{\B,\Salg_0}
)\circ\bigl(\bigl\{ (R_{\Bel}\tensor R_{\Bel})\circ\copS\circ\mu^{\Salg}\bigr\}\tensor
	\id
\bigr) (h\tensor\gamma)\\[.6em]
&{\bf011}~:~&&  \underline\Sas^{011} \cdot \bigl( (\id\tensor\copS)(\delta_0)\bigr)\cdot\bigl(\one\tensor(\copS(h)\cdot\Bel\tensor\Bel)\bigr)\\
&&&= \bigl\{\bigl(\omega_{\Salg}\circ\mu^{\Salg}\circ(\id\tensor \omega_{\Salg})\bigr)\tensor\id\tensor \id\bigr\}\\
&&&\qquad\qquad\circ \bigl\{\id\tensor \bigl((R_{\delta_1}\tensor R_{\Bel})\circ\copS\bigr)\tensor
	\id
\bigr\}
 \circ \bigl\{\id\tensor \bigl((R_{\Bel}\tensor R_{\Bel})\circ \copS\bigr)\bigr\}\circ \copS(h)\\[.6em]
&{\bf111}: \quad&& \underline\Sas^{111} \cdot \bigl\{\bigl(\id\tensor(\copS\circ\mu^{\Salg})\bigr)\bigl(\copS(v)\tensor h\bigr)\bigr\} \cdot \Bel\tensor\Bel\tensor\Bel\\
&&&=\bigl\{\bigl((R_{\Bel}\tensor R_{\Bel})\circ\copS\circ\mu^{\Salg}\bigr)\tensor
	\id
\bigr\}
\circ(
	\id
\tensor\sflip_{\B,\Salg_0})\circ\bigl\{\bigl((R_{\delta_1}\tensor R_{\Bel})\circ\copS(v)\bigr)\tensor\phi(h)\bigr\}
\end{align*}
\caption{
Evaluation of the compatibility of associators as in \eqref{eq:transport-assoc-diag} for $\mathcal{D}$ and $\Delta_{U,V}$ in each of the eight sectors.
The constant $\gamma$ is defined as $\gamma := \exp(C) \cdot \idem_0 \ot \idem_0$ with $C$ as in \eqref{def:copair} and we use the identification $\xp_k =a_k$, $\xm_k = b_k$ as in \eqref{eq:D-functor-sec0-def}.
The map $\phi : \Salg_0 \to \Salg_0$ is defined as in \eqref{eq:SF-phi-def} under the same identification.
The above conditions have to hold for all $h\in\Salg_0$ and $v\in\B$.
}\label{tab:Lam-eqn-sectors}
\end{table}

The main effort lies in showing that the diagram in \eqref{eq:transport-assoc-diag} commutes for $\mathcal D\colon \catSF \to \repS$ and $\Delta_{U,V}$.
In a calculation similar to that in \cite[Sec.\,7.2]{Gainutdinov:2015lja} one can evaluate \eqref{eq:transport-assoc-diag} for each of the eight possibilities of choosing $U \in \catSF_a$, $V \in \catSF_b$, $W \in \catSF_c$, which we refer to as ``sector $abc$''.  
We define 
\be\label{eq:tilde-Lambda}
\underline\Sas^{abc}=\Sas^{abc} \cdot (\idem_a\tensor\idem_b\tensor\idem_c)
\ee
 with $\Sas^{abc}$ as in \eqref{eq:Sas} 
	(it is understood that the $\Sas^{abc}$ not spelled out explicitly in \eqref{eq:Sas} are set to $\one^{\otimes 3}$).
Then \eqref{eq:transport-assoc-diag} for $\mathcal{D}$ and $\Delta_{U,V}$ is equivalent to the eight conditions in Table~\ref{tab:Lam-eqn-sectors}.

To verify the eight identities in Table~\ref{tab:Lam-eqn-sectors}, we reduce them to the $\np=1$ case which has been checked in \cite[Prop.\,7.3]{Gainutdinov:2015lja}.
To do so, define $\Salg^{(k)}$ to be the subalgebra of $\Salg$ generated by $\xp_k$, $\xm_k$ and $\LL$. By definition of $\Salg$, for $k \neq l$ elements from $\Salg^{(k)}$ and $\Salg^{(l)}$ super-commute. 

The aim is now to rewrite each condition in Table~\ref{tab:Lam-eqn-sectors} as a product of terms $k=1,\dots,\np$ which use only elements in $\Salg^{(k)}$ and note that since by \cite[Prop.\,7.3]{Gainutdinov:2015lja} the corresponding identity holds for each $k$ separately, it holds for the product.

For $\underline\Sas^{abc}$ the product decomposition is given in
\eqref{eq:Lambda-abc}.
The constants $\Bel$, $\delta_1$ and $\delta_0$ were already defined as products over elements in $\Salg^{(k)}$ and $\Salg^{(k)}\ot\Salg^{(k)}$ in \eqref{eq:Bel} and \eqref{eq:delta0-delta1}. For $\gamma$ in Table~\ref{tab:Lam-eqn-sectors} set $C^{(i)}=\xm_i\tensor\xp_i-\xp_i\tensor\xm_i$ and use
\be
	\gamma= \exp{\Big(\sum_{i=1}^\np C^{(i)}\Big)}=\prod_{i=1}^\np \gamma^{(i)} 
	\qquad \text{with} \quad
	\gamma^{(i)} := \exp C^{(i)} \ .
\ee

Now sector {\bf 000} directly decomposes into products over elements in 
$(\Salg^{(k)})^{\ot 3}$
 and hence holds by \cite[Prop.\,7.3]{Gainutdinov:2015lja}.
For sector {\bf 001} note that, for
 $c^{(k)},d^{(k)}\in \Salg^{(k)}$ even and $v^{(k)}\in \B^{(k)}$ we have, for $i\neq j$,
 \begin{align}   
	&(\id\tensor\copS) \circ (R_{c^{(i)}c^{(j)}}\tensor R_{d^{(i)}d^{(j)}}) \circ (\copS (v^{(i)}v^{(j)})) \\ \nonumber
	&= (\id\tensor\copS) \circ (R_{c^{(i)}}\tensor R_{d^{(i)}}) \circ (R_{c^{(j)}}\tensor R_{d^{(j)}}) \big(\copS (v^{(i)})\cdot\copS (v^{(j)})\big) \\ \nonumber
	&= (\id\tensor\copS) \circ (R_{c^{(i)}}\tensor R_{d^{(i)}}) \big(\copS (v^{(i)}) \cdot (R_{c^{(j)}}\tensor R_{d^{(j)}}) (\copS (v^{(j)}) )\big) \\ \nonumber
	&= (\id\tensor\copS) \big( (R_{c^{(i)}}\tensor R_{d^{(i)}}) (\copS (v^{(i)})) \cdot (R_{c^{(j)}}\tensor R_{d^{(j)}}) (\copS (v^{(j)}) )\big) \\ \nonumber
	&= (\id\tensor\copS) \big( (R_{c^{(i)}}\tensor R_{d^{(i)}}) (\copS (v^{(i)}))\big) \cdot  (\id\tensor\copS) \big( (R_{c^{(j)}}\tensor R_{d^{(j)}}) (\copS (v^{(j)}) )\big)  \ .
\end{align}
This allows one to write the LHS of the equation for sector {\bf 001} as a product over elements in 
$(\Salg^{(k)})^{\ot 3}$.
 For the RHS one uses a similar  statement where $\id\tensor\copS$ is replaced by $\copS\tensor\id$.

Analogous arguments work in all sectors, except that in sector {\bf 111} we need to deal with $\phi(h)$ separately.

The linear map $\phi\colon \Salg_0\to\Salg_0$ is defined in \eqref{eq:SF-phi-def}, where by abuse of notation we identify 
$\Salg_0$
 and $\algGr$ via 
 $\xp_k\idem_0 =a_k$, $\xm_k\idem_0 = b_k$.
Following \cite[Sec.~5.2]{Davydov:2012xg} $\phi$ can be written explicitly as, for $u \in \Salg_0$, 
\begin{align} \label{eq:phi}
	\phi(u)= \sum_{m=0}^{2\np} \rmi^{m(m+1)} \sum_{i_1\leq \ldots\leq i_m}
	\eps_{i_{1}}\cdots\eps_{i_{m}} ~\Lambda^\mathrm{co}_{\Salg_0} (\xx_{i_{1}}^{\eps_{i_{1}}}\cdots\xx_{i_{m}}^{\eps_{i_{m}}}\cdot u) 
	 \ \xx_{i_{1}}^{-\eps_{i_{1}}}\cdots \xx_{i_{m}}^{-\eps_{i_{m}}} \idem_0 \gc
\end{align}
where $\eps_{i_n}\in\{\pm 1\}$ such that $i_n=i_{n+1}\Rightarrow \eps_{i_n}>\eps_{i_n+1}$.
Furthermore, $\Lambda^\mathrm{co}_{\Salg_0} : \Salg_0 \to \oC$ is defined as in \eqref{eq:SF-coint-def}, again using the identification between $\Salg_0$ and $\algGr$. Explicitly, it is zero everywhere on $\Salg$ except in the top degree, where it takes the value
\be
	\Lambda^\mathrm{co}_{\Salg_0}\big( \xp_1\xm_1\cdots\xp_\np\xm_\np\idem_0
	\big) = 
	\beta^{-2} \ .
\ee
As in the definition of $\mathsf{A}$ in \eqref{eq:A-via-SN=1-def} choose
$\beta_1,\dots,\beta_\np$ such that $\beta_i^4 = -1$ and $\beta = \beta_1 \cdots \beta_\np$.
On each of the subalgebras $\Salg^{(k)}_0$ we can define the ``$\np=1$ version of $\phi$'' as follows:
\be
	\phi^{(k)}(\idem_0) =  \beta_k^2 \, \xp_k\xm_k\idem_0 \ , 
	\quad 
	\phi^{(k)}(\xx_k^{\pm}\idem_0)=   \beta_k^2 \,\xx_k^{\pm}\idem_0 \ , 
	\qquad 
	\phi^{(k)}(\xp_k\xm_k\idem_0)= - \beta_k^2 \, \idem_0 \ .
\ee
Indeed, for $\np=1$ we have $\phi = \phi^{(1)}$ (see also~\cite[Eqn.\,(33)]{Gainutdinov:2015lja}).

\begin{lemma}
We have, for $v^{(k)} \in \Salg^{(k)}_0$, 
\be
	\phi(v^{(1)} \cdots v^{(\np)}) ~=~  \phi^{(1)}(v^{(1)}) \cdots \phi^{(\np)}(v^{(\np)}) \ .
\ee
\end{lemma}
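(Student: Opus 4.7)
The plan is to verify the identity on a basis. By $\mathbb{C}$-linearity of $\phi$ and the $\phi^{(k)}$, it suffices to check
$\phi(v^{(1)}\cdots v^{(\np)}) = \phi^{(1)}(v^{(1)})\cdots \phi^{(\np)}(v^{(\np)})$
when each $v^{(k)}$ is one of the four basis monomials $\idem_0,\,\xp_k\idem_0,\,\xm_k\idem_0,\,\xp_k\xm_k\idem_0$ of $\Salg^{(k)}_0$.

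The starting point is that the cointegral factorises across the tensor decomposition: since $\Lambda^\mathrm{co}_{\Salg_0}$ is supported on the top degree with value $\beta^{-2} = \prod_k \beta_k^{-2}$, and since $\beta_k^{-2}$ is precisely the top-degree value of the local cointegral $\Lambda^\mathrm{co}_{\Salg^{(k)}_0}$, one has $\Lambda^\mathrm{co}_{\Salg_0}(u_1\cdots u_\np) = \prod_k \Lambda^\mathrm{co}_{\Salg^{(k)}_0}(u_k)$ for $u_k\in \Salg^{(k)}_0$. Consequently, in the defining sum \eqref{eq:phi} applied to $u=v^{(1)}\cdots v^{(\np)}$, the coefficient $\Lambda^\mathrm{co}_{\Salg_0}(\xx_{i_1}^{\eps_{i_1}}\cdots\xx_{i_m}^{\eps_{i_m}}\cdot u)$ vanishes unless, for every $k\in\{1,\dots,\np\}$, the subword consisting of those $\xx^\pm$ with $i_j=k$ combines with $v^{(k)}$ to give the top element $\xp_k\xm_k\idem_0$ of $\Salg^{(k)}_0$. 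Because the ordering convention $i_1\le\cdots\le i_m$ already groups generators by their $k$-label, no extra reordering sign arises inside $\Lambda^\mathrm{co}_{\Salg_0}$; so the sum splits as a product of $\np$ independent sums, one for each $k$.

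Writing $m_k$ for the number of generators carrying index $k$ and $m=\sum_k m_k$, the remaining task is to match prefactors and signs. For fixed $(m_1,\dots,m_\np)$, each local sum contributes a factor matching the $\np=1$ formula and therefore equals $\phi^{(k)}(v^{(k)})$ up to a global prefactor. To assemble these into a product, one identifies $\rmi^{m(m+1)} = \prod_k \rmi^{m_k(m_k+1)} \cdot (-1)^{\sum_{k<l}m_k m_l}$ via $m(m+1)=\sum_k m_k(m_k+1)+2\sum_{k<l}m_k m_l$.

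The main technical obstacle lies in showing that the cross-term sign $(-1)^{\sum_{k<l}m_k m_l}$ cancels exactly against the sign produced when one rearranges the ordered monomial $\xx_{i_1}^{-\eps_{i_1}}\cdots\xx_{i_m}^{-\eps_{i_m}}\idem_0$, appearing on the right of $\phi(u)$, into the grouped form $\prod_k (\text{local inverse monomial})\idem_0$ needed to identify it with $\prod_k \phi^{(k)}(v^{(k)})$. But since generators with distinct $k$-labels are all parity-odd and hence pairwise anticommute in $\Salg_0$, and the ordering $i_1\le\cdots\le i_m$ puts them in $k$-blocks from the start, no reordering is actually needed there; the only reordering is moving the $v^{(k)}$ factors past each other inside $\Lambda^\mathrm{co}_{\Salg_0}$, and a direct parity count gives precisely $(-1)^{\sum_{k<l}m_k m_l}$. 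Combining these observations, the product structure falls out, proving the lemma. (As a consistency check, the $\np=1$ specialisation reduces to the definition of $\phi^{(1)}$, and for general $\np$ the identity $\phi(\idem_0)=\beta^2 \xp_1\xm_1\cdots\xp_\np\xm_\np\idem_0 = \prod_k \phi^{(k)}(\idem_0)$ holds on the nose.)
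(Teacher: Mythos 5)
Your proof is correct, and it takes a genuinely different route to the same sign bookkeeping as the paper. You factorise the defining formula \eqref{eq:phi} directly: for a basis monomial only one summand survives, you split the prefactor $\rmi^{m(m+1)}$ into local factors $\rmi^{m_k(m_k+1)}$ times the cross-term $(-1)^{\sum_{k<l}m_km_l}$ (where $m_k$ is the number of generators carrying index $k$), and you show this cross-term is exactly compensated by the Koszul sign from interleaving the $k$-blocks of generators with the factors $v^{(1)},\dots,v^{(\np)}$ inside $\Lambda^\mathrm{co}_{\Salg_0}$ --- the key point, which you should state explicitly, being that a nonvanishing term forces $\deg v^{(k)} = 2-m_k$, hence $|v^{(k)}|\equiv m_k \bmod 2$, so the interleaving sign is $(-1)^{\sum_{l}m_l\sum_{k>l}m_k}=(-1)^{\sum_{k<l}m_km_l}$. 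The paper instead introduces an auxiliary family $\phi^L(u_K)=\beta_L\,\rmi^{|L|+2|K|}\,u^-_{L\setminus K}$ indexed by subsets of an ordered index set $T$: multiplicativity of $\phi^T$ is then built into the definition, it restricts to $\phi^{(k)}$ on each sector, and the entire burden is shifted to the single identification $\phi=\phi^T$, carried out by computing the absolute sign of $\phi(u_K)$ once. Your argument only needs relative (cross-sector) signs and leans on the fact, quoted just before the lemma, that the $\np=1$ formula reproduces $\phi^{(k)}$; the paper's version makes the multiplicative structure manifest at the cost of one global sign computation. One presentational wrinkle: your first paragraph asserts that no reordering sign arises inside $\Lambda^\mathrm{co}_{\Salg_0}$, which is then corrected in the third paragraph to the interleaving sign above; the final accounting is right, but the two statements should be reconciled (no sign from permuting the generators among themselves, a nontrivial sign from interleaving the blocks with the $v^{(k)}$).
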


\begin{proof}
We start by describing $\phi$ in a different way: 
Let $T=\{1,\ldots ,\np\}\times \{-1,1\}$ be a totally ordered set whose ordering is given by, for $(i_1,\eps_1), (i_2,\eps_2)\in T$,
\be
(i_1,\eps_1) < (i_2,\eps_2) ~\iff ~ i_1<i_2 ~\lor~ (i_1=i_2 ~\land~ \eps_2<\eps_1) \gp
\ee
Let $K \subset T$ be a subset of the form $K=\{(j_1,\eps_{j_1}),\ldots ,(j_k,\eps_{j_k})\}$, 
	where the indexing 	agrees with the ordering in the sense that $(j_a,\eps_a) < (j_{a+1},\eps_{a+1})$.
We introduce the elements
\begin{align}
	u_K\coloneqq \xx_{j_1}^{\eps_{j_1}}\cdots\xx_{j_k}^{\eps_{j_k}} \idem_0 \qcq
		 u_{K}^-&\coloneqq \xx_{j_1}^{-\eps_{j_1}}\cdots\xx_{j_k}^{-\eps_{j_k}} \idem_0 \qcq u_\emptyset\coloneqq  \idem_0 \qcq 
		\beta_K=\beta_{j_1}\cdots\beta_{j_k} \ .
	\end{align}
For two subsets $K \subset L \subset T$ we define the linear map
	\begin{align}
		\phi^L(u_K)\coloneqq
		\beta_L
		\, \rmi^{|L|+2|K|} \, u^-_{L\setminus K} \gp  
	\end{align}
For $n=1,\dots,\np$ let
$L_n=\{(n,+),(n,-)\}$. We claim that
\begin{align*}
\text{(a)}\quad & \phi^{L_n}(u_K) = \phi^{(n)}(u_K) \quad \text{for $K \subset L_n$} \ , 
\\
\text{(b)}\quad & \phi^{\cup_{n=1}^r L_n}(u_K) = \phi^{\cup_{n=1}^{r-1} L_n}(u_{K\setminus L_r}) \cdot \phi^{L_r}(u_{K\cap L_r}) \ ,
\\
\text{(c)}\quad & \phi^T = \phi \ , 
\end{align*}
where for (b) we have $r = 1,\dots,\np$ and $K \subset \bigcup_{n=1}^r L_n$. Properties (a)--(c) together imply the statement of the lemma.

Properties (a) and (b) are immediate from the definition. 
It remains to show property (c).  Abbreviate $k=|K|$.
The cointegral
$\Lambda^\mathrm{co}_{\Salg_0}$ in \eqref{eq:phi} is non-zero only in top-degree, so that for a given $u_K$, the only non-zero summand in \eqref{eq:phi} is that with $\{(i_1,\eps_{i_1}),\ldots ,(i_m,\eps_{i_m})\} = T \setminus K$. Thus $\phi(u_K) = \pm u_{L\setminus K}^-$.
To determine the sign, we define
\begin{align}
	\eta_1(K)&= \rmi^{(2\np-k)(2\np-k+1)} 
	=  (-1)^\np \,\rmi^{(k-1)k}
	\gc \\ \nonumber
	 \eta_2(K) &=\prod_{(i,\eps_i )\in T\setminus K} \eps_i \qcq 	\eta_3(K) = \Big( \prod_{(n,\eps_n )\in K} -\eps_n \Big)\textbf{}
	 \cdot \Big( \prod_{m=1}^{k}(-1)^{ 2\np-k-m} \Big) \gp
\end{align}
The factors in  $\phi(u_K)$ from \eqref{eq:phi} correspond to $\eta_1,\eta_2,\eta_3$ as follows. 
We have $\beta^2 = \beta_T$ and
the product $\beta_T^{-1}\cdot\eta_1$ 
corresponds to the factor $\rmi^{m(m+1)}$, for $m=|T\setminus K| = 2\np-k$,
times the normalisation of $\Lambda^\mathrm{co}_{\Salg_0}$ from \eqref{eq:SF-coint-def}. The sign
$\eta_2$ corresponds to $\eps_{i_{1}}\cdots\eps_{i_{m}}$ where the $i_a$ run over values in the complement $T \setminus K$,
	and $\eta_3$ corresponds to the sign 
	we get when bringing $u_{T\setminus K}\cdot u_K$ in $\Lambda^\mathrm{co}_{\Salg_0}(\cdots)$ into the form $\xp_1\xm_1\cdots\xp_\np\xm_\np\idem_0$.
Altogether, the coefficient in front of $u_{L\setminus K}^-$ is
$\beta_T^{-1} \,\eta_1(K)\cdot \eta_2(K) \cdot\eta_3(K)$.
It is not hard to verify that
\begin{align} \eta_2(K)\cdot  \eta_3(K)
&= \Big(\prod_{(i,\eps_i )\in T\setminus K} \eps_i\Big) \, \Big(\prod_{(n,\eps_n )\in K} -\eps_n\Big)  \, \rmi^{(k-1)k} 
= \rmi^{(k-1)k} \cdot (-1)^{\np+k} \  .
\end{align}
and hence 
	$\eta_1(K)\cdot \eta_2(K) \cdot\eta_3(K) = (-1)^{k}$.
Thus $\phi(u_K) = \beta_T^{-1} (-1)^{k} u_{T\setminus K}^- =  \beta_T (-1)^{\np+k} u_{T\setminus K}^-  = \phi^T(u_K)$, as claimed.
\end{proof}

Using the above lemma, one can also write the two sides of sector {\bf 111} in Table \ref{tab:Lam-eqn-sectors} as products over elements in $(\Salg^{(k)})^{\ot 3}$ and conclude its validity from the $\np=1$ case.

Combining the above result with Propositions~\ref{prop:D-is-C-lin-equiv} and \ref{prop:functor-D-mult} we obtain:

\begin{proposition}\label{prop:functor-D-tensor}
The functor  $\mathcal D\colon \catSF \to \repS$, together 
with the isomorphisms $\Delta_{U,V}$ and $\id_{\oC^{1|0}} : \one \to \mathcal{D}(\one)$, is $\oC$-linear monoidal equivalence.
\end{proposition}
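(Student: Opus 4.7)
My plan is to build on the two preceding propositions and reduce the remaining verification to the $\np=1$ case. By Proposition~\ref{prop:D-is-C-lin-equiv}, $\funD$ is already an equivalence of $\oC$-linear categories, and by Proposition~\ref{prop:functor-D-mult}, it is multiplicative with the natural isomorphisms $\isoD_{U,V}$ of \eqref{eq:isoD00}--\eqref{eq:isoD11}. What remains to establish that $(\funD,\isoD,\id_{\oC^{1|0}})$ is a \emph{monoidal} equivalence is compatibility with the unit isomorphisms and compatibility with the associators in the sense of diagram~\eqref{eq:transport-assoc-diag}. The unit compatibility is immediate: both in $\catSF$ and in $\repS$ the unit isomorphisms are inherited from $\svect$, and our chosen unit structure morphism is the identity of $\oC^{1|0}$.

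The serious work is compatibility with the associators. I would first observe that, since $\catSF$ and $\repS$ each decompose into two sectors ($\catSF = \catSF_0 \oplus \catSF_1$, $\repS = \repS_0 \oplus \repS_1$), the equation~\eqref{eq:transport-assoc-diag} splits into the eight cases indexed by $(a,b,c)\in\{0,1\}^3$. For each sector I would evaluate both sides of \eqref{eq:transport-assoc-diag} using the explicit formulas for $\assC = \alpha^{\catSF}$ from Section~\ref{sec:assoc-SF}, for $\isoD$ from \eqref{eq:isoD00}--\eqref{eq:isoD11}, and for $\Sas$ from \eqref{eq:Sas}. After a straightforward but lengthy simplification (essentially the one carried out for $\np=1$ in \cite[Sec.\,7.2]{Gainutdinov:2015lja}) the commutativity of \eqref{eq:transport-assoc-diag} is equivalent to the eight identities in Table~\ref{tab:Lam-eqn-sectors}.

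The main idea for verifying these eight identities is a factorisation argument based on Proposition~\ref{prop:S-quasiHopf}: each of the structure constants appearing in Table~\ref{tab:Lam-eqn-sectors} can be written as a product indexed by $k = 1,\dots,\np$ of ``$\np=1$ pieces'' lying in the sub-quasi-bialgebra $\Salg^{(k)} \subseteq \Salg$ generated by $\xpm_k,\LL$. This is explicit for $\Bel = \prod_k \Bel^{(k)}$, $\delta_0 = \prod_k \delta_0^{(k)}$, $\delta_1 = \prod_k \delta_1^{(k)}$, $\underline\Sas^{abc} = \Sas^{abc}(\idem_a \tensor \idem_b \tensor \idem_c) = \beta^{2abc}\prod_k \Sas_{(k)}^{abc}(\idem_a \tensor \idem_b \tensor \idem_c)$, and $\gamma^{(13)} = \prod_k \gamma^{(k),13}$. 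Since elements of $\Salg^{(k)}$ and $\Salg^{(l)}$ super-commute for $k \neq l$, and since $\copS$ is an algebra map, each side of every identity in Table~\ref{tab:Lam-eqn-sectors} factors as a product over $k=1,\dots,\np$ of the corresponding $\np=1$ expressions. Each factor identity then holds by \cite[Prop.\,7.3]{Gainutdinov:2015lja}, and taking the product gives the general $\np$ statement.

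The hard part will be sector $\mathbf{111}$, because the right-hand side of that line in Table~\ref{tab:Lam-eqn-sectors} contains $\phi(h)$, and $\phi$ is defined via a cointegral (formula~\eqref{eq:SF-phi-def}) which is not manifestly compatible with the product structure $\Salg_0 = \Salg^{(1)}_0 \cdots \Salg^{(\np)}_0$. Accordingly, the essential lemma I would prove is the multiplicativity identity
\begin{equation*}
\phi(v^{(1)} \cdots v^{(\np)}) \;=\; \phi^{(1)}(v^{(1)}) \cdots \phi^{(\np)}(v^{(\np)}),
\qquad v^{(k)} \in \Salg^{(k)}_0,
\end{equation*}
where $\phi^{(k)}$ is the ``$\np=1$ version'' on $\Salg^{(k)}_0$. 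Using the explicit expansion~\eqref{eq:phi}, this reduces to a careful bookkeeping of signs: introduce the totally ordered set $T = \{1,\dots,\np\}\times\{-1,+1\}$, parametrise the subsets $K \subset T$ contributing to $\phi(u_K)$, and check that the combined sign coming from the normalisation $\beta^{-2}$ of $\Lambda^{\mathrm{co}}_{\Salg_0}$, the coefficient $\rmi^{m(m+1)}$, the factors $\eps_{i_1}\cdots\eps_{i_m}$, and the reordering to top-degree form equals the sign produced by the product $\phi^{(1)}\cdots\phi^{(\np)}$. Once this lemma is in place, sector $\mathbf{111}$ reduces to a product of $\np=1$ sectors and is verified as in the other seven cases, completing the proof.
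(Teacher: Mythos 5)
Your proposal follows the paper's own proof essentially verbatim: unit compatibility is immediate, associator compatibility reduces to the eight sector identities of Table~\ref{tab:Lam-eqn-sectors}, these are verified by factorising all structure constants over the super-commuting subalgebras $\Salg^{(k)}$ and invoking the $\np=1$ case from \cite[Prop.\,7.3]{Gainutdinov:2015lja}, and the only genuine obstacle -- the map $\phi$ in sector $\mathbf{111}$ -- is handled by exactly the multiplicativity lemma the paper proves, with the same ordered-set and sign-bookkeeping argument. This is correct and is the same route as the paper.
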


\subsection{Transporting the braiding}\label{sec:trans-braid}

Recall the definition of the element $r \in \Salg \ot \Salg$ in \eqref{eq:R-element-for-S}.
We  use $r$ to define a family of natural isomorphisms
\be
	\brS_{M,N} : M \ot N \to N \ot M
	\qquad , \quad M,N \in \repS 
\ee
in $\repS$
as a sum over sectors:\footnote{The appearance of the parity involution is the reason that $r$ is not a universal $R$-matrix for $\Salg$.}
\be\label{eq:RepS-braiding-sectors-def}
	\brS_{M,N}
	= \sum_{a,b \in \{0,1\}} \brS^{ab}_{M,N}
	\qquad , \quad
	\brS^{ab}_{M,N} = \sflip_{M,N} \circ 
	\big( r^{ab} \cdot \idem_a \ot \idem_b \big)
	\circ \big(\id_M \ot \omega_N^a \big) \ ,
\ee
where $\omega^0_N = \id_N$ and $\omega^1_N$ is the parity involution, and composition with $r^{ab} \cdot \idem_a \ot \idem_b$ denotes the action of this element of $\Salg \ot \Salg$ on $M \ot N$ (with the corresponding parity signs resulting from braiding one copy of $\Salg$ past $M$).

We will show that $\brS$ is the result of transporting the braiding from $\catSF$ to $\repS$ via 
the family of isomorphisms $\isoD_{U,V}$ introduced in Section~\ref{sec:funD},
 that is, it is the unique natural family of isomorphisms that makes the diagram \eqref{eq:transport-braiding-via-functorequiv} commute: 
\begin{equation}\label{eq:braiding-transport-comm-diag}
\xymatrix@R=22pt@C=42pt{
&\funD(U*V)\ar[r]^{\funD(c_{U,V})}\ar[d]^{\isoD_{U,V}}&\funD(V*U)\ar[d]^{\isoD_{V,U}}&\\
&\funD(U)\tensor\funD(V)\ar[r]^{\brS_{\funD(U),\funD(V)}}&\funD(V)\tensor\funD(U)&
}
\end{equation}
This then proves that $\brS$ is a braiding on $\repS$ and that 
$\funD$
 is a braided monoidal functor.

\begin{table}[bt]
\begin{align*}
&{\bf00}~:~&&  \sflip_{U,V}\Big( \sflip_{S,S}(\delta_0) \,.\, \gamma^{-1} \,.\, (u \tensor v) \Big)
=
\brS^{00}_{U,V}\Big( \delta_0 \,.\, (u\tensor v)\Big)
\\[.6em]
&{\bf01}~:~&&  
\sflip_{U,\B \ot V}\Big(
\big[\sflip_{\B,U}\big(
\copS(a) 
 \,.\, (\Bel \tensor \kappa)
  \,.\, (\one \tensor u)\big)\big]
  \tensor v
\Big)
\\
&&&=
\brS^{01}_{U,\B \ot V}\Big(
\big[\copS(a) 
 \,.\, (\delta_1 \tensor \Bel)
  \,.\,  (u \tensor \one) \big] \tensor v
\Big)
\\[.6em]
&{\bf10}~:~&&
\sflip_{\B \ot U, V} \circ (\id_\B \tensor \id_U \tensor \rho^V) 
\\
&&& \hspace{7em}
\circ (\id_\B \tensor \sflip_{S,U} \tensor \omega_V)  \Big(
\big[(\sflip_{S,S} \circ 
\copS(a) )
	\cdot (\Bel \tensor (\delta_1\kappa)) \big]
\tensor u \tensor v
\Big)
\\
&&&=
\brS^{10}_{\B \ot U,V}
\circ (\id_\B \tensor \id_U \tensor \rho^V) 
\circ (R_\Bel \tensor \sflip_{\Salg_0,U} \tensor \id_V)  \Big(
\copS(a) \tensor u \tensor v
\Big)
\\[.6em]
&{\bf11}~:~&&  
\beta \cdot 
	\sflip_{\B \ot U, \B \ot V} 
\circ (\id_\B \tensor \sflip_{\B,U} \tensor \omega_V) \Big(
\sflip_{\B,\B}\big[
\copS(h\kappa^{-1}) 
 \,\cdot\, (\Bel \tensor \Bel) \big]
  \tensor u \tensor v
\Big)
\\
&&&=
\brS^{11}_{\B \ot U,\B \ot V}
\circ(\id_\B \tensor \sflip_{\B,U} \tensor \id_V)
\Big(
\big[\copS(h) \,\cdot\, (\Bel \tensor \Bel)\big] \tensor u \tensor v
\Big)
\end{align*}
\caption{Conditions on  $\brS_{U,V}$ such that \eqref{eq:braiding-transport-comm-diag} commutes in each of the four sectors.
The conditions have to hold for all $h \in \Salg_0$, $a \in \B$, $u \in U$, $v \in V$ and all $U \in \catSF_i$, $V \in \catSF_j$.
The element $\gamma$ is defined as in Table~\ref{tab:Lam-eqn-sectors} (and its inverse is given in \eqref{gamma-inv}) and $\kappa$ is defined in \eqref{kappa}.
}\label{tab:R-eqn-sectors}
\end{table}

\newcommand{\W}{\Omega}

\begin{lemma}\label{lemma:psi-iso}
For all $M,N \in \repS$,
$\psi_{M,N}$ is a morphism in $\repS$. Furthermore it is invertible, natural in $M,N$ and  makes the diagram \eqref{eq:braiding-transport-comm-diag} commute.
\end{lemma}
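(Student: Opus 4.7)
The plan is to establish the commutativity of the diagram~\eqref{eq:braiding-transport-comm-diag} first, and then to deduce the other three properties of $\brS_{M,N}$ via the equivalence $\funD$.

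Naturality of $\brS_{M,N}$ in $M$ and $N$ follows immediately from the formula~\eqref{eq:RepS-braiding-sectors-def}: the symmetric braiding $\sflip$, left multiplication by a fixed element of $\Salg \tensor \Salg$, and the parity involution $\omega_N$ are each natural in the underlying super-vector spaces. The substantive work lies in checking the four sector identities in Table~\ref{tab:R-eqn-sectors}. Each of these is a polynomial identity in the generators $\xpm_i, \LL$ of $\Salg$, and I would reduce each to the corresponding $\np=1$ identity, which is already established in \cite[Sec.~6.3]{Gainutdinov:2015lja}. The reduction uses the product decompositions $\RS^{ab} = \beta^{ab} \prod_{k=1}^\np \RS^{ab}_{(k)}$ from~\eqref{eq:R-element-for-S-sectors}, together with $\Bel = \prod_k \xm_k\xp_k\idem_1$, $\delta_0 = \prod_k (\one\tensor\one+\xm_k\tensor\xp_k)$, $\delta_1 = \prod_k(\one+\xp_k\xm_k)$, and the analogous factorisations of $\kappa$ and $\gamma$. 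Each individual factor is parity-even and lives in $\Salg^{(k)}$ (or $\Salg^{(k)} \tensor \Salg^{(k)}$, etc.), and elements of $\Salg^{(i)}$ and $\Salg^{(j)}$ super-commute for $i \neq j$. This lets me rewrite each side of each of the four sector identities as a product over $k=1,\ldots,\np$ of expressions built only from the $k$-th subalgebra, so the $\np=1$ case from~\cite{Gainutdinov:2015lja} applies factorwise. The argument is directly parallel to the reduction carried out for the associator in Section~\ref{sec:trans-assoc-SF-repS}, including the careful bookkeeping of parity signs from~\eqref{eq:super-alg-tensor}.

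Once the diagram~\eqref{eq:braiding-transport-comm-diag} is known to commute, the remaining claims follow quickly. By Proposition~\ref{prop:D-is-C-lin-equiv} the functor $\funD$ is essentially surjective, so every $M \in \repS$ admits an isomorphism $M \cong \funD(U)$ for some $U \in \catSF$; combined with naturality, it suffices to verify the remaining properties on objects of the form $\funD(U), \funD(V)$. For these, the commutative diagram yields
\begin{equation}
\brS_{\funD(U),\funD(V)} \;=\; \isoD_{V,U} \,\circ\, \funD(c_{U,V}) \,\circ\, \isoD_{U,V}^{-1} \gp
\end{equation}
By Lemmas~\ref{lem:isoD-is-S-map} and~\ref{lem:isoD-is-iso} the maps $\isoD_{U,V}, \isoD_{V,U}$ are invertible morphisms in $\repS$, while $\funD(c_{U,V})$ is a morphism in $\repS$ because $c_{U,V}$ is one in $\catSF$ and $\funD$ is a functor; $c_{U,V}$ is invertible since it is the braiding of a braided category. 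Hence $\brS_{\funD(U),\funD(V)}$ is an invertible $\Salg$-linear map, and transport along the isomorphism $M \cong \funD(U)$ together with naturality extends both properties to arbitrary $M, N \in \repS$.

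The hard part will be the polynomial verification of the four identities in Table~\ref{tab:R-eqn-sectors}: even after reducing to $\np=1$, one must carefully handle the parity signs produced by $\sflip$ in $\svect$ and the extra parity involution $\omega_N$ appearing in sectors ${\bf10}$ and ${\bf11}$ of~\eqref{eq:RepS-braiding-sectors-def}, which is the reason $\RS$ itself is not a universal $R$-matrix for $\Salg$ but only becomes a braiding after this parity correction.
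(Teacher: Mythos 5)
Your proposal is correct and follows essentially the same route as the paper: naturality read off from the formula \eqref{eq:RepS-braiding-sectors-def}, the four sector conditions of Table~\ref{tab:R-eqn-sectors} reduced to the $\np=1$ case via the factorised forms of $\RS^{ab}$, $\Bel$, $\delta_0$, $\delta_1$, $\kappa$, $\gamma$ over the super-commuting subalgebras $\Salg^{(k)}$, and the morphism property and invertibility of $\brS$ deduced afterwards from commutativity of \eqref{eq:braiding-transport-comm-diag} together with essential surjectivity of $\funD$. The only point you leave implicit, which the paper spells out, is how to absorb the parity involution in sector \textbf{11} by writing $\omega_\B$ as left multiplication by the element $\W$ of \eqref{Omega-def} so that the condition becomes the algebraic identity \eqref{eq:r11}, which then factorises as in the other sectors.
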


\begin{proof}
Given morphisms $f : M \to M'$ and $g : N \to N'$ in $\repS$, it is immediate from the definition that $\psi_{M',N'} \circ (f \ot g) = (g \ot f) \circ \psi_{M,N}$. The lemma follows once we proved that \eqref{eq:braiding-transport-comm-diag} commutes. Indeed, since the top path in \eqref{eq:braiding-transport-comm-diag} is a morphism in $\repS$, so is $\psi_{\mathcal{D}(U),\mathcal{D}(V)}$. And since the top path is invertible, so is $\psi$.

We now show that \eqref{eq:braiding-transport-comm-diag} commutes. In Table~\ref{tab:R-eqn-sectors}, the conditions on $\psi_{M,N}$ are given in each of the four sectors. These conditions use the inverse of $\gamma$,
\begin{equation}\label{gamma-inv}
\gamma^{-1} = \exp(-C) = \prod_{i=1}^{\np}(\one\tensor\one - \xm_i\tensor \xp_i + \xp_i\tensor \xm_i - \xp_i \xm_i\tensor \xp_i \xm_i)\cdot\idem_0 \tensor \idem_0 \ ,
\end{equation}
and the constant
\begin{equation}\label{kappa}
\kappa := \exp\bigl(\ffrac12\hat C\bigr) =  \prod_{i=1}^{\np}(\one - \xp_i \xm_i ) \idem_0 \ .
\end{equation} 

Substituting the definition of $\psi$ in \eqref{eq:RepS-braiding-sectors-def}, the condition in sector {\bf 00} can be rewritten as
\begin{equation}\label{eq:S-braid-trans-sec00}
\RS^{00} \cdot \idem_0 \ot \idem_0 ~=~ \sflip_{S,S}(\delta_0) \cdot \gamma^{-1} \cdot  \delta_0^{-1} \ ,
\end{equation}
where $\delta_0$ and its inverse are given in \eqref{eq:delta0-delta1} and \eqref{eq:delta0-delta1-inv}.
Note that all ingredients in the above equality are written as products over elements in $\Salg^{(k)}$ for $k=1,\dots,\np$. This reduces the verification of \eqref{eq:S-braid-trans-sec00} to the case $\np=1$, in which case it is  a short calculation combining \eqref{eq:delta0-delta1}, \eqref{eq:delta0-delta1-inv}, \eqref{gamma-inv} and the definition of $\RS$ in \eqref{eq:R-element-for-S}.

In the remaining three sectors the strategy is the same: we show that the required identity can be written as a product over elements in $\Salg^{(k)}$. This implies that if the equality holds for $\np=1$, it holds for all $\np$. The case $\np=1$ can then be checked by hand or by computer algebra (which is what we did). Below we only explain the reduction to $\np=1$ and we omit the details of the verification for $\np=1$.

The condition on $\psi$ in the {\bf 01}-sector can be expressed via $r$ as the following equation in $\Salg_0\tensor\Salg_1$:
\begin{equation}
\sflip_{\Salg_0,\Salg_1}\bigl[\RS^{01} \cdot \copS(a) \cdot (\one\tensor\Bel) \bigr]= \copS(a) \cdot (\Bel\tensor\kappa\delta_1^{-1})\quad, \qquad a \in \B.
\end{equation}
The elements $\Bel$, $\delta_1$ and $\kappa$ are all given in product form, see~\eqref{eq:Bel}, \eqref{eq:delta0-delta1} and~\eqref{kappa}. This reduces the verification in sector {\bf 01} to the case $\np=1$.

We see from \eqref{eq:RepS-braiding-sectors-def} and Table~\ref{tab:R-eqn-sectors} that the situation in the {\bf 10}- and {\bf 11}-sectors is different because of the presence of the parity-involution $\omega$. 
In sector {\bf 10} the condition on $\psi$ is expressed in terms of $r$ as the following equation in $\Salg_1\tensor\Salg_0$:
\begin{equation}
\sflip_{\Salg_1,\Salg_0}\bigl[\RS^{10} \cdot (\id\tensor\omega) \bigl( \copS(a)\bigr) \cdot (\Bel\tensor\one) \bigr]= \copS(a) \cdot (\delta_1\kappa\tensor\Bel)\quad , \qquad a \in \B \ .
\end{equation}
As in sector {\bf 01} all elements are given in factorised form, and the above equality thus follows from the fact that it holds for $\np=1$.

To rewrite the condition on $\psi$ in sector {\bf 11} in terms of $r$, we have to relate 
$\omega_{\B \ot V}$,
 which appears in $\psi$ on the RHS of the condition in sector {\bf 11}, recall~\eqref{eq:RepS-braiding-sectors-def},
  and $\omega_V$, which appears on the LHS of the condition. This can be done as follows. Note that
\be
\omega_{\B}\otimes \omega_V = \omega_{\B\otimes V} \ .
\ee
We recall then the basis
\begin{align} 
	\B= \operatorname{span} \Bigl\{\Bel_{I=(i_1,\dots,i_{\np})}=\prod_{k=1}^{\np}(\xm_k)^{i_k} \xp_k \idem_1 \mid i_k\in\{0,1\}\Bigr\} \ ,
\end{align}
with $\omega(\Bel_I)=(-1)^{\np+\sum_k i_k}\Bel_I$ and so $\omega_{\B} = \W.(-)$ with
\begin{equation}\label{Omega-def}
\W = \prod_{i=1}^\np (\xm_i\xp_i-\xp_i\xm_i)\idem_1 \ .
\end{equation}
The braiding $\brS^{11}$ can be then expressed as 
\begin{align}\label{brS11}
\brS^{11}_{\B\tensor U,\B\tensor V} &= \sflip_{\B\tensor U,\B\tensor V} \circ \RS^{11}\circ\bigl(\id_{\B\tensor U}\tensor \omega_{\B \ot V} \bigr) 
\\ \nonumber
&= \sflip_{\B\tensor U,\B\tensor V}\circ (\RS^{11}\cdot \one\tensor\W )\circ (\id_{\B\tensor U}\tensor \id_\B \ot \omega_{V}) \ .
\end{align}
	In the last line, the element $\RS^{11}\cdot \one\tensor\W  \in \Salg \ot \Salg$ acts on the two tensor factors $\B$ in $\B \ot U \ot \B \ot V$.
Substituting this into the condition in Table~\ref{tab:R-eqn-sectors} gives the following condition on $\RS^{11}$:
\begin{equation}\label{eq:r11}
\sflip_{\Salg_1,\Salg_1}\bigl[(\RS^{11}\cdot \one\tensor\W ) \cdot  \copS(h) \cdot (\Bel\tensor\Bel) \bigr]= \beta \cdot \copS(h\kappa^{-1}) \cdot (\Bel\tensor\Bel)\quad , \qquad h\in \Salg_0 \ .
\end{equation}
Again one can write this equality as a product over elements in $\Salg^{(k)}$, $k=1,\dots,\np$, reducing the verification to $\np=1$.
\end{proof}

Since $\catSF$ is a braided monoidal category, the above lemma shows that $\psi$ defines a braiding on $\repS$. Altogether we have shown:

\begin{proposition}\label{prop:functor-D-tensor-br}
The functor $\mathcal D$ in Proposition~\ref{prop:functor-D-tensor} is braided monoidal.
\end{proposition}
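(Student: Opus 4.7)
The plan is to combine Proposition~\ref{prop:functor-D-tensor} with Lemma~\ref{lemma:psi-iso}. The former supplies $\funD$ as a $\oC$-linear monoidal equivalence with coherence data $\isoD$, while the latter provides a natural family of isomorphisms $\brS$ in $\repS$ fitting into the transport square \eqref{eq:braiding-transport-comm-diag} for all $U, V \in \catSF$. Granted these two ingredients, braided monoidality of $\funD$ will be a formal consequence of the principle that a braiding transports uniquely across a monoidal equivalence.

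More precisely, I would first remark that $\brS_{M,N}$ is determined on all $M, N \in \repS$: for any such pair, essential surjectivity of $\funD$ produces $U, V \in \catSF$ with $M \cong \funD(U)$ and $N \cong \funD(V)$, and the naturality established in Lemma~\ref{lemma:psi-iso} forces $\brS_{M,N}$ to be the evident conjugate of $\brS_{\funD(U), \funD(V)}$, which is in turn the image of $c^{\catSF}_{U,V}$ under $\funD$ modulo the $\isoD$'s. To derive the hexagon equations for $\brS$ at a triple $\funD(U), \funD(V), \funD(W)$, I would paste together three copies of the square \eqref{eq:braiding-transport-comm-diag} with two copies of the associator-transport square \eqref{eq:transport-assoc-diag}, rewriting each side of the hexagon for $\brS_{\funD(U),\funD(V),\funD(W)}$ as the $\isoD$-conjugate of the corresponding side of the hexagon for $c^{\catSF}_{U,V,W}$. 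The latter holds by Theorem~\ref{SF-fact}, and essential surjectivity together with naturality of $\brS$ then propagates the identity to arbitrary triples in $\repS$.

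The main obstacle is already encapsulated in Lemma~\ref{lemma:psi-iso}, where the sector-by-sector commutativity of \eqref{eq:braiding-transport-comm-diag} was reduced to the $\np=1$ case treated in \cite{Gainutdinov:2015lja}. Once that lemma is in hand, the present proposition is a purely diagrammatic consequence of the transport principle and requires no further computation in $\Salg$.
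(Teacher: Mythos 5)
Your proposal is correct and follows essentially the same route as the paper: the proposition is obtained by combining the monoidal equivalence of Proposition~\ref{prop:functor-D-tensor} with Lemma~\ref{lemma:psi-iso}, and then invoking the uniqueness-of-transport principle (spelled out after Definition~\ref{def:transport}) to conclude that $\brS$ satisfies the hexagon identities because $c^{\catSF}$ does. The only cosmetic difference is that the paper defines $\brS_{M,N}$ by the explicit formula \eqref{eq:RepS-braiding-sectors-def} on all of $\repS$ rather than extending it from the image of $\funD$ by essential surjectivity, but this does not change the argument.
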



\section{Equivalence between \texorpdfstring{$\catSF$}{SF} and \texorpdfstring{$\rep\Q$}{Rep(Q)}} \label{app:Q-S}

Here, we present the second part of the proof of Lemma~\ref{lem-trans-QSF}. We begin with introducing a quasi-bialgebra $\QQ$ in $\vect$ and show a braided monoidal equivalence between $\rep \Salg$ and $\rep \QQ$. Then we present a twisting of $\QQ$ into $\Q$, and therefore $\rep \Q$ is braided monoidally equivalent to $\rep \Salg$ and thus to $\catSF$.
Finally, we use this equivalence to transport the ribbon twist from $\catSF$ to $\rep \Q$.

\subsection{The quasi-bialgebra \texorpdfstring{$\QQ$}{Q hat}}

In this section we introduce the quasi-bialgebra
$\QQ=\QQ(\np,\beta)$ which is equal to $\Q$ as an algebra. It has a different quasi-bialgebra structure, namely
\begin{align} \label{def:co-prod-Qhat}
	\hat\Delta(\K)&=\K\tensor\K-(1+(-1)^\np) \idem_1\tensor\idem_1\cdot \K\tensor\K \gc & \eps(\K)&=1 \gc \\ \nonumber
	\hat\Delta(\fpm_i)&=\fpm_i\tensor\one + \K^{-1}\tensor\fpm_i-(1+(-1)^\np) \idem_1\tensor\idem_1\cdot \K^{-1}\tensor\fpm_i \gc  &  \eps(\fpm_i)&=0 \gc
\end{align}
where we use the central idempotents  $\idem_i\in\QQ$ ($i=0,1$)  defined as in~\eqref{def:idem Q}.
The coassociator for this coproduct is 
	(we will check the axioms below)
\begin{align}\label{eq:Qhat-coass-I}
\asQQ &= \idem_0\tensor\idem_0\tensor\idem_0 + \idem_0\tensor\idem_0\tensor\idem_1 + \idem_1\tensor\idem_0\tensor\idem_0
+ \idem_0\tensor\idem_1\tensor\idem_1\\ \nonumber
&\qquad + \idem_1\tensor\idem_1\tensor\idem_0 
+ \asQQ^{010}\idem_0\tensor\idem_1\tensor\idem_0
+ \asQQ^{101}\idem_1\tensor\idem_0\tensor\idem_1
+ \asQQ^{111}\idem_1\tensor\idem_1\tensor\idem_1\ ,
\end{align}
where the non-trivial components are given by
\begin{align}\label{eq:Qhat-coass-II}
\asQQ^{010}=&\prod_{k=1}^{\np}\hat\as^{010}_{(k)} \gc \\ \nonumber
\asQQ^{101}=& (-\K)^{\np-1}\tensor \K^{\np-1} \tensor\one \cdot
\Big(\prod_{k=1}^{\np} \hat\as^{101}_{(k)}  \Big) \cdot  \K^{\np-1} \tensor\K\tensor\one  \gc\\ \nonumber
\asQQ^{111}=&-\rmi^{\np}\beta^{2} \, \K^{\np-1}\tensor\one\tensor\one \cdot
\Big(\prod_{k=1}^{\np} \hat\as^{111}_{(k)} \Big) \cdot \K^{\np-1}\tensor\K^\np\tensor\one \gc
\end{align}
with
\begin{align}\label{eq:Qhat-coass-III}
\hat\as^{010}_{(k)} =~& \Bigl(\one\tensor\one\tensor\one
    + (1 + \rmi) \ff^+_k\K\tensor\K\tensor\ff^-_k\Bigr)\Bigl(\one\tensor\one\tensor\one + (1 - \rmi) \ff_k^-\K\tensor\K\tensor\ff_k^+\Bigr)\gc\\ \nonumber
\hat\as^{101}_{(k)} =~&\Bigl( \one\tensor\one\tensor\one
+ (1+\rmi) \one\tensor\ff_k^+\K\tensor\ff_k^-
+ (1 - \rmi) \ff_k^-\K\tensor\ff_k^+\tensor\one
\Bigr)\\ \nonumber
~&\times\Bigl( \one\tensor\one\tensor\one
 + (1 + \rmi)\ff_k^+\K\tensor\ff_k^-\tensor\one
 + (1 - \rmi)\one\tensor\ff_k^-\K\tensor\ff_k^+ \Bigr) \gc\\ \nonumber
\hat\as^{111}_{(k)} =~& 
\Bigl( \one\tensor\one\tensor\one
+(\rmi-1)\bigl(\one\tensor\ff_k^+\K\tensor\ff_k^-
 +  \ff_k^+\K\tensor\K\tensor\ff_k^-
 - \ff_k^+\K\tensor\ff_k^-\tensor\one
  + \one\tensor\ff_k^-\ff_k^+\tensor\one\bigr)
\Bigr)\\ \nonumber
~&\times\Bigl( \one\tensor\one\tensor\one
 - (\rmi-1)\bigl( \one\tensor\ff_k^-\K\tensor\ff_k^+
 + \ff_k^-\K\tensor\K\tensor\ff_k^+
   -  \ff_k^-\K\tensor\ff_k^+\tensor\one
   - \one\tensor\ff_k^-\ff_k^+\tensor\one\bigr)
  \Bigr)\\ \nonumber
  ~&\times\Bigl( \one\tensor\one\tensor\one
   - 2 \ \one\tensor\ff_k^-\ff_k^+\tensor\one
   \Bigr)
  \gp
\end{align}
The quasi-bialgebra $\QQ$ can also be equipped with an $R$-matrix which is defined as
\begin{equation} \label{eq:hatR}
\hat R = \hat R^{00}\idem_0\tensor\idem_0 + \hat R^{01}\idem_0\tensor\idem_1
+ \hat R^{10}\idem_1\tensor\idem_0 + \hat R^{11}\idem_1\tensor\idem_1 \ ,
\ee
where 
\begin{align}
\hat R^{0i} &= \rho(\K) \cdot \prod_{k=1}^{\np} \hat{R}^{0i}_{(k)}\ , \qquad i=0,1 \qc   \label{R0j-exp}
   \\   
\hat R^{10} &= \rho(\K)\cdot \prod_{k=1}^{\np} \hat{R}^{10}_{(k)} \cdot \one\tensor\K \gc \label{R10-exp}\\  
\hat R^{11} &= (-1)^\np \, \rmi \beta \, \rho(\K)\cdot \one\tensor \K^{\np-1}\cdot \Big(\prod_{k=1}^{\np} \hat{R}^{11}_{(k)}\Big) \cdot \K^\np\tensor\one  \label{R11-exp}
\end{align}
with the Cartan part
\begin{equation}\label{sflip-om}
\rho(\K)=  \half(\one\tensor\one + \omega\tensor\one + \one\tensor\omega - \omega\tensor\omega) \qcq \omega =  (\idem_0-\rmi\idem_1)\K \gc
\end{equation}
and
\begin{align} \label{eq:hatRk}
\hat{R}^{00}_{(k)} &= \one\tensor\one - 2\ff^-_k\K\tensor\ff^+_k \gc\\ \nonumber
\hat{R}^{01}_{(k)} &=\one\tensor\one - (1+\rmi)\ff^-_k\K\tensor\ff^+_k -(1+\rmi)\ff^+_k\K\tensor\ff^-_k  +(1-\rmi)\ff^-_k\ff^+_k\tensor\one + 2\rmi\ff^-_k\ff^+_k\tensor\ff^-_k\ff^+_k\gc \\\nonumber
  \hat{R}^{10}_{(k)} &= \one\tensor\one + (1 + \rmi) \ff^-_k\K\tensor\ff^+_k + (1 + \rmi) \ff^+_k\K\tensor\ff^-_k  
   + (1 + \rmi) \one\tensor\ff^-_k\ff^+_k - 2 \rmi \ff^-_k\ff^+_k\tensor\ff^-_k\ff^+_k \gc \\ \nonumber
     \hat{R}^{11}_{(k)} &= \one\tensor\one - 2\rmi  \ff^-_k\K\tensor\ff^+_k
+ (\rmi-1)\one\tensor\ff^-_k\ff^+_k 
- (1 + \rmi)\ff^-_k\ff^+_k\tensor\one +
 2  \ff^-_k\ff^+_k\tensor\ff^-_k\ff^+_k \gp
\end{align}

\begin{remark}\label{rem:Phi-R-N1}
For $\np=1$ we have
\begin{align}
\asQQ^{010} =\hat\as^{010}_{(k=1)} \qcq
\asQQ^{101} =\hat\as^{101}_{(k=1)} \cdot \one\tensor\K\tensor\one  \qcq
\asQQ^{111} = \frac{\beta^{2}}{\rmi} \hat\as^{111}_{(k=1)} \cdot \one\tensor\K\tensor\one   
\end{align}
 and they coincide with the components of the associator 
 in~\cite[Sec.\,7.4]{Gainutdinov:2015lja}. 
So, we have the simple factorisation~\eqref{eq:Qhat-coass-II} of the nilpotent (off-diagonal) part of the associator  into the product of $\np=1$ components. 
The Cartan part depends only on the parity of $\np$.
We  also note  that the components $\hat\as^{abc}_{(i)}$ commute with each other: $\hat\as^{abc}_{(i)} \cdot \hat\as^{abc}_{(j)}=\hat\as^{abc}_{(j)} \cdot \hat\as^{abc}_{(i)}$.
\\
For the $\hat R$ element, we note that~\eqref{R0j-exp} for $\np=1$ is 
$\hat R^{0i} =  \rho(\K)  \hat{R}^{0i}_{(k=1)}$, 
and it is the {\bf 00} and  {\bf 01} components of the universal  $R$-matrix 
in~\cite[Sec.\,7.7]{Gainutdinov:2015lja}. The Cartan part thus does not change with $\np$ while the ``off-diagonal'' part is just the product of the $\np=1$ components.
Also, $\hat{R}^{1i}_{(k=1)}$, for $i\in\{0,1\}$, corresponds to $\np=1$ case: the {\bf 10} component of the universal 
  $R$-matrix
in~\cite[Sec.\,7.7]{Gainutdinov:2015lja} is expressed as $\hat R^{10} =  \rho(\K)\cdot \hat{R}^{10}_{(k=1)} \cdot \one\tensor\K$ while  the {\bf 11} component is $\hat R^{11} =  \frac{\beta}{\rmi}\rho(\K)\cdot \hat{R}^{11}_{(k=1)} \cdot \K\tensor\one$ that agrees with~\eqref{R10-exp}-\eqref{R11-exp}. Hence, we have again the simple factorisation~\eqref{R10-exp}-\eqref{R11-exp}  of the nilpotent (off-diagonal) part of $\hat R$  into the product of $\np=1$ components. 

\end{remark}

The following lemma can be easily checked by a direct calculation. 
\begin{lemma}\label{lem:DeltaQhat-alg-map}
The map $\hat\Delta\colon \QQ\to \QQ\otimes \QQ$ defined in \eqref{def:co-prod-Qhat} is an algebra homomorphism.
\end{lemma}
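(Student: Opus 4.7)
My plan is to verify directly that $\hat\Delta$ preserves each of the four defining relations of $\QQ$, namely
\begin{equation*}
\{\fpm_i,\K\}=0, \quad \{\ff^+_i,\ff^-_j\}=\delta_{ij}\idem_1, \quad \{\fpm_i,\fpm_j\}=0, \quad \K^4=\one.
\end{equation*}
The computation is cleanest when split by the parity of $\np$, since the constant $c:=1+(-1)^\np$ that controls the correction terms in $\hat\Delta$ is $0$ for $\np$ odd and $2$ for $\np$ even. First I collect a few preliminary observations that will be used throughout: (a) $\K^2=\idem_0-\idem_1$ is central in $\QQ$ (from $\{\fpm_i,\K\}=0$ one gets $\fpm_i\K^2=\K^2\fpm_i$), hence $\idem_0,\idem_1$ commute with all generators; (b) $\K^{-1}=\K^3$; (c) the element $E:=\one\tensor\one - c\,\idem_1\tensor\idem_1\in\QQ\tensor\QQ$ is central and satisfies $E^2=\one\tensor\one$, either trivially for $c=0$ or from $\idem_1^2=\idem_1$ for $c=2$. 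Rewriting the coproduct as $\hat\Delta(\K)=E\cdot(\K\tensor\K)$ and $\hat\Delta(\fpm_i)=\fpm_i\tensor\one + E\cdot(\K^{-1}\tensor\fpm_i)$ isolates where the correction term lives.

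For $\np$ odd ($c=0$) the coproduct simplifies to $\hat\Delta(\K)=\K\tensor\K$ (so $\K$ is group-like) and $\hat\Delta(\fpm_i)=\fpm_i\tensor\one+\K^{-1}\tensor\fpm_i$, and the verifications are the familiar Hopf-algebraic ones. Relation $\K^4=\one$ is immediate. For $\{\hat\Delta(\fpm_i),\hat\Delta(\K)\}$ one expands and uses $\K\fpm_i=-\fpm_i\K$ to cancel both terms. For $\{\hat\Delta(\ff^+_i),\hat\Delta(\ff^-_j)\}$ the cross terms cancel by the same sign, yielding $\delta_{ij}(\idem_1\tensor\one+\K^{-2}\tensor\idem_1)$; since $\K^{-2}=\K^2$, a direct expansion of $\tfrac12(\one\tensor\one-\K^2\tensor\K^2)$ confirms this equals $\hat\Delta(\idem_1)$. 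The relation $\{\fpm_i,\fpm_j\}=0$ is analogous.

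For $\np$ even ($c=2$) one uses the factorisation above. The relation $\K^4=\one$ follows from $\hat\Delta(\K)^4=E^4(\K^4\tensor\K^4)=\one\tensor\one$, using that $E$ is central and $E^2=\one\tensor\one$. The anticommutation checks reduce to the odd-$\np$ computations after pulling the central factors $E$ to the outside, together with the observation that on the range of $\idem_1\tensor\idem_1$ the element $E$ acts as $-1$, which accounts for the sign twist built into $\hat\Delta(\K)$ in this sector. A convenient way to organise the book-keeping is to decompose $\QQ\tensor\QQ=\bigoplus_{a,b\in\{0,1\}}\idem_a\QQ\tensor\idem_b\QQ$ and verify each relation in each of the four sectors; since $\idem_0,\idem_1$ are central this decomposition is respected by all products involved.

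The main obstacle I anticipate is not conceptual but bookkeeping: in the even-$\np$ case the sector $\idem_1\tensor\idem_1$ of $\{\hat\Delta(\ff^+_i),\hat\Delta(\ff^-_j)\}$ picks up signs from three sources simultaneously (the anticommutator $\K\fpm_j=-\fpm_j\K$, the factor of $E$, and the central swap of $\idem_1$ past $\fpm_i$), and one must check these conspire to reproduce the sector $\idem_1\tensor\idem_1$ of $\delta_{ij}\hat\Delta(\idem_1)=\delta_{ij}\cdot\tfrac12(\one\tensor\one-\K^2\tensor\K^2)$. Once this sector is pinned down, the remaining three sectors are straightforward and essentially identical to the $\np$-odd computation.
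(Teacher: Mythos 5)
Your proposal is correct and is exactly the argument the paper has in mind: the paper disposes of this lemma with ``can be easily checked by a direct calculation,'' and your verification that the images of the generators under $\hat\Delta$ satisfy the defining relations \eqref{def:Q} (using the central element $E=\one\tensor\one-(1+(-1)^\np)\,\idem_1\tensor\idem_1$ with $E^2=\one\tensor\one$, and $\{\fpm_i,\K^{-1}\}=0$ to kill the cross terms) is that direct calculation. The only remark worth adding is that the even-$\np$ sign bookkeeping you worry about at the end is harmless, since the two factors of $E$ in each diagonal term always combine to $E^2=\one\tensor\one$ and the cross terms vanish outright.
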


In order to show that $(\QQ, \cdot,\one,\hat\Delta, \varepsilon, \hat\Phi, \hat R)$ is a quasi-triangular quasi-bialgebra we start with the braided monoidal category $\repS$ described in  Appendix~\ref{app:SF-S} and
	verify that
$\hat\Phi$ and $\hat R$ 
	can be obtained via
transport along a certain multiplicative functor from $\repS$ to $\repQQ$, see Sections~\ref{sec:RepS-RepQ}--\ref{sec:trans-braid-SQ}.
From this it follows that $\QQ$ is indeed a quasi-triangular quasi-bialgebra and that $\repQQ$ is braided equivalent to $\repS$.

\subsection{A $\oC$-linear equivalence from \texorpdfstring{$\repS$}{RepS} to \texorpdfstring{$\repQQ$}{Rep(Q-hat)}}\label{sec:RepS-RepQ}

In this section we present a $\oC$-linear functor $\funSQ: \repS \to \repQQ$. 
Recall that $\omega_U$ denotes the parity involution on the super-vector space $U$.
For a given $U\in\repS$, $\funSQ(U)$ is the underlying vector space $U$ with $\QQ$-action given by, for $u \in U$,
\begin{align}\label{eq:funSQ}
\K. u &~:=~ \z . \omega_U(u)  = \omega_U(\z . u) ~~ , \quad \text{where} \quad \z=\idem_0+\rmi\idem_1 \gc
\\
\nonumber
\ff^{\pm}_i. u &~:=~ \xpm_i . u \gp
\end{align}
Note that $\K^2$ acts as $\LL$, that is, $\idem_i\in\QQ$ ($i=0,1$) acts as $\idem_i\in\Salg$.
For a morphism $f : U \to V$ in $\repS$ we set $\funSQ(f) = f$.

\begin{proposition}\label{prop:G-equiv}
The functor $\funSQ$ is an equivalence of $\oC$-linear categories.
\end{proposition}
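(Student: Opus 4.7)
The plan is to verify that $\funSQ$ is well-defined, then construct an explicit quasi-inverse $\funQS:\repQQ\to\repS$ and check the two functors compose to the identity up to natural isomorphism. The conceptual point is that $\QQ$ should behave like a bosonisation of $\Salg$: the $\QQ$-generator $\K$ encodes simultaneously the action of $\LL\in\Salg$ and the parity involution on the super-vector space, glued together by the central element $\z=\idem_0+\rmi\idem_1$.

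First I would check that \eqref{eq:funSQ} really defines a $\QQ$-action, i.e.\ that the operators $\K\mapsto \z\,\omega_U$ and $\ff^\pm_i\mapsto \xpm_i$ satisfy the relations \eqref{def:Q}. The key computation is $\z^2 = \idem_0-\idem_1 = \K^2$ (viewing the last equality inside $\QQ$), so that $\K^2$ acts as $\z^2\omega_U^2=\LL$ on $U$, giving $\K^4=\one$. The relation $\{\ff^\pm_i,\K\}=0$ follows because $\z$ is even and central (so commutes with $\xpm_i$) while $\omega_U$ anticommutes with the odd operators $\xpm_i$, producing the required cancellation; the anticommutation relations among the $\ff^\pm_i$ reduce directly to those of the $\xpm_i$ in $\Salg$ since $\idem_1\in\Salg$ acts the same way as $\idem_1\in\QQ$. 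On morphisms $\funSQ(f)=f$; faithfulness is then automatic.

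Next I would construct the functor $\funQS:\repQQ\to\repS$ in the opposite direction. Given $M\in\repQQ$, equip the underlying vector space with the involution $\omega_M := \z^{-1}\K\in\End(M)$, which squares to the identity by the same computation $\z^2=\K^2$ used above; this gives $M$ a super-vector space structure. Define the $\Salg$-action by $\xpm_i\mapsto \ff^\pm_i$ and $\LL\mapsto \K^2$. I need to check that $\xpm_i$ acts as an odd operator, i.e.\ $\omega_M\ff^\pm_i = -\ff^\pm_i\omega_M$; this is equivalent to $\K\ff^\pm_i=-\ff^\pm_i\K$, which is exactly the $\QQ$-relation. The $\Salg$-relations among the $\xpm_i$ and $\LL$ are immediate from those in $\QQ$ together with $\K^2=\LL$. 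On morphisms $\funQS(g)=g$; the only non-trivial verification is that a $\QQ$-linear map $g$ is necessarily parity-preserving with respect to the gradings $\omega_M,\omega_N$, which follows from $g\circ\K=\K\circ g$ together with invertibility of $\z$.

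Finally I would verify $\funSQ\circ\funQS=\id_{\repQQ}$ and $\funQS\circ\funSQ=\id_{\repS}$ strictly on the nose at the level of underlying vector spaces and structure operators: in one direction $\z\omega_M = \z(\z^{-1}\K)=\K$, and in the other the grading $\z^{-1}(\z\omega_U)=\omega_U$ is recovered. For morphisms there is nothing to check since both functors act as the identity. I do not expect any serious obstacle; the main subtlety is the careful bookkeeping distinguishing the \emph{central idempotents} $\idem_0,\idem_1\in\Salg$ (built from $\LL$) from the \emph{central idempotents} $\idem_0,\idem_1\in\QQ$ (built from $\K^2$), which are intertwined precisely by the identification $\LL\leftrightarrow\K^2$ that makes the formula $\z^2=\LL$ work in both algebras.
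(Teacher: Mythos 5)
Your proposal is correct and follows essentially the same route as the paper: the paper's proof also proceeds by exhibiting the inverse functor $\funQS$ with the grading on $V\in\repQQ$ defined by the involution $(\idem_0-\rmi\idem_1)\K = \z^{-1}\K$, with $\LL$ acting as $\K^2$ and $\xpm_i$ as $\ff^\pm_i$, deferring the routine relation checks (which you spell out) to the $\np=1$ reference. The bookkeeping point you flag — identifying $\idem_i\in\Salg$ (built from $\LL$) with $\idem_i\in\QQ$ (built from $\K^2$) via $\K^2\leftrightarrow\LL$ — is exactly the observation the paper records after \eqref{eq:funSQ}.
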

\begin{proof}
Since the proof is very similar to the proof of \cite[Prop.~5.2]{Gainutdinov:2015lja} we omit the details here.
However, for later reference we introduce 
a functor $\funQS:\repQQ\to\repS$ which is inverse to~$\funSQ$.
By inverting the relation in \eqref{eq:funSQ} we define an involution map on an object $V\in\repQQ$ as
\be\label{Z2-repQ}
\omega_V(v) := (\idem_0-\rmi\idem_1)\K. v \qcq v\in V \gp
\ee
Then the $\Salg$-module $\funQS(V)$ has $V$ as the underlying super-vector space with the $\oZ_2$-grading defined by the eigenvalues of $\omega_V$ as $\omega_V(v)=(-1)^{|v|}v$, 
for an eigenvector $v$ of $\omega_V$.
Moreover, $\LL$ acts on $\funQS(V)$ by $\idem_0-\idem_1\in\QQ$ and $\xpm_i$ acts on $\funQS(V)$ by $\ff^{\pm}_i$.
\end{proof}

Since $\Q$ and $\QQ$ have the same algebra structure we in fact have shown a $\oC$-linear equivalence of $\repS$ and $\repQ$.

\subsection{\texorpdfstring{$\funSQ$}{G} as multiplicative functor}

In order to show that $\funSQ$ is multiplicative, we define the family of isomorphisms 
\begin{align} 
\begin{split}
\isoG_{U,V} \colon \; \funSQ(U\Stensor V) &\to \funSQ(U) \tensor_{\repQQ} \funSQ(V) \gc \\ 
u\tensor v \label{eq:isoG}
&\mapsto u \tensor v + \idem_1. u \tensor (\xi-1)\idem_1. v \gc 
\end{split}
\end{align}
where $\xi$ is defined as
\begin{equation}\label{xi-def}
\xi =	\rmi^{\np(\np-1)/2}  
\prod_{k=1}^\np 
\xi_k \quad \text{with}\quad  \xi_k = \xp_k + \xm_k\  .
\end{equation}
Invertibility is easy to see since $(\isoG_{U,V})^2 = \id_{U \otimes V}$, which follows from $\xi^2 = \idem_1$.
Naturality is also clear. It remains to prove the following lemma.

\begin{lemma}
$\isoG_{U,V}$ is an intertwiner of $\QQ$-modules.
\end{lemma}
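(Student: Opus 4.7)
The plan is to exploit the fact that $\hat\Delta$ is an algebra homomorphism (Lemma~\ref{lem:DeltaQhat-alg-map}): the set of elements $a\in\QQ$ satisfying the intertwining identity
\[
\isoG_{U,V}\bigl(a.(u\otimes v)\bigr) ~=~ \hat\Delta(a).\isoG_{U,V}(u\otimes v)
\]
forms a subalgebra of $\QQ$, so it suffices to verify the identity on the generating set $\{\K,\fp_i,\fm_i\}$. Moreover, on both sides the $\QQ$-action respects the $\mathbb{Z}_2$-bigrading induced by the idempotents $\idem_0,\idem_1$, because $\copS(\idem_0)=\idem_0\otimes\idem_0+\idem_1\otimes\idem_1$ and an identical formula holds for $\hat\Delta(\idem_0)$. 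I would therefore split the check for each generator into the four sectors $(a,b)\in\{0,1\}^2$ given by $u=\idem_a.u$, $v=\idem_b.v$.

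The correction term $\idem_1.u\otimes(\xi-1)\idem_1.v$ in \eqref{eq:isoG} is supported entirely in sector $11$, so in sectors $00$, $01$, and $10$ the map $\isoG_{U,V}$ reduces to the identity and the required identity becomes a direct comparison between $\copS$ in~\eqref{S:delta} and $\hat\Delta$ in~\eqref{def:co-prod-Qhat}, transported through the dictionary~\eqref{eq:funSQ}. The essential point is that the scalar $\z=\idem_0+\rmi\idem_1$ entering the action of $\K$ reproduces precisely the coefficient $(\idem_0-\rmi\idem_1)$ occurring in $\copS(\xpm_i)$ once the parity involution is taken into account, so these three sectors are handled immediately.

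The substantive content lies in sector $11$, where one must show that conjugation by $\isoG_{U,V}|_{11}$ turns $\copS$ into $\hat\Delta$, including the additional contribution $(1+(-1)^\np)\idem_1\otimes\idem_1\cdot\K\otimes\K$ appearing in $\hat\Delta(\K)$ and its counterpart for $\fpm_i$. I would derive this from three identities in $\Salg_1=\idem_1\Salg$: the involutivity $\xi^2=\idem_1$, ensuring $\isoG|_{11}$ is self-inverse; the commutation $\omega\xi\idem_1 = (-1)^\np\xi\omega\idem_1$ of $\xi$ with the $\LL$-induced parity; and an explicit commutation relation of $\xi$ with each $\xpm_i$, obtained by expanding $\xi=\rmi^{\np(\np-1)/2}\prod_k(\xp_k+\xm_k)$ and using the Clifford relations $\{\xp_k,\xm_l\}=\delta_{k,l}\idem_1$ and $\{\xpm_k,\xpm_l\}=0$.

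The main obstacle is the overall sign bookkeeping in sector $11$: the prefactor $\rmi^{\np(\np-1)/2}$ in \eqref{xi-def} is fixed by the requirement that, after commuting one $\xpm_i$ past the remaining $(\np-1)$ factors $\xi_k$ and applying the Clifford relation at the $i$-th site, the residual sign is exactly the $(1+(-1)^\np)$ coefficient appearing in $\hat\Delta$. For $\np=1$ this verification is essentially the one carried out in~\cite{Gainutdinov:2015lja}; for general $\np$ it can be organised either as a direct combinatorial extension across the $\np$ ``sites'' or, more conceptually, by descending the calculation from the tensor product $\mathsf{A}$ via the surjection $\mathsf{A}\to\Salg$ of Proposition~\ref{prop:S-quasiHopf}. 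I expect the direct approach to be shorter in practice.
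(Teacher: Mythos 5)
Your proposal matches the paper's proof: reduce to the generators $\K,\fpm_i$ using that $\copS$ and $\hat\Delta$ are algebra maps, note that $\isoG_{U,V}$ is the identity unless both $U$ and $V$ lie in the $\idem_1$-sector, and settle sector $\mathbf{11}$ via the relation $\xpm_k\,\xi\,\idem_1=(-1)^{\np-1}\xi\,\xmp_k\,\idem_1$ together with careful parity signs. The only quibble is your remark on the prefactor $\rmi^{\np(\np-1)/2}$: being an overall scalar it cancels from all commutation relations and hence from the sign bookkeeping you describe; its role is solely to normalise $\xi^2=\idem_1$ (invertibility of $\isoG$), not to produce the $(1+(-1)^\np)$ coefficient in $\hat\Delta$.
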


\begin{proof}

We need to show that for all $a \in \QQ$, $u \in U$, $v \in V$ we have
\begin{equation}\label{eq:GammaUV-Q-intertw}
	\isoG_{U,V}(a\,\hat.\,(u \otimes v)) = a.\isoG_{U,V}(u \otimes v) \ ,
\end{equation}
where the notation $\,\hat.\,$ emphasises that the action of $\Delta(a) \in \Salg \ot \Salg$ on $U \ot V$ is in $\svect$ and involves parity signs.

Since $\Delta^{\Salg}$ and $\hat\Delta$ are algebra maps, it is enough to verify this on the generators $\K$, $\ff^\pm_k$. 
If $U\notin\repS_1$ or $V\notin\repS_1$, $\isoG_{U,V}$ is just the identity, and the 
	verification is straightforward.
As an example for the sector \textbf{11} case let $a=\ff^+_k$ and assume $\np$ to be even. 
Then the two sides of the above identity are
\begin{align} 
\ff^+_k . \isoG_{U,V}(u\tensor v) 
&= (\ff^+_k \tensor \one + \K^{-1} \tensor \ff^+_k -2\K^{-1}\idem_1\tensor\ff^+_k\idem_1 ) 
	.
(\idem_1 . u  \tensor \xi \idem_1 . v ) \\ \nonumber
&= \xp_k  \idem_1 . u \tensor \xi \idem_1 . v + \rmi (-1)^{|u|}  \idem_1 . u \tensor \xp_k \xi \idem_1 . v \ , \\ \nonumber
\isoG_{U,V}\big(\ff^+_k \,\hat.\, (u \otimes v)\big)
  &=\isoG_{U,V}\big(\xp_k 
  \,\hat.\, 
  (u \otimes v)\big)
   = \isoG_{U,V}\big((\xp_k \otimes\one - \rmi\one\otimes \xm_k) 
   \,\hat.\, 
   (u \otimes v)\big) \\ \nonumber
	&= \xp_k \idem_1. u \otimes\xi\idem_1. v - \rmi (-1)^{|u|} \idem_1. u\tensor \xi\xm_k \idem_1. v \ .
\end{align}
Since $ \xpm_k \xi \idem_1 = (-1)^{\np-1} \xi \xmp_k \idem_1$ both sides are equal.
The calculations for the other generators and for odd $\np$ are equally straightforward.
\end{proof}

Altogether, we have shown:

\begin{proposition}\label{prop:funSQ-mult}
	With the isomorphisms $\isoG_{U,V}$ as in~\eqref{eq:isoG}, the functor $\funSQ: \repS \to \repQQ$ is multiplicative.
\end{proposition}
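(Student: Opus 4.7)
The plan is to verify the three conditions needed for $\funSQ$ to be multiplicative in the sense of Definition~\ref{def:transport}: the $\isoG_{U,V}$ form a natural family of isomorphisms in $\repQQ$, and a unit isomorphism $\isoG_{\one}: \funSQ(\one) \to \one$ exists. The unit isomorphism is trivial since $\funSQ$ is the identity on underlying vector spaces, so $\isoG_{\one}= \id$ works.

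Naturality is immediate from the formula \eqref{eq:isoG}: both $\idem_1$ and $\xi$ act via fixed elements of $\Salg$, so commuting $\isoG$ with $f \otimes g$ for morphisms $f,g$ in $\repS$ is a direct check. Invertibility follows by computing $\isoG_{U,V}^2 = \id$ on $U \otimes V$; the key identity is $\xi^2 = \idem_1$, which reduces to showing $(\xp_k+\xm_k)^2 = \idem_1$ for each $k$ (using the relation $\{\xp_k,\xm_k\}=\tfrac12(\one-\LL) = \idem_1$) and verifying the global phase $\rmi^{\np(\np-1)/2}$ squares correctly after commuting the $\xi_k$'s past each other.

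The main obstacle is verifying that $\isoG_{U,V}$ is an intertwiner, i.e.\ equation \eqref{eq:GammaUV-Q-intertw}. Since both $\Delta^\Salg$ and $\hat\Delta$ are algebra maps (Lemmas \ref{lem:D-alg-map} and \ref{lem:DeltaQhat-alg-map}), it suffices to check the relation on the generators $\K, \ff^\pm_k$. In the sectors \textbf{00}, \textbf{01}, \textbf{10} the map $\isoG$ restricts to the identity on $U \otimes V$ and the coproducts $\Delta^\Salg$ and $\hat\Delta$ essentially coincide under the identification~\eqref{eq:funSQ} (the $\LL\leftrightarrow \K^2$ and $\xpm \leftrightarrow \ff^\pm$ dictionary), so the identity holds directly. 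The one genuinely nontrivial case is sector \textbf{11}, where the correction term $\idem_1 . u \otimes (\xi-1)\idem_1 . v$ actively contributes. There one must show that for $u \otimes v \in \idem_1 U \otimes \idem_1 V$,
\begin{equation*}
  \ff^\pm_k .\, \isoG_{U,V}(u \otimes v) \,=\, \isoG_{U,V}\bigl(\xpm_k \,\hat.\, (u \otimes v)\bigr),
\end{equation*}
and similarly for $\K$. The crucial algebraic fact needed is the commutation relation $\xpm_k \xi \idem_1 = (-1)^{\np-1} \xi \xmp_k \idem_1$, which expresses how $\xi$ intertwines $\xp_k$ and $\xm_k$ up to a sign depending on the parity of $\np$. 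This sign is exactly what makes the non-obvious term $-2\K^{-1}\idem_1 \otimes \ff^{\pm}_k \idem_1$ in the $\QQ$-coproduct on the right-hand side cancel with the parity sign $(-1)^{|u|}$ produced when the $\svect$-action in $\Salg \otimes \Salg$ commutes $\xmp_k$ past $u$. After verifying the generator relations in sector \textbf{11}, the proposition follows by combining all the above steps.
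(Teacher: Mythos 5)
Your proposal is correct and follows essentially the same route as the paper: invertibility from $\xi^2=\idem_1$, naturality being immediate, and the intertwiner property checked on the generators $\K,\ff^\pm_k$, with the only nontrivial case being sector \textbf{11}, where the paper likewise uses the relation $\xpm_k\,\xi\,\idem_1=(-1)^{\np-1}\,\xi\,\xmp_k\,\idem_1$ to match the extra term in $\hat\Delta(\fpm_k)$ against the parity sign from the $\svect$-action.
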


\subsection{Transporting the associator}
In this section we 
transport the associator from $\repS$ to $\repQQ$ 
along the lines explained around the diagram \eqref{eq:transport-assoc-diag}.
As $\repQQ$ is the category of (finite-dimensional) $\Qhat$-modules in vector spaces, the associator on $\repQQ$ takes the form
\begin{equation}\label{eq:assoc-RepQ-via-Phi}
 \assocQQ_{U,V,W}(u\tensor v\tensor w)
 = 
 \asQQ. (u\tensor v\tensor w)  \gc
\end{equation}
where $u,v,w$ are elements of $U,V,W\in\repQQ$ and for some $\asQQ \in \QQ \tensor \QQ \tensor \QQ$. 
In order to compute~$\hat \Phi$
and to see that it agrees with~\eqref{eq:Qhat-coass-I}-\eqref{eq:Qhat-coass-III},
 we  choose $U=V=W=\Qhat$
and  solve the diagram \eqref{eq:transport-assoc-diag}  with $\fun$ replaced by $\funSQ$.

Recall the functor $\funQS$ inverse to $\funSQ$ from the proof of Proposition~\ref{prop:G-equiv}. 
Let us abbreviate $\Qhat_{\funQS} := \funQS(\Qhat)$. 
The $\Salg$-module structure on $\Qhat_{\funQS}$ is 
as explained in the proof of Proposition~\ref{prop:G-equiv}.
Commutativity of the diagram \eqref{eq:transport-assoc-diag} 
applied for $\fun=\funSQ$ and $\Theta = \isoG$
then reads 
for  $q\in\Qhat_{\funQS}^{\tensor3}$:
\begin{equation}\label{eq:Phi-from-Lambda}
\big( (\isoG_{\Qhat_{\funQS},\Qhat_{\funQS}} \tensor \id) \circ \isoG_{\Qhat_{\funQS} \tensor\Qhat_{\funQS},\Qhat_{\funQS}}\big)\big(\Sas \,\hat. \, q \big)
=
\asQQ \cdot \big[ (\id\tensor\isoG_{\Qhat_{\funQS},\Qhat_{\funQS}}) \circ \isoG_{\Qhat_{\funQS},\Qhat_{\funQS} \tensor\Qhat_{\funQS}}
(q) \big] \gc
\end{equation}
where $\isoG_{U,V}$ is defined in \eqref{eq:isoG} and the notation 
$\hat.$ emphasises that the action of $\Sas$ on $\Qhat_{\funQS} \otimes \Qhat_{\funQS} \otimes \Qhat_{\funQS}$ is as in $\svect$, 
	i.e.\ involves parity signs.
During the calculation it is important to be careful with the parity signs. 
For example, $\Sas \,\hat. \,(\one \tensor \one \tensor \one)$ can not be simplified to~$\Sas$ since $\one \in \Qhat_{\funQS}$ is not of definite parity.
Other examples are the actions of $\isoG_{\Qhat_{\funQS},\Qhat_{\funQS} \tensor\Qhat_{\funQS}}$ and
$\isoG_{\Qhat_{\funQS}\ot\Qhat_{\funQS} ,\Qhat_{\funQS}}$, which are, for $a,b,c \in \Qhat_{\funQS}$,
\begin{align}\label{eq:isoGabc}
\isoG_{\Qhat_{\funQS},\Qhat_{\funQS} \tensor\Qhat_{\funQS}}(a \tensor b \tensor c) &= 
a \tensor b \tensor c + 
\idem_1. a \tensor \big[ \copS((\xi-1)\idem_1) \,\hat.\, (b \otimes c) \big] \gc \\ \nonumber
\isoG_{\Qhat_{\funQS}\ot\Qhat_{\funQS} ,\Qhat_{\funQS}}(a \tensor b \tensor c) &= 
a \tensor b \tensor c + 
\copS(\idem_1). (a\ot b) \tensor \big[ (\xi-1)\idem_1
\,.\,c \big] \gp
\end{align}

\newcommand{\tPhi}{\underline{\hat\Phi}}
\newcommand{\tasQQ}{\underline{\hat\Phi}}
\newcommand{\tLambda}{\underline\Lambda}
We  use below the notations (in the spirit of~\eqref{eq:tilde-Lambda}) 
\be\label{eq:tilde-Phi}
\tPhi^{abc}=\hat\Phi^{abc} \cdot (\idem_a\tensor\idem_b\tensor\idem_c)
\ee
 with $\hat\Phi^{abc}$ as in \eqref{eq:Qhat-coass-I}
	(it is understood that the $\hat\Phi^{abc}$ not spelled out explicitly in \eqref{eq:Qhat-coass-I} are set to $\one^{\otimes 3}$). 
	We will also use the similar convention for $\tPhi^{abc}_{(k)}$  and $\tLambda^{abc}_{(k)}$.

In the following we will present the calculation of $\hat\Phi$
 in sector \textbf{101}. The other cases are similar. 
For brevity, we write $\omega$ instead of $\omega_\Salg$ for the parity involution in $\Salg$. 
Using~\eqref{eq:isoGabc} and~\eqref{eq:tilde-Phi},
	the equality in \eqref{eq:Phi-from-Lambda} 
then reduces to
\begin{align}
\tPhi^{101} \, .\, \big(\oneS\tensor\copS(\xi) \,\hat.\, (\omega^\np\tensor\id\tensor\id)(q)\big) &= \oneS\tensor\oneS\tensor\xi \, .\, (\underline\Lambda^{101} \,\hat.\, q)\\ \nonumber
&=(\omega^\np\tensor\omega^\np\tensor\id)\big(\oneS\tensor\oneS\tensor\xi \,\hat.\, (\underline\Lambda^{101} \,\hat.\, q)\big) \gc
\end{align}
where the parity involutions appear upon converting the action ``$\,.\,$'' in $\vect$ to the action ``$\,\hat.\,$'' in $\svect$, using that $\xi$ is odd if and only if $\np$ is odd. 
By setting
\be
q = \oneS\tensor\copS(\xi) \,\hat.\, (\omega^\np\tensor\id\tensor\id)(p)
\ee
for $p\in\QQ_{\funQS}^{\tensor 3}$
	(note that since $\xi^2=\idem_1$ the above map is a bijection between $p$'s and $q$'s), 
we get 
\begin{align}\label{eq:trans-assoc-sec-101}
\tasQQ^{101} .\,
p &= (\omega^\np\tensor\omega^\np\tensor\id)\Big(\oneS\tensor\oneS\tensor\xi \,\hat\cdot\, 
\tLambda^{101} \,\hat\cdot\, \oneS\tensor\copS(\xi) \,\hat.\, (\omega^\np\tensor\id\tensor\id)(p) \Big)   \\ \nonumber
	&= (\omega^\np\tensor\omega^\np\tensor\id)\Big(\prod_{k=1}^\np \big(\oneS\tensor\oneS\tensor\xi_k \,\hat\cdot\, 
	\tLambda^{101}_{(k)} \,\hat\cdot\, \oneS\tensor\copS(\xi_k)\big)\,\hat.\, (\omega^\np\ot\id\tensor\id)  (p)\Big)  \gc
\end{align}
where for the second equality we used  the factorisation in~\eqref{eq:Lambda-abc} and~\eqref{xi-def} and the fact that $\tLambda^{101}_{(k)}$ are even elements, as well as the equality 
\be
(\oneS\tensor\oneS\tensor\xi) \,\hat\cdot\,  (\oneS\tensor\copS(\xi)) = \prod_{k=1}^\np \big(\oneS\tensor\oneS\tensor\xi_k \,\hat\cdot\,  \oneS\tensor\copS(\xi_k) \big)
\gc
\ee
which follows from reordering the parity-odd elements $\xi_k$ and $\copS(\xi_k)$.
We now take $\asQQ^{101}$ in the form~\eqref{eq:Qhat-coass-II} and check the above equality.
First, we know that this equality holds in the $\np=1$ case~\cite[Sec.~7.4]{Gainutdinov:2015lja}, which takes the form
\be\label{eq:Phik-equality}
	\tasQQ^{101}_{(k)} \cdot \one\ot\K\ot\one 
	\,.\, 
	p 
	= (\omega\tensor\omega\tensor\id)\big(\oneS\tensor\oneS\tensor\xi_k \,\hat\cdot\,\tLambda^{101}_{(k)} \,
\hat\cdot\,
 \oneS\tensor\copS(\xi_k)\big)   
	\,\hat.\, 
(\id\tensor\omega\tensor\id)(p) 
\ee
where $k=1$ and  one has to use~\eqref{eq:Qhat-coass-III} and the convention in~\eqref{eq:tilde-Phi}.
We also note that the equality~\eqref{eq:Phik-equality} holds for general $\np$ and $k$.
 It can be rewritten as
\be\label{eq:trans-assoc-sec-101-2}
	\tasQQ^{101}_{(k)} \cdot \one\ot\K\ot\one
	\,.
	(\id\tensor\omega\tensor\id) (p) 
= (\omega\tensor\omega\tensor\id)\big(\oneS\tensor\oneS\tensor\xi_k \,\hat\cdot\,\tLambda^{101}_{(k)} \,
\hat\cdot\,
 \oneS\tensor\copS(\xi_k)\big)  \,\hat.\, p   \gp 
\ee
	By choosing $p=\one^{\ot 3}$ and applying \eqref{eq:trans-assoc-sec-101-2} multiple times on   \eqref{eq:trans-assoc-sec-101} it follows that
(recall that $\one \in \Qhat_{\funQS}$ is not of definite parity)
\begin{align}
\tasQQ^{101}=&\left(\omega^{\np-1}\tensor\omega^{\np-1}\tensor\id\right) \Big(\tasQQ^{101}_{(1)}  \cdot \one\ot\K\ot\one 
\,.\,
 (\id\tensor\omega\tensor\id) \Big(\tasQQ^{101}_{(2)}  \cdot \one\ot\K\ot\one 
\,.\, 
(\id\tensor\omega\tensor\id)  \\ \nonumber
&\cdots \Big(\tasQQ^{101}_{(\np-1)}\cdot \one\ot\K\ot\one
\,.\,
 (\id\tensor\omega\tensor\id) \left(\tasQQ^{101}_{(\np)}\cdot \one\ot\K\ot\one 
	\,.\,
	\omega^{\np-1}(\one)\tensor\one\tensor\one\right) \Big)\cdots\Big) \gp
	\end{align}
By identifying $\omega$ with the element $(\idem_0-\rmi\idem_1)\K$, 
the above expression simplifies to~\eqref{eq:Qhat-coass-II}.
Note that $\omega\cdot\idem_0=\K\cdot\idem_0$ and hence $\one\ot\K\ot\one\cdot\one\ot\omega\ot\one\cdot \idem_1\ot\idem_0\ot\idem_1=\idem_1\ot\idem_0\ot\idem_1$.

Altogether, we have shown:
\begin{proposition}\label{prop:repS-repQ-monoid}
The natural isomorphism $\assocQQ$ from~\eqref{eq:assoc-RepQ-via-Phi} with $\hat\Phi$ as in \eqref{eq:Qhat-coass-I}--\eqref{eq:Qhat-coass-III} defines an associator on $\repQQ$. With respect to this associator, 
the equivalence $\funSQ: \repS \to \repQQ$ with multiplicative structure $\isoG_{U,V}$ defined in~\eqref{eq:isoG} 
and $\id_{\oC} : \one \to \funSQ(\one)$, is $\oC$-linear monoidal equivalence.
\end{proposition}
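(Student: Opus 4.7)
My plan is to establish the proposition in two steps: first verify that $\hat\Phi$ as given in \eqref{eq:Qhat-coass-I}--\eqref{eq:Qhat-coass-III} is precisely the transport of $\Sas$ from $\repS$ to $\repQQ$ along the multiplicative functor $\funSQ$ equipped with the coherence $\isoG$, and then invoke the general principle that monoidal structure transports along an equivalence.

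The key technical point is that, since $\funSQ$ is a $\oC$-linear equivalence (Proposition~\ref{prop:G-equiv}) and is multiplicative (Proposition~\ref{prop:funSQ-mult}), there exists a unique natural family $\assocQQ$ making the diagram \eqref{eq:transport-assoc-diag} commute, and this unique family automatically satisfies the pentagon axiom. Hence it suffices to exhibit some element $\hat\Phi\in\QQ^{\otimes 3}$ solving \eqref{eq:Phi-from-Lambda} on the regular module $\Qhat_{\funQS}^{\otimes 3}$ (and then $\assocQQ$ defined by acting with $\hat\Phi$ is automatically natural and makes \eqref{eq:transport-assoc-diag} commute on arbitrary objects).

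To carry out the verification, I would check \eqref{eq:Phi-from-Lambda} sector-by-sector, for each choice $abc\in\{0,1\}^3$ of parity labels. The five trivial sectors (those where $\Sas^{abc}=\one^{\otimes 3}$ and $\isoG$ contributes no $\idem_1$-projector) reduce to $\hat\Phi^{abc}=\one^{\otimes 3}$, matching \eqref{eq:Qhat-coass-I}. For the three nontrivial sectors \textbf{010}, \textbf{101}, \textbf{111}, I follow the reduction strategy shown in the text for sector \textbf{101}: rewrite \eqref{eq:Phi-from-Lambda} after conjugating by $\oneS\otimes\copS(\xi)$ and parity involutions, use the factorisations \eqref{eq:Lambda-abc} and \eqref{xi-def} into commuting single-mode pieces, and then iteratively apply the $\np=1$ identity \eqref{eq:Phik-equality} (whose validity for each individual index $k$ follows from \cite[Sec.~7.4]{Gainutdinov:2015lja}) to collapse the sequence of conjugations to the explicit product formulas in \eqref{eq:Qhat-coass-II}--\eqref{eq:Qhat-coass-III}. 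The appearance of the Cartan prefactors $\K^{\np-1}\otimes\K^{\np-1}\otimes\one$ etc.\ arises exactly from combining the $(\omega^\np\otimes\omega^\np\otimes\id)$ wrappers with the single-mode factors and then identifying $\omega$ with $(\idem_0-\rmi\idem_1)\K$.

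The main obstacle is bookkeeping: tracking parity signs when commuting $\xi_k$ and $\copS(\xi_k)$ past the even but non-central factors $\tLambda^{abc}_{(k)}$, and keeping track of the nested applications of $(\omega^{\np-1}\otimes\omega^{\np-1}\otimes\id)$ that get peeled off one index at a time. Once the sector-by-sector identities are established, the proposition follows: $\hat\Phi$ solves \eqref{eq:Phi-from-Lambda}, so $\assocQQ$ defined via \eqref{eq:assoc-RepQ-via-Phi} is the transported associator; the pentagon axiom and the coherence with unit isomorphisms (which are trivially inherited since the units are tensor units of underlying vector spaces on both sides) are automatic from the corresponding statements for $\Sas$ on $\repS$ and the fact that $\funSQ$ with $\isoG$ transports them.
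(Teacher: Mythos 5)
Your proposal follows essentially the same route as the paper's proof: determine $\asQQ$ by evaluating the transport diagram \eqref{eq:transport-assoc-diag} on the regular module $\Qhat^{\tensor 3}$, reduce each sector to the $\np=1$ case of \cite{Gainutdinov:2015lja} via the factorisations of $\Sas^{abc}$ and $\xi$ into commuting single-index pieces, and obtain the pentagon and unit coherences for free from transport along the equivalence. One caveat in your sector bookkeeping: the five sectors with $\asQQ^{abc}=\one^{\tensor 3}$ are \emph{not} exactly those where both $\Sas^{abc}$ and the $\isoG$-insertions are trivial --- for instance $\Sas^{110}\neq\one^{\tensor 3}$ yet $\asQQ^{110}=\one^{\tensor 3}$, and in sector \textbf{011} the maps $\isoG$ do contribute $\idem_1$-terms on both sides of \eqref{eq:Phi-from-Lambda} --- so these sectors still require the same reduction argument; they merely happen to yield the identity after the cancellations.
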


\subsection{Transporting the braiding} \label{sec:trans-braid-SQ}
We now similarly transport the braiding along the monoidal equivalence $\funSQ: \repS\to\repQQ$ from Proposition~\ref{prop:repS-repQ-monoid},
recall the discussion above~\eqref{eq:transport-braiding-via-functorequiv}.

\begin{lemma}
For all $U,V \in \repQQ$, the isomorphisms
$\tau_{U,V}\circ \hat R$ with $\hat R$ from~\eqref{eq:hatR}
  are natural in $U,V$ and  make 
the diagram \eqref{eq:transport-braiding-via-functorequiv} commute for all $M,N\in\repS$: 
	\begin{equation}\label{eq:braiding-transport-comm-diag-SQ}
	\xymatrix@R=32pt@C=55pt{
	&\funSQ(M\ot_{\repS} N)\ar[r]^{\funSQ(\brS_{M,N})}\ar[d]^{\isoG_{M,N}}&\funSQ(N\ot_{\repS}M)\ar[d]^{\isoG_{N,M}}&\\
	&\funSQ(M)\ot_{\repQQ}\funSQ(N)\ar[r]^{\tau_{\funSQ(M),\funSQ(N)}\circ \hat R}&\funSQ(N)\ot_{\repQQ}\funSQ(M)&
	}
	\end{equation}
\end{lemma}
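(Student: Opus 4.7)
The plan is to verify commutativity of~\eqref{eq:braiding-transport-comm-diag-SQ} by a sector-by-sector computation on pure tensors, reducing in each sector to the $\np=1$ case already treated in~\cite{Gainutdinov:2015lja}. Naturality of $\tau_{U,V}\circ\hat R$ in $U,V$ is automatic: the symmetric braiding $\tau$ in $\vect$ is natural and $\hat R$ acts as a fixed element of $\QQ\ot\QQ$ on the tensor factors. Invertibility of $\tau_{U,V}\circ\hat R$ will follow a posteriori from commutativity of the diagram, since $\funSQ(\psi_{M,N})$ is invertible (because $\psi$ is a braiding on $\repS$ by Proposition~\ref{prop:functor-D-tensor-br}) and $\isoG_{M,N}$ is invertible. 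As $\funSQ$ is essentially surjective, it suffices to fix $M,N\in\repS$ in each sector $\mathbf{ab}$ and verify the identity on a generator $m\otimes n$.

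The first simplification is that $\isoG_{U,V}=\id$ whenever $U\in\repS_0$ or $V\in\repS_0$, because then either $\idem_1.u=0$ or $(\xi-1)\idem_1.v=0$ kills the second summand in~\eqref{eq:isoG}. Hence in the sectors $\mathbf{00}$, $\mathbf{01}$, $\mathbf{10}$ the desired equality reduces to $\hat R^{ab}\cdot(\idem_a\tensor\idem_b)=\funSQ\text{-transport of }\psi^{ab}$, i.e.\ $\hat R^{ab}$ acting in $\vect$ must reproduce $r^{ab}\cdot(\id\ot\omega_N^a)$ acting in $\svect$. To match these, I translate the $\svect$-action into a $\vect$-action via the rule~\eqref{eq:funSQ}: the parity $\omega_N$ on a $\Salg_1$-module becomes multiplication by $\z=\idem_0+\rmi\idem_1$ composed with the action of $\K$, and reordering any pair of parity-odd elements across a tensor factor yields a sign which is exactly the $\rho(\K)$ Cartan part of $\hat R$ in~\eqref{sflip-om}. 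This accounts for the overall factor $\rho(\K)$ (and the correction $\one\ot\K$ in $\hat R^{10}$ coming from the explicit $\omega_N$ in the $\mathbf{10}$ sector of $\psi$). The remaining nilpotent factors agree with $\prod_k\hat R^{ab}_{(k)}$ once one uses that for $k\neq l$ the elements $\ff^\pm_k$ and $\ff^\pm_l$ anti-commute, so reordering them contributes signs that combine with the $\svect$ signs of $r^{ab}=\prod_k r^{ab}_{(k)}$ to give the desired $\vect$-expression. This reduces each of the three sectors to the $\np=1$ identity verified in~\cite[Sec.~7.7]{Gainutdinov:2015lja}.

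Sector $\mathbf{11}$ is the only substantive case. Here $\isoG_{M,N}$ is non-trivial and involves $\xi=\rmi^{\np(\np-1)/2}\prod_k\xi_k$ with $\xi_k=\xp_k+\xm_k$. The plan is to write the diagram, after cancelling a common $\Gamma^{-1}$ on the right, as an equality in $\QQ\ot\QQ$ (acting on $\QQ_{\funQS}^{\ot2}$) of the form
\begin{equation*}
\hat R^{11}\cdot\hat\Delta(\xi)
\;=\;\bigl(\omega^\np\ot\omega^\np\bigr)\bigl[\,(\oneS\ot\xi)\,\hat\cdot\,r^{11}\,\hat\cdot\,\copS(\xi)\,\bigr],
\end{equation*}
analogous to~\eqref{eq:trans-assoc-sec-101}, where one has to be careful that $\xi$ is parity-odd for odd $\np$, so each transfer of a $\xi_k$ past a tensor factor produces a sign that is then converted into an $\omega$. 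Since the $\hat R^{11}_{(k)}$, $r^{11}_{(k)}$ and $\xi_k$ labelled by distinct $k$ super-commute, I will collect the factors and prove the identity one $k$ at a time. For each $k$ the single-index version is precisely the $\np=1$ identity established in~\cite[Sec.~7.7]{Gainutdinov:2015lja}, and the Cartan prefactors $\rho(\K)$, the overall $\rmi\beta$ in~\eqref{R11-exp}, and the global scalar $\rmi^{\np(\np-1)/2}$ hidden in $\xi$ must be shown to combine correctly; this is a straightforward book-keeping of powers of $\rmi$ and factors of $\K^\np$, similar to the one carried out for the associator in Section~\ref{sec:trans-braid-SQ}.

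The main obstacle is not conceptual but notational: it is the parity bookkeeping in sector $\mathbf{11}$, where $\hat\Delta(\xi)\neq\copS(\xi)$ even as plain linear maps and where each rearrangement of parity-odd generators across the symmetric braiding in $\svect$ produces a sign that must be tracked through the reduction to $\np=1$. Once the $\mathbf{11}$ identity is written in factorised form using~\eqref{eq:Qhat-coass-III} and the analogous factorisation~\eqref{eq:R-element-for-S-sectors} of $r^{11}$, the computation is a repetition of the method used to transport the associator in the sector $\mathbf{101}$, with the $\omega^{\np}$ factors arising from the need to normal-order the $\xi_k$'s on the two sides of the tensor product.
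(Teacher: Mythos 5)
Your overall strategy is the one the paper follows: naturality is immediate, invertibility of $\flip\circ\hat R$ is deduced a posteriori from commutativity of the diagram together with essential surjectivity of $\funSQ$, the sectors {\bf 00}, {\bf 01}, {\bf 10} reduce (since $\isoG$ is trivial there) to matching $\hat R^{ab}$ against the $\vect$-translation of $\RS^{ab}\cdot(\id\ot\omega^a)$, with the Cartan factor $\rho(\K)$ accounting for the super-flip signs and the extra $\one\ot\K$ in $\hat R^{10}$ coming from the explicit parity involution, and everything is reduced to $\np=1$ through the factorised forms of $\hat R^{ab}$, $\RS^{ab}$ and $\xi$. All of this agrees with the paper's proof.

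The gap is in the setup of the only substantive case, sector {\bf 11}. In the braiding diagram both $\isoG_{M,N}$ and $\isoG_{N,M}$ are evaluated on a tensor product of two \emph{single} modules, and in sector {\bf 11} each is simply $\id\ot L_\xi$ (plain left multiplication by $\xi$ on the second tensor factor); no coproduct of $\xi$ enters anywhere. Writing out the two paths, the condition on $\hat R^{11}$ is, for $u\in M$, $v\in N$,
\be
\flip_{M,N}\circ \hat R^{11} \,.\, (u\tensor v) \;=\;(\id\tensor L_{\xi}) \circ \sflip_{M,N}\circ \RS^{11}\smult(\id\tensor\omega) \bigl[u\tensor \xi \,.\, v\bigr]\ ,
\ee
which after moving the flips around becomes a sandwich $\xi\tensor\one\cdot\RS^{11}\cdot\one\tensor\xi$ dressed with parity operators $\omega^{\np}$, $\omega^{\np-1}$ and the factor $(-1)^\np\rho(\K)$ -- but it contains neither $\copS(\xi)$ nor $\hat\Delta(\xi)$. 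Your displayed identity $\hat R^{11}\cdot\hat\Delta(\xi)=(\omega^\np\ot\omega^\np)\bigl[(\oneS\ot\xi)\,\hat\cdot\,\RS^{11}\,\hat\cdot\,\copS(\xi)\bigr]$ imports the structure of the \emph{associator} transport (where $\isoG$ is applied to a module tensored with a tensor-product module, cf.\ \eqref{eq:isoGabc} and \eqref{eq:trans-assoc-sec-101}) into a situation where it does not occur. Since $\copS(\xi_k)\cdot\idem_1\ot\idem_1=\xi_k\ot\idem_1-\rmi\,\idem_1\ot\xi_k$ is not a one-sided multiplication, your condition is genuinely different from the commutativity condition, and verifying it factor by factor would not prove the lemma. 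Relatedly, there is no ``common $\Gamma^{-1}$ to cancel'': one $\isoG$ is pre-composed on the bottom path and the other post-composed on the top path; one instead uses $\isoG_{M,N}^{-1}=\isoG_{M,N}$ (from $\xi^2=\idem_1$) to bring both to one side. Once the sector-{\bf 11} condition is corrected as above, your factorised reduction to $\np=1$ and the parity bookkeeping do go through exactly as in the paper.
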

\begin{proof}
It is clear that for morphisms $f : U \to U'$ and $g : V \to V'$ in $\repQQ$ 
	we have
$\tau_{U',V'}\circ \hat R \circ (f \ot g) = (g \ot f) \circ \tau_{U,V}\circ \hat R$.
The lemma follows once we proved that \eqref{eq:braiding-transport-comm-diag-SQ} commutes, as it was already argued in the proof of 
Lemma~\ref{lemma:psi-iso}.

Though the monoidal isomorphisms $\isoG$ are non-trivial only in the {\bf 11}-sector the transport is non-trivial in all the four sectors due to the passage from $\svect$ to $\vect$. 
We recall first the braiding~\eqref{eq:RepS-braiding-sectors-def} in $\repS$. The commutativity of the diagram~\eqref{eq:braiding-transport-comm-diag-SQ} in the {\bf 00} and  {\bf 01} sectors then corresponds to the equation
\begin{gather}\label{eq:R0j}
\flip_{U,V}\circ \hat R^{0i} \,.\, (u\tensor v) = \sflip_{U,V}\circ \RS^{0i} \smult (u\tensor v),\quad \qquad i\in\{0,1\},\quad u\in U,\quad v\in V,
\end{gather}
where $\tau$ is the symmetric braiding in vector spaces.
This equation for $\np=1$ case holds due to~\cite{Gainutdinov:2015lja}, see also Remark~\ref{rem:Phi-R-N1}. Therefore, using the factorised expression for $\hat R^{0i}$ and $\RS^{0i}$, 
the equation~\eqref{eq:R0j} has  the unique solution 
as in~\eqref{R0j-exp},
where the first factor $\rho(\K)$ defined in \eqref{sflip-om} is due to  the braiding in super-vector spaces.

Recall then the expression of the braiding in the {\bf 10} and  {\bf 11} sectors of~$\repS$
in~\eqref{eq:RepS-braiding-sectors-def}.
The commutativity of the diagram~\eqref{eq:braiding-transport-comm-diag-SQ} corresponds thus to the equations
\begin{align}
\flip_{U,V}\circ \hat R^{10} \,.\, (u\tensor v) &= \sflip_{U,V}\circ \RS^{10} \smult (\id\tensor\omega) \bigl[u\tensor v\bigr],\\ \nonumber
\flip_{U,V}\circ \hat R^{11} \,.\, (u\tensor v) &=(\id\tensor L_{\xi}) \circ \sflip_{U,V}\circ \RS^{11}\smult(\id\tensor\omega) \bigl[u\tensor \xi \,.\, v\bigr],
\end{align}
 with the solutions
given in~\eqref{R10-exp} and~\eqref{R11-exp}, correspondingly.
The derivation of~\eqref{R10-exp} is obvious. To derive~\eqref{R11-exp}, we note that the equation on $\hat R^{11}$ can be rewritten as
\begin{equation}
 \hat R^{11} \,.\, (u\tensor v) =(-1)^\np\rho(\K)\,. (\id\tensor\omega^{\np-1})\Bigl[ (\id\tensor\omega)\bigl(\xi\tensor\one\cdot\RS^{11}\cdot \one\tensor\xi \bigr)\smult (\omega^{\np}(u)\tensor  v) \Bigr] \gp
\end{equation}
We use then the factorised expressions for $\xi$ in~\eqref{xi-def} 
and for $\RS^{11}$ in~\eqref{eq:R-element-for-S} together with the known solution for $\np=1$, 
recall Remark~\ref{rem:Phi-R-N1}, and it finally gives the expression in~\eqref{R11-exp}. By the construction of $\hat R$, the diagram~\eqref{eq:braiding-transport-comm-diag-SQ} commutes and this finishes the proof.
\end{proof}

Since $\repS$ is a braided monoidal category, the above lemma shows that
 $\hat R$ from~\eqref{eq:hatR} is the R-matrix 
 of $\QQ$  and by the construction of the transport of the braiding the family $\tau_{U,V}\circ \hat R$ defines the braiding in $\repQQ$.
Altogether we have shown:

\begin{proposition}\label{prop:functor-G-tensor-br}
The functor $\mathcal G$ in Proposition~\ref{prop:repS-repQ-monoid} is braided monoidal.
\end{proposition}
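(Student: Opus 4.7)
The plan is to reduce the statement to abstract nonsense about transport of structure along a monoidal equivalence, using the content of the preceding lemma. Concretely, the preceding lemma already isolates the only nontrivial input, namely that for every pair $M,N\in\repS$ the diagram~\eqref{eq:braiding-transport-comm-diag-SQ} commutes with bottom row $\tau_{\funSQ(M),\funSQ(N)}\circ\hat R$. So our task reduces to promoting this pointwise statement into the assertion that $\hat R$ induces a braiding on the whole of $\repQQ$ and that, with respect to it, $\funSQ$ is braided.

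First I would recall from Section~\ref{sec:funD} that $\repS$ is braided monoidal with braiding $\brS$, and from Proposition~\ref{prop:repS-repQ-monoid} that $\funSQ:\repS\to\repQQ$ is a $\oC$-linear monoidal equivalence with coherence isomorphisms $\isoG_{U,V}$. Given this, I would invoke the standard transport argument laid out around diagram~\eqref{eq:transport-braiding-via-functorequiv}: since $\funSQ$ is essentially surjective, there is at most one natural family $c^{\repQQ}_{U,V}:U\tensor V\to V\tensor U$ in $\repQQ$ making~\eqref{eq:braiding-transport-comm-diag-SQ} commute on every object of the form $\funSQ(M)$, and such a family automatically fulfils the hexagon identities because $\brS$ does in $\repS$ and because $\funSQ$ intertwines the associators.

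The preceding lemma constructs a candidate family, namely $c^{\repQQ}_{U,V}:=\flip_{U,V}\circ\hat R$, which is natural in $U,V$ (as $\hat R$ is a fixed element of $\QQ\tensor\QQ$) and which by the lemma satisfies~\eqref{eq:braiding-transport-comm-diag-SQ} for all $M,N\in\repS$. By the uniqueness just mentioned, $c^{\repQQ}$ is the transported braiding. Hence $\repQQ$ inherits the structure of a braided monoidal category (which is equivalent to saying that $\hat R$ is a universal $R$-matrix for the quasi-bialgebra $\QQ$ in the sense of Section~I:\ref*{I-sec:ribbon-qHopf}), and $\funSQ$ becomes braided monoidal by construction.

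The main obstacle one might worry about is verifying directly that $\hat R$ in \eqref{eq:hatR}--\eqref{eq:hatRk} satisfies the quasi-triangularity axioms; this is avoided here because the preceding lemma establishes exactly what is needed to conclude the hexagon identities via transport, rather than by a head-on computation. The remaining issue, namely essential surjectivity of $\funSQ$ onto the objects appearing in the hexagon, is handled by Proposition~\ref{prop:G-equiv}.
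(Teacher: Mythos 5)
Your proposal is correct and follows essentially the same route as the paper: the preceding lemma supplies naturality and commutativity of the transport diagram~\eqref{eq:braiding-transport-comm-diag-SQ}, and then the uniqueness of transported structure along the monoidal equivalence $\funSQ$ (together with the hexagon identities holding in $\repS$) yields that $\tau\circ\hat R$ is a braiding on $\repQQ$ and that $\funSQ$ is braided monoidal. The paper records exactly this in the two sentences following the lemma, so no further comment is needed.
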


\subsection{The quasi-bialgebra 
\texorpdfstring{$\Q$}{Q} is the twisting of \texorpdfstring{$\QQ$}{Q-hat}} \label{sec:twist-Q-Qhat}

We define the twist $\zeta$ --
 an invertible element in $\QQ\tensor\QQ$: 
\be \label{zeta-def}
	\zeta=\idem_0\tensor\one +\zeta^{10} \cdot \idem_1\tensor\idem_0+ \zeta^{11} \cdot\idem_1\tensor\idem_1\gc
\ee
where
\be
	 \zeta^{10}= \left(\prod_{k=1}^\np \zeta^{10}_{(k)}\right) \cdot \one\tensor\K 	\qcq 
	 \zeta^{11}= \left(\prod_{k=1}^\np \zeta^{11}_{(k)}\right) \cdot \one\tensor\K^{\np-1} 
\ee
and
\begin{align}
	\zeta^{10}_{(k)} &=\one\tensor\one + (1-\rmi) \one\tensor\fp_k\fm_k +(1-\rmi)\fm_k\K\tensor\fp_k + (1+\rmi)\fp_k\K\tensor\fm_k-2\fp_k\fm_k\tensor\fp_k\fm_k  \gc  \\ \nonumber
	\zeta^{11}_{(k)} &= \one\tensor\one-(1+\rmi)\one\tensor\fp_k\fm_k  \gp 
\end{align}
Its  inverse is
\be 
	\zeta^{-1}=\idem_0\tensor\one + (\zeta^{10})^{-1} \cdot \idem_1\tensor\idem_0 + (\zeta^{11})^{-1} \cdot \idem_1\tensor\idem_1
\ee
with
\begin{align*} 
	 (\zeta^{10})^{-1}&= \one\tensor\K^{-1} \cdot \left(\prod_{k=1}^\np (\zeta^{10}_{(k)})^{-1}\right) 	\qcq 
	 (\zeta^{11})^{-1}= \one\tensor\K^{-(\np-1)} \cdot \left(\prod_{k=1}^\np (\zeta^{11}_{(k)})^{-1}\right)  \gc \\ \nonumber
	(\zeta^{10}_{(k)})^{-1} &=\one\tensor\one +(1+\rmi) \one\tensor\fp_k\fm_k -(1-\rmi)\fm_k\K\tensor\fp_k -(1+\rmi)\fp_k\K\tensor\fm_k -2\fp_k\fm_k\tensor\fp_k\fm_k \gc \\ \nonumber
	(\zeta^{11}_{(k)})^{-1} &= \one\tensor\one -(1-\rmi)\one\tensor\fp_k\fm_k \gp
\end{align*}
Since $(\eps\tensor\id)(\zeta)=\one=(\id\tensor\eps)(\zeta)$
and $\zeta$ is invertible, $\zeta$ is indeed a twist. 
This means (see e.g. \cite{Dr-quasi} or \cite{ChPr}) that it defines another quasi-triangular quasi-bialgebra 
$\QQ_\zeta$ 
with the same algebra structure and counit 
as in $\QQ$, 
while the new coproduct $\Delta_\zeta$, $R$-matrix $R_\zeta$ and coassociator $\Phi_\zeta$ are given by
\begin{align}  \label{twist:copord}
	\Delta_\zeta(x) &= \zeta \, \hat\Delta(x) \, \zeta^{-1} \gc  \\ 
	R_\zeta &= \zeta_{21} \, \hat R \, \zeta^{-1} \gc \\
	\Phi_\zeta  &= (\zeta\tensor\one) \cdot (\hat\Delta\tensor\id)(\zeta) \cdot \hat\Phi \cdot (\id\tensor\hat\Delta)(\zeta^{-1}) \cdot (\one\tensor\zeta^{-1})    \gp\label{twist:coass}
\end{align}
The action of the twist defines a multiplicative structure on the identity functor between the representation categories of both quasi-bialgebras. 
In particular, the categories are braided monoidally
 equivalent. 

\begin{proposition}\label{prop-twist-Q-Qhat}
We have $\QQ_\zeta = \Q$, that is,
$\Q$ as a quasi-triangular quasi-bialgebra
defined in Section~\ref{sec:Q-def}
 is obtained from $\QQ$ by twisting via~$\zeta$.
\end{proposition}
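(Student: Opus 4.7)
The plan is to verify the three defining identities \eqref{twist:copord}--\eqref{twist:coass}, namely $\Delta_\zeta=\Delta$, $R_\zeta=R$ and $\Phi_\zeta=\Phi$, since the underlying algebra and counit structures of $\Q$ and $\QQ$ are already identical by construction. The unifying theme is that $\zeta$, like $\hat\Phi$ and $\hat R$, is block-diagonal with respect to the central idempotents $\idem_0\ot\idem_1,\idem_1\ot\idem_0,\idem_1\ot\idem_1$ (and trivial on $\idem_0\ot\idem_0$), and within each block it factorises as a product $\prod_{k=1}^\np(\cdots)_{(k)}$ over pairwise commuting factors indexed by the fermion label $k$. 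This makes a sector-by-sector, and then index-by-index, verification feasible.

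For $\Delta_\zeta=\Delta$, since both coproducts are algebra maps by Lemmas~\ref{lem:D-alg-map} and~\ref{lem:DeltaQhat-alg-map}, it suffices to check the equality on the generators $\K$ and $\fpm_i$. On $\K$ the calculation is immediate from \eqref{def:delta} and \eqref{def:co-prod-Qhat}, as $\K$ only reshuffles the idempotents $\idem_0,\idem_1$. On $\fpm_i$, the essential point is that conjugating $\hat\Delta(\fpm_i)=\fpm_i\ot\one+\K^{-1}\ot\fpm_i$ by $\zeta$ in sector $\mathbf{1\cdot}$ must turn the factor $\K^{-1}$ into $(\idem_0\pm\rmi\idem_1)\K$ as required by \eqref{def:delta}. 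This reduces to a short computation sector by sector using that $\fpm_i$ anticommutes with $\K$ and that the off-diagonal pieces $\zeta^{10}_{(k)},\zeta^{11}_{(k)}$ are built precisely so as to absorb the resulting factors of $\K^2$.

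For $R_\zeta=\zeta_{21}\hat R\zeta^{-1}=R$, I would compute the conjugation sector by sector. The Cartan part $\rho(\K)$ in \eqref{R0j-exp}--\eqref{R11-exp} must be turned into the form $\sum_{n,m}\beta^{nm}\rho_{n,m}\idem_n\ot\idem_m$ of \eqref{R+Riv}; this is a finite check on the subalgebra generated by $\K$ alone. The nilpotent part then reduces to checking in each sector a factorised identity of the form $\prod_k(\zeta_{(k)})_{21}\hat R^{ab}_{(k)}\zeta^{-1}_{(k)}=\prod_k(\one\ot\one-2\ff^-_k\omega_-\ot\ff^+_k)$ (up to the sector-dependent scalar and idempotent), which decouples into independent identities for each $k$ and can be verified at $\np=1$.

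The genuine work is in $\Phi_\zeta=\Phi$, because $\Phi$ is essentially trivial (only its sector $\mathbf{111}$ carries the scalar $\beta^2(\rmi\K)^{\np}$ plus the Cartan correction $\K^\np$ in sector $\mathbf{110}$), while $\hat\Phi$ has the intricate off-diagonal components $\hat\Phi^{010},\hat\Phi^{101},\hat\Phi^{111}$ of \eqref{eq:Qhat-coass-II}--\eqref{eq:Qhat-coass-III}. The strategy is: expand $(\zeta\ot\one)\,(\hat\Delta\ot\id)(\zeta)\,\hat\Phi\,(\id\ot\hat\Delta)(\zeta^{-1})\,(\one\ot\zeta^{-1})$ sector by sector on the idempotent decomposition of $\QQ^{\ot 3}$, and in each sector use the factorisations to reduce the desired cancellation to a collection of $\np=1$ identities, one for each fermion index $k$, analogous to the reductions carried out in Appendices~\ref{app:SF-S} and~\ref{app:Q-S}. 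In sectors $\mathbf{010}$, $\mathbf{101}$ and $\mathbf{111}$ one must show that the nilpotent part of $\hat\Phi$ is exactly annihilated by the twist cocycle built from $\zeta$, leaving at most the sector $\mathbf{111}$ scalar and the sector $\mathbf{110}$ Cartan piece of $\Phi$ from \eqref{Phi+Phi-inv}. The hardest step is the sector $\mathbf{111}$ check: there $\hat\Phi^{111}$ contains many off-diagonal terms, $(\hat\Delta\ot\id)(\zeta)$ and $(\id\ot\hat\Delta)(\zeta)$ produce the most cross-terms between distinct indices $k\neq l$, and one has to track the Cartan factors of $\K^\np$ carefully. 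I would handle this by first verifying the single-index identity at $\np=1$ by direct (symbolic) computation, and then showing that the multi-index case factorises because all pieces with different $k,l$ indices (super)commute, so that the full identity follows from the $\np$-fold product of the $\np=1$ case. With $\Phi_\zeta=\Phi$ established together with the two simpler identities, the proposition follows.
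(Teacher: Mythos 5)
Your proposal is correct and follows essentially the same route as the paper: the paper also checks $\Delta_\zeta=\Delta$ on generators sector by sector, computes $R_\zeta$ by conjugation using the factorised form of $\zeta$, and establishes $\Phi_\zeta=\Phi$ by reducing each sector to the $\np=1$ case (verified by computer algebra) via the commutation relations between factors with distinct fermion indices, collected in Lemma~\ref{lem-twist-Q-Qhat}. The only caveat is that the reduction for the coassociator is not a clean $\np$-fold product of independent identities but an iterated application of the single-index identity interleaved with reordering of the Cartan factors $\K^{\np-1}$, which you correctly anticipate as the delicate point.
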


	To prove the proposition, we will need the following lemma.

\begin{lemma}\label{lem-twist-Q-Qhat} 
We  have for  $a,b,c\in\{0,1\}$ and $1\leq i\neq j\leq \np$ the equalities
\begin{align} \label{lem-twist-Q-Qhat-1}
 \left[\hat\Phi_{(i)}^{abc} , \hat\Phi_{(j)}^{abc} \right] &=0 \gcg & 
 \left[(\Delta\ot\id)(\zeta^{11}_{(i)}),\hat\Phi^{101}_{(j)}\right] &=0 \gcg  \\ \nonumber
 \left[\zeta^{ab}_{(i)},\zeta^{ab}_{(j)}\right]&=0 \gcg &
 \left[ \hat\Phi_{(i)}^{101}  , \K\tensor\K\tensor\K \right]&=0
 \end{align}
 and
 \be \label{lem-twist-Q-Qhat-2}
\left[\zeta^{10}_{(i)}\tensor\one,(\id\tensor\Delta)(\zeta^{11}_{(j)})\right]=0 \gp
\ee
\end{lemma}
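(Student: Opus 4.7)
The plan is to reduce all five identities to a single parity argument. Equip $\QQ$ with the $\mathbb{Z}_2$-grading in which $\K$ and $\one$ are even and $\fpm_i$ are odd, extended to $\QQ^{\otimes n}$ by total fermion degree summed over tensor factors. Two preliminary observations are needed. First, $\hat\Delta$ (and indeed also the coproduct $\Delta$ of $\Q$) preserves this grading, as is immediate from the formulas \eqref{def:co-prod-Qhat}: $\hat\Delta(\K)$ is even, and each term of $\hat\Delta(\fpm_i)$ is odd and supported on fermion index $i$ only. Second, by inspection of \eqref{eq:Qhat-coass-III} and of the definitions of $\zeta^{ab}_{(k)}$ in Section~\ref{sec:twist-Q-Qhat}, every monomial appearing in $\hat\Phi^{abc}_{(k)}$ or $\zeta^{ab}_{(k)}$ has even total fermion degree, and involves only the fermion generators $\fpm_k$ with index $k$ (apart from factors of $\K$). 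Consequently $(\Delta\ot\id)(\zeta^{11}_{(i)})$ and $(\id\ot\Delta)(\zeta^{11}_{(j)})$ are also even and supported on the fermion indices $i$ and $j$ respectively.

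Given these facts, the identities follow from two elementary commutation principles in $\QQ^{\otimes n}$. For $i\neq j$ the generators $\fpm_i$ and $\fpm_j$ anti-commute, so for even monomials $x,y \in \QQ^{\otimes n}$ with $x$ supported on fermion index $i$ and $y$ on index $j$, the sign incurred by commuting is $(-1)^{\deg_i(x)\,\deg_j(y)} = 1$ since both degrees are even; hence $xy = yx$. This immediately yields the first, third and fifth identities of the lemma. The second identity follows in the same way once one uses the first preliminary observation to ensure that $(\Delta\ot\id)(\zeta^{11}_{(i)})$ remains supported on fermion index $i$. For the fourth identity, $\K$ anti-commutes with every $\fpm_k$ in each tensor factor, so $\K\tensor\K\tensor\K$ commutes with any element of even total fermion degree; as $\hat\Phi^{101}_{(i)}$ is such an element, the commutator vanishes.

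The substantive content is therefore concentrated in the termwise parity check for the expressions defining $\hat\Phi^{abc}_{(k)}$ and $\zeta^{ab}_{(k)}$, which is routine but voluminous. The main obstacle, if any, is purely bookkeeping: keeping careful track that the scattered powers of $\K$ do not disturb the parity count, and verifying the grading statement across all four sectors $abc \in \{010,101,110,111\}$ of $\hat\Phi$ and both sectors $ab \in \{10,11\}$ of $\zeta$. No conceptual difficulty is expected, and a reader willing to expand out one representative monomial in each case will be convinced that the general pattern holds.
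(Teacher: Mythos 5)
You reduce everything to the claim that two monomials of even total fermion degree, supported on distinct fermion indices $i\neq j$, automatically commute in $\QQ^{\tensor n}$, the sign of the exchange being $(-1)^{\deg_i(x)\deg_j(y)}$. This claim is false, and it already fails without any interference from $\K$. Since $\QQ$ is a quasi-bialgebra in $\vect$, the product on $\QQ^{\tensor n}$ is the \emph{plain} tensor product of algebras: there are no Koszul signs between tensor factors, so the sign produced by exchanging $x=x_1\tensor\cdots\tensor x_n$ and $y=y_1\tensor\cdots\tensor y_n$ is controlled by the \emph{factorwise} sum $\sum_m a_m b_m$ (with $a_m,b_m$ the fermion counts of $x_m,y_m$), corrected by the $\K$'s, and not by the product of the total degrees. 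For example, $x=\fp_i\tensor\fm_i\tensor\one$ and $y=\one\tensor\fp_j\tensor\fm_j$ both have even total fermion degree and disjoint index support, yet $xy=-yx$. This is not a hypothetical configuration: the monomials $\ff^-_j\K\tensor\ff^+_j\tensor\one$ in $\hat\Phi^{101}_{(j)}$ and $\one\tensor\ff^+_i\K\tensor\ff^-_i$ in $\hat\Phi^{101}_{(i)}$ have fermion-parity vectors $(1,1,0)$ and $(0,1,1)$, so the fermion transpositions alone contribute an \emph{odd} number of signs; these two terms do commute, but only because the $\K$ inside the middle factor contributes a compensating $-1$ when pushed past $\ff^+_j$. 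So the powers of $\K$ are not a bookkeeping nuisance that ``does not disturb the parity count''~--- they are essential to the cancellation, and the principle you state, which would render them irrelevant, cannot be what makes the lemma true.

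The lemma does hold, but the argument has to be the unglamorous one: for each pair of monomial types occurring in $\hat\Phi^{abc}_{(k)}$ and $\zeta^{ab}_{(k)}$, record in each tensor factor the numbers $(a_m,p_m)$ and $(b_m,q_m)$ of fermions and of $\K$'s; using $\{\fpm_i,\fpm_j\}=0$, $\{\fpm_i,\K\}=0$ and $\K\K=\K\K$, the exchange sign is $\prod_m(-1)^{a_mb_m+a_mq_m+p_mb_m}$, and one verifies that the exponent is even for every pair of terms actually present (in the $\hat\Phi^{101}$ example above it is $0+2+0$). This factorwise sign check is exactly what the paper's one-line proof refers to; it is short, but it cannot be replaced by your global parity argument. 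A grading argument of the kind you envisage would be valid in $\Salg^{\tensor n}$ with the \emph{super} tensor product, where even elements of disjoint support do commute; the passage to $\vect$ destroys the cross-factor Koszul signs and redistributes them into the $\K$'s, which is precisely why the termwise verification is unavoidable on the $\QQ$ side.
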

\begin{proof}  
	This can be easily checked using the anti-commutator relations of $\Q$ from \eqref{def:Q} and by recalling that $\Q$ and $\QQ$ have the same algebra structure. 
\end{proof}

	\newcommand{\tzeta}{\underline{\zeta}}
	\newcommand{\uPhi}{\underline{\Phi}}

\begin{proof}[Proof of Proposition~\ref{prop-twist-Q-Qhat}]
Using~\eqref{twist:copord}-\eqref{twist:coass}, we have to show that  $R_\zeta=R$, $\Phi_\zeta=\Phi$ and $\Delta_\zeta=\Delta$
	with $R$, $\Phi$ and $\Delta$ defined in \eqref{R+Riv}, \eqref{Phi+Phi-inv} and \eqref{def:delta}, respectively.  
For $\np=1$ we verified these equalities  using computer algebra. 
The proof for general $\np$ will rely on the $\np=1$ case.  

	We start with the coproduct and show the statement sector by sector.  
	Clearly,  $\Delta_\zeta$ in \eqref{twist:copord} agrees with the coproduct in $\Q$ from \eqref{def:delta} in the sectors {\bf00} and {\bf01}.
Since $\Delta_\zeta$ is an algebra map, it is enough to show the equality
$\Delta_\zeta = \Delta$
 on the generators.  
	Below we will use the  abbreviation
\be\label{def:tzeta+uphi}
		\tzeta^{ab} = \zeta^{ab}\cdot\idem_a\ot\idem_b \ , \qquad a,b\in\{0,1\}\ ,
\ee
	and similarly for $\tzeta^{ab}_{(k)}$.
By applying Lemma~\ref{lem-twist-Q-Qhat} and using that for $\np=1$ the statement is true we get in sector {\bf10}, for 
$x_i\in\{\K, \fp_i, \fm_i\}$,
\begin{align}  
	\tzeta^{10} \, \hat\Delta(x_i) \, (\tzeta^{10})^{-1} 
	&= \left(\prod_{k=1}^\np \tzeta^{10}_{(k)}\right) \one\tensor\K \cdot \hat\Delta(x_i) \cdot \one\tensor\K^{-1} \left(\prod_{k=1}^\np (\tzeta^{10}_{(k)})^{-1}\right) \\ \nonumber
	&= \left(\prod_{k\neq i} \tzeta^{10}_{(k)}\right) \cdot \Delta(x_i) \cdot \left(\prod_{k\neq i} (\tzeta^{10}_{(k)})^{-1}\right)
	= \Delta(x_i) \gc
\end{align}
where we also used that $\tzeta^{10}_{(k)}$ commutes with  $\Delta(x_i)$ for $i\ne k$.
For the sector {\bf11}, we prove it similarly.

The twisted $R$-matrix is 
$R_\zeta = 
\zeta_{21} \, \hat R \, \zeta^{-1}$
 and we begin with the case $\np=1$. 
 The direct calculation gives (for $\beta^4=-1$)
\begin{multline}
R_\zeta  =  \half\sum_{i,j,k=0}^1 2^k \rmi^{2k(i-j+1)-2ij} (\ff^-_1)^k \K^{k+i}\tensor (\ff^+_1)^k\K^j\\
\times (\idem_0\tensor\idem_0 + \rmi^{-i-k}\idem_1\tensor\idem_0 + \rmi^j\idem_0\tensor\idem_1 +  \rmi^{-i-k+j}\beta\idem_1\tensor\idem_1) \ .
\end{multline}
 We can rewrite $R_\zeta$ in a more compact  form involving the $\oZ_2$-parity $\omega=\K(\idem_0-\rmi\idem_1)$:
\begin{equation}
R_\zeta  = \sum_{n,m=0}^1 \beta^{nm} \rho_{n,m} \cdot (\one\tensor\one - 2 \ff^-_1 \omega\tensor \ff^+_1)\cdot\idem_n\tensor\idem_m\ ,
\end{equation}
with the Cartan factor
$\rho_{n,m}$ as in~\eqref{eq:car-fac}.
In the above calculation we used the identities $\rho_{n,0} = \rho(\K)\cdot\idem_n\tensor\idem_0$ and $\rho_{n,1} =(-\rmi)^{n}\K\tensor\one\cdot \rho(\K)\cdot\idem_n\tensor\idem_1$, with $\rho(\K)$ introduced in~\eqref{sflip-om}.
For general $\np$, 
using the factorised form of $\zeta$  we obtain the expression in terms of  $\np=1$ terms:
\begin{equation}
R_\zeta  \cdot\idem_n\tensor\idem_m =  \beta^{nm} \rho_{n,m} \cdot \prod_{k=1}^{\np}(\one\tensor\one - 2 \ff^-_k \omega\tensor \ff^+_k)\cdot\idem_n\tensor\idem_m,
\qquad n,m\in\oZ_2,
\end{equation}
where now $\beta^4=(-1)^{\np}$.
This proves $R=R_\zeta$ with the $R$-matrix for $\Q$ given in \eqref{R+Riv}
where one has to use $\omega_-=\omega$.

We now turn to the calculation of the twisted coassociator.
We prove the equality 
$\Phi_\zeta=\Phi$  
 in the sector {\bf101},
the proof for 	the
other sectors is similar. 
Recall the definition of the coassociators $\Phi$ and $\hat\Phi$ in \eqref{Phi+Phi-inv} and \eqref{eq:Qhat-coass-I}--\eqref{eq:Qhat-coass-III}, respectively.   
We have to show that $\Phi_\zeta^{101}= \one^{\otimes 3}$.
The equation \eqref{twist:coass} reduces to 
\begin{align}  \label{eq:Phi-twist-101}
	\Phi^{101}_{\zeta}\cdot\idem_1\ot\idem_0\ot\idem_1= \tzeta^{10}\tensor\one\cdot(\hat\Delta\tensor\id)(\tzeta^{11})\cdot\hat\uPhi^{101}\cdot (\id\tensor\hat\Delta)\bigl((\tzeta^{11})^{-1}\bigr) \gc
\end{align}
where we used the notations in \eqref{eq:tilde-Phi}
and~\eqref{def:tzeta+uphi},
and  that $\zeta=\one\ot\one$ in sector {\bf01}.
Using computer algebra
for $\np=1$
 it turned out that $\Phi^{101}_\zeta=\one^{\ot3}$. 
Taking this into account, the equality~\eqref{eq:Phi-twist-101} 
for $\np=1$
 is then equivalent to
\begin{align}  \label{eq:twist-assoc-1}
(\id\tensor\hat\Delta)(\tzeta^{11}_{(k)})\cdot
\one\tensor\K\tensor\one= \tzeta^{10}_{(k)}\tensor\one \cdot\one\tensor\K\tensor\one\cdot(\hat\Delta\tensor\id)(\tzeta^{11}_{(k)})\cdot\hat\uPhi^{101}_{(k)} \gc
\end{align}
where $k=1$ and we used that $\K^{-1}\idem_0 = \K \idem_0$. We note that the above equation also holds for general $k$.
Together with
 Lemma~\ref{lem-twist-Q-Qhat} we get for general $\np$:
\begin{align}  \label{eq:twist-calc-1}
	\tzeta^{10}\tensor\one\cdot(\hat\Delta\tensor\id)(\tzeta^{11})\cdot\hat\uPhi^{101}
	&= \big(\prod_{i=1}^\np\tzeta^{10}_{(i)}\tensor\one\big) \cdot \one\tensor\K\tensor\one
	\cdot \Big(\prod_{i=1}^\np(\hat\Delta\tensor\id) (\tzeta^{11}_{(i)}) \Big) \cdot \one\tensor\one\tensor\K^{\np-1}  \\ \nonumber
	&\quad
\times 
 (-\K)^{\np-1}\tensor \K^{\np-1}\tensor\one \cdot \big(\prod_{i=1}^\np\hat\uPhi_{(i)}^{101} \big) \cdot \K^{\np-1}\tensor\K\tensor\one \\ \nonumber
	&= \big(\prod_{i=1}^{\np-1}\tzeta^{10}_{(i)}\tensor\one\big) \cdot \Big(\tzeta^{10}_{(\np)}\tensor\one \cdot \one\tensor\K\tensor\one 
	\cdot (\hat\Delta\tensor\id) (\tzeta^{11}_{(\np)}) \cdot \hat\uPhi_{(\np)}^{101} \Big) \\ \nonumber
	&\quad
	\times
	 \Big(\prod_{i=1}^{\np-1}(\hat\Delta\tensor\id) (\tzeta^{11}_{(i)})  \cdot \hat\uPhi_{(i)}^{101}  \Big) \cdot \one\tensor\K^{\np} \tensor\K^{\np-1} \gc
\end{align}
where the first equality is by definition of $\zeta$ and $\hat\Phi^{101}$, while we used the relations in \eqref{lem-twist-Q-Qhat-1} at the second equality.
Next, by applying \eqref{eq:twist-assoc-1} for $k=\np$ to the previous expression we get
\begin{align}  \label{eq:twist-calc-2}
\tzeta^{10}\tensor\one\cdot(\hat\Delta\tensor\id)(\tzeta^{11})\cdot\hat\uPhi^{101}
	&= \big(\prod_{i=1}^{\np-1}\tzeta^{10}_{(i)}\tensor\one\big) \cdot \Big((\id\tensor\hat\Delta)(\tzeta^{11}_{(\np)})  \cdot \one\tensor\K\tensor\one \Big) \\ \nonumber
	&\quad 
	\times
	\Big(\prod_{i=1}^{\np-1}(\hat\Delta\tensor\id) (\tzeta^{11}_{(i)})  \cdot \hat\uPhi_{(i)}^{101}  \Big) \cdot \one\tensor\K^{\np} \tensor\K^{\np-1}\gp
\end{align}
Finally using first the relation~\eqref{lem-twist-Q-Qhat-2} and then doing the reordering of the terms in the products as in~\eqref{eq:twist-calc-1} and~\eqref{eq:twist-calc-2} for $i=\np-1, \dots, 1$,  we obtain
\begin{align}
	\text{RHS of}~\eqref{eq:twist-calc-2}&= \Big(\prod_{i=1}^\np (\id\tensor\hat\Delta)(\tzeta^{11}_{(i)}) \Big) \cdot \one\tensor\K\tensor\one \cdot \one\tensor\K^{\np} \tensor\K^{\np-1} \\ \nonumber
	&=(\id\tensor\hat\Delta)(\tzeta^{11}) 	\gp
\end{align}
Hence, $\Phi^{101}_\zeta=\one^{\ot3}=\Phi^{101}$ which completes the proof.
\end{proof}

\subsection{Transporting the ribbon twist}\label{sec:trans-twist-SQ}
The ribbon twist in $\catSF$ is described in Section~\ref{eq:SF-twist}. 
Following the same lines as in \cite[Sec.\,7.9]{Gainutdinov:2015lja}
and using that $\omega_{\B}$ is given by the left-action of 
$\W$ in~\eqref{Omega-def}
we can write, for $M\in\repQ$, $m\in M$,
\begin{equation}\label{eq:Q-twist}
\theta_M(m) = \Big( \idem_0 \prod_{k=1}^\np (\one + 2 \ff^+_k\ff^-_k) - \rmi \beta^{-1} \idem_1 \K \prod_{k=1}^\np (\one-2 \ff^+_k \ff^-_k) \Big).m \gp
\end{equation}
The action of the ribbon element $\ribbon\in\Q$ on $M$ is by convention
equals
 the inverse twist. It follows that  
\be
\ribbon =  (\idem_0-\beta\rmi\K\idem_1)\cdot\prod_{k=1}^\np(\one-2\fp_k\fm_k) \gp
\ee  

\subsection{Ribbon equivalence
\texorpdfstring{$\funQSF\colon\catSF\to\repQ$}{F:SF->Rep(Q)}}\label{app:rib-equiv-QSF}
Taking Appendices~\ref{app:SF-S} and \ref{app:Q-S} together 
(Propositions~\ref{prop:functor-D-tensor},~\ref{prop:functor-D-tensor-br} and~\ref{prop:repS-repQ-monoid},~\ref{prop:functor-G-tensor-br}, and~\ref{prop-twist-Q-Qhat}), we have established a ribbon equivalence
 $\funQSF\colon\catSF\to\repQ$ which is the composition 
 \be
 \funQSF=
\funSQ\circ \funCS
 \ee
 where the functor $\funCS$ is defined in~\eqref{eq:D-functor-sec0-def} and~\eqref{eq:D-functor-sec1-def}, and the functor $\funSQ$ is in Section~\ref{sec:RepS-RepQ}.
  The monoidal structure of $\funQSF$ is defined by
 \be
 \funQSF_{U,V} 
 \colon \funQSF(U*V) \xrightarrow{\sim} \funQSF(U)\tensor_{\repQ}\funQSF(V) 
 \ee
with 
 \be
 \funQSF_{U,V} = \big(\zeta.(-)\big) \circ \Gamma_{\funCS(U),\funCS(V)}\circ \funSQ(\Delta_{U,V})\gc
 \ee
 where  $\zeta$, $\Gamma_{U,V}$ and $\Delta_{U,V}$ are given in \eqref{zeta-def}, \eqref{eq:isoG} and in Section \ref{sec:funD}, respectively. 
  This monoidal equivalence is by construction braided and ribbon and thus finally proves Lemma~\ref{lem-trans-QSF}.


\section{Proof of Proposition~\ref{DH:main-prop}} \label{app:DD}

In this Appendix, we use for brevity $H$ instead of $H(\np)$, for the Hopf algebra introduced in~\eqref{H-alg1} and~\eqref{H-alg2}, and we fix the basis    in~$H$:
\be\label{eq:Hbasis}
H = \mathrm{span}_\oC \left\{f_{u_1}\cdots f_{u_m}k^v\,|\,
0\leq m \leq \np,
 1\leq u_1 < u_2 < \ldots < u_m\leq\np, 
v\in \mathbb{Z}_2
\right\}\ .
\ee
The coproduct $\Delta$ on the basis elements~\eqref{eq:Hbasis} is given by
\begin{align}\label{eq:Delta-H}
\Delta(f_{u_1}\cdots f_{u_m}k^v) &=
 \left(\prod_{u=u_1}^{u_m} (f_u\tensor k + \one\tensor f_u) \right)\cdot k^v\tensor k^v \\ \nonumber
&= \Big(\sum_{l=(l_1,\ldots,l_m)\in\oZ_2^m} f_{u_1}^{l_1}\cdots f_{u_m}^{l_m} \tensor f_{u_1}^{1-l_1} k^{l_1} \cdots f_{u_m}^{1-l_m} k^{l_m}\Big) \cdot k^v\tensor k^v \\ \nonumber
&= \Big(\sum_{l\in\oZ_2^m}
  \eps_l(m)
   \, f_{u_1}^{l_1}\cdots f_{u_m}^{l_m} k^v \tensor f_{u_1}^{1-l_1} \cdots f_{u_m}^{1-l_m} k^{v+|l|}\Big)  \gp
\end{align}
where $\eps_l(m)=(-1)^{\sum_{i=1}^{m-1} l_i((m-i)-\sum_{j=i+1}^m l_j)}  $ and $|l|=\sum_i l_i \in\oN$.

For the Drinfeld double construction we need  the dual Hopf algebra
\be
(H^{\rm{op}})^\ast = \left(H^\ast,\, 
\mu_{H^\ast}=\Delta^\ast,\, \one_{H^\ast}=
\eps^\ast,\,
 \Delta_{H^\ast}=\bigl(\mu^{\mathrm{op}}\bigr)^\ast,\, 
\eps_{H^\ast}=\eta^\ast,\,
 S_{H^\ast}=(S^{-1})^\ast\right)\gc
\ee
where
$\mu^{\mathrm{op}}$ is the opposite multiplication,
and we used the standard isomorphism of vector spaces $(H\tensor H)^\ast \cong H^\ast\tensor H^\ast$,
so $\Delta_{H^\ast}(\varphi) (a\tensor b) = \varphi(ba)$ and 
$(\varphi \cdot \psi)(a) = (\varphi \tensor \psi) \bigl(\Delta(a)\bigr)$.
Using~\eqref{eq:Delta-H}, we note that the canonical duals of the generators of $H$ do not generate $(H^{\rm{op}})^\ast$ since e.g.\ $f_i^\ast \cdot f_j^\ast = (f_i^\ast \tensor f_j^\ast)\circ \Delta=0$. 
 We then instead use  the linear forms
\begin{align}
\kappa = \one^\ast - k^\ast \gc \qquad \varphi_i = (f_ik)^\ast-f_i^\ast \gp
\end{align}
\begin{lemma} \label{lem:H-dual-Hopf-alg} 
The  algebra $(H^{\rm{op}})^\ast$ is generated by $\kappa$ and $\varphi_i$, $1\leq i\leq \np$, with the defining relations
\begin{align}\label{app:DH:Hs-alg}
\kappa^2 =\one_{H^\ast} \gc \qquad 
 \{\varphi_i,\varphi_j\}=0 \gc \qquad \{\varphi_i,\kappa\}=0\gc 
\end{align}
and	the set 
	\be\label{eq:Hast-basis}
\{\varphi_{i_1}\ldots \varphi_{i_m}\kappa^j \mid 0\leq m\leq\np,  1\leq i_1 < i_2 < \ldots < i_m\leq\np,  j\in\oZ_2\}
	\ee
	 forms a basis.
 The Hopf-algebra structure of $(H^{\rm{op}})^\ast$ is
\begin{align}\label{app:DH:Hs-hopf}
  \Delta_{H^\ast}(\kappa) 
	&= \kappa\tensor\kappa \gc \qquad  \Delta_{H^\ast}(\varphi_i)= \varphi_i\tensor\one_{H^\ast} + \kappa\tensor\varphi_i   \gc \\
	\eps_{H^\ast}(\kappa) &=1 \gc \qquad \qquad\; \eps_{H^\ast}(\varphi_i) = 0 \gc\nonumber \\
	S_{H^\ast}(\kappa) 
	&=\kappa \gc  \qquad \qquad  S_{H^\ast}(\varphi_i) = \varphi_i\kappa \gp\nonumber
\end{align}
\end{lemma}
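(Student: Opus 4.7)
The plan is to first verify that the proposed generators satisfy the stated relations (so the formulas define an algebra map from the abstract algebra with presentation \eqref{app:DH:Hs-alg} into $(H^{\mathrm{op}})^\ast$), then establish that the proposed PBW-type set \eqref{eq:Hast-basis} is actually a basis (this makes the algebra map an isomorphism), and finally compute the coalgebra and antipode structure on the generators directly from the defining formulas
\be
(\varphi\cdot\psi)(a)=(\varphi\tensor\psi)(\Delta(a)),\quad \Delta_{H^\ast}(\varphi)(a\tensor b)=\varphi(ba),\quad S_{H^\ast}(\varphi)(a)=\varphi(S^{-1}(a)).
\ee

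First I would verify the relations \eqref{app:DH:Hs-alg}. For $\kappa^2$, observe that $\kappa$ is supported on the $m=0$ part of the basis \eqref{eq:Hbasis}, so by \eqref{eq:Delta-H} only the $m=0$ terms $k^v\tensor k^v$ in $\Delta(k^v)$ contribute to $(\kappa\cdot\kappa)(k^v)$, giving $\kappa(k^v)^2=1$; and $(\kappa\cdot\kappa)$ vanishes on all basis elements with $m>0$, matching $\eps=\one_{H^\ast}$. For $\{\varphi_i,\kappa\}$ and $\{\varphi_i,\varphi_j\}$ the computation is the same in spirit: $\varphi_i$ is supported on $\{f_i, f_ik\}$, so when one expands $\Delta(f_{u_1}\ldots f_{u_m}k^v)$ via \eqref{eq:Delta-H}, only the terms with exactly one generator $f_i$ in the first tensor slot (for $\varphi_i\cdot\kappa$) or with $f_i$ in one slot and $f_j$ in the other (for $\varphi_i\cdot\varphi_j$) survive. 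The resulting sums are nonzero only when $m\in\{1,2\}$ and $\{u_1,\ldots,u_m\}\subset\{i,j\}$; in each such case the sign $\eps_l(m)$ from \eqref{eq:Delta-H} together with the $+$ versus $-$ signs coming from $\kappa=\one^\ast-k^\ast$ and $\varphi_i=(f_ik)^\ast-f_i^\ast$ produces exact cancellation with the opposite-ordered product $\kappa\cdot\varphi_i$ resp.\ $\varphi_j\cdot\varphi_i$. This is a finite bookkeeping check.

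Once the relations hold, the abstract algebra with presentation \eqref{app:DH:Hs-alg} has dimension at most $2^{\np+1}=\dim H^\ast$, spanned by the monomials in \eqref{eq:Hast-basis}. To upgrade the resulting surjection onto $(H^{\mathrm{op}})^\ast$ into an isomorphism it suffices to exhibit dual-type non-degeneracy. I would compute the pairing of $\varphi_{i_1}\cdots\varphi_{i_m}\kappa^j$ with $f_{u_1}\cdots f_{u_{m'}}k^v$ by iterating $\Delta$ and evaluating: only the term in $(\Delta^{m})(f_{u_1}\cdots f_{u_{m'}}k^v)$ distributing one $f_{i_a}$ into each of the first $m$ slots and the leftover $k$-power into the last slot contributes, and this forces $m=m'$ and $\{i_1,\ldots,i_m\}=\{u_1,\ldots,u_m\}$, giving a nonzero (in fact, up to sign, $\pm 1$ or $\pm 2$) diagonal pairing. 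Linear independence follows, and hence the basis claim.

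Finally I would compute the Hopf structure on generators. For $\Delta_{H^\ast}(\kappa)$, the value $\kappa(ba)$ depends only on whether $ba\in\{\one,k\}$, which for $a,b$ ranging over the basis \eqref{eq:Hbasis} forces $a,b\in\{\one,k\}$, and a direct tabulation recovers $\kappa\tensor\kappa$. For $\Delta_{H^\ast}(\varphi_i)$, similarly $\varphi_i(ba)\ne 0$ forces $ba\in\{f_i,f_ik\}$, so one of $a,b$ lies in $\{\one,k\}$ and the other in $\{f_i,f_ik\}$; collecting the four cases and tracking the signs from $\varphi_i=(f_ik)^\ast-f_i^\ast$ reproduces $\varphi_i\tensor\one_{H^\ast}+\kappa\tensor\varphi_i$. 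The counit formulas are immediate from $\eps_{H^\ast}(\varphi)=\varphi(\one)$. For the antipode, using $S^{-1}=S$ on $k$ and $S^{-1}(f_i)=-k^{-1}f_i=-kf_i=f_ik$ (this is the only mildly delicate point, requiring $\{f_i,k\}=0$ and $k^2=\one$), one verifies $S_{H^\ast}(\kappa)=\kappa$ and $S_{H^\ast}(\varphi_i)=\varphi_i\kappa$ by evaluating on the same small sets of basis vectors.

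The main obstacle is not conceptual but combinatorial: getting the signs $\eps_l(m)$ in the iterated coproduct \eqref{eq:Delta-H} to line up correctly when proving $\{\varphi_i,\varphi_j\}=0$ and when reading off $\Delta_{H^\ast}(\varphi_i)$, since the ordering conventions for the PBW basis of $H$ interact with the fermionic signs. Once a consistent convention is fixed everything reduces to finite case checks on monomials of length at most two.
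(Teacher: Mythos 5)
Your plan is correct and follows essentially the same route as the paper's proof: verify the relations on the generators directly from the coproduct formula, establish the basis claim by evaluating the monomials $\varphi_{i_1}\cdots\varphi_{i_m}\kappa^j$ against the monomial basis of $H$ (the paper packages this step as the closed formulas \eqref{eq:varphi-string}, proved by induction on $m$), and read off the coproduct, counit and antipode by evaluating on the few basis elements in the relevant supports. One small caveat: the pairing you describe is block-diagonal rather than diagonal, since each of $\varphi_{i_1}\cdots\varphi_{i_m}$ and $\varphi_{i_1}\cdots\varphi_{i_m}\kappa$ pairs nontrivially with both $f_{i_1}\cdots f_{i_m}$ and $f_{i_1}\cdots f_{i_m}k$, so linear independence rests on the invertibility of the resulting $2\times 2$ blocks (determinant $\pm 2$), which is exactly the content of \eqref{eq:varphi-string}.
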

\begin{proof} 
We begin with   the defining relations. The first one from~\eqref{app:DH:Hs-alg} is straightforward to check using  $\one_{H^\ast}=\eps= \one^\ast + k^\ast$. The next one follows from the calculation:
\begin{align}
\varphi_i \varphi_j &= \big(((f_ik)^\ast-f_i^\ast)\tensor ((f_jk)^\ast-f_j^\ast)\big)\circ \Delta 
= -((f_i k)^\ast\tensor f_j^\ast+f_i^\ast\tensor (f_j k)^\ast)\circ \Delta \\ \nonumber
&= (f_if_jk)^\ast+(f_if_j)^\ast =-((f_jf_ik)^\ast+(f_jf_i)^\ast)=  -\varphi_j \varphi_i \gc
\end{align}
where we used  the coproduct formula~\eqref{eq:Delta-H},
and similarly for the third relation.

Now, we construct a basis in $(H^{\rm{op}})^\ast$.
Using induction, one can check the relations
\begin{align} \label{eq:varphi-string}
(-1)^m\varphi_{i_1}\ldots \varphi_{i_m} 
&=  (f_{i_1}\ldots f_{i_m})^\ast  + (-1)^m(f_{i_1}\ldots f_{i_m}k)^\ast\\   \nonumber
\varphi_{i_1}\ldots \varphi_{i_m} \kappa &= (f_{i_1}\ldots f_{i_m})^\ast-(-1)^m (f_{i_1}\ldots f_{i_m}k)^\ast\gp
\end{align}
Since the set of elements 
$(f_{u_1}\ldots f_{u_m}k^v)^\ast$,
with indices as in~\eqref{eq:Hbasis}, 
forms a basis in $(H^{\rm{op}})^\ast$, we conclude from the above relations that the set~\eqref{eq:Hast-basis} is also a basis in $(H^{\rm{op}})^\ast$.

For the coproduct we have by definition $\Delta_{H^\ast}(\varphi)(a\otimes b) = \varphi(ba)$. This can be written as
\be\label{eq:phi-cop-proof}
\Delta_{H^\ast}(\varphi_i) = ((f_ik)^\ast-f_i^\ast)
	\circ\mu^{\mathrm{op}} \gp
\ee
For $f_i^\ast \circ\mu^{\mathrm{op}}(a\otimes b)$
we should find such pairs $(a,b)$ that $f_i^\ast(ba)$ is non-zero. There are four such pairs and we get
\be
f_i^\ast \circ\mu^{\mathrm{op}}
=  f_i^\ast\tensor \one^\ast + \one^\ast\tensor f_i^\ast - (f_ik)^\ast\tensor k^\ast + k^\ast\tensor f_ik^\ast \gc
\ee
and we get a similar expression for
$(f_ik)^\ast\circ\mu^{\mathrm{op}}$.
Combining the eight total terms in~\eqref{eq:phi-cop-proof} we get the coproduct in~\eqref{app:DH:Hs-hopf}. 
For the antipode we have
\begin{align} 
S_{H^\ast}(\varphi_i) 
&= ((f_ik)^\ast-f_i^\ast) \circ S^{-1} = f_i^\ast+(f_ik)^\ast  
= \varphi_i\kappa  \gp
\end{align}
Calculations for the coproduct and antipode  for $\kappa$, together with the counit, are straightforward.
This finally proves the lemma. 
\end{proof}

Next we present $D(H)$, the Drinfeld double of $H$, by following the conventions in \cite[Chapter IX]{Kassel-book}.
As a vector space $D(H)$ is $H^\ast\tensor H$. 
It has a Hopf algebra structure with unit $\one_{H^\ast}\tensor\one$ and  counit $\eps(\phi\tensor a) = \eps_{H^\ast}(\phi)\eps( a)$,  with the multiplication  defined as
\begin{equation}\label{app:DH:maps}
 (\phi\tensor a) \cdot (\psi\tensor b) = \sum_{(a)} 
\phi \cdot \psi (S^{-1}(a''')(-)a')\tensor a''b  \gc
\end{equation}
where $\psi(S^{-1}(a''')(-)a')$ stands for the map $(x\mapsto \psi(S^{-1}(a''')xa'))$.
The coproduct and the antipode are given by
\begin{equation}\label{app:DH:comaps}
\Delta (\phi\tensor a) = \sum_{(\phi),(a)} (\phi'\tensor a')\tensor (\phi''\tensor a'') \gc 
\quad S(\phi\tensor a) =\bigl(\one_{H^\ast}\tensor S(a)\bigr) \cdot \bigl(S_{H^\ast}(\phi)\tensor \one\bigr) \gp
\end{equation}
We will identify an element $a$ in $H$ with $\one_{H^\ast}\tensor a$ and an element $\phi$ in $(H^{\rm{op}})^\ast$ with $\phi\tensor \one$, 
in particular, we write $\phi a = (\phi\tensor\one)\cdot(\one_{H^\ast}\tensor a)$.
In this notation, the basis of $D(H)$ is
\be
\{\varphi_{i_1}\cdots\varphi_{i_m}\kappa^u f_{j_1}\cdots f_{j_n}k^v\mid0\leq m,n\leq\np, u,v\in\oZ_2\} \gc
\ee 
where as usual we assume that $1\leq i_1 < i_2 < \ldots < i_m\leq\np$ and similarly for the $j$'s indices.
It is well-known that $D(H)$ is quasi-triangular: for any basis $\{b_i| \, i\in I\}$ in $H$ the $R$-matrix is given by 
(using the convention above we interpret $b_i, b_i^*\in D(H)$)
\be\label{eq:DH-Rmatrix}
R_D=\sum_{i\in I} b_i\tensor b_i^\ast\gp
\ee

\begin{proposition} \label{lem:Dd-alg}
$D(H)$ is generated by $k,\kappa,f_i,\varphi_j$ with defining relations \eqref{H-alg1}, \eqref{app:DH:Hs-alg} and 
\begin{align} \label{app:DH:alg}
	k\kappa &=\kappa k \gc \qquad \varphi_i k=-k\varphi_i \gc \qquad f_i\kappa=-\kappa f_i \gc\qquad 
	[f_i,\varphi_j]=\delta_{i,j}(\kappa -k) \gp
\end{align}
The Hopf algebra structure is given by \eqref{H-alg2} and \eqref{app:DH:Hs-hopf}. 
The $R$-matrix for $D(H)$ is given by
\begin{align} \label{DH:R-mat}
R_D =\frac12 (\one\tensor\one+\one\tensor\kappa+k\tensor\one-k\tensor\kappa)\Big(\sum_{\substack{0\leq m\leq\np,\\i_1<\ldots < i_m}} 
(-1)^m  f_{i_1}\ldots f_{i_m} \tensor\varphi_{i_1}\ldots\varphi_{i_m} \Big) \gp
\end{align}
\end{proposition}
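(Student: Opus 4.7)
\medskip

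\noindent\textbf{Proof plan for Proposition~\ref{lem:Dd-alg}.} The strategy is to read off the algebra structure of $D(H)$ from the general Drinfeld double formulas \eqref{app:DH:maps}--\eqref{app:DH:comaps}, using the two sub-Hopf-algebra embeddings
\[ H \hookrightarrow D(H), \quad a\mapsto 1_{H^\ast}\ot a \qquad\text{and}\qquad (H^{\operatorname{op}})^\ast \hookrightarrow D(H), \quad \phi\mapsto \phi\ot 1. \]
These are standard for the Drinfeld double, so the defining relations \eqref{H-alg1} of $H$ and \eqref{app:DH:Hs-alg} of $(H^{\operatorname{op}})^\ast$ (the latter provided by Lemma~\ref{lem:H-dual-Hopf-alg}) are inherited. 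Moreover, Lemma~\ref{lem:H-dual-Hopf-alg} identifies $\{\kappa,\varphi_j\}$ as generators of the $H^\ast$-factor, so together with $\{k,f_i\}$ they generate all of $D(H)$.

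The cross relations \eqref{app:DH:alg} come from specialising \eqref{app:DH:maps} to $\phi=1_{H^\ast}$ and $b=1$, which gives
\[ a\,\psi \;=\; \sum_{(a)} \psi\bigl(S^{-1}(a''')\,\cdot\,-\,\cdot\,a'\bigr)\,a'',\qquad a\in H,\ \psi\in (H^{\operatorname{op}})^\ast. \]
I would evaluate this on the four pairs $(a,\psi)\in\{k,f_i\}\times\{\kappa,\varphi_j\}$. The required data is: $\Delta^2(k)=k\ot k\ot k$; $\Delta^2(f_i)=f_i\ot k\ot k + 1\ot f_i\ot k + 1\ot 1\ot f_i$ from \eqref{eq:Delta-H}; and the antipode inverse $S^{-1}(f_i)=f_i k$, which follows from $S^2(f_i)=kf_ik=-f_i$ so that $S^{-1}=S^3$. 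Evaluating $\kappa$ and $\varphi_j$ on $k\cdot x\cdot k$, $k\cdot x$, $f_ik\cdot x$ (for $x$ in the basis \eqref{eq:Hbasis}) is a short bookkeeping exercise and yields the three sign relations together with the last relation in the form $\{f_i,\varphi_j\}=\delta_{ij}(\kappa-k)$, where the bracket in \eqref{app:DH:alg} is to be read as the graded commutator of the odd elements $f_i,\varphi_j$.

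Next, to see that these generators and relations give a complete presentation, I would observe that the cross relations allow one to rewrite any monomial as a linear combination of ordered monomials of the form $\varphi_{i_1}\cdots\varphi_{i_m}\kappa^u f_{j_1}\cdots f_{j_n} k^v$ with $i_1<\cdots<i_m$, $j_1<\cdots<j_n$ and $u,v\in\oZ_2$. These are at most $2^{2\np+2}=\dim D(H)$ in number, so no further relations can exist. The Hopf-algebra structure formulas \eqref{H-alg2} and \eqref{app:DH:Hs-hopf} are then immediate from \eqref{app:DH:comaps}, since the coproduct and antipode of $D(H)$ restrict on each of the sub-Hopf-algebras $H$ and $(H^{\operatorname{op}})^\ast$ to their original structure maps.

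Finally, for the $R$-matrix \eqref{DH:R-mat}, I would use the canonical expression \eqref{eq:DH-Rmatrix} with the basis $\{f_I k^v\}$ of $H$ from \eqref{eq:Hbasis}, with $f_I=f_{i_1}\cdots f_{i_m}$ and $\varphi_I=\varphi_{i_1}\cdots\varphi_{i_m}$. The relations \eqref{eq:varphi-string} are an invertible linear system giving the dual basis
\[ (f_I)^\ast=\tfrac12\bigl((-1)^{m}\varphi_I+\varphi_I\kappa\bigr),\qquad (f_Ik)^\ast=\tfrac12\bigl(\varphi_I-(-1)^{m}\varphi_I\kappa\bigr). \]
Summing $f_I k^v\ot (f_I k^v)^\ast$ first over $v\in\{0,1\}$ for fixed $I$, and using the sign identities $\kappa\varphi_I=(-1)^m\varphi_I\kappa$ and $kf_I=(-1)^m f_I k$ to bring every term to the shape $(\cdots)\cdot (f_I\ot\varphi_I)$, collapses the four summands into the single prefactor $\tfrac12(1\ot 1+1\ot\kappa+k\ot 1-k\ot\kappa)$ times $(-1)^m f_I\ot\varphi_I$. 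Summing over $I$ yields \eqref{DH:R-mat}. The only delicate point in the whole proof is the consistent tracking of the sign $(-1)^m$ arising from the graded commutation of $k$, $\kappa$ past the odd generators; none of the individual steps is computationally heavy.
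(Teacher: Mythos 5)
Your strategy coincides with the paper's proof: you inherit \eqref{H-alg1} and \eqref{app:DH:Hs-alg} from the two sub-Hopf algebras, obtain the cross relations by straightening $a\psi$ with the double's multiplication \eqref{app:DH:maps} (specialised exactly as you do, with $\Delta^{2}(f_i)$ from \eqref{eq:Delta-H} and $S^{-1}(f_i)=f_ik$), and derive $R_D$ from the canonical expression \eqref{eq:DH-Rmatrix} in the basis \eqref{eq:Hbasis} via the dual-basis formulas \eqref{eq:varphi-string}; your reordering identities $\kappa\varphi_I=(-1)^m\varphi_I\kappa$, $kf_I=(-1)^m f_Ik$ and the collapse to the prefactor $\tfrac12(\one\ot\one+\one\ot\kappa+k\ot\one-k\ot\kappa)$ reproduce the paper's computation. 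Your explicit ordered-monomial/dimension count for completeness of the presentation is a sensible addition which the paper leaves implicit.

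There is, however, one genuine error: the mixed relation is \emph{not} an anticommutator, and the bracket in \eqref{app:DH:alg} is the ordinary commutator (the paper writes $\{\,\cdot\,,\,\cdot\,\}$ whenever an anticommutator is meant, as in \eqref{H-alg1}, \eqref{app:DH:Hs-alg}, \eqref{def:Q}). Carrying out the straightening you set up gives
\begin{equation*}
f_i\varphi_j
\;=\; \varphi_j\bigl(k\,(-)\,f_i\bigr)\,k
\;+\;\varphi_j\bigl(k\,(-)\bigr)\,f_i
\;+\;\varphi_j\bigl(f_ik\,(-)\bigr)\,\one
\;=\; -\delta_{i,j}\,\one_{H^\ast}\,k \;+\; \varphi_j\,f_i \;+\; \delta_{i,j}\,\kappa \ ,
\end{equation*}
because the middle functional is $-f_j^{\ast}+(f_jk)^{\ast}=\varphi_j$ (with a plus sign), while $\one^{\ast}+k^{\ast}=\one_{H^\ast}$ and $\one^{\ast}-k^{\ast}=\kappa$. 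Hence $f_i\varphi_j-\varphi_jf_i=\delta_{i,j}(\kappa-k)$, whereas $\{f_i,\varphi_j\}=2\varphi_jf_i+\delta_{i,j}(\kappa-k)$, which is not $\delta_{i,j}(\kappa-k)$ since $\varphi_jf_i\neq 0$ in $D(H)$. The commutator form is also the one consistent with the isomorphism \eqref{iso-Dd-Q} used afterwards: $\Psi(f_i)\Psi(\varphi_j)-\Psi(\varphi_j)\Psi(f_i)=2\rmi\,\delta_{i,j}\,\idem_1\K=\Psi(\kappa-k)$, while the anticommutator of the images equals $2(\fp_j\fm_i-\fm_i\fp_j)\omega_-$, which is not $\Psi(\kappa-k)$. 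So keep your method, but record the relation as $[f_i,\varphi_j]=f_i\varphi_j-\varphi_jf_i=\delta_{i,j}(\kappa-k)$; with the anticommutator instead, the presented algebra would not admit the surjection onto $D(H)$ on which your dimension argument relies, and the proposition would not follow.
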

\begin{proof}
Using \eqref{app:DH:maps} it is straightforward  to show the relations in \eqref{app:DH:alg}. 
For example, we get the last equality 
since $\sum_{(f)} f_i'\tensor f_i'' \tensor f_i''' = f_i\tensor k\tensor k+ \one\tensor f_i\tensor k + \one\tensor \one\tensor f_i$ and then
\begin{align}
f_i\varphi_j  
&= \varphi_j(k-f_i)  k + \varphi_j(k-)  f_i + \varphi_j(f_ik-)  \one \\ \nonumber
&= -\delta_{i,j} (\one^\ast+k^\ast)  k + (-f_j^\ast+(f_jk)^\ast)  f_i + \delta_{i,j} (\one^\ast-k^\ast)  \one  \gp
\end{align}
The coproduct, counit, and antipode on the generators were already computed.

For the $R$-matrix in~\eqref{eq:DH-Rmatrix}, we fix the basis $\{b_i\}$ as in~\eqref{eq:Hbasis}. Then
\begin{align}
	R_D &= \sum_{\substack{0\leq m\leq\np,\\1\leq i_1<\ldots < i_m\leq\np}} \sum_{0\leq j\leq 1}  f_{i_1}\ldots f_{i_m}k^j \tensor (f_{i_1}\ldots f_{i_m}k^j)^\ast \gp
\end{align}
By applying \eqref{eq:varphi-string} 
we get
\begin{align}
(f_{i_1}\ldots f_{i_m})^\ast &= \ffrac12\varphi_{i_1}\ldots \varphi_{i_m}((-1)^m\one_{H^\ast}+\kappa) \label{eq:f-phi-1} \gc \\ \nonumber
(f_{i_1}\ldots f_{i_m}k)^\ast&= \ffrac12\varphi_{i_1}\ldots \varphi_{i_m}(\one_{H^\ast}-(-1)^m\kappa))  
\end{align}
and therefore
\be
	R_D = \Big(  \sum_{\substack{0\leq m\leq\np,\\i_1<\ldots < i_m}} f_{i_1}\ldots f_{i_m} \tensor  \varphi_{i_1} \ldots \varphi_{i_m} \Big) 
	    \ffrac12 \big(	(-1)^m \one\tensor\one+\one\tensor\kappa+k\tensor\one-(-1)^m k\tensor\kappa \big) 
	    \ee
which  gives~\eqref{DH:R-mat} using the relations between $f_i$ and $k$, and $\varphi_i$ and $\kappa$.
\end{proof}

\begin{proposition}
For any $\np\geq1$,
the linear map 
\begin{align}\label{iso-Dd-Q}
	\Psi (\varphi_{i_1}\cdots\varphi_{i_m}\kappa^u f_{j_1}\cdots f_{j_n}k^v) 
	= (-1)^{nu}\,\rmi^{n(n-1)}2^m \fp_{i_1}\cdots\fp_{i_m}\fm_{j_1}\cdots\fm_{j_n}\,\omega_+^u\omega_-^{v+n} \ ,
\end{align}
with $\omega_\pm$  from~\eqref{def:delta},
defines an isomorphism of  $\oC$-algebras  
$D\bigl(H(\np)\bigr)\xrightarrow{\sim}\Q(\np,\beta)$.\\
Furthermore, for even $\np$ this map
defines an isomorphism of Hopf algebras
 between $D\bigl(H(\np)\bigr)$ and	 $\Q(\np,\pm 1)$. \\
Moreover, for $\beta=1$ the $R$-matrix in $\Q$ defined in \eqref{R+Riv} and the image of $R_D$ under $\Psi\tensor\Psi$ coincide,
	i.e.\ $\Psi$ is an isomorphism of quasi-triangular Hopf algebras.
\end{proposition}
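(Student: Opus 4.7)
The plan is to verify the three assertions in sequence, each by a check on generators. The map $\Psi$ acts on the generators of $D(H(\np))$ as $\Psi(k)=\omega_-$, $\Psi(\kappa)=\omega_+$, $\Psi(f_i)=\fm_i\omega_-$, $\Psi(\varphi_i)=2\fp_i$, and the general formula \eqref{iso-Dd-Q} will then follow by multiplying these out in $\Q$ using only \eqref{def:Q}.

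For the algebra-isomorphism assertion, the approach is to verify that these generator images satisfy the defining relations of $D(H(\np))$ listed in Proposition~\ref{lem:Dd-alg}. The key identities needed are $\omega_\pm^2=\one$, $\omega_+\omega_-=\omega_-\omega_+=\K^2=\idem_0-\idem_1$, and $\omega_\pm\fpm_i=-\fpm_i\omega_\pm$, all immediate from \eqref{def:Q}. The only non-trivial relation is the cross-relation $[f_i,\varphi_j]=\delta_{ij}(\kappa-k)$, which after applying $\Psi$ reduces via $\omega_-\fp_j=-\fp_j\omega_-$ to $-2\{\fm_i,\fp_j\}\omega_-=-2\delta_{ij}\idem_1\omega_-=2\rmi\delta_{ij}\idem_1\K=\delta_{ij}(\omega_+-\omega_-)$, as required. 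Bijectivity then follows from the dimension count $\dim D(H)=(\dim H)^2=2^{2\np+2}=\dim\Q$ together with the observation that the image of a PBW-type basis of $D(H)$ spans $\Q$ (by the invertible linear change of variable between $\K$ and $\omega_\pm$ on each idempotent sector).

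For the Hopf-algebra assertion, assume $\np$ is even and $\beta^2=1$. By Remark~\ref{rem:Q-Hopf-alg} we have $\Salpha=\Sbeta=\one$ and $\Phi=\one^{\otimes 3}$; moreover the coproduct, counit, and antipode in \eqref{def:delta} and \eqref{eq:Q-antipode-def} become $\beta$-independent in this regime (one checks $\Sbeta=\idem_0+\beta^2(\rmi\K)^{\np}\idem_1=\one$ using $(\rmi\K)^{2m}\idem_1=\idem_1$), so it suffices to verify Hopf-compatibility at $\beta=1$ on generators. A direct expansion gives $\Delta(\omega_\pm)=\omega_\pm\otimes\omega_\pm$, matching the group-likes $k,\kappa$ of $D(H)$; the $(\omega_\pm,\one)$-primitivity of $\fpm_i$ in \eqref{def:delta} transports to the $(k,\one)$- and $(\kappa,\one)$-primitivity of $f_i$ and $\varphi_i$ because $\Psi(k)=\omega_-$ and $\Psi(\kappa)=\omega_+$; and the antipode match uses $S(\omega_\pm)=\omega_\pm$ (from $\omega_\pm^2=\one$) together with the direct calculation $S(\fm_i\omega_-)=\omega_-\cdot\fm_i\omega_-=-\fm_i=\Psi(-f_ik)$.

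The main obstacle is the final assertion, $(\Psi\otimes\Psi)(R_D)=R$ at $\beta=1$. The plan is to split both $R_D$ in \eqref{DH:R-mat} and $R$ in \eqref{R+Riv} into a Cartan prefactor and a nilpotent product and match each factor separately. For the nilpotent part, $(\Psi\otimes\Psi)\bigl(\sum_{i_1<\cdots<i_m}(-1)^m f_{i_1}\cdots f_{i_m}\otimes\varphi_{i_1}\cdots\varphi_{i_m}\bigr)$ equals $\sum(-2)^m(\fm_{i_1}\omega_-)\cdots(\fm_{i_m}\omega_-)\otimes\fp_{i_1}\cdots\fp_{i_m}$, which is precisely the expansion of the ordered product $\prod_{k=1}^{\np}(\one\otimes\one-2\fm_k\omega_-\otimes\fp_k)$ in \eqref{R+Riv}. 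The harder step is the Cartan comparison: $\Psi\otimes\Psi$ sends the prefactor of $R_D$ to $\tfrac12(\one\otimes\one+\one\otimes\omega_++\omega_-\otimes\one-\omega_-\otimes\omega_+)$, and one must show this equals $\sum_{n,m\in\{0,1\}}\rho_{n,m}\,\idem_n\otimes\idem_m$ from \eqref{eq:car-fac} at $\beta=1$. This reduces to a finite linear-algebra check on the basis $\{\K^i\otimes\K^j\}_{i,j\in\{0,1\}}$: writing $\rho_{n,m}=\tfrac12\sum_{i,j}(-1)^{ij}\rmi^{-in+jm}\K^i\otimes\K^j$, summing over $n,m$ regroups via $\sum_n\rmi^{-in}\idem_n=\idem_0+\rmi^{-i}\idem_1$ and the identity $\K^i(\idem_0+\rmi^{-i}\idem_1)=\omega_-^i$ (and analogously $\omega_+^j$) into the four Cartan terms.
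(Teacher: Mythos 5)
Your proposal is correct and follows essentially the same route as the paper's own proof: define $\Psi$ on generators, verify the defining relations of $D(H(\np))$ from Proposition~\ref{lem:Dd-alg} (the only nontrivial one being the cross-relation, which you handle correctly), conclude bijectivity from the dimension count, check coalgebra and antipode compatibility on generators for even $\np$, and match the image of $R_D$ with $R$ by splitting both into a Cartan prefactor and a nilpotent product. Your regrouping $\sum_{n,m}\rho_{n,m}\,\idem_n\otimes\idem_m=\tfrac12\sum_{i,j}(-1)^{ij}\,\omega_-^i\otimes\omega_+^j$ is in fact a slightly more explicit (and correctly signed) version of the Cartan comparison than the one displayed in the paper.
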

\begin{proof}
In order to show that $\Psi$ is an  algebra map it is enough to verify the relations in 
\eqref{H-alg1},~\eqref{app:DH:Hs-alg}  and~\eqref{app:DH:alg}
for the the image of the generators of $D(H)$ under $\Psi$. Since we have
\begin{align*}
	\Psi(\varphi_i) =	 2\fp_i \gc \qquad \Psi(\kappa) =
	  \omega_+ \gc \qquad \Psi(f_i) = \fm_i\omega_- \gc \qquad  \Psi(k) = \omega_- \gc 
\end{align*} 
this is an easy check. For example,
\begin{align}
	\Psi(f_i)\Psi(\varphi_j) &=
 \omega_-(\delta_{i,j}(\K^2-\one)+2\fp_j\fm_i) = \delta_{i,j}(\omega_+-\omega_-)+2\fp_j\fm_i\omega_- \\ \nonumber
	&=\delta_{i,j}(\Psi(\kappa)-\Psi(k))+\Psi(\varphi_j)\Psi(f_i) 
\end{align}
Since $\frac{1}{1+\rmi}\Psi(\kappa+\rmi k)=\K$ and $\Psi(f_j\kappa)=\fm_j$ the image of $\Psi$ clearly generates $\Q$.
Moreover, since the dimensions of $D(H)$ and $\Q$ agree $\Psi$ is  bijective.

To show that $\Psi$ is a coalgebra map we use~\eqref{H-alg2}  and~\eqref{app:DH:Hs-hopf}, and check
\begin{align}
		(\Psi\tensor\Psi)(\Delta(\varphi_i)) &=(\Psi\tensor\Psi)(\varphi_i\tensor\one + \kappa\tensor\varphi_i) 
	= 2\fp_i\tensor\one + 2 \omega_+\tensor\fp_i = \Delta(2\fp_i) = \Delta(\Psi(\varphi_i))\gc\\\nonumber
	(\Psi\tensor\Psi)(\Delta(f_i)) &=(\Psi\tensor\Psi)(f_i\tensor k+ \one\tensor f_i) 
	= \fm_i\omega_-\tensor\omega_- + \one\tensor\fm_i\omega_- = \Delta(\fm_i\omega_-) = \Delta(\Psi(f_i))\gc\\\nonumber
	(\Psi\tensor\Psi)(\Delta(\kappa)) &=(\Psi\tensor\Psi)(\kappa\tensor \kappa) 
	= \omega_+\tensor\omega_+  \stackrel{(*)}{=} \Delta(\omega_+) = \Delta(\Psi(\kappa))  \gp
\end{align}
Note, the equality $(*)$ is in $\Q$ and it holds only if $\np$ is even.
For the antipode, we have
\begin{align}
	\Psi(S(\varphi_i)) &=\Psi(\varphi_i\kappa) = 2\fp_i\omega_+ = S(2\fp_i) = S(\Psi(\varphi_i))\gc\\\nonumber
	\Psi(S(f_i)) &=-\Psi(f_ik) = -\fm_i = S(\fm_i\omega_-) = S(\Psi(f_i))\\\nonumber
	\Psi(S(\kappa)) &=\Psi(\kappa) = \omega_+ = S(\omega_+) = S(\Psi(\kappa))\gp
\end{align} 

Recall the $R$-matrices in $\Q$ and $D(H)$ defined in \eqref{R+Riv}  and \eqref{DH:R-mat}, respectively. 
The image of $R_D$ under $\Psi\tensor\Psi$ leads to 
(using $\beta=1$)
\begin{align*}
	(\Psi\tensor\Psi)(R_D) 
	&=  \frac12 (\one\tensor\one+\one\tensor\omega_++\omega_-\tensor\one-\omega_-\tensor\omega_-)
	\Big(\sum
	 (-2)^{m} \fm_{i_1}\omega_-\ldots \fm_{i_m}\omega_-\tensor \fp_{i_1}\ldots \fp_{i_m}  \Big) \\ 
	&=  \sum_{u,v=0}^1 \rho_{u,v} \cdot \prod_{k=1}^{\np}(\one\tensor\one - 2 \ff^-_k \omega_-\tensor \ff^+_k)\cdot\idem_u\tensor\idem_v  = R \gc
\end{align*}
where the sum is taken over $1\leq i_1<\ldots < i_m\leq\np$ with $0\leq m\leq\np$.
This calculation finishes the proof of the statement.
\end{proof}

\newcommand\arxiv[2]      {\href{http://arXiv.org/abs/#1}{#2}}
\newcommand\doi[2]        {\href{http://dx.doi.org/#1}{#2}}
\newcommand\httpurl[2]    {\href{http://#1}{#2}}

\end{document}